\newcommand{\beq}{\begin{equation}}\newcommand{\enq}{\end{equation}}\newcommand{\G}{\mathbb{G}}\newcommand{\F}{\mathcal{F}}
\newcommand{\C}{\mathbb{C}}\newcommand{\R}{\mathbb{R}}\newcommand{\dsp}{\displaystyle}
\newcommand{\N}{\mathbb{N}}\newcommand{\Z}{\mathbb{Z}}\newcommand{\eps}{\varepsilon}
\newcommand{\Raw}{\Rightarrow}
\newcommand{\raw}{\rightarrow}\newcommand{\new}{\newline}\newcommand{\bld}{\mathbf}
\newcommand{\ind}{\mathds{1}}
\newcommand{\cone}{\begin{center}}\newcommand{\ctwo}{\end{center}}\newcommand{\Ahat}{\widehat{A}}
\newcommand{\bs}{\backslash}\newcommand{\beqa}{\begin{eqnarray*}}\newcommand{\enqa}{\end{eqnarray*}}
\newcommand{\half}{\frac{1}{2}}
\newcommand{\la}{\langle}\newcommand{\ra}{\rangle}
\newcommand{\KP}{\mathbb{KP}}
 \newcommand{\wxn}{\addtocontents{toc}{\protect\setcounter{tocdepth}{0}}}
\newcommand{\wxo}{\addtocontents{toc}{\protect\setcounter{tocdepth}{1}}}\setlength{\parindent}{0in}
\newtheorem{theorem}{Theorem}[section]
\newtheorem{lemma}[theorem]{Lemma}
\newtheorem{proposition}[theorem]{Proposition}
\newenvironment{definition}[1][Definition]{\begin{trivlist}
\item[\hskip \labelsep {\bfseries #1}]}{\end{trivlist}}
\newenvironment{example}[1][Example]{\begin{trivlist}
\item[\hskip \labelsep {\bfseries #1}]}{\end{trivlist}}
\newenvironment{remark}[1][Remark]{\begin{trivlist}
\item[\hskip \labelsep {\bfseries #1}]}{\end{trivlist}}
\begin{document}
\begin{titlepage}
\cone
\huge{Random Walks} \\ \huge{on
       Finite Quantum Groups}

       \bigskip

      \LARGE{Diaconis--Shahshahani Theory} \\
          \LARGE{for Quantum Groups}

          \bigskip

          \large{J.P. McCarthy}
          \author{J.P. McCarthy}

          \bigskip

\vspace{1.5in}

      \normalsize{ A thesis submitted to the National University of Ireland, Cork for the
degree of Doctor of Philosophy

\bigskip

Supervisor: Dr Stephen Wills

\bigskip

Head of Department: Dr Martin Kilian

\vspace{1.5in}

\bigskip

Department of Mathematics

\bigskip

College of Science, Engineering and Food Science

\bigskip

National University of Ireland, Cork

\bigskip

January 2017}

\ctwo
\end{titlepage}

\tableofcontents
\begin{abstract}
Of central interest in the study of random walks on finite groups are \emph{ergodic} random walks.  Ergodic random walks converge to random in the sense that as the number of transitions grows to infinity, the state-distribution converges to the uniform distribution on $G$. The study of random walks on finite groups is generalised to the study of random walks on \emph{quantum groups}. Quantum groups are neither groups nor sets and rather what are studied are finite dimensional algebras that have the same properties as the algebra of functions on an actual group --- except for commutativity.

\bigskip

The concept of a random walk converging to random --- and a metric for measuring the distance to random after $k$ transitions --- is generalised from the classical case to the case of random walks on quantum groups.

\bigskip

A central tool in the study of ergodic random walks on finite groups is the Upper Bound Lemma of Diaconis and Shahshahani. The Upper Bound Lemma uses the representation theory of the group to generate upper bounds for the distance to random and thus can be used to determine convergence rates for ergodic walks. The representation theory of quantum groups is very well understood and is remarkably similar to the representation theory of classical groups. This allows for a generalisation of the Upper Bound Lemma to an Upper Bound Lemma for \emph{quantum} groups.

\bigskip

The Quantum Diaconis--Shahshahani Upper Bound Lemma is used to study the convergence of ergodic random walks on classical groups $\Z_n$, $\Z_2^n$, the dual group $\widehat{S_n}$ as well as the `truly' quantum groups of Kac and Paljutkin and Sekine.

\bigskip

Note that for all of these generalisations, restricting to commutative subalgebras gives the same definitions and results as the classical theory.

\end{abstract}

\section*{Acknowledgements}
In this section of my MSc thesis, I signed off my thanks to my advisor Dr Stephen Wills by saying that I was very much looking forward to working with him on my PhD work. These proved
to be prophetic words as Steve once again provided assistance and good council whenever I required it. I would like to express particular gratitude for his great patience over these last six years. Combining study with a heavy lecturing load meant there were more than a few months, nay, quarters where very little work got done. He also provided a calming presence: although I never openly expressed doubts about the viability of this project, that does not mean that there were not any.

\bigskip

The second round of thanks must go to the two heads of the Department of  Mathematics that I have worked under at the Cork Institute of Technology: Dr David Flannery and Dr \'{A}ine N\'{i} Sh\'{e}. Their support and advice --- first from Dave and then \'{A}ine --- has throughout been unwavering and, in the parlance of our times, `on point'. Credit also to Dr Noel Barry --- Head of Academic Affairs --- who allowed me to enter the Staff Doctorate Scheme in CIT and thus receive funding to support my PhD studies.

\bigskip

It is with a definite sadness that my dear grandmother Roses died (she hated the phrase `passed away') during the course of my PhD studies. From 2001 until 2012, she was a champion of my academic life here in Cork and I will shed more than one tear when, hopefully, I go out to her grave and triumphantly show her the oul parchment. I would like to apologise to my mother --- her famous bad luck meant she would only ask how the PhD was going when the PhD was not going at all. Thus, for showing her interest \& concern, she only ever received sharp words by way of a thank you. I would like to thank my father for nodding in agreement with me whenever I claimed ``look, my uncle Patrick spent ten years at it --- it'll be grand''.

\bigskip

Very soon after Roses left my life, Rebecca entered. Luckily I have the rest of my life to show my gratitude to her (although it must be remarked upon that my uncle Patrick did not finish his PhD until he was married --- hopefully Rebecca will be spared this spectre).


\justify
\chapter{Introduction}
The innocuous sounding question --- how many shuffles are required to mix up a deck of cards? --- leads to considering `shuffles' $\sigma_i\in S_{52}$ chosen according to a fixed probability distribution, and asking how large should $k$ be so that the distribution of the random variable
$$\sigma_k\cdots \sigma_2\cdot\sigma_1$$
is approximately uniform on $S_{52}$. The culture of generalisation in mathematics leads us to consider the following problem. Given a finite group, $G$, and elements $s_i\in G$ chosen according to a fixed probability distribution, how large should $k$ be so that the distribution of the random variable
$$s_k\cdots s_2\cdot s_1$$
is approximately uniform on $G$? Such problems arise in the theory of random walks on finite groups and were the subject of the author's MSc thesis \citep{MSc}.

\bigskip

It became apparent during the development of quantum mechanics that classical, Kolmogorovian probability was unable to describe quantum mechanical phenomena such as, for example, Heisenberg's Uncertainty Principle.  Just like the fact that classical probability had been studied for years before Kolmogorov lay down the measure-theoretic, axiomatic foundation of the subject in the early 1930s (ironically not very long after the work of Hilbert, Dirac, von Neumann and others on quantum mechanics), quantum probability had been studied  --- primarily in the field of quantum mechanics --- for the bones of half a century before maturing in the 1970s and 1980s.

 \newpage

Taking a line through the uncertainty principle, observables $a$ and $b$ (measurable quantities) need not commute: the observable $ab$ need not be the same as the observable $ba$, and therefore, rather than a real-valued function on a state space, observables might behave more like matrices. Considering further postulates about the nature of quantum mechanics (justified by the experimental verification of their consequences \citep{Brans}), Dirac and von Neumann were led to the following axioms:
\begin{itemize}
\item the observables of a quantum mechanical system are defined to be the self-adjoint elements of a $\mathrm{C}^*$-algebra.
\item the states of a quantum mechanical system are defined to be the states of the $\mathrm{C}^*$-algebra.
\item the value $\rho(a)$ of a state $\rho$ on an element $a$ is the expectation value of the observable $a$ if the quantum system is in the state $\rho$.
\end{itemize}
Moving away from quantum mechanics, the basic definition in quantum probability is that of a \emph{quantum probability space}, sometimes referred to as a noncommutative probability space \citep{space}.

\bigskip

\begin{definition}
A \emph{quantum probability space} is a pair $(A,\rho)$, where $A$ is a ${}^*$-algebra and $\rho$ is a state.
\end{definition}

\bigskip

This definition is a generalization of the definition of a probability space in Kolmogorovian probability theory, in the sense that every (classical) probability space, $\Omega$, gives rise to a quantum probability space if $A$ is chosen as $\mathcal{L}^\infty(\Omega)$, the ${}^*$-algebra of bounded complex-valued measurable functions on it. Indeed every  `quantisation' of classical probability should, ideally, agree with the classical definition if restricted to a commutative subalgebra.

\bigskip

Considered as a research programme, quantum probability is concerned with generalising, where possible, objects in the study of classical probability to quantised objects in the study of quantum probability theory. It is under this programme that this work lies: the study of random walks on finite groups uses classical probability theory --- a study of random walks on \emph{quantum} groups \emph{should} be the corresponding area of study in quantum probability.

\bigskip

Therefore, this work is concerned with a generalisation of a generalisation of card shuffling: generalising, where possible, the ideas and results presented in the MSc thesis to the case of quantum groups. The problem is that while the central object of card shuffling --- the set of shuffles --- is generalised to that of a set of elements of a group, the generalisation to \emph{quantum} groups moves away from a `set of points' interpretation. For those new to the area (such as the author at the beginning of this study), this can cause serious problems --- particularly because this generalisation means a dearth of intuition. Going back to quantum mechanics, the fact that one of the most successful physics theories of our time says frankly unimaginable things about the nature of reality --- space is not as we comprehend and perhaps even incomprehensible --- leads to famous quotes such as those of Niels Bohr:
\begin{quote}
  \emph{If quantum mechanics hasn't profoundly shocked you, you haven't understood it yet.}
\end{quote}
However, in this study of random walks on finite quantum groups at least, the quantum theory generalises so nicely from the classical setup that it can be fruitful to refer to quantum groups and associated  \emph{virtual} objects as if they really exist. This has become more and more common in the quantum group community and is a helpful development in the author's opinion: this approach is utilised as often as possible in this work. As is commented upon later, at the very least this approach gives a most pleasing notation for quantised objects (in fact some papers simply denote a quantum group by $G$ not paying much credence to the fact that it is not actually a `set of points' group). For examples of this approach see recent papers on quantum groups such as by Banica and M\'{e}sz\'{a}ros \citep{not1}, Franz, Kula and Skalski \citep{not2} and Skalski and So{\l}tan \citep{not3}

\bigskip

Starting in the 1980s with the work of Drinfeld, Jimbo and (later) Woronowicz, there are many motivations for and approaches to quantum groups (although in finite dimensions, the majority of approaches are equivalent).  As this study concerns random walks on \emph{finite} quantum groups, this recent history of the motivations for and approaches to quantum groups is largely irrelevant. Briefly, while quantum groups were first spoken about in the 1980s, the objects studied in this thesis can be traced back to work by Heinz Hopf in the 1940s and Kac in the 1960s \citep{Andr}. Please see the introduction by Timmermann \citep{Timm} to learn more about the motivations for and approaches to quantum groups.

\bigskip

Random walks on \emph{finite} quantum groups were first studied by Franz and Gohm \citep{franzgohm}. The random walks of interest in the classical case, largely, are those which converge in distribution to the uniform or \emph{random} distribution, $\pi$. The question that is asked about these classical random walks are as per the shuffling question. Asking this question in a more precise way involves putting a metric on the set of probabilities on a group, and asking, where $\Psi_k$ is the distribution of the product of $k$ group elements (sampled by a fixed probability distribution): for a given $\eps>0$, how large must $k$ be to ensure that $d(\Psi_k,\pi)<\eps$? As far as the author knows, this question has not been asked for random walks, in the sense of Franz and Gohm, on finite quantum groups. The quantisation of a `random walk on a group converging to random' is a random walk on a finite quantum group converging in distribution to the Haar state (which will eventually be denoted by $\pi$ also). This work, in Chapter 4, gives an appropriate metric to measure the distance between the quantised distribution, $\Psi_k$, of the random walk after $k$ transitions, and the random distribution $\pi$.

\bigskip

Returning to quantum mechanics briefly, a classical random walk on a finite group $G$ can be viewed as a quantum mechanical system evolving by transitioning from state to state at discrete times. Assume furthermore that the random walk sits in a lidded black box. The algebra of complex-valued functions on $G$, $F(G)$, is a commutative $\mathrm{C}^*$-algebra and can be concretely realised as the set of diagonal operators on $\C^{|G|}$. Thus, the observables are $|G|\times |G|$ diagonal matrices with real entries i.e. the real-valued functions on $G$. Note that the measurement of an observable $f$ is one of the eigenvalues of the associated linear operator. The eigenvalues of a diagonal operator  are the elements along the diagonal --- in other words the function values $\{f(s):s\in G\}$. The state after $k$ transitions is given by a state $\Psi_k$ on  $F(G)$ i.e. integration against a probability distribution, $\mu_k$. Therefore the expectation of an observable $f$ after $k$ transitions is given by
$$\Psi_k(f)=\int_G f(t)\,d\mu_k(t).$$
Taking the Copenhagen interpretation of quantum mechanics \citep{sten}, after $k$ transitions the wavefunction/state $\Psi_k$ describes the random walk completely and that is all that can be said. However if an observable $f$ is to be measured, the system must be interfered with: the lid must be lifted off. If the result of the measurement of $f$ yields $f(s)$ then the wavefunction has collapsed into the state $s$ (or rather $\delta^s$). A model for a random walk is, of course, a cat\footnote{bearing a collar with the note ``If found please call 01-6140100 and ask for E.S.''} inside a large black room containing a structure modelling the Cayley graph of the group, moving from node to node in a seemingly random manner. Given some observable $f$ --- perhaps the height of the node upon which the cat sits --- before opening the door, all that can said about the result of measuring $f$ after $k$ transitions is the expectation, $\Psi_k(f)$. However, upon opening the door, the observer could see at this $k$th transition of the walk that the cat was on the node labelled $s$ and thus everything was known about the result of the measurement: it would certainly yield the eigenvalue $f(s)$.

\bigskip

Now thinking about a random walk on a finite, classical group converging to random, what can be imagined is that no matter what observable $f$  is considered, as more and more transitions are made, then --- with the lid on ---  less and less is known about the state of the random walk in the  black box. No good prediction  can be made about where the random walk is after $k$ transitions and  the random walk is --- approximately --- uniformly distributed on $G$. If the random walk is uniformly distributed,  the expectation of $f$ is nothing but the mean-average of $f$.

\bigskip

Therefore, although the algebra of functions on a  finite quantum group is defined in this work to be a finite but not-necessarily-commutative $\mathrm{C}^*$-algebra ---  therefore without a `set of points' interpretation to parameterise the states --- the self-adjoint elements of the $\mathrm{C}^*$-algebra can still be considered observables whose eigenvalues are the result of measuring the state of the random walk. If converging to the Haar state is to be considered in the same way as the classical case --- that the Haar state $h$ is integration against the uniform measure and so $h(a)$ is interpreted as the average of $a$ --- then a random walk on a quantum group converging to random shares the property of random walks on classical groups --- that as more and more transitions are made, the expectation of any observable is nothing but the mean-average.

\bigskip

This thesis shows that for given families of random walks on $\Z_n$, $\Z_2^n$, $\widehat{S_n}$ and $\mathbb{KP}_n$, respectively, $\mathcal{O}(n^2)$, $\mathcal{O}(n\ln n)$, $\mathcal{O}(n^n)$ and $\mathcal{O}(n^2)$ transitions are sufficient for convergence to random. The first two random walks have been studied before \citep{cecc}, but as far as the author knows, for a truly quantum group, or rather family of quantum groups, such as $\mathbb{KP}_n$, this is the first time that explicit convergence rates have been obtained.

\bigskip

One of the most exciting and potentially lucrative aspects of quantum probability, or rather more specifically quantum group theory, is that theorems about finite groups may in fact be true for quantum groups also. For example --- and a lot of this thesis hangs upon this --- the finite Peter--Weyl Theorem \ref{PeterW} concerning the matrix elements of representations of classical groups is exactly the same as the finite Peter--Weyl Theorem \ref{QPW} for \emph{quantum} finite groups in the sense that replacing in the classical statement `finite group, $G$' with `finite quantum group, $\G$' yields the quantum statement. What this really means is that the classical finite Peter--Weyl Theorem is actually just a special case of the quantum finite Peter--Weyl Theorem (itself a special case of \emph{the} Peter--Weyl Theorem (for compact quantum groups)).

\bigskip

This is rather comforting on the conceptional level --- these quantum objects behave so much like their `set of points' classical counterparts --- but it is on the pragmatic level of proving results about these quantum objects that this principle really comes to the fore. On the one hand, some theorems concerning the theory of finite groups are just  corollaries to results about quantum groups. On this other, pragmatic, hand, there is a transfer principle: any proof of a classical group theorem, written without regard to any of the points in the `set of points', may be directly translatable into a proof of the corresponding quantum group theorem.

\bigskip

In this work, the hero of this transfer principle is the Haar state, $h$, which frequently allows `sum over points' arguments and statements about elements of an $f:G\raw \C$ in the algebra of functions on a finite group, $G$, to be transferred via:
$$\underbrace{\frac{1}{|G|}\sum_{t\in G}f(t)}_{\text{classical: references points }t\in G}=\underbrace{h(f)}_{\text{quantum: no reference to points}}.$$
Representation Theory and `sum over points' arguments, therefore are transferrable and it is precisely these ideas that play a central role in the representation- theoretic approach of Diaconis to analysing the rate of convergence of random walks on finite groups \citep{PD}. Once the quantised versions of the various objects and maps used by Diaconis are established --- and these were non-trivial tasks --- it was largely straightforward to derive and prove the transferred/quantised central tool of Diaconis' work --- the Diaconis--Shahshahani Upper Bound Lemma. The fact that the quantum Upper Bound Lemma is as similar to the classical Upper Bound Lemma as the quantum Peter--Weyl Theorem is to the classical Peter--Weyl  is \emph{the} triumph of this work.

\bigskip

The restriction to \emph{finite} quantum groups is for two reasons. First of all the classical work that this is building upon is the Diaconis--Shahshahani Theory approach to random walks on \emph{finite}  groups. Secondly, the approach to quantising classical notions in this work --- extolled in Section \ref{category} --- requires the isomorphism $(A\otimes B)^*=A^*\otimes B^*$ for spaces $A$ and $B$ and this holds only when $A$ and $B$ are finite dimensional. However, the classical Diaconis--Shahshahani Theory also applies to compact groups not just finite groups. Section \ref{compact} points the way towards extending this work to the compact quantum case where the algebras are no longer necessarily finite dimensional.

\bigskip

This does indeed make the work modest: however on the other hand the application of the Upper Bound Lemma is made more difficult by the fact that for truly quantum groups there must be at least one representation of dimension greater than one. A quantum group with one dimensional representations only is isomorphic to the group ring of a finite (classical) group.

\bigskip

It would be remiss not to declare a deficiency of this work, namely that the upper bounds generated are probably not very sharp --- and if sharp, they do not come with a complementary sharp lower bound. One could argue that the main aim of this study was to prove a Diaconis--Shahshahani Upper Bound Lemma for quantum groups, and while this aim was successful, a more honest appraisal of the work might paraphrase an idiom of calculus and say, in this context at least, that finding upper bounds is mechanics while procuring lower bounds is art --- and the author has failed to show a creative side. In particular, failing to present a random walk on a truly quantum group exhibiting the cut-off phenomenon, when this was a key emphasis of the MSc thesis, is a definite black mark. The great hope would be that sharpening these bounds, and, critically, coming up with effective lower bounds would be the subject of future, successful, work. This is discussed further in Section \ref{LBCOP}.

\bigskip

The primary references for this work are the author's MSc thesis on random walks on finite groups \citep{MSc} (available on the arXiv), the paper of Franz and Gohm which introduces random walks on finite quantum groups \citep{franzgohm} and the comprehensive book on quantum groups by Timmermann \citep{Timm}.
 \newpage
\section{Summary}
The following sections of this chapter are concerned primarily with discussion of the \emph{Gelfand Philosophy}. This philosophy leads from Gelfand's Theorem \ref{Gelfand}  which states that \emph{commutative} unital $\mathrm{C}^*$-algebras are  algebras of (continuous) functions on compact, Hausdorff spaces. The philosophy is an invitation to think of noncommutative $\mathrm{C}^*$-algebras as algebras of functions on  \emph{quantum spaces}. These quantum spaces do not actually exist --- and are referred to as virtual objects --- yet many questions that can be posed and resolved in the commutative case may also be posed and hopefully resolved in the noncommutative case. It can sometimes be non-trivial to translate classical definitions into the quantised world but in this chapter it is seen that there is a functorial quantisation that often motivates the --- correct and well-established in the literature --- quantised definitions.

\bigskip

Chapter 2 introduces the general theory of finite quantum groups (as defined by the author); and includes a study of the Haar state. Some examples of finite quantum groups are presented; namely classical groups $G$, dual groups of classical groups $\widehat{G}$ (which are virtual when $G$ is non-abelian), the Kac--Paljutkin quantum group $\mathbb{KP}$ as well as the one parameter family of quantum groups of Sekine, $\mathbb{KP}_n$.

\bigskip

Chapter 3 presents the quantisation of discrete-time Markov chains as well as, far more importantly, the quantisation of random walks on finite groups.

\bigskip

In Chapter 4 a distinguished metric, namely the \emph{total variation distance}, is identified as the conventional measure of closeness to random in this study. As far as the author is aware, not only is this the correct quantisation/generalisation of the classical total variation distance --- in that it shares three key features of the classical metric --- it has not been studied previously.

\bigskip

Chapter 5 contains the main result or rather tool of this thesis ---  the Quantum Diaconis--Shahshahani Upper Bound Lemma. This is an extension of the classical result and so returns the same estimates when applied to classical groups viewed as quantum groups. The Upper Bound Lemma is also applied to a family of random walks on the cocommutative quantum group $\widehat{S_n}$, some random walks on the `truly quantum' group of Kac and Paljutkin, as well as a family of random walks on the one-parameter Sekine quantum groups.

\bigskip

Chapter 6 contains some possible questions/avenues for further study such as `what are necessary and sufficient conditions for a random walk to converge to random?', `does the classical spectral analytic approach to Markov chains carry over?' and examples of random walks that deserve analysis.

\bigskip

Most of the original work is concentrated in Chapters 4 and 5. However, it would be hoped that all sections contain new perspectives and points of view on previously studied objects.

\section{The Duality of Algebra and Geometry}
It is a theme of modern mathematics that geometry and algebra are `dual':
$$\text{Geometry }\leftrightarrow \text{ Algebra}$$
Arguably this began when Descartes began to answer questions about synthetic geometry using the (largely) algebraic methods of coordinate geometry. Since then this duality has been extended and refined to consider:
$$\text{Space }\leftrightarrow \text{ Algebra of Functions on the Space}$$
Here a space is a set of points with some additional structure, and the idea is that for a given space, there will be a canonical algebra of functions on the space. For example, given a compact, Hausdorff topological space $X$, the canonical algebra of functions is the continuous functions on $X$, $C(X)$. The algebra of functions on a space encodes many of the properties of that space.  In the example of a compact, Hausdorff space $X$ and its algebra of functions $C(X)$,  the Banach--Stone Theorem says that the algebra of functions determines the topology on $X$.
\subsubsection*{Examples}
\begin{enumerate}
\item \textit{Cardinality}: Let $X=\{a_1,a_2,\dots,a_n\}$ be a set and consider $F(X)$, the space of complex-valued functions on $X$. To define $f\in F(X)$  complex numbers $\lambda_i$ must be chosen:
$$f(a_i)=\lambda_i\text{ ; for }i=1,\dots,n.$$
 \newpage
Define \emph{delta functions} by:

$$\delta_x(y)=\begin{cases}
                1 & \mbox{if } x=y, \\
                0 & \mbox{otherwise.}
              \end{cases}$$
Also define \emph{indicator functions} for each $A\subset X$
$$\ind_A(x)=\begin{cases}
                1 & \mbox{if } x\in A, \\
                0 & \mbox{otherwise.}
              \end{cases}$$
Note that $\delta_x=\ind_{\{x\}}$ and
$$\ind_A=\sum_{x\in A}\delta_x.$$
 Hence every $f\in F(X)$ may be uniquely written in the form:
$$f=\sum_{i=1}^n\lambda_i\delta_{a_i}.$$
That is $\{\delta_{a_i}:1\leq i\leq n\}$ is a basis of $F(X)$ so $\text{dim } F(X)=n$.
It could be argued that the only feature of this space is that $|X|=n$. So, for a finite set $X$ such as this one, with no additional structure at all, the dimension of the algebra of functions $F(X)$ determines $X$ completely.

\item \textit{Connectedness}: Consider the interval $X=[0,1]$. In the usual topology it is connected which means $X$ cannot be represented as a union of non-empty, open disjoint subsets. Consider the continuous functions on $X$, $C(X)$. A map $p\in C(X)$ a projection if $p^2(x)=\overline{p(x)}=p(x)$ for all $x\in X$.
This means that $p$ either takes the value $0$ or the value $1$. Suppose $p$ is a non-zero projection and set
$$A=\{x\in X : p(x)=1\},$$
 so that $p=\mathds{1}_{A}$. It is clear that either $A=\emptyset$ or $X$; otherwise $p$ is not continuous as it would have jump discontinuities on the boundary of $A$. Hence the only continuous projections on the connected set $X$ are the trivial projections $0$ and $\mathds{1}_X$.

\bigskip

Consider $X=[0,1]\cup[2,3]$. This space is certainly disconnected but $\mathds{1}_{[0,1]}$ and $ \mathds{1}_{[2,3]}$ are continuous non-trivial projections. If $X\subset \R$ and if $C(X)$ contains non-trivial projections, then $X$ is disconnected.

\end{enumerate}

The examples above start with a space $X$, `induce' an algebra of functions on the space, $A(X)$ and often there is enough data in the algebra of functions to describe the space completely. Often it is equally valid to look at a commutative algebra, say $A$, and look for an `induced' space $X(A)$ in such a way that there is enough data in the space $X(A)$ to describe the algebra of functions, $A$.

\bigskip

This  can be understood on the level of observable-state duality. For example, consider a point $a_j\in X=\{a_1,\dots,a_n\}$ to be a \emph{state} and a function $f=\sum_i\lambda_i\delta_{a_i}\in F(X)$ to be an \emph{observable}. If the observable $f$ acts on the state $a_i$ then the \emph{measurement} of $f$ produces the result $\lambda_j$:

$$f(a_i)=\sum_{i=1}^n\lambda_i\delta_{a_i}(a_j)=\lambda_j.$$
However could not another party see the same measurement to be a result of the observable $a_j$ acting on the state $f$ producing the same result?

$$a_j(f)=a_j\left(\sum_{i=1}^n\lambda_i \delta_{a_i}\right)=\lambda_j.$$

There are more than a few things that need to be said to make this notion precise but it is useful to loosely introduce the concept of an observable at this point (Majid writes about this both in detail and in context in an essay \citep{Majid2}):

\begin{figure}[ht]\cone\epsfig{figure=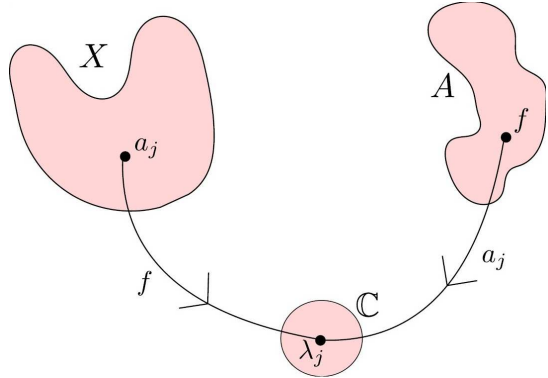,scale=0.5}\ctwo\caption{$f(a_j)=\lambda_j=a_j(f)$: \emph{observable-state duality}:  both $X$ and $A=F(X)$ are at once spaces and algebras of functions on spaces.}\end{figure}

\newpage

Note that in these examples, the  algebra of functions has a common structure:
\begin{enumerate}
\item \emph{Vector Space} --- for any complex valued functions $f$ and $g$ the functions $f+g$ and $\lambda f$  ($\lambda \in \mathbb{C}$) can be defined pointwise.
\item \emph{Normed Space} --- there are various norms that could be put on the algebra of functions. In an appropriate setting, these include the supremum norm, one norm, two norm, etc. In particular, it is convenient if the algebra of functions is a \emph{Banach space}, that is a complete normed vector space.
\item \emph{Algebra} --- a pointwise multiplication can be defined on the algebra of functions.
\item \emph{*-Algebra} --- the algebra of functions takes on an involution, namely the conjugation:
$f^*(x)=\overline{f(x)}$.
\end{enumerate}
Any algebra $A$ which has these four features (with the $\mathrm{C}^*$-equation condition on how the norm interacts with the involution: $\|a^*a\|=\|a\|^2$ for all $a\in A$), is known as a $\mathrm{C}^*$-algebra and, by and large, the canonical algebra of functions on a space will have this structure. As the complex numbers  $\mathbb{C}$ are commutative, the algebra of functions on $X$ is commutative:
$$f(x)g(x)=g(x)f(x)\,;\, \text{for all }x\in X.$$
What is the nature of these seemingly inevitable algebras? The basic features have been outlined  above but here the last two features are explored a little further. An \emph{associative algebra} is a (complex) vector space together with a bilinear map
$$m:A\times A\rightarrow A,\,(a,b)\mapsto ab,$$
such that $m(a,m(b,c))=m(m(a,b),c)$. Using the universal property, the bilinear map $m$ may be extended to a linear map
  $$\nabla:A\otimes A\raw A,\,(a\otimes b)\mapsto ab.$$
    Of course it is natural to refer to this map as the \emph{multiplication} on $A$. If $A$ admits a submultiplicative norm and a \emph{unit} --- an element $1_A\in A$ such that $a1_A=a=1_Aa$ for all $a\in A$ --- such that $\|1_A\|=1$, then $A$ is said to be a \emph{unital normed algebra}. If, further, a unital normed algebra $A$ is complete then $A$ is called a \emph{unital Banach algebra}.  An element $a\in A$ is \emph{invertible} if there is an element $a^{-1}\in A$ such that $aa^{-1}=1_A=a^{-1}a$. The set
$$G(A)=\{a\in A:a\text{ is invertible}\}$$
is an (open) group with the multiplication got from $A$. Define the \emph{spectrum} of an element $a$ to be the set
$$\sigma(a)=\{\lambda\in\C:a-\lambda1_A\not\in G(A)\}.$$
A theorem of Gelfand states that if $a$ is an element of a unital Banach algebra $A$, then the spectrum of $a$ is non-empty. As a corollary, Gelfand and Mazur proved that if $A$ is a unital algebra in which every non-zero element is invertible, then $A$ is isometrically isomorphic to $\C$ (Theorem 10.14 of \citep{Rudin91}).

%

\bigskip

Consider the non-zero linear functionals $\chi:A\rightarrow\C$ that are also homomorphisms. These maps are called \emph{characters} and the set of all such functionals is called the \emph{character space of $A$}, $\Phi(A)$. Suppose that $A$ is an abelian Banach algebra for which the space $\Phi(A)$ is non-empty. For $a\in A$, define  the \emph{evaluation map}:
$$\widehat{a}:\Phi(A)\raw\C,\,\,\chi\mapsto\chi(a).$$
 Endow $\Phi(A)$ with the weakest topology that makes all of these evaluation maps continuous: this coincides with the weak* topology. If $A$ is a unital abelian Banach algebra, then $\Phi(A)$ is a compact Hausdorff space. It can be shown  that the set $\{\chi\in\Phi(A):|\chi(a)|\geq\eps\}$ is weak*-compact. Hence $\widehat{a}\in C_0(\Phi(A))$: it is called the \emph{Gelfand transformation} of $a$.

\bigskip

An \emph{involution} on an algebra is a conjugate-linear map $a\mapsto a^*$ on $A$ such that $a^{**}=a$ and $(ab)^*=b^*a^*$. The pair $(A,*)$ is called a \emph{*-algebra}. An element is said to be \emph{self-adjoint} if $a^*=a$. A \emph{$\mathrm{C}^*$-algebra} is a Banach *-algebra such that
\beq
\|a^*a\|=\|a\|^2\,,\,\text{ for all }a\in A.
\enq
This seemingly mild condition is in fact very strong. In particular, it implies that there is at most one norm on a *-algebra making it a $\mathrm{C}^*$-algebra.
\newpage
\subsubsection*{Example}
Consider the Hilbert space  $H=\C^n$ with the usual inner product on $\C^n$ and the set of bounded operators on $H$, $A=B(H)\cong M_n(\C)$. Using the usual matrix addition and multiplication, $A$ becomes a *-algebra when equipped with the conjugate-transpose for the involution  $a^*= \overline{a^T}$. A quick calculation shows that the operator norm satisfies the $\mathrm{C}^*$-equation and so is the correct norm making $A$ into a $\mathrm{C}^*$-algebra.

\bigskip

Some features of the matrix algebra above extend to general $\mathrm{C}^*$-algebras. For example, the fact that a self-adjoint matrix has real eigenvalues is  a more general result about $\mathrm{C}^*$-algebras --- namely that the spectrum of a self-adjoint element of a $\mathrm{C}^*$-algebra $A$ is real. This analysis culminates in the beautiful theorem of Gelfand that states that every abelian $\mathrm{C}^*$-algebra is isomorphic to an algebra of functions on a space.

\bigskip

\begin{theorem}
\emph{(Gelfand)}\label{Gelfand}
If $A$ is a non-zero commutative $\mathrm{C}^*$-algebra, then the Gelfand representation
$$\varphi:A\raw C_0(\Phi(A))$$
is an isometric *-isomorphism $\bullet$
\end{theorem}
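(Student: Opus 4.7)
The plan is to verify the three properties simultaneously demanded by the conclusion: that $\varphi$ is a $*$-homomorphism, that it is isometric, and that it is surjective. Well-definedness of $\widehat{a}\in C_0(\Phi(A))$ has already been established in the excerpt, and linearity and multiplicativity of $\varphi$ are immediate from the fact that elements of $\Phi(A)$ are, by definition, linear homomorphisms. The first genuinely non-trivial step is preservation of the involution, which reduces to showing $\chi(a^*)=\overline{\chi(a)}$ for all $\chi$. I would first argue that self-adjoint elements have real spectrum: for $a=a^*$ and $t\in\R$, the element $\exp(ita)$ in the unitization is unitary, hence of norm one together with its inverse, and a short spectral mapping argument forces $\sigma(a)\subseteq\R$. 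Since every character value $\chi(a)$ lies in $\sigma(a)$ (as $a-\chi(a)1_A$ is non-invertible), the real spectrum conclusion together with the Cartesian decomposition $a=\frac{1}{2}(a+a^*)+\frac{i}{2i}(a-a^*)$ yields $\chi(a^*)=\overline{\chi(a)}$.

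For isometry, I would invoke the spectral radius formula $r(a)=\lim_n\|a^n\|^{1/n}$ in tandem with the $\mathrm{C}^*$-equation. For self-adjoint $a$, the identity $\|a^2\|=\|a\|^2$ and induction along powers of two give $r(a)=\|a\|$; for arbitrary $a$ this combines with the $\mathrm{C}^*$-axiom to give $\|a\|^2=\|a^*a\|=r(a^*a)$. The other ingredient is the commutative-algebra fact that $\sigma(b)=\{\chi(b):\chi\in\Phi(A)\}$ for any $b$, which follows from the bijection between characters and maximal ideals in a commutative unital Banach algebra (passing to the unitization in the non-unital case). Consequently $r(b)=\|\widehat{b}\|_\infty$, and chaining the equalities gives $\|\varphi(a)\|_\infty^2=\|\widehat{a^*a}\|_\infty=r(a^*a)=\|a\|^2$.

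The final step is surjectivity, which I would obtain via the locally compact Stone--Weierstrass theorem. Isometry forces the image of $\varphi$ to be closed in $C_0(\Phi(A))$, and the previous steps show it is a $*$-subalgebra. Distinct characters differ on some element, so the image separates points of $\Phi(A)$; and for every $\chi\in\Phi(A)$ there is some $a\in A$ with $\chi(a)\neq 0$, so the image vanishes nowhere. Stone--Weierstrass then yields that the image equals all of $C_0(\Phi(A))$.

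The hardest point is the $*$-preservation step, since it relies on the spectrum of a self-adjoint element being real — a conclusion that genuinely needs the full $\mathrm{C}^*$-identity rather than mere submultiplicativity and is where the algebra's geometry really enters. Once that is in hand, the spectral radius calculation for isometry and the Stone--Weierstrass application for surjectivity proceed along well-trodden lines.
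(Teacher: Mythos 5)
Your argument is correct and is the canonical textbook proof of the commutative Gelfand--Naimark theorem: real spectrum of self-adjoint elements (via the unitary $\exp(ita)$) for $*$-preservation, the spectral radius formula combined with the $\mathrm{C}^*$-identity for isometry, and Stone--Weierstrass applied to the closed, point-separating, nowhere-vanishing $*$-subalgebra $\varphi(A)$ for surjectivity. The paper itself supplies no proof of this theorem --- it is stated as a classical result --- so there is no in-text argument to compare against; your proof is complete as a sketch and correctly identifies the $*$-preservation step (equivalently, the reality of $\sigma(a)$ for $a=a^*$) as the one place where the full $\mathrm{C}^*$-equation, rather than mere submultiplicativity, is genuinely needed.
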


\bigskip

This is a precise realisation of the duality $f(x)=x(f)$ as discussed previously. For more see the introduction to \citep{not2}.

\section{Virtual Objects}
Consider the $\mathrm{C}^*$-subalgebra $A:=\mathcal{D}_{n^2}\subset M_{n^2}(\C)$ of $n^2\times n^2$ diagonal matrices. As it is commutative, by Gelfand's Theorem, $A$ is isomorphic to the algebra of functions on the character space, $\Phi(A)$. There are $n^2$ characters on $A$:
$$\chi_i(a)=a_{ii},$$
and so where $X:=\Phi(A)=\{\chi_1,\chi_2,\cdots,\chi_{n^2}\}$, there is an isomorphism $A\cong C_0(X)=F(X)$.
$$a=\left(\begin{array}{ccc}\lambda_1 & & \\ & \ddots & \\ & & \lambda_{n^2}\end{array}\right)\mapsto \sum_{i=1}^{n^2}\lambda_i\delta_{\chi_i}=\hat{a}$$
Note that the $\mathrm{C}^*$-norm on $A$ is the operator norm while the $\mathrm{C}^*$-norm on $F(X)$ is the supremum norm. The isomorphism $a\mapsto \hat{a}$ is isometric and so
$$\|a\|_{\text{op}}=\|a\|_{\infty}.$$
This will be seen later where the algebra of functions on a finite group  can be viewed as a diagonal subalgebra $F(G)\subset B(H)$ of bounded operators on a Hilbert space, and the operator norm of $f\in F(G)$ will be called by the supremum norm because:
$$\|f\|_{\text{op}}=\sup_{\|x\|_H\leq 1}\|f(x)\|_{H}=\max_{t\in G}|f(t)|=\|f\|_{\infty}.$$

\bigskip

Now consider the $\mathrm{C}^*$-algebra $B:=M_n(\C)\cong B(\C^n)$. Both $A$ and $B$ are $n^2$ dimensional $\mathrm{C}^*$-algebras. However because $B$ is noncommutative, only $A$ may be written as $A=F(X)$ via Gelfand's Theorem. Gelfand's Theorem says that commutative $\mathrm{C}^*$-algebras are nothing but algebras of functions on spaces however it has become fashionable to consider noncommutative $\mathrm{C}^*$-algebras as algebras of functions on \emph{noncommutative} or \emph{quantum} spaces.

\bigskip

So, for example, $B$ can be written as $B=F(\mathbb{X})$ and although $\mathbb{X}$ is a \emph{virtual object}, it can be fruitful to consider $B$ in these terms. At the very worst, this philosophy yields a nice notation. At its very best it can inspire the noncommutative geometer to unshackle macroscopic-earthly chains and employ their imagination.
\subsubsection*{Examples}
\begin{enumerate}
\item Let $(G,\star)$ be a finite group and consider the algebra of complex-valued functions on $G$, $F(G)$.

\bigskip

What kind of relations hold `up' in $F(G)$? Relations in the group --- associativity, identity and inverses --- need to be accounted for. In particular, for all $x,\,y,\,z\in G$ and $f\in F(G)$, where $e$ is the identity:
\begin{eqnarray}
f(x\star(y\star z))=f((x\star y)\star z),
\\ f(x\star e)=f(x)=f(e\star x),
\\ f(x\star x^{-1})=f(e)=f(x^{-1}\star x).
\end{eqnarray}
These relations can be translated into the language of \emph{coalgebras}. Note that with the supremum norm, pointwise multiplication and the involution $f\mapsto \overline{f}$, $F(G)$ has the structure of a $\mathrm{C}^*$-algebra. Note however that $F(G)$ is a \emph{commutative} $\mathrm{C}^*$-algebra. There exist finite-dimensional $\mathrm{C}^*$-algebras that satisfy all of the (coalgebraic) axioms of $F(G)$ \emph{except} commutativity. Through abuse of terminology, these $\mathrm{C}^*$-algebras are sometimes called quantum groups, however it is more appropriate  to refer to such a $\mathrm{C}^*$-algebras as the \emph{algebra of functions} on a quantum group. The quantum group is a virtual object and the noncommutative $\mathrm{C}^*$-algebra can be denoted by $F(\mathbb{G})$ --- the quantum group is this virtual object $\mathbb{G}$. If the algebra $F(\mathbb{G})$ has a unit $1_{F(\mathbb{G})}$ then this is denoted by $\mathds{1}_{\mathbb{G}}$ --- mirroring the fact that the unit in $F(G)$ for a classical $G$ is given by the indicator function on $G$, $\mathds{1}_G$.

\bigskip

\item Consider the unit sphere in $\R^n$:
$$\mathcal{S}^{n-1}:=\{\bld{v}\in\R^n:\|\bld{v}\|_2=1\}.$$
There are $n$ real-valued coordinate functions $\mathcal{S}^{n-1}\raw \R$, $x_1,\,x_2,\,\dots x_n$ which are defined, for $\bld{v}=(a_1,\dots,a_n)$ by:
$$x_i(\bld{v})=a_i.$$
Now consider  the following universal $\mathrm{C}^*$-algebra:
$$C^*_{\text{comm}}\left(x_1,\dots,x_n\,|\,x_i\text{ self-adjoint and }\sum x_i^2=1\right).$$
With the standard topology on $\mathcal{S}^{n-1}$, this $\mathrm{C}^*$-algebra is isomorphic to $C(\mathcal{S}^{n-1})$ --- the algebra of \emph{continuous} functions on $\mathcal{S}^{n-1}$. Now consider the same universal $\mathrm{C}^*$-algebra except that commutativity is not included:
$$C^*\left(x_1,\dots,x_n\,|\,x_i\text{ self-adjoint and }\sum x_i^2=1\right).$$
Banica and Goswami \citep{FS} denotes this $\mathrm{C^*}$-algebra by $C(\mathbb{S}_+^{n-1})$ and call the virtual object $\mathbb{S}^{n-1}_+$ the \emph{free $n$ sphere}. In fact, Banica and Goswami go even further and talk about the action of the virtual group $\mathbb{O}_n^+$ on the virtual space $\mathbb{S}^{n-1}_+$.
\end{enumerate}


Given an arbitrary, not-necessarily commutative $\mathrm{C}^*$-algebra, $A$, the \emph{Gelfand Philosophy} says that $A$ should be considered the algebra of functions on a quantum space, $\mathbb{X}$:
$$A=C_0(\mathbb{X}).$$
Note again that $\mathbb{X}$ is a virtual object: in the notation of the previous section, it corresponds to the space, $X(A)$, `induced' by the algebra of functions $A$. The Gelfand Philosophy suggests some definitions, for example:
$$\mathbb{X}\text{ has cardinality $n$ if }A=C_0(\mathbb{X})\text{ has dimension }n,$$
$$\mathbb{X}\text{ is connected if }A=C_0(\mathbb{X})\text{ contains no non-trivial projections},$$
$$\mathbb{Y}\subset \mathbb{X}\text{ if }C_0(\mathbb{Y})\subset C_0(\mathbb{X}),$$
$$\mathbb{X}\text{ is compact if }A=C_0(\mathbb{X})\text{ is unital.}$$

\bigskip

This suggests one quantisation regime:  you quantise objects, such as Markov chains, by replacing each instance of  a commutative $\mathrm{C}^*$-algebra $C_0(X)$ with  a not-necessarily commutative one $A=C_0(\mathbb{X})$. Such a quantisation is called a \emph{liberation} by Banica and Speicher \citep{Banica}. A feature of any successful quantisation is that if  a restriction to a commutative subalgebra is made, it should be possible to recover a classical version. In many examples, quantisation is achieved just like this --- this thesis will quantise Markov chains in this way. However, for the quantisation of groups there is a slightly different approach that can be taken.

\section{The Quantisation Functor\label{category}}
This section is included as it is used to motivate  the correct notions of (the algebra of functions on) a \emph{quantum group}, a \emph{random walk on a quantum group} as well as (co)\emph{representation of a quantum group}. Note that the   `quantised' objects that are arrived at via this `categorical quantisation' are nothing but the established definitions so this section should be considered as little more than a motivation. The author feels that introductory texts on quantum groups could include these ideas and that is why they are included here. This quantisation is the translation of statements about a finite group, $G$ into statements about the algebra of functions on $G$, $F(G)$.

\bigskip

 This notion of quantisation sits naturally in category theory where two functors  --- the $\C$ functor and the dual functor --- lead towards a satisfactory quantisation. A \emph{category} $\mathfrak{C}$ consists of a class of \emph{objects}, a class of \emph{morphisms} and a composition law for morphisms. Denote the class of objects also by $\mathfrak{C}$ and the class of morphisms by $\operatorname{Mor}(\mathfrak{C})$. Each morphism $f\in\operatorname{Mor}(\mathfrak{C})$ has some \emph{source} $a\in \mathfrak{C}$ and  \emph{target} $b\in\mathfrak{C}$, and thus is denoted by $f:a\raw b$. The class of morphisms with source $a\in\mathfrak{C}$ and target $b\in\mathfrak{C}$ is denoted by $\operatorname{Mor}(a,b)$. Finally there is an associative binary operation:
 $$\circ:\operatorname{Mor}(b,c)\times\operatorname{Mor}(a,b)\raw \operatorname{Mor}(a,c).$$
  Note that for any object $x\in\mathfrak{C}$, there is a morphism $I_x\in\operatorname{Mor}(x,x)$ such that for all $f\in\operatorname{Mor}(a,b)$
  $$I_b\circ f=f=f\circ I_a.$$
  A commutative diagram is a \emph{quiver} with objects for vertices and morphisms for edges such that the composition morphism $a\raw b$ is path independent. That is, it expresses a family of equalities of morphisms of the form:
  $$\underbrace{\varphi_N\circ\cdots \circ \varphi_1}_{=:f}=\underbrace{\psi_M\circ\cdots \circ \psi_1}_{=:g},$$
  where $f$ and $g$ are morphisms from $a\raw b$.

  \bigskip

A \emph{functor} is a map between categories. A functor $F:\mathfrak{C}_1\raw\mathfrak{C}_2$ associates to each $a\in\mathfrak{C}_1$ an object $F(a)\in\mathfrak{C}_2$ and to each morphism $f\in\operatorname{Mor}(\mathfrak{C}_1)$ a morphism $F(f)\in \operatorname{Mor}(\mathfrak{C}_2)$ such that for all $x\in\mathfrak{C}_1$, $F(I_x)=I_{F(x)}$.
 \begin{itemize}
 \item $F$ is called \emph{covariant} if $F$ associates to a morphism $f\in \operatorname{Mor}(a,b)$ a morphism $F(f)\in\operatorname{Mor}(F(a),F(b))$ such that if $g\in\operatorname{Mor}(b,c)$
 $$F(g\circ_1 f)=F(g)\circ_2 F(f).$$
 Therefore the image of a commutative diagram under a covariant functor is another commutative diagram.
 \item $F$ is called \emph{contravariant} if $F$ associates to a morphism $f\in \operatorname{Mor}(a,b)$ a morphism $F(f)\in\operatorname{Mor}(F(b),F(a))$ such that if $g\in\operatorname{Mor}(b,c)$
 $$F(g\circ_1 f)=F(f)\circ_2 F(g).$$
Therefore the image of a commutative diagram under a contravariant functors is another commutative diagram with the morphisms ``turned around''.
 \end{itemize}
\subsubsection*{The $\C$ Functor}
The category of finite sets, $\mathbf{FinSet}$, has the class of all finite sets as objects and functions for morphisms. Also of interest is the category of finite dimensional complex vector spaces, $\mathbf{FinVec}_{\C}$ with linear maps for morphisms.  There is a map, the $\C$ map, $\C :\mathbf{FinSet}\raw \mathbf{FinVec}_{\C}$, that associates to each object $X\in\mathbf{FinSet}$, an object $\C X\in\mathbf{FinVec}_{\C}$ --- the complex vector space with basis $\{\delta^x:x\in X\}$. This map associates to each morphism $f:X\raw Y$ a morphism $\mathbb{C}f:\C X\raw \C Y$, $\delta^x\mapsto \delta^{f(x)}$ and it is not difficult to see that it is a covariant functor.

\bigskip

If $X$ and $Y$ are finite sets then $X\times Y$ is also a finite set. This object is sent to $\C(X\times Y)$ by the $\C$ functor. The following explains how to deal with $\C(X\times Y)$, as well as presenting a number of other useful isomorphisms of vector spaces.

\bigskip

\begin{theorem}
(Tensor Product Isomorphisms)
\begin{enumerate}
\item[(a)] Let $X$ and $Y$ be finite sets. Then, under the isomorphism $\delta^{(x,y)}\leftrightarrow \delta^x\otimes\delta^y$, $\C (X\times Y)\cong \C X\otimes \C Y$.

\bigskip

\item[(b)] Let $V$ be a finite dimensional complex vector space. Then, under the isomorphism $\lambda\otimes\bld{v}\leftrightarrow \lambda\bld{v}\leftrightarrow \bld{v}\otimes\lambda$, $\C \otimes V\cong V\cong V\otimes \C$.

\bigskip

\item[(c)] Let $U$ and $V$ be finite dimensional complex vector spaces. Then $(U\otimes V)^*\cong U^*\otimes V^*$.

\end{enumerate}
\end{theorem}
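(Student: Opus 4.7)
For part (a), the plan is to exploit the fact that both sides are free vector spaces. Since $\{\delta^{(x,y)}:(x,y)\in X\times Y\}$ is by definition a basis of $\mathbb{C}(X\times Y)$, and since $\{\delta^x\otimes \delta^y:x\in X,y\in Y\}$ is a basis of $\mathbb{C}X\otimes \mathbb{C}Y$ (a standard fact about the tensor product of finite-dimensional vector spaces with prescribed bases), the assignment $\delta^{(x,y)}\mapsto \delta^x\otimes \delta^y$ extends by linearity to a bijective linear map, which is the required isomorphism.

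For part (b), I would apply the universal property of the tensor product to the bilinear map $\mathbb{C}\times V\to V$, $(\lambda,\mathbf{v})\mapsto \lambda\mathbf{v}$, to get a linear map $m:\mathbb{C}\otimes V\to V$. The inverse $V\to \mathbb{C}\otimes V$, $\mathbf{v}\mapsto 1\otimes \mathbf{v}$, is manifestly linear, and one checks both compositions are the identity on elementary tensors (using $\lambda\otimes \mathbf{v}=1\otimes \lambda\mathbf{v}$) and then extends by linearity. The isomorphism $V\cong V\otimes\mathbb{C}$ is analogous.

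For part (c), which is the substantive case, I would first define a bilinear map $U^*\times V^*\to (U\otimes V)^*$ by sending $(\phi,\psi)$ to the linear functional on $U\otimes V$ arising from the bilinear form $(u,v)\mapsto \phi(u)\psi(v)$ (itself invoking the universal property once to land in $(U\otimes V)^*$). Applying the universal property a second time yields a linear map $\Theta:U^*\otimes V^*\to (U\otimes V)^*$ with $\Theta(\phi\otimes \psi)(u\otimes v)=\phi(u)\psi(v)$. To see $\Theta$ is injective, take bases $\{u_i\}$ of $U$ and $\{v_j\}$ of $V$, with dual bases $\{u_i^*\}$ and $\{v_j^*\}$; a general element of the kernel has the form $\sum_{i,j}c_{ij}u_i^*\otimes v_j^*$, and evaluating against $u_k\otimes v_\ell$ forces $c_{k\ell}=0$. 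Since $\dim(U^*\otimes V^*)=\dim U\cdot \dim V=\dim(U\otimes V)=\dim(U\otimes V)^*$, injectivity yields surjectivity, and $\Theta$ is the desired isomorphism.

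The main obstacle, and the reason the theorem is restricted to finite dimensions (as flagged just before the statement), is the dimension-counting step in (c): in the infinite-dimensional setting one only obtains an injection $U^*\otimes V^*\hookrightarrow (U\otimes V)^*$, generally strict, so it is essential to work with finite bases when producing the surjectivity. I would make sure to highlight this point since later chapters of the thesis rely on (c) for the quantisation regime described in Section \ref{category}.
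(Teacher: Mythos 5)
Your proposal is correct. Note that the paper does not actually prove this theorem --- it simply cites Wegge-Olsen (Appendix T) for these standard results --- so there is no in-paper argument to compare against; your three arguments (free-basis identification for (a), the universal property for (b), and the double application of the universal property followed by injectivity on dual bases and a dimension count for (c)) are exactly the standard proofs that such a reference contains. Your closing remark that the dimension count in (c) is what fails in infinite dimensions, yielding only a strict injection $U^*\otimes V^*\hookrightarrow (U\otimes V)^*$ in general, correctly identifies the reason the thesis restricts itself to finite quantum groups, a point the author also flags in the introduction.
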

\begin{proof}
See Wegge-Olsen (Appendix T, \citep{WO}) for these standard results $\bullet$
\end{proof}

  Therefore, a morphism $f:X\times Y\rightarrow Z$ is sent to the linear map $\C f:\C X\otimes \C Y\raw \C Z$:
$$(\mathbb{C}f)(\delta^x\otimes \delta^y)=\delta^{f(x,y)}.$$

\subsubsection*{Dual Functor}
The dual map, $\mathcal{D}$, is a morphism in the category of finite dimensional vector spaces that sends a vector space to its dual and a linear map $T:U\rightarrow V$ to its transpose:
$$\mathcal{D}(T):V^*\rightarrow U^*,\qquad \varphi\mapsto \varphi\circ T.$$
It can be shown that for $T:V_1\raw V_1$ and $S:V_2\raw V_3$ that
$$(S\circ T)^*=T^*\circ S^*.$$
Let $\varphi\in V_3^*$:
\begin{align*}
(T^*\circ S^*)(\varphi)&=T^*\circ (S^*(\varphi))=S^*(\varphi)\circ T
\\&=\varphi\circ S\circ T=\varphi\circ(S\circ T)
\\&=(S\circ T)^*(\varphi).
\end{align*}
With this result, and the fact that $T^*$ is linear, the dual functor is a contravariant functor from $\mathbf{FinVec}_{\C}$ to itself.

\bigskip

Call the composition of these two functors by the quantisation functor:
$$\mathcal{Q}:\textbf{FinSet}\rightarrow \textbf{FinVec}_{\C},\qquad \mathcal{Q}=\mathcal{D}\circ\mathbb{C}.$$
It will be seen that the image of a group under this functor is the algebra of functions on the group. This gives us a routine to quantise groups and related objects: apply the $\mathcal{Q}$ functor to objects, morphism and commutative diagrams in the category of finite sets to get quantised objects, morphisms and commutative diagrams  in the category of finite dimensional vector spaces.

\bigskip

It will be seen that the image of a finite group under this functor, $\mathcal{Q}(G)=F(G)$, has the structure of a \emph{Hopf}-algebra: whose axioms are found simply by quantising the group axioms on $G$. That is $F(G)$ satisfies:
$$\mathcal{Q}(\text{group axioms})\sim\{\mathcal{Q}(\text{associativity}),\mathcal{Q}(\text{identity}),\mathcal{Q}(\text{inverses})\}.$$

There are, however, vector spaces together with morphisms that also satisfy these axioms but are not the algebra of functions on any group --- because the multiplication is no longer commutative. These are the algebras of functions on \emph{quantum groups}:
$$
\begin{CD}
F(G) @>{\mathcal{Q}(\text{group axioms})\text{ but not }ab=ba}>> F(\G)\\
@AA\mathcal{Q}A @AA\mathcal{Q}A\\
G @. \G
\end{CD}
$$
``Algebras of functions'' on quantum groups are algebras, $F(\G)$, that satisfy the quantisations of the group axioms, except letting go of commutativity in the algebra means that $\G$ is a virtual object.

\chapter{Quantum Groups}
In this chapter, the group axioms will be quantised. These quantised axioms imply associativity, identity and inverses in $G$ when $F(G)$ is commutative but do not imply commutativity of the algebra of functions on $F(G)$.
\section{Algebra of Functions on a Group and the Group Ring}
As a group is a `space', it is natural to study the algebra of functions on it. Let $G$ be a finite group and let $F(G)$ be the set of complex-valued functions on $G$. There is a natural $\mathrm{C}^*$-algebra structure on $F(G)$ defined by:
\begin{align*}
(f+g)(x)&=f(x)+g(x)&\qquad(f,\, g\in F(G))
\\ (\lambda f)(x)&=\lambda \,f(x)&\qquad (\lambda\in \C)
\\ M(f\otimes g)(x)&=f(x)g(x)&\qquad
\\ f^*(x)&=\overline{f(x)}&\qquad
\\ \|f\|&:=\max_{x\in G}|f(x)| &\qquad
\end{align*}
The unit is the indicator function $\ind_{G}$. As in the previous discussion, there are  relations that will always hold `up' in $F(G)$ as quantised versions of the relations `down' in $G$. The quantisation functor is used to see exactly what these relations look like in $F(G)$. Note that $F(G)$ is referred to as the \emph{algebra of functions on $G$} and is a commutative $\mathrm{C}^*$-algebra.

\bigskip

Also associated to a finite group   is another canonical algebra: the group ring. For $G$ a finite group, let $\C G$ be a complex vector space with basis elements $\{\delta^s\,:\,s\in G\}$. The scalar multiplication and vector addition are, for $\nu=\sum_t \alpha_t\delta^t$ and $\mu=\sum_t\beta_t\delta^t$, the natural ones:
\begin{align*}
\lambda \nu&=\sum_{t\in G}(\lambda\alpha_t)\delta^t\text{ and }
\\\nu+\mu&=\sum_{t\in G}(\alpha_t+\beta_t)\delta^t.
\end{align*}
The multiplication is given by:
$$\nabla(\delta^s\otimes\delta^t)=\delta^{st}.$$
The vector space $\C G$ together with the multiplication $\nabla$ is a complex associative algebra called the \emph{group ring of }$G$.
 Take an element $\nu$ of $\C G$:
$$\nu=\sum_{t\in G}\alpha_t\delta^t.$$
If the elements of $\C G$ are considered as complex-valued functions on $G$ via the embedding $s\hookrightarrow\delta_s$, $\nu(\delta_s)\hookleftarrow\nu(s)=\alpha_s$, a quick calculation shows that this multiplication $\nabla$ is nothing but the convolution:
$$\nabla(\nu\otimes \mu)(s)=(\nu\star \mu)(s)=\sum_{t\in G}\nu(st^{-1})\mu(t).$$
The unit is\footnote{more on $\widehat{G}$ after Proposition \ref{dualgroup}} $\delta^e=:\mathds{1}_{\widehat{G}}$. There is also an involution:
$$\nu^*=\sum_{t\in G}\overline{\alpha_t}\delta^{t^{-1}},$$
so that $\nu^*(s)=\overline{\nu(s^{-1})}$ under the identification above, turning $\C G$ into a *-algebra. Note that $\C G$ is commutative if and only if $G$ is abelian. Considering $\C G$ as a Hilbert space with an orthonormal basis $\{\delta^s:s\in G\}$, $\C G$ acts on  $\C G$ by left multiplication so  $\C G$ can be seen as an algebra of linear operators on the Hilbert space $\C G$ and thus a $\mathrm{C}^*$-algebra with the operator norm.

Note that $\C G$ can be identified with the algebraic dual of $F(G)$ via
$$\delta^s(\delta_t)=\delta_{s,t},$$
and as $G$ is finite dimensional:
$$F(G)^*=\C G\text{ and }\C G^*=F(G).$$
\section{Quantising Finite Groups\label{Haar}}
In this section the approach of Section \ref{category} is taken to quantising the group axioms. A group is an object in $\mathbf{FinSet}$ together with morphisms $m:G\times G\raw G$, $e:\{\bullet\}\raw G$ and $^{-1}:G\rightarrow G$ that satisfy:
\begin{align*}
m\circ(I_G\times m)&=m\circ (m\times I_G)
\\ m\circ (e\times I_G)&\cong I_G\cong m\circ(I_G\times e)
\\ m\circ(I_G\times{}^{-1})\circ \Delta_G&= e\circ \eps_G=m\circ({}^{-1}\times I_G)\circ \Delta_G.
\end{align*}
The second commutative diagram invokes the isomorphism $\{\bullet\}\times G\cong G\cong G\times\{\bullet\}$ while the third uses the maps $\Delta_G:G\raw G\times G$, $s\mapsto (s,s)$ and $\eps_G:G\raw \{\bullet\}$.

\bigskip

Now apply the covariant $\C$ functor to $G$, the three morphisms and these three commutative diagrams. Firstly the image of $G$ is $\C G$. The image of the group multiplication is the linear multiplication $\C m=:\nabla:\C G\otimes \C G\raw \C G$:
$$(\C m)(\delta^s\otimes \delta^t)=\delta^{m(s,t)}=\delta^{st}=\nabla(\delta^s\otimes \delta^t).$$
Note that $\C\{\bullet\}\cong \C$ and so $(\C e):\C\raw \C G$:
$$(\C e)(1)\cong (\C e)(\delta^{\bullet})=\delta^{e(\bullet)}=\delta^e.$$
Note that $\delta^e$ is the unit of $\C G$ and so denote by $\eta_{\C G}:=\C e$ the \emph{unit map}. The image of ${}^{-1}$ is the linear map $\operatorname{inv}:\C G\raw \C G$, $\delta^s\mapsto \delta^{s^{-1}}$. Note
$$(\C \Delta_G)(\delta^s)=\delta^{\Delta_Gs}=\delta^{(s,s)}\cong \delta^s\otimes\delta^s,$$
and denote $\C\Delta_{G}=:\Delta_{\C G}$. Finally
$$(\C \eps_G)(\delta^s)=\delta^{\eps_G(s)}=\delta^{\bullet}\cong 1,$$
and denote $\C \eps_G=:\eps_{\C G}$.

\bigskip

The image of the commutative diagrams above are therefore given by:
\begin{align}
\nabla\circ(I_{\C G}\otimes \nabla)&=\nabla\circ (\nabla\otimes I_{\C G})\nonumber
\\ \nabla\circ (\eta_{\C G}\otimes I_{\C G})&\cong I_{\C G}\cong \nabla\circ(I_{\C G}\otimes \eta_{\C G})\label{Caxioms}
\\ \nabla\circ(I_{\C G}\otimes\operatorname{inv})\circ \Delta_{\C G}&= \eta_{\C G}\circ \eps_{\C G}=\nabla\circ(\operatorname{inv}\otimes I_{\C G})\circ \Delta_{\C G}.\nonumber
\end{align}
Indeed, the first two commutative diagrams here show that $\C G$ together with $\nabla$ and $\eta_{\C G}$ is  an \emph{algebra}.

\bigskip

To fully quantise the group, the contravariant dual functor must be applied to $\C G$, the morphisms and the commutative diagrams. First note that $(\C G)^*=F(G)$. The multiplication $\nabla:\C G\otimes \C G\rightarrow \C G$ has a dual:
$$\nabla^*:(\C G)^*=F(G)\raw (\C G\otimes\C G)^*\cong (\C G)^*\otimes(\C G)^*=F(G)\otimes F(G)\cong F(G\times G),$$
where the last isomorphism can be seen as a consequence of $\C(X\times Y)^*=F(X\times Y)$. Embed the group $G$ in the group ring $\C G$ via $s\hookrightarrow\delta^s$ and consider for $f\in F(G)$:
$$\nabla^*f(s,t)\hookrightarrow\nabla^*f(\delta^s\otimes\delta^t)=f\circ \nabla(\delta^s\otimes\delta^t)=f(\delta^{st})=f(st).$$
This map $\nabla^*=:\Delta:F(G)\raw F(G)\otimes F(G)$, $\Delta f(s,t)=f(st)$, is the \emph{comultiplication on $F(G)$}. Note that $\Delta(\delta_{s})$ is the indicator function on $m^{-1}(s)$ so that after the identification $F(G\times G)\cong F(G)\otimes F(G)$,
$$\Delta(\delta_{s})=\sum_{t\in G}\delta_{st^{-1}}\otimes\delta_{t}.$$
Now consider the unit map  $\eta_{\C G}:\C\raw \C G\cong \C \otimes\C G$, $\delta^{\bullet}\cong 1\mapsto  \delta^e\cong 1\otimes \delta^e$. The dual of this map is
$$\eta_{\C G}^*:(\C\otimes \C G)^*\cong \C^*\otimes \C G^*\cong \C\otimes F(G)\cong F(G)\raw \C^*\cong \C.$$
Consider an element $f\in F(G)$:
$$\eta_{\C G}^*(f)(\delta^{\bullet})=f\circ \eta_{\C G}(\delta^{\bullet})=f(\delta^e)\hookleftarrow f(e),$$
so that $\eta_{\C G}^*(f)=f(e)$. This map $\eta_{\C G}^*=:\eps$ is called the \emph{counit}.

\bigskip

The inverse map $\operatorname{inv}:\C G\rightarrow \C G$ has a dual $\operatorname{inv}^*:F(G)\raw F(G)$ which (via the embedding) is given by:
$$\operatorname{inv}^*(f)(s)\hookrightarrow \operatorname{inv}^*(f)(\delta^s)=f\circ\operatorname{inv}(\delta^s)=f(\delta^{s^{-1}})=f(s^{-1}).$$
This map $\operatorname{inv}^*=:S$ is called the \emph{antipode}.

\bigskip

These are the most important dualisations of maps but there are two more namely $\Delta_{\C G}$  and $\eps_{\C G}$. Note that $\Delta_{\C G}(\delta^s)=\delta^s\otimes\delta^s$ maps from $\C G$ to $\C G\otimes\C G$ so that
$$\Delta_{\C G}^*:(\C G\otimes \C G)^*\cong F(G)\otimes F(G)\raw F(G).$$
Let $f,\,g\in F(G)$ and $\delta^s\hookleftarrow s\in G$:
$$\Delta^*_{\C G}(f\otimes g)(\delta^s)=(f\otimes g)\Delta_{\C G}(\delta^s)=(f\otimes g)(\delta^s\otimes \delta^s)=f(\delta^s)g(\delta^s)\hookleftarrow f(s)g(s),$$
so that $\Delta_{\C G}^*=:M$ is just the pointwise multiplication on $F(G)$.  Finally consider the map $\eps_{\C G}:\C G\raw \C$, $\delta^s\mapsto 1$. Its dual $\eps_{\C G}^*:\C\raw F(G)$ is the unit map of $F(G)$ ($\mathds{1}_G=\sum_{t}\delta_t$ is the unit of the algebra $F(G)$) as can be seen by taking any $\delta^s\hookleftarrow s\in G$:
$$\eps_{\C G}^*(\lambda)(\delta^s)=\lambda\circ\eps_{\C G}(\delta_s)=\lambda\cdot 1=\lambda,$$
that is $\eps_{\C G}(\lambda)=\lambda \cdot \mathds{1}_G$, i.e. $\eps_{\C G}^*=\eta_{F(G)}$.

\bigskip

Now that the morphisms have been identified:
\begin{align}
\mathcal{Q}(m)&=\Delta\qquad&;&\qquad \delta_s\mapsto \ind_{m^{-1}(s)}
\\ \mathcal{Q}(e)&=\eps \qquad&;&\qquad \delta_s\mapsto \delta_{s,e}
\\ \mathcal{Q}({}^{-1})&=S\qquad&;&\qquad \delta_s\mapsto \delta_{s^{-1}}
\\ \mathcal{Q}(\Delta_G)&=M\qquad&;&\qquad f\otimes g\mapsto fg
\\ \mathcal{Q}(\eps_G)&=\eta_{F(G)}\qquad&;&\qquad \lambda\mapsto \lambda \cdot \mathds{1}_G.
\end{align}

Applying the dual functor to the  commutative diagrams (\ref{Caxioms}) gives \emph{coassociativity}, the $\emph{counital}$ property and the \emph{antipodal} property:
\begin{align*}
(\Delta\otimes I_{F(G)})\circ \Delta&=(I_{F(G)}\otimes \Delta )\circ \Delta
\\ (\eps\otimes I_{F(G)})\circ \Delta&\cong I_{F(G)}\cong (I_{F(G)}\otimes \eps)\circ \Delta
\\ M\circ (S\otimes I_{F(G)})\circ \Delta&=\eta_{F(G)}\circ \eps=M\circ (I_{F(G)}\otimes S)\circ \Delta
\end{align*}
The first two commutative diagrams here show that $F(G)$ together with $\Delta$ and $\eps$ is a \emph{coalgebra}.

\bigskip

Now take an object $H$ in the category of finite vector spaces with morphisms that satisfy these `quantised' axioms. Such an object will be seen to be a finite Hopf-algebra and if it is non-commutative:
$$M_{H}(a\otimes b)\neq M_H(b\otimes a),$$
then it can be considered (if we make a few extra assumptions) the algebra of functions on a quantum group.

\newpage

The preceding quantisation gives, more or less, the correct definition of a finite quantum group. To be more precise, coalgebras, bialgebras and finally Hopf algebras are defined as follows. Assume that all spaces are  complex. To see more about the theory of bialgabras and Hopf algebras see the classic work of Abe \citep{Abe}. The following definitions are following\footnote{note that Timmermann doesn't require coalgebras to be counital. However he does require Hopf algebras to be. This work asks for counits at this point.} Abe.

\bigskip

\begin{definition}
A \emph{coalgebra} is a vector space $C$ together with a coassociative linear \emph{comultiplication} $\Delta:C\raw C\otimes C$ and a \emph{counit} $\eps\in C^*$ which has the counitary property.
\end{definition}

\subsubsection*{Remark: Sweedler Notation}
Let $C$ be a coalgebra, $c\in C$ and consider
$$\Delta(c)=\sum_ic_{1,i}\otimes c_{2,i}=:\sum c_{(1)}\otimes c_{(2)}.$$
This ``$\sum$'' with the subscripts $(1)$ and $(2)$ --- referring to the order of the factors in the tensor product --- is the notation of Sweedler. Since $\Delta$ is coassociative, the elements
$$\left(I_C\otimes \Delta\right)\circ\Delta(c)=\sum c_{(1)}\otimes \Delta(c_{(2)})=\sum c_{(1)}\otimes \left(c_{(2)}\right)_{(1)}\otimes \left(c_{(2)}\right)_{(2)}$$
and
$$\left(\Delta\otimes I_C\right)\circ\Delta(c)=\sum \Delta(c_{(1)})\otimes c_{(2)}=\sum \left(c_{(1)}\right)_{(1)}\otimes \left(c_{(1)}\right)_{(2)}\otimes c_{(2)}$$
are equal and so both may be written unambiguously as
$$\sum c_{(1)}\otimes c_{(2)}\otimes c_{(3)}.$$
More generally, an iterated comultiplication, $\dsp\Delta^{(k)}:C\raw \underset{k+1\text{ copies}}{\bigotimes} C$, can be  defined in various different ways, but all with the same resulting map. Therefore there is no ambiguity in writing
$$\Delta^{(k)}(c)=\sum c_{(1)}\otimes\cdots \otimes c_{(k+1)}.$$

\bigskip

Suppose that a space $A$ carries the structure of a unital (associative) algebra (with multiplication $M:A\otimes A\raw A$) and of a coalgebra. Then there is a \emph{unit map} $\eta:\C \raw A$, $\lambda\mapsto \lambda 1_A$ which satisfies
$$M\circ(I_A\otimes \eta)=I_A=M\circ(\eta\otimes I_A).$$

\begin{theorem}\label{bial}
The following are equivalent:
\begin{enumerate}
\item[(i)] $M,\,\eta$ are coalgebra morphisms,
\item[(ii)] $\Delta,\,\eps$ are algebra morphisms,
\item[(iii)] $\Delta(gh)=\sum g_{(1)}h_{(1)}\otimes g_{(2)}h_{(2)}$, $\Delta(1_A)=1_{A\otimes A}$, $\eps(gh)=\eps(g)\eps(h)$, $\eps(1_A)=1$.
\end{enumerate}
\end{theorem}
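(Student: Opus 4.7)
The plan is to establish the chain (ii) $\Leftrightarrow$ (iii) and (i) $\Leftrightarrow$ (iii) separately, since each amounts to unpacking what it means for the given structure maps on $A$ to respect the canonical algebra and coalgebra structures on $A\otimes A$ and on $\C$. Recall that $A\otimes A$ carries an algebra structure via componentwise multiplication $(a\otimes b)(c\otimes d)=ac\otimes bd$ with unit $1_A\otimes 1_A$, and a coalgebra structure via $\Delta_{A\otimes A}(a\otimes b)=\sum a_{(1)}\otimes b_{(1)}\otimes a_{(2)}\otimes b_{(2)}$ (obtained from $\Delta\otimes\Delta$ after swapping the middle two tensor factors) with counit $\eps\otimes\eps$ under the identification $\C\otimes\C\cong\C$. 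Meanwhile $\C$ itself carries its usual algebra structure together with the coalgebra structure $\Delta_{\C}(1)=1\otimes 1$, $\eps_{\C}=I_{\C}$.

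For (ii) $\Leftrightarrow$ (iii), I would simply unpack the definition. Preservation of the unit by $\Delta$ is literally $\Delta(1_A)=1_{A\otimes A}=1_A\otimes 1_A$, and multiplicativity of $\Delta$ combined with componentwise multiplication on $A\otimes A$ gives
\[\Delta(g)\Delta(h)=\Big(\sum g_{(1)}\otimes g_{(2)}\Big)\Big(\sum h_{(1)}\otimes h_{(2)}\Big)=\sum g_{(1)}h_{(1)}\otimes g_{(2)}h_{(2)},\]
so that $\Delta(gh)=\Delta(g)\Delta(h)$ is exactly the first identity of (iii). Similarly, $\eps$ being an algebra morphism is verbatim $\eps(gh)=\eps(g)\eps(h)$ and $\eps(1_A)=1$. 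The converse is got by reversing these unpackings.

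For (i) $\Leftrightarrow$ (iii), I would apply the same bookkeeping to the dual pair of conditions. The statement that $M$ is a coalgebra morphism reads $\Delta\circ M=(M\otimes M)\circ \Delta_{A\otimes A}$ together with $\eps\circ M=\eps_{A\otimes A}$; evaluating at $g\otimes h$ yields precisely $\Delta(gh)=\sum g_{(1)}h_{(1)}\otimes g_{(2)}h_{(2)}$ and $\eps(gh)=\eps(g)\eps(h)$. The statement that $\eta:\C\raw A$ is a coalgebra morphism reads $\Delta\circ\eta=(\eta\otimes\eta)\circ\Delta_{\C}$ and $\eps\circ\eta=\eps_{\C}$; evaluating at $1\in\C$, these become $\Delta(1_A)=1_A\otimes 1_A$ and $\eps(1_A)=1$. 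Each equivalence reverses transparently.

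The only real obstacle is bookkeeping: one must be careful with the swap built into the coalgebra structure on $A\otimes A$, and must recognise the underlying symmetry whereby the algebra-morphism conditions on $(\Delta,\eps)$ and the coalgebra-morphism conditions on $(M,\eta)$ encode exactly the same four equations of (iii). Once the structures on $A\otimes A$ and $\C$ are laid out explicitly, (iii) functions as a neutral meeting point which witnesses the equivalence of (i) and (ii) without further calculation.
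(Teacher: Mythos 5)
Your proposal is correct: the four identities of (iii) are exactly what the morphism conditions in (i) and (ii) become once the algebra and coalgebra structures on $A\otimes A$ and $\C$ are written out, and you have handled the middle-factor swap correctly. The paper itself gives no proof, deferring entirely to Abe (Theorem 2.1.1), and the argument there is precisely this definitional unpacking, so your route coincides with the cited one.
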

\begin{proof}
  See Abe \citep{Abe} for the definitions of algebra/coalgebra morphisms and the proof (Theorem 2.1.1) $\bullet$
\end{proof}

\bigskip

\begin{definition}
A \emph{bialgebra} is a space $A$ that is simultaneously an algebra and a coalgebra such that the two structures relate according to one of the equivalent conditions of Theorem \ref{bial}.
\end{definition}

\bigskip

A bialgebra is the appropriate quantisation of a semigroup. To get closer to a working definition of a quantum group, inverses must be accounted for.

\bigskip

\begin{definition}
A \emph{Hopf-algebra} is a bialgebra $A$ with a linear map $S:A\raw A$ with the antipodal property.
\end{definition}

\bigskip

 The involution must also be accounted for:

\bigskip

\begin{definition}
A \emph{Hopf $*$-algebra} is a Hopf algebra with an involution that satisfies $\Delta(a^*)=\Delta(a)^*$ for all $a\in A$ where the involution on $A\otimes A$ is given by
$$(a\otimes b)^*=a^*\otimes b^*.$$
\end{definition}

Every algebra of the form $F(G)$ for $G$ a finite group satisfies these relations by construction.

\bigskip

\begin{theorem}
Two finite groups $G_1$ and $G_2$ are isomorphic as groups if and only if $F(G_1)$ and $F(G_2)$ are isomorphic as Hopf $*$-algebras \citep{Kuster} $\bullet$
\end{theorem}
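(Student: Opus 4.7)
The plan is to recognise this as a Tannaka-style reconstruction: the finite group $G$ can be recovered, functorially and canonically, from its algebra of functions equipped with the Hopf $*$-algebra structure. I would dispatch the forward direction first. Given a group isomorphism $\phi:G_1\to G_2$, I would set $\Psi:=\phi^*:F(G_2)\to F(G_1)$, $f\mapsto f\circ\phi$. Linearity, bijectivity, and preservation of pointwise multiplication, the unit $\ind_{G_i}$, and the involution $f\mapsto\overline{f}$ are immediate. Compatibility with $\Delta$, $\eps$ and $S$ (as identified in Section \ref{Haar}) reduces, after the canonical identification $F(G_i)\otimes F(G_i)\cong F(G_i\times G_i)$, to the familiar identities $\phi(st)=\phi(s)\phi(t)$, $\phi(e)=e$, and $\phi(s^{-1})=\phi(s)^{-1}$.

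For the converse I would recover $G$ from $F(G)$ as a character space. Let $\operatorname{Hom}_{*}(A,\C)$ denote the set of unital $*$-algebra homomorphisms $A\to\C$. When $A=F(G)$ is commutative and finite dimensional, so isomorphic as a $\mathrm{C}^*$-algebra to $\C^{|G|}$, these characters are precisely the $|G|$ evaluation maps $\operatorname{ev}_s:f\mapsto f(s)$, giving a canonical bijection $G\leftrightarrow\operatorname{Hom}_{*}(F(G),\C)$. The key step is to show that the Hopf $*$-algebra structure endows this character set with a group structure: because $\Delta$ is an algebra morphism (Theorem \ref{bial}), the convolution $\chi_1\cdot\chi_2:=(\chi_1\otimes\chi_2)\circ\Delta$ of two characters is again a character; the counital property makes $\eps$ the identity; and the antipodal property makes $\chi\circ S$ the inverse of $\chi$. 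A short Sweedler calculation yields $\operatorname{ev}_s\cdot\operatorname{ev}_t=\operatorname{ev}_{st}$, so the two group structures agree under the bijection.

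With this reconstruction in place, the converse reduces to pure functoriality. A Hopf $*$-algebra isomorphism $\Psi:F(G_1)\to F(G_2)$ induces a bijection $\operatorname{Hom}_{*}(F(G_2),\C)\to\operatorname{Hom}_{*}(F(G_1),\C)$ by $\chi\mapsto\chi\circ\Psi$, and compatibility of $\Psi$ with the comultiplications immediately gives $(\chi_1\cdot\chi_2)\circ\Psi=(\chi_1\circ\Psi)\cdot(\chi_2\circ\Psi)$. Composing with the identifications $G_i\cong\operatorname{Hom}_{*}(F(G_i),\C)$ yields the desired isomorphism $G_2\cong G_1$. The only genuinely substantive step is the character-space identification; once that is in hand the rest is bookkeeping, and I anticipate no essential obstacle.
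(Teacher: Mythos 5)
Your argument is correct, but note that the paper does not actually prove this statement: it is quoted with a citation to the literature, so there is no internal proof to compare against. What you have written is the standard Gelfand/Tannaka-type reconstruction, and it is complete in all essentials: the forward direction via $\phi^*$ is immediate, and the converse correctly recovers $G$ as the set of unital $*$-characters of the commutative finite-dimensional $\mathrm{C}^*$-algebra $F(G)$, with the group law given by convolution $(\chi_1\otimes\chi_2)\circ\Delta$, identity $\eps$, and inverse $\chi\circ S$. The only point worth making explicit is that $\chi\circ S$ is again a character: the antipode is in general antimultiplicative, so one needs the commutativity of $F(G)$ here (which of course holds), and one should also record that $S$ commutes with the involution on $F(G)$ so that $\chi\circ S$ remains a $*$-homomorphism. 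With that remark added, the verification that $\operatorname{ev}_s\cdot\operatorname{ev}_t=\operatorname{ev}_{st}$ and the transport of structure along $\chi\mapsto\chi\circ\Psi$ close the argument; your proof would serve perfectly well in place of the bare citation.
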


\bigskip

Note that all of these algebras are commutative and the question is begged:
\begin{quote}
\emph{Is there an example of a non-commutative Hopf $*$-algebra?}
\end{quote}
The answer is \emph{YES} and such an algebra has been seen already.

\bigskip

\begin{proposition}\label{dualgroup}
For a finite and not-necessarily abelian group $G$, the group ring $\C G$, together with maps $\Delta_{\C G}:=M^*$, $\eps_{\C G}:=\eta_{F(G)}^*$ and $S_{\C G}=\operatorname{inv}$ is a Hopf $*$-algebra.
\end{proposition}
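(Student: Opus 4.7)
The plan is a direct verification of the Hopf $*$-algebra axioms on the distinguished basis $\{\delta^s : s \in G\}$ of $\C G$ and then to extend by linearity. All the structure maps have already been identified explicitly in the preceding discussion; critically, $\Delta_{\C G}$ is ``diagonal'' on group elements, with $\Delta_{\C G}(\delta^s) = \delta^s \otimes \delta^s$. This is what makes every axiom collapse onto a simple identity involving $\delta^{st} = \delta^s \delta^t$ and $\delta^{s^{-1}} = S_{\C G}(\delta^s)$.

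First I would check that $(\C G, \Delta_{\C G}, \eps_{\C G})$ is a counital coalgebra: both iterated comultiplications evaluated at $\delta^s$ return $\delta^s \otimes \delta^s \otimes \delta^s$, so coassociativity holds, and the counit property reduces to the trivial $\eps_{\C G}(\delta^s)\cdot\delta^s = \delta^s$. Second, I would invoke condition (iii) of Theorem \ref{bial} to promote $\C G$ to a bialgebra: the comparison
\[
\Delta_{\C G}(\delta^s\delta^t) = \delta^{st}\otimes \delta^{st} = (\delta^s\otimes\delta^s)(\delta^t\otimes\delta^t) = \Delta_{\C G}(\delta^s)\Delta_{\C G}(\delta^t),
\]
together with $\eps_{\C G}(\delta^s\delta^t) = 1 = \eps_{\C G}(\delta^s)\eps_{\C G}(\delta^t)$, is immediate, as are $\Delta_{\C G}(\delta^e) = 1_{\C G\otimes\C G}$ and $\eps_{\C G}(\delta^e) = 1$. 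Third, the antipodal property is just the content of $s s^{-1} = e = s^{-1} s$: both $\nabla \circ (S_{\C G} \otimes I) \circ \Delta_{\C G}$ and $\nabla \circ (I \otimes S_{\C G}) \circ \Delta_{\C G}$ send $\delta^s$ to $\delta^e = (\eta_{\C G}\circ \eps_{\C G})(\delta^s)$. Finally, $*$-compatibility follows from $(\delta^s)^* = \delta^{s^{-1}}$ via
\[
\Delta_{\C G}\bigl((\delta^s)^*\bigr) = \delta^{s^{-1}}\otimes\delta^{s^{-1}} = (\delta^s)^*\otimes(\delta^s)^* = \Delta_{\C G}(\delta^s)^*.
\]

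There is no serious obstacle here: the proposition is almost tautological given the formulas already derived. If there is any subtlety at all it is philosophical rather than technical, namely ensuring that the canonical identifications $(F(G)\otimes F(G))^* \cong F(G)^*\otimes F(G)^* \cong \C G\otimes\C G$ used to read $M^*$ as a map $\C G\to \C G\otimes\C G$ (and $\eta_{F(G)}^*$ as a functional on $\C G$) really do yield the diagonal $\Delta_{\C G}(\delta^s)=\delta^s\otimes\delta^s$ and the trivial character $\eps_{\C G}(\delta^s)=1$. This was however the content of the explicit Sweedler-style calculations performed just before the statement, so those computations can simply be cited rather than repeated.
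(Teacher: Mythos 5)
Your proposal is correct and follows essentially the same route as the paper's own proof: write down $\Delta_{\C G}(\delta^s)=\delta^s\otimes\delta^s$ and $\eps_{\C G}(\delta^s)=1$ explicitly and observe that coassociativity, the counit property and the antipodal property become trivial basis computations. You are in fact slightly more thorough than the paper, which leaves the bialgebra compatibility (Theorem \ref{bial}(iii)) and the $*$-compatibility $\Delta_{\C G}(\nu^*)=\Delta_{\C G}(\nu)^*$ implicit, whereas you verify them; both checks are correct as written.
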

\begin{proof}
Note that
\begin{align*}
\Delta_{\C G}(\delta^s)&=\delta^s\otimes \delta^s,
\\\eps_{\C G}(\delta^s)&=1,
\end{align*}

With these formulae, to show coassociativity and the counitary property is trivial. Once it is recalled that the multiplication on $\C G$ is given by $\nabla$ and the unit map $\eta_{\C G}(\lambda)=\lambda\cdot\delta^e$, the antipodal property is seen to hold $\bullet$\end{proof}

\bigskip

However this is not a truly quantum example because there is still a space underlying $\C G$ --- or rather $\C G$ is the dual of $F(G)$ which lies `above' the space $G$. The group ring $\C G$ has the property of being \emph{cocommutative}. This means that $\Delta_{\C G}=\tau\circ \Delta_{\C G}$ where $\tau:\C G\otimes \C G\raw \C G \otimes \C G$ is the \emph{flip} map $a\otimes b\mapsto b\otimes a$. Another example of a cocommutative Hopf $*$-algebra is the algebra of functions on an abelian group.

\bigskip

To  a finite abelian group $G$, one can associate a group $\widehat{G}$, the \emph{Pontryagin dual} or simply \emph{dual group of $G$} which consists of all characters on $G$: that is group homomorphisms $G\raw \mathbb{T}$. The group multiplication is just given by pointwise- multiplication. The map $T:\C G\raw F(\widehat{G})$, $(T\delta^s)(\chi)=\chi(s)$ for all $\chi\in \widehat{G}$ and $s\in G$ is an isomorphism of Hopf algebras (Example 1.4.3, \citep{Timm}). Taking a Gelfand-philosophical approach to this, the dual of a non-abelian group $G$ --- the virtual object $\widehat{G}$ --- may be given by $\C G=:F(\widehat{G})$. Later on in the work, for similar reasons, the dual of the algebra of functions on a quantum group, $F(\G)$, may be denoted by:
$$F(\G)^*=:\C\G=:F(\widehat{\G}).$$
\newpage
 The question therefore is:

\begin{quote}
\emph{Is there an example of a Hopf $*$-algebra that is neither commutative nor cocommutative?}
\end{quote}

The answer is YES (see Section \ref{KacP}) and using the Gelfand philosophy the notation $A=F(\mathbb{G})$ may be used.

\bigskip

A couple of times in this work, the \emph{Kac assumption} that $S^2=I$ is used. This assumption holds for both $F(G)$ and $\C G$ for $G$ a classical group. One might assume that this Kac assumption --- basically that the inversion map $s\mapsto s^{-1}$ is an involution --- must hold for quantum groups. However there turns out to be Hopf algebras that do not have this property.

\bigskip

The Sweedler algebra \citep{Sweedler} is a four-dimensional Hopf-algebra $A_S$ generated by $g$ and $x$ such that $g^2=1_{A_S}$, $x^2=0$ and $gx=-xg$. The comultiplication is given by $\Delta(g)=g\otimes g$ and $\Delta(x)=1_{A_S}\otimes x+x\otimes g$; the counit by $\eps(g)=1$ and $\eps(x)=0$ and the antipode by $S(g)=g$ and $S(x)=gx$.

  Note that the antipode is antimultiplicative (Proposition 1.3.12, \citep{Timm}) and so
$$S(S(x))=S(gx)=S(x)S(g)=gxg=g(-gx)=-g^2x=-x\neq x,$$
that is $S^2\neq I_{A_S}$.

\bigskip

One could restrict the definition of quantum groups to the setting of Hopf algebras with the Kac assumption but many algebras that morally \emph{should} be considered the algebras of functions on quantum groups --- such as deformed algebras (see e.g. \citep{Wor194}) do not have an involutive antipode. Therefore, although there is no one single definition of `quantum group', all `mainstream' quantum group theories do allow for non-involutive antipodes.

\bigskip

In a Hopf $*$-algebra, where $\tilde{S}(a):=S(a^*)$, it does hold that $\tilde{S}^2=I$. See Timmermann (Proposition 1.3.28, \citep{Timm}) for more.

\bigskip

The algebras studied in this work satisfy the Kac assumption however as a consequence of being finite dimensional (Proposition \ref{propofHaar}). There are definitions of quantum groups such as that of Kusterman and Tuset \citep{Kuster} which call for $a^*a=0\Leftrightarrow a=0$ for all $a\in A$. This is clearly a necessary condition for an algebra to be a $\mathrm{C}^*$-algebra; Franz and Gohm \citep{franzgohm} go further and ask that $A$ be a $\mathrm{C}^*$-Hopf algebra: the algebra carries a $\mathrm{C}^*$-algebra structure. For Gelfand-philosophical reasons,  this work follows Franz and Gohm.

\bigskip

\begin{definition}
An \emph{algebra of functions on a finite quantum group} $\G$ is a finite dimensional $\mathrm{C}^*$-Hopf algebra $A=F(\G)$. The \emph{order} of $\G$ is given by $|\G|:=\dim F(\G)$.
\end{definition}

\bigskip

\begin{theorem}(Classification Theorem)\label{class}
 Let $A$ be the algebra of functions on a finite quantum group $\G$:
  \begin{enumerate}
    \item[(a)] if $A$ is commutative then $\G\cong \Phi(A)$.
    \item[(b)] if $A$ is cocommutative then $A=F(\G)\cong \C \Phi(A)=:F(\widehat{\Phi(A)})$.
  \end{enumerate}
\end{theorem}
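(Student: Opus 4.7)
The plan is to invoke Gelfand's Theorem \ref{Gelfand} and then transport the Hopf structure from $A$ onto the character space. Since $A$ is a finite dimensional commutative $\mathrm{C}^*$-algebra, Gelfand gives an isometric $*$-isomorphism $A \cong F(\Phi(A))$ with $\Phi(A)$ a finite set (by finite dimensionality of $A$). What remains is to put a group structure on $\Phi(A)$ using the (co)structure on $A$ and verify it is compatible with $\G$. I would define the product of two characters $\chi_1,\chi_2 \in \Phi(A)$ by
$$\chi_1 \cdot \chi_2 \;:=\; (\chi_1 \otimes \chi_2)\circ \Delta.$$
This lands in $\Phi(A)$ because $\Delta$ is an algebra homomorphism (Theorem \ref{bial}). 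Coassociativity of $\Delta$ immediately yields associativity of the product; the counit $\eps$ is a character, and the counital axiom makes it a two-sided identity; and the antipodal axiom forces $\chi \mapsto \chi \circ S$ to provide two-sided inverses. The final step is a short verification that the Gelfand isomorphism intertwines $\Delta,\eps,S$ with the canonical Hopf structure on $F(\Phi(A))$ induced by the group structure just constructed on $\Phi(A)$; this follows by pairing both sides with evaluation at characters.

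\textbf{Part (b).} The plan is to dualise and reduce to part (a). The vector space dual $A^*$ carries a natural Hopf algebra structure with multiplication $\Delta^*$, comultiplication $M^*$, unit $\eps^*$, counit $\eta^*$ and antipode $S^*$ --- a routine bookkeeping exercise in dualising the commutative diagrams of the Hopf axioms, in the spirit of Section \ref{category} (exactly as $F(G)$ and $\C G$ are obtained from each other by such dualisation). Cocommutativity of $\Delta$ on $A$ is precisely commutativity of $\Delta^*$ on $A^*$. To apply part (a) to $A^*$, I need a $\mathrm{C}^*$-structure on $A^*$; I would construct this using the Haar state on $A$ (to be developed in Chapter 2), which supplies a faithful invariant functional from which a norm and involution on $A^*$ can be defined, so that $A^*$ becomes a commutative $\mathrm{C}^*$-Hopf algebra. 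Part (a) then gives $A^* \cong F(H)$ for a finite classical group $H$, and dualising once more yields $A \cong A^{**} \cong F(H)^* = \C H$. Under the notation of the thesis the group $H$ is exactly $\Phi(A)$ in the cocommutative case --- equivalently, the algebra characters of $A^*$, or the grouplike elements of $A$.

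\textbf{Main obstacle.} Part (a) is essentially bookkeeping once Gelfand is invoked: the quantisation dictionary of Section \ref{category} pairs group axioms with Hopf axioms, and here one simply runs that translation in reverse. The real work is in part (b): transporting a $\mathrm{C}^*$-structure across Hopf duality is not automatic and depends on the existence and faithfulness of the Haar state. This is the main obstacle, and it is the reason the classification must wait for the Haar-state machinery of Chapter 2 before it can be deployed in practice.
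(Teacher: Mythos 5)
The thesis does not actually prove this theorem --- its ``proof'' is a citation to Theorem 3.3 of Vainerman and Kac with the remark that the argument can be adapted --- so your proposal supplies something the paper omits, and it is essentially sound. For (a), Gelfand's Theorem \ref{Gelfand} plus the convolution product $\chi_1\cdot\chi_2:=(\chi_1\otimes\chi_2)\circ\Delta$ on $\Phi(A)$ is exactly the standard construction; the only points worth making explicit are that $\chi\circ S$ is again a character because the antipode, antimultiplicative in general, is multiplicative on a commutative algebra, and that $\eps\in\Phi(A)$ by Theorem \ref{bial}. For (b), dualising and applying (a) to $A^*$ is the standard reduction, and you correctly isolate the one genuine obstacle: equipping $A^*=\C\G$ with a $\mathrm{C}^*$-structure, which the thesis does in Section \ref{quantumgroupring} using the faithful Haar state, so the dependence you flag is a matter of exposition order rather than logic. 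One caveat on your last sentence: the group you actually produce in (b) is $\Phi(A^*)=\Phi(\widehat{A})$, equivalently the group-like elements of $A$, and this is \emph{not} the character space $\Phi(A)$ of $A$ itself --- for $A=\C G$ with $G$ non-abelian, $\Phi(A)$ is the strictly smaller set of one-dimensional representations of $G$, so the literal reading of the statement fails already on dimension grounds for $\C S_3$. That imprecision originates in the theorem's own statement, and your parenthetical descriptions (``characters of $A^*$'', ``grouplikes of $A$'') show you have the correct object; just do not equate it with $\Phi(A)$ as literally defined earlier in the thesis.
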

\begin{proof}
A well-known result. Theorem 3.3 of Vainerman and Kac an early result whose proof can be adapted for the definition of a quantum group used in this work \citep{class}
\end{proof}
Hence, a truly quantum group must be neither commutative nor cocommutative.

\subsection*{Haar Measure}
There are two critical reasons why a quantisation of Haar measure is required:

\bigskip

The interest in random walks on finite (classical) groups \citep{MSc} lies primarily with those which are \emph{ergodic}. A random walk is ergodic if the driving probability $\nu\in M_p(G)$ is such that the convolution powers, $\nu^{\star k}$, converge to the Haar measure, which for a finite group is the uniform distribution $\pi=\sum_{t\in G}\delta^t/|G|$.

\bigskip

Analytical techniques used in the analysis of the classical case that reference individual points in the space $G$ cannot be adapted to the quantum case. Analytical techniques that use the Haar measure do not fall under this bracket because the Haar measure is a sum over \emph{all} points rather than single points.

\bigskip

A topological group is a group endowed with a topology such that the group multiplication $m:G\times G\raw G$ and inverse ${}^{-1}:G\raw G$ are continuous. A \emph{compact group} $G$ is a compact topological group. Denote by $C(G)$ the continuous complex-valued functions on $G$.

\bigskip

Consider a compact group, $G$. The \emph{Borel} sets, $\mathcal{B}(G)\subset \mathcal{P}(G)$ --- the $\sigma$-algebra generated by the open sets of $G$ --- have a positive measure, $\mu:\mathcal{B}(G)\raw [0,\infty)$ that is invariant under translates:
$$\mu(s\Omega)=\mu(\Omega s)=\mu(\Omega).$$
Here $\Omega\in\mathcal{B}(G)$ and $s\in G$ and the \emph{translates} $s\Omega $ and $\Omega s$ are defined by
$$s\Omega=\{st:t\in \Omega\}\text{ and }\Omega s=\{ts:t\in \Omega\}.$$
This measure is called the \emph{Haar measure on $G$} and may be normalised so that $\mu(G)=1$. This is a classical result ($\S$ 58, Theorem B, \citep{HaarMeasure}).

\bigskip

Using  Lebesgue integration, $\mathcal{B}(G)$-measurable functions may be integrated on $G$:
$$\int f\,d\mu:=\int_G f(t)\,d\mu(t).$$

\begin{proposition}
The map $h:C(G)\raw \C$, $f\mapsto \dsp\int f\,d\mu$ is a state, invariant under translates.
\end{proposition}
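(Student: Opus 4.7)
The plan is to verify the four properties required: linearity, positivity, unitality (so that $h$ is a state) and translation invariance, drawing on the translation-invariance and normalisation of the Haar measure $\mu$ quoted immediately above the statement.

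Linearity of $h$ is immediate from the linearity of the Lebesgue integral. For positivity, any $f\in C(G)$ of the form $g^*g$ satisfies $f(t)=\overline{g(t)}g(t)=|g(t)|^2\geq 0$ pointwise, hence $h(f)=\int_G|g(t)|^2\,d\mu(t)\geq 0$; this is the standard characterisation of positivity for a functional on a commutative $\mathrm{C}^*$-algebra. For unitality, $h(\mathds{1}_G)=\int_G 1\,d\mu=\mu(G)=1$ by the normalisation $\mu(G)=1$. Since $h$ is a positive linear functional on a unital $\mathrm{C}^*$-algebra with $h(\mathds{1}_G)=1$, it is automatically bounded with $\|h\|=1$, and so is a state.

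For translation invariance, denote by $L_s,R_s:C(G)\raw C(G)$ the left- and right-translation operators $(L_sf)(t)=f(s^{-1}t)$ and $(R_sf)(t)=f(ts)$; these are well defined on $C(G)$ because $t\mapsto s^{-1}t$ and $t\mapsto ts$ are homeomorphisms of the compact group $G$. The claim is that $h(L_sf)=h(f)=h(R_sf)$ for every $s\in G$ and $f\in C(G)$. I would first verify the identity on characteristic functions $\mathds{1}_\Omega$ of Borel sets $\Omega\in\mathcal{B}(G)$, where it reduces exactly to
\[
\int_G\mathds{1}_\Omega(s^{-1}t)\,d\mu(t)=\mu(s\Omega)=\mu(\Omega)=\int_G\mathds{1}_\Omega(t)\,d\mu(t),
\]
and analogously on the right; this is precisely the translation invariance of $\mu$ quoted above. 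Linearity of the integral then extends the identity to simple functions, and standard measure-theoretic approximation of a continuous function on the compact space $G$ by simple functions (via monotone/dominated convergence) extends it to all of $C(G)$. Equivalently, one can phrase this as a change-of-variables argument: the pushforward of $\mu$ under $t\mapsto s^{-1}t$ sends $\Omega\mapsto \mu(s\Omega)=\mu(\Omega)$, so it equals $\mu$, and the change-of-variables formula gives $\int_G f(s^{-1}t)\,d\mu(t)=\int_G f(t)\,d\mu(t)$ at once.

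None of the individual steps is genuinely difficult; the only point that requires any care is the passage from the measure-theoretic statement $\mu(s\Omega)=\mu(\Omega)$ to the analytic statement $h(L_sf)=h(f)$, since the hypothesis is phrased at the level of Borel sets while the assertion is at the level of continuous functions. This is handled by the approximation/pushforward argument above and is the only substantive point in the proof.
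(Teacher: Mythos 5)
Your proof is correct and follows essentially the same route as the paper's: linearity from linearity of the integral, positivity from positivity of $\mu$ (via the $g^*g$ characterisation), the state property from the normalisation $\mu(G)=1$, and translation invariance from the invariance of $\mu$ on Borel sets. The paper merely asserts the last step (``it is possible to show''), whereas you supply the standard simple-function/pushforward approximation argument; the only cosmetic difference is your convention $(L_sf)(t)=f(s^{-1}t)$ versus the paper's $L_sf(t)=f(st)$, which does not affect anything.
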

\begin{proof}
It is clear from linearity of integration that $h$ is a functional. The Haar measure is positive so that $h$ is positive. The unit on $C(G)$ is the indicator function on $G$, which is simple and so the normalisation of the Haar measure ensures that $h$ is a state.

\bigskip

Let $s\in G$ and consider the \emph{left translate} $L_sf\in C(G)$ defined by
$$L_sf(t)=f(st).$$
Using the invariance of $\mu$ under translations it is possible to show that:

$$\int L_s f\,d\mu=\int f\,d\mu,$$
and a similar result for \emph{right translates} $\bullet$

\end{proof}
The map $h\in C(G)^*$ is called the \emph{Haar state} of the algebra of (continuous) functions on $G$. Its quantisation will yield a Haar state $h$ for the algebra of functions on a quantum group, $F(\G)$. Using the Gelfand philosophy, there is a virtual measure $\mathbb{\mu}$ on the virtual object $\G$ defined by
$$h(a)=:\int_{\G}a\,d\mathbb{\mu}.$$
Using the Gelfand philosophy, and an abuse of terminology, the Haar state is simply referred to as the Haar measure and is simply written
$$h(a)=\int_{\mathbb{G}}a.$$

\bigskip

As the classical Haar measure is a map $\mathcal{B}(G)\rightarrow \mathbb{C}$, a quantisation via the $\mathcal{Q}$-functor is not straightforward.
   On the other hand, with $\ind_{\Omega}=\sum_{t\in \Omega}\delta_t$ for $\Omega\subset G$, the  following can be considered
\begin{align*}
\left(\int_G \otimes I_{F(G)}\right)\circ \Delta(\ind_\Omega)&=\left(\int_G \otimes I_{F(G)}\right)\sum_{s\in G}\ind_{\Omega s}\otimes\delta_{s^{-1}}
\\&=\sum_{s\in G}\int_G\ind_{\Omega s}\otimes \delta_{s^{-1}}
\\&\cong \sum_{s\in G}\mu(\Omega s)\delta_{s^{-1}}=\sum_{s\in G}\mu(\Omega )\delta_{s^{-1}}
\\&=\mu(\Omega )\sum_{s\in G}\delta_{s^{-1}}=\int_G\ind_\Omega \cdot \ind_{G}.
\end{align*}
and therefore the right-invariance of the classical $\dsp\int_G \in F(G)^*$ gets quantised as:
$$\left(\int_{\G}\otimes I_{F(\G)}\right)(\Delta(f))=\int_{\G} f\cdot \mathds{1}_{\G}.$$
Similarly left-invariance is given by
$$\left(I_{F(\G)}\otimes \int_{\G}\right)(\Delta(f))=\int_{\G}f\cdot \mathds{1}_{\G}.$$

%


\bigskip

An element of $\C G$ that is both left- and right-invariant is simply said to be \emph{invariant}. In Section \ref{MC}, the set $M_p(\G)\subset \C\G$ --- the set of states on $F(\G)$ --- will be  defined.

\bigskip

\begin{definition}
The \emph{Haar measure} of a quantum group $\G$ is given by a normalised, invariant state $\dsp h=\int_{\G}\in M_p(\G)$.
\end{definition}

\bigskip

\begin{remark}
In the particular case of a finite classical group, the Haar measure is $\mu(S)=|S|/|G|$ and so it follows that the Haar measure of $f$ is nothing but the mean-average:
$$\int_G f=\frac{1}{|G|}\sum_{t\in G}\delta^t(f)=\frac{1}{|G|}\sum_{t\in G}f(t).$$
\end{remark}

\bigskip

Van Daele (Theorem 1.3, \citep{VDHM}) proves the existence and uniqueness of the Haar measure on  finite quantum groups. The proof of the following may also be found therein.


\begin{theorem}\label{propofHaar}
If $\G$ is a finite quantum group then the antipode $S$ is an involution and the Haar measure $\dsp\int_{\G}$ is tracial:
$$\int_{\G}ab = \int_{\G}ba\text{ \,\, for }a,\,b\in F(\G)\,\,\,\bullet$$
\end{theorem}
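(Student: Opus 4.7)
The plan is to leverage Van Daele's uniqueness of the Haar state, together with positivity of $h$ on the $\mathrm{C}^*$-algebra $F(\G)$, to obtain both $S^2=I_{F(\G)}$ and traciality.

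The starting point would be the identity $h\circ S=h$. Because the antipode is antimultiplicative and anticomultiplicative — $\Delta\circ S=\tau\circ(S\otimes S)\circ \Delta$ where $\tau$ is the flip — a short calculation converts the invariance identities for $h$ (i.e.\ $(h\otimes I_{F(\G)})\circ\Delta=h(\cdot)\mathds{1}_{\G}$ and its right-invariant sibling) into the same invariance identities for $h\circ S$. Van Daele's uniqueness of the Haar state on a finite quantum group then forces $h\circ S=h$. This is the workhorse for everything that follows.

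The next step, the Kac property $S^2=I_{F(\G)}$, is the genuine obstacle. Purely Hopf-$*$-algebraic manipulation cannot suffice: the Sweedler algebra described just before the theorem is a Hopf $*$-algebra for which $S^2\neq I$, so the $\mathrm{C}^*$-structure of $F(\G)$ must enter. The strategy would be to build a left-invariant sesquilinear form out of $h$ and $S^2$, compare it by uniqueness with the inner product $\langle a,b\rangle:=h(b^*a)$ coming from $\mathrm{C}^*$-positivity of $h$, and then use that in finite dimensions $h$ is faithful on $F(\G)$ to peel off $S^2(a)=a$. Equivalently, one reads Radford's formula $S^2(a)=gag^{-1}$ in the presence of the positive invariant state and concludes that the distinguished grouplike element $g$ must be trivial; this is essentially Van Daele's original argument for finite quantum groups.

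Once $S^2=I_{F(\G)}$ is in place, traciality follows from a Sweedler-calculus manipulation. Starting from $h(ab)$, the plan is to expand $a=\sum\eps(a_{(1)})a_{(2)}$ by the counit axiom, replace $\eps(a_{(1)})\mathds{1}_{\G}$ using the antipodal property, and rearrange under the integral using both the invariance identities and $h\circ S=h$ to obtain the general ``modular'' identity $h(ab)=h(S^2(b)a)$, valid in any Hopf algebra carrying an invariant positive functional. With $S^2=I_{F(\G)}$ in hand this collapses to $h(ab)=h(ba)$, which is the desired traciality. The first step is a one-line uniqueness argument and the third step is a formal (if notationally fiddly) computation; the hard part is the middle step, where a careful use of $\mathrm{C}^*$-positivity — rather than mere Hopf $*$-algebra axioms — is essential.
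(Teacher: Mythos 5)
The paper offers no proof of this theorem at all: it defers entirely to Van Daele's \emph{The Haar measure on finite quantum groups}, so there is no in-house argument to compare against, and your sketch is --- as you yourself say --- a reconstruction of that cited proof. Its three-step architecture is the standard and correct one: one-dimensionality of the space of normalised invariant functionals gives $h\circ S=h$; the $\mathrm{C}^*$-positivity and faithfulness of $h$ are then the essential (and genuinely non-Hopf-algebraic) input forcing $S^2=I_{F(\G)}$, exactly as the Sweedler-algebra example demands; and the modular identity collapses to traciality once $S^2=I_{F(\G)}$ is known. Two cautions. First, your parenthetical alternative for the middle step is misquoted: Radford's formula controls $S^4$, not $S^2$ --- in the unimodular case it reads $S^4=\operatorname{ad}(g)$ for the distinguished grouplike element $g$ --- so even after showing $g=\mathds{1}_{\G}$ you have only $S^4=I_{F(\G)}$ and still need the positivity/faithfulness argument to extract $S^2=I_{F(\G)}$; your primary route (comparing the $S^2$-twisted invariant sesquilinear form against $\langle a,b\rangle=h(b^*a)$ and peeling off $S^2$ by faithfulness) is the one that actually closes the step. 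Second, the identity $h(ab)=h(S^2(b)a)$ in your third step presupposes that $h$ is a two-sided integral (unimodularity); you do get this for free from the uniqueness used in step one, but it should be said explicitly before the Sweedler manipulation is performed.
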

Therefore, with the finiteness assumption, all quantum groups in this work have $S^2=I_{F(\G)}$ and a tracial Haar state.

\bigskip

In the richer category of \emph{compact} quantum groups (see Section \ref{compact}), the above theorem is recast.

\bigskip

\begin{theorem}(Woronowicz \citep{202})
If $\G$ is a compact quantum group then the antipode $S$ is an involution if and only if Haar measure $\dsp\int_{\G}$ is tracial $\bullet$
\end{theorem}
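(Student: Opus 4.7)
The plan is to invoke Woronowicz's modular theory for compact quantum groups, which equips the underlying Hopf $*$-algebra $\mathcal{A}$ (of matrix coefficients of irreducible unitary corepresentations) with a distinguished one-parameter family of algebraic characters $(f_z)_{z \in \C}$, the so-called \emph{Woronowicz characters}. These satisfy $f_0 = \eps$, $f_z \star f_w = f_{z+w}$ (convolution on $\mathcal{A}^*$), and, crucially, they encode both the square of the antipode and the modular automorphism of the Haar state via
$$S^2(a) \;=\; f_{-1} \star a \star f_1 \qquad \text{and} \qquad h(ab) \;=\; h\bigl(b \cdot (f_1 \star a \star f_1)\bigr),$$
where $f_z \star a \star f_w := (f_z \otimes I \otimes f_w)(\Delta^{(2)}(a))$. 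I would simply quote this machinery (from Woronowicz's original paper and its exposition in Timmermann's book) as the main tool, and then the theorem reduces to an essentially algebraic statement about $f_1$.

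For the forward direction, I would assume $S^2 = I_{F(\G)}$ on the dense Hopf $*$-subalgebra $\mathcal{A}$. The identity $f_{-1} \star a \star f_1 = a$ then holds on all of $\mathcal{A}$. Testing this on the matrix coefficients $u^\alpha_{ij}$ of an irreducible corepresentation converts it into a matrix equation $F_\alpha^{-1} U^\alpha F_\alpha = U^\alpha$ (where $F_\alpha$ is the positive matrix $(f_1(u^\alpha_{ij}))_{ij}$), which together with unitarity of $U^\alpha$ and irreducibility forces $F_\alpha$ to be a scalar; positivity and the character/normalisation condition $f_0 = \eps$ then pin that scalar down to $1$, so $f_1 = \eps$ on each block and hence on $\mathcal{A}$. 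Consequently the modular automorphism $a \mapsto f_1 \star a \star f_1$ is the identity, so $h(ab) = h(ba)$, i.e.\ the Haar state is tracial.

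For the converse, I would assume $h$ is tracial, so $h(b \cdot (f_1 \star a \star f_1)) = h(ba)$ for all $a,b \in \mathcal{A}$. Faithfulness of $h$ on $\mathcal{A}$ (a standard Woronowicz fact) upgrades this to $f_1 \star a \star f_1 = a$ for all $a$. The same block-by-block argument as above forces $f_1 = \eps$, whence $f_{-1} = (f_1)^{\star -1} = \eps$ as well, and therefore $S^2(a) = f_{-1} \star a \star f_1 = \eps \star a \star \eps = a$, so $S$ is involutive.

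The main obstacle — and really the only nontrivial step beyond invoking Woronowicz's structural results — is passing from the convolution identity ($f_{-1} \star a \star f_1 = a$ in one direction, or $f_1 \star a \star f_1 = a$ in the other) to the conclusion $f_1 = \eps$. This requires the Peter--Weyl decomposition of $\mathcal{A}$ into matrix blocks of irreducible corepresentations, unitarity of representatives, positivity of the $F$-matrices, and faithfulness of $h$ on $\mathcal{A}$; none of these are available in the purely algebraic Hopf $*$-algebra setting, which is exactly why the finite-dimensional Theorem \ref{propofHaar} gives the stronger one-sided implication ``$S^2 = I$'' unconditionally, whereas in the compact setting one genuinely needs the biconditional formulation with traciality of $h$.
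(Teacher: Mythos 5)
The paper offers no proof of this theorem --- it is stated purely as a citation to Woronowicz --- so there is nothing internal to compare against; your sketch is the standard argument (essentially Woronowicz's own, as exposed in Timmermann) via the characters $(f_z)$ and the two identities $S^2(a)=f_{-1}\star a\star f_1$ and $h(ab)=h\bigl(b\,(f_1\star a\star f_1)\bigr)$, and the overall structure --- reduce both conditions to $f_1=\eps$, block by block over $\operatorname{Irr}(\G)$ --- is sound. Two points of precision are worth fixing. First, in the forward direction, after Schur's lemma forces $F_\alpha=\lambda_\alpha I$ with $\lambda_\alpha>0$, it is not the condition $f_0=\eps$ that pins down $\lambda_\alpha=1$: that condition constrains $f_0$, not $f_1$. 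What you actually need is the canonical normalisation $\operatorname{Tr}(F_\alpha)=\operatorname{Tr}(F_\alpha^{-1})$ built into Woronowicz's construction of the $f_z$, which with $F_\alpha=\lambda_\alpha I$ gives $\lambda_\alpha d_\alpha=d_\alpha/\lambda_\alpha$ and hence $\lambda_\alpha=1$; without this the forward direction only yields $h(ab)=\lambda_\alpha^2\,h(ba)$ blockwise. Second, in the converse the block equation is $F_\alpha U^\alpha F_\alpha=U^\alpha$, which is \emph{not} an intertwiner relation, so it is not literally ``the same block-by-block argument as above''; it is in fact easier --- apply the counit entrywise (using $\eps(u^\alpha_{ij})=\delta_{i,j}$) to obtain $F_\alpha^2=I$, and positivity of $F_\alpha$ then gives $F_\alpha=I$ directly, with no appeal to Schur. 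With those two repairs the argument is complete and is the proof the paper is implicitly delegating to \citep{202}.
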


\bigskip

\begin{remark}
If the above remark about the Haar measure giving the mean-average of a function is taken to give a definition of the mean-average of a function on a compact quantum group:
$$\overline{a}:=\int_\G a,$$
then the theorem of Woronowicz allows us to remark that when a compact quantum group $\G$ has an involutive antipode   all pairs of  functions $a,\,b$ on $\G$ commute on average:
$$\overline{ab}=\int_\G ab=\int_\G ba=\overline{ba}\Raw \overline{[a,b]}=\int_\G [a,b]=0,$$
where $[a,b]=ab-ba$ is the \emph{commutator} of $a$ and $b$.
\end{remark}

\section{The Kac--Paljutkin Quantum Group\label{KacP}}
 Kac and Paljutkin introduced a truly quantum group \citep{KP6} --- a quantum group $\mathbb{KP}$ of order eight --- and it is the smallest such object.

\bigskip

Franz and Gohm \citep{franzgohm} introduce the quantum group in some detail but here only a flavour is given. The algebra $A:=F(\mathbb{KP})$ may be realised as the direct sum
$$F(\mathbb{KP})=\C\oplus\C\oplus\C\oplus\C\oplus M_2(\C).$$
with the usual matrix multiplication and conjugate-transpose involution. The elements of the standard basis are denoted by $e_i$ for the first four factors and $a_{ij}$ for the fifth factor. The unit, $\mathds{1}_{\mathbb{KP}}$, is canonical. The comultiplication, $\Delta: F(\mathbb{KP})\raw F(\mathbb{KP})\otimes F(\mathbb{KP})$, is detailed in Franz and Gohm. The counit, $\eps:F(\mathbb{KP})\raw \C$, is given by the coefficient of the first factor while the antipode, $S:F(\mathbb{KP})\raw F(\mathbb{KP})$, is just the matrix-transpose. The Haar measure, $\dsp\int_{\mathbb{KP}}\in M_p(\mathbb{KP})$, is given by
$$\int_{\mathbb{KP}}\left(x_1\oplus x_2\oplus x_3\oplus x_4\oplus \left(\begin{array}{cc} c_{11} & c_{12} \\ c_{21} & c_{22}\end{array}\right)\right)=\frac{1}{8}\left(x_1+x_2+x_3+x_4+2c_{11}+2c_{22}\right).$$
It is nothing but a tedious exercise to show that $\mathbb{KP}$ is a quantum group.

\section{The Sekine Quantum Groups}
Sekine \citep{Sekine} introduced a family a finite quantum groups of order $2n^2$ that are neither commutative nor cocommutative.

\bigskip

The following follows the presentation of Franz and Skalski \citep{idempotent} rather than of Sekine. Let $n\geq 3$ be fixed and $\zeta_n=e^{2\pi i/n}$ and
$$\Z_n=\{0,1,\dots,n-1\},$$
 with addition modulo $n$.

 \bigskip

 Consider $n^2$ one-dimensional spaces $\C e_{(i,j)}$ spanned by elements indexed by $\Z_n\times\Z_n$, $\{e_{(i,j)}:i,j\in \Z_n\}$. Together with a copy of $M_n(\C)$, spanned by elements $E_{ij}$ indexed by $\{(i,j)\,:\,i,j=1,\dots, n,\, 0\equiv n\}$, a direct sum of these $n^2+1$ spaces, the $2n^2$ dimensional space
 $$A_n=\left(\bigoplus_{i,j\in\Z_n}\C e_{(i,j)}\right)\oplus M_n(\C),$$
 can be given the structure of the algebra of functions on a finite quantum group denoted by $\mathbb{KP}_n$ (so that $A_n=F(\mathbb{KP}_n)$). On the one dimensional elements the comultiplication is given by, for $i,\,j\in \Z_n$:
 \beq \Delta(e_{(i,j)})=\sum_{\ell,m\in\Z_n}(e_{(\ell,m)}\otimes e_{(i-\ell,j-m)})+\frac{1}{n}\sum_{\ell,m=1}^n\left(\zeta_n^{i(\ell-m)}E_{\ell,m}\otimes E_{\ell+j,m+j}\right).\label{oneD}\enq
 On the matrix elements in the $M_n(\C)$ factor:
 \beq\Delta(E_{i,j})=\sum_{\ell,m\in\Z_n}(e_{(-\ell,-m)}\otimes \zeta_n^{\ell(i-j)}E_{i-m,j-m})+\sum_{\ell,m\in\Z_n}\left(\zeta_n^{\ell(j-i)}E_{i-m,j-m}\otimes e_{(\ell,m)}\right)\label{Mfact}\enq
 The antipode is given by $S(e_{(i,j)})=e_{(-i,-j)}$ on the one dimensional factors and the transpose for the $M_n(\C)$ factor. Sekine does not give the counit but by noting that $u_{(0,0)}=I_{n}$ (where $U\in M_n(M_n(\C))$ is defined in Sekine's original paper),  it can be seen  that the coefficient of the $e_{(0,0)}$ one-dimensional factor satisfies the counital property. The Haar measure $\dsp\int_{\mathbb{KP}_n}\in M_p(\mathbb{KP}_n)$ is given by:
 $$\int_{\mathbb{KP}_n}\left(\sum_{i,j\in \Z_n}x_{(i,j)}e_{(i,j)}+a\right)=\frac{1}{2n^2}\left(\sum_{i,j\in\Z_n}x_{(i,j)}+n\cdot \text{Tr}(a)\right).$$

 \bigskip

 Although Sekine restricts his construction to $n\geq 3$, for $n=1$ and $n=2$ the construction still satisfies the conditions of Kac and Paljutkin \citep{KP6} and so are algebras of functions of quantum groups. Sekine does not clarify but the construction for $n=2$ does not give the celebrated Kac--Paljutkin quantum group of order eight and indeed $\KP_2$ is commonly mistaken for $\KP$ in the literature. Here it is shown that $\KP_1$ is classical and $\KP_2$ a (virtual) dual group.

 \bigskip

 For $n=1$, $\zeta_1=1$ and the construction gives an algebra structure
$$F(\mathbb{KP}_1)=\C\oplus M_1(\C)\cong \C^2,$$
with basis elements $e_1:=e_{(0,0)}$ and $e_2:=E_{11}$. It is straightforward to show that
$$\Delta(e_1)=e_1\otimes e_1+e_2\otimes e_2 \qquad\text{ and }\qquad\Delta(e_2)=e_1\otimes e_2+e_2\otimes e_1,$$
and so via $\varphi(e_1)=\delta_0$ and $\varphi(e_2)=\delta_1$, $\mathbb{KP}_1\cong \Z_2$.

\bigskip

For $n=2$, $\zeta_2=-1$ and so Sekine's construction gives an algebra structure
$$F(\mathbb{KP}_2)=\C^4\oplus M_2(\C).$$
  Define $e_1:=e_{(0,0)}$, $e_2:=e_{(1,1)}$, $e_3:=e_{(0,1)}$ and $e_4:=e_{(1,0)}$ and denote $\{a,b\}:=a\otimes b+b\otimes a$. The comultiplication on $F(\mathbb{KP}_2)$ is given by:
\begin{align*}
\Delta(e_1)&=e_1\otimes e_1+e_2\otimes e_2+e_3\otimes e_3+ e_4\otimes e_4
\\&+\frac12 E_{11}\otimes E_{11}+\frac12 E_{12}\otimes E_{12}+\frac12 E_{21}\otimes E_{21}+\frac12 E_{22}\otimes E_{22},
\\ \Delta(e_2)&=\{e_1,e_2\}+\{e_3,e_4\}+\frac12\{E_{11},E_{22}\}-\frac12\{E_{12},E_{21}\},
\\ \Delta(e_3)&=\{e_1,e_3\}+\{e_2,e_4\}+\frac12\{E_{11},E_{22}\}+\frac12 \{E_{12},E_{21}\},
\\ \Delta(e_4)&=\{e_1,e_4\}+\{e_2,e_3\}
\\&+\frac12 E_{11}\otimes E_{11}-\frac12 E_{12}\otimes E_{12}-\frac12 E_{21}\otimes E_{21}+\frac12 E_{22}\otimes E_{22},
\\ \Delta(E_{11})&=\{e_1,E_{11}\}+\{e_2,E_{22}\}+\{e_3,E_{22}\}+\{e_4,E_{11}\},
\\ \Delta(E_{12})&=\{e_1,E_{12}\}-\{e_2,E_{21}\}+\{e_3,E_{21}\}-\{e_4,E_{12}\},
\\ \Delta(E_{21})&=\{e_1,E_{21}\}-\{e_2,E_{12}\}+\{e_3,E_{12}\}-\{e_4,E_{21}\},
\\ \Delta(E_{22})&=\{e_1,E_{22}\}+\{e_2,E_{11}\}+\{e_3,E_{11}\}+\{e_4,E_{22}\}.
\end{align*}
A very quick inspection shows that, where $\tau$ is the flip map:
$$\tau\circ \Delta=\Delta,$$
and so $F(\mathbb{KP}_2)$ is cocommutative, hence isomorphic to a group ring $\C G$ by the Classification Theorem \ref{class}. As there is an $M_2(\C)$ factor, $\C G$ is noncommutative and so $G$ is non-abelian. The dimension of $F(\mathbb{KP}_2)$ is eight so $G$ is isomorphic to the dihedral group of order four, $D_4$, or the quaternion group. To find the group-like elements, for a general
$$x=ae_1+be_2+ce_3+de_4+a_{11}E_{11}+a_{12}E_{12}+a_{21}E_{21}+a_{22}E_{22},$$
solving $\Delta x=x\otimes x$ gives the following elements of $\C G$:
\begin{align*}
\delta^{s_1}&=e_1+e_2+e_3+e_4+E_{11}+E_{22},
\\ \delta^{s_2}&=e_1+e_2+e_3+e_4-E_{11}-E_{22},
\\\delta^{s_3}&=e_1-e_2-e_3+e_4+E_{11}-E_{22},
\\\delta^{s_4}&=e_1-e_2-e_3+e_4-E_{11}+E_{22},
\\\delta^{s_5}&=e_1-e_2+e_3-e_4+E_{12}+E_{21},
\\\delta^{s_6}&=e_1-e_2+e_3-e_4-E_{11}-E_{12},
\\\delta^{s_7}&=e_1+e_2-e_3-e_4+E_{12}-E_{21},
\\\delta^{s_8}&=e_1+e_2-e_3-e_4-E_{12}+E_{21}.
\end{align*}
A quick calculation shows that there are at least three elements of order two and so $\mathbb{KP}_2$ is equal to the virtual object $\widehat{D_4}$. Further calculations show that if $D_4$ is presented as
$$\langle x,y\,:\,x^2=e,\,y^4=e,\,(xy)^2=e\rangle,$$
that is $x$ is a reflection and $y$ an order four rotation, then $\varphi:G\rightarrow D_4$ is an isomorphism:
$$ s_1\mapsto e,\,s_2\mapsto y^2,\,s_3\mapsto x,\,s_4\mapsto y^2x,\,s_5\mapsto xy,\,s_6\mapsto yx,\,s_7\mapsto y,\,s_8\mapsto y^3.
$$
Note that in \citep{KP6}, to construct the celebrated quantum group of order eight that is neither commutative nor cocommutative, Kac and Paljutkin do not use the same construction as Sekine. In the notation of \citep{KP6}, the Sekine construction of $F(\mathbb{KP}_2)$ would use
$$K=p_{e}=I_2,\quad p_{\alpha}=\left(\begin{array}{cc}-1 & 0 \\ 0 & 1\end{array}\right),\quad p_{\beta}=\left(\begin{array}{cc}0 & 1 \\ 1 & 0\end{array}\right),\quad p_{\gamma}=\left(\begin{array}{cc}0 & -1 \\ 1 & 0\end{array}\right),$$
while the to construct the celebrated quantumn group $\mathbb{KP}$, Kac and Paljutkin use:
$$K=p_{e}=I_2,\quad p_{\alpha}=\left(\begin{array}{cc}0 & i \\ i & 0\end{array}\right),\quad p_{\beta}=\left(\begin{array}{cc}0 & 1 \\ i & 0\end{array}\right),\quad p_{\gamma}=\left(\begin{array}{cc}-1 & 0 \\ 0 & 1\end{array}\right).$$
A good place to see more examples of finite quantum groups --- including the construction of new quantum groups from old --- include the notes of Andruskiewitsch \citep{And} and the work of Banica, Bichon and Natale \citep{BBN}.

\section{The Dual of a Quantum Group --- Quantum Group Rings\label{quantumgroupring}}
Let $F(\G)$ be the algebra of functions on  a finite quantum group with Haar measure $\dsp\int_{\G}:F(\G)\raw \C$. Define $\Ahat$
 as the space of linear functionals on $F(\G)$ of the form
 $$b\mapsto \int_{\G} ba\,,\,\,\qquad (a\in F(\G)).$$
In a relatively natural way, this space can be given the structure of a quantum group and it is called the \emph{dual} of the quantum group $F(\G)$ and, as a nod to Pontryagin duality, using the Gelfand philosophy, $\Ahat$ is denoted by $\C \G$ or  $F(\widehat{\G})$.

 \bigskip

 As $F(\G)$ is finite dimensional, the continuous and algebraic duals coincide. Furthermore,  the Haar measure is faithful (Proposition 2.2.4, \citep{Timm}) and so
 $$\langle a,b\rangle:=\int_{\G} a^*b$$
  defines an inner product making $F(\G)$ into a Hilbert space. Via the Riesz Representation Theorem for Hilbert spaces, for every element  $\varphi\in F(\G)'$, there exists an element $a\in F(\G)$ such that:
  $$\varphi(b)=\langle a,b\rangle=\int_\G a^*b,$$
  so that $F(\G)'=\C\G$.  The dual of the comultiplication $\Delta:F(\G)\raw F(\G)\otimes F(\G)$ defines a multiplication on the dual, $\nabla:\C\G\otimes \C\G\raw \C\G$. In particular, for $\mu,\,\nu\in \C\G$ and $b\in F(\G)$
$$\nabla(\mu\otimes \nu)(b)=\Delta^*(\mu\otimes\nu)(b)=(\mu\otimes \nu)\Delta(b)=\sum \mu\left(b_{(1)}\right)\nu\left(b_{(2)}\right)$$
using Sweedler's notation. This multiplication on $\C\G$ is often called the \emph{convolution} and can be denoted by:
$$\nabla(\mu\otimes\nu)=\mu\star \nu.$$
Similarly, the dual of the multiplication, $M:F(\G)\otimes F(\G)\raw F(\G)$, defines a comultiplication on the dual, $\widehat{\Delta}:\C\G\raw\C\G\otimes\C\G$. In particular, for $\varphi\in\C\G$ and $a,\,b\in F(\G)$:
$$\widehat{\Delta}(\varphi)(a\otimes b)=M^*(\varphi)(a\otimes b)=\varphi\circ M(a\otimes b)=\varphi(ab).$$
The antipode on the dual, $\widehat{S}:\C\G\raw \C\G$, is just the dual of the antipode on $F(\G)$, $S:F(\G)\raw F(\G)$. Namely, for $\varphi\in \C\G$ and $a\in F(\G)$:
$$\widehat{S}(\varphi)(a)=S^*(\varphi)(a)=\varphi(S(a)).$$
The counit on the dual, $\widehat{\eps}:\C\G\raw \C$ is given by evaluation at the unit of $F(\G)$:
 $$\widehat{\eps}(\varphi)=\varphi(\mathds{1}_{\G})\qquad\qquad(\varphi\in \C\G).$$
 This object, $\C\G$, also possesses an involution. For $\varphi\in\C\G$, $a\in F(\G)$, $S$ the antipode on $F(\G)$ and the involution on the right-hand-side the involution on $F(\G)$:
$$\varphi^*(a)=\overline{\varphi\left(S(a)^*\right)}.$$
This gives $\C \G$ the structure of the algebra of functions on a quantum group. Denote this quantum group, the   \emph{dual quantum group of $\G$}, by $\widehat{\G}$ so that $F(\widehat{\G})=\C\G$.

\bigskip

There is also a Haar measure on $\widehat{\G}$. To write it down as a nice formula the bijective map that takes an element $a\in F(\G)$ to the map $b\mapsto \dsp\int_{\G}ba$ will  be denoted by $\mathcal{F}$:
$$\mathcal{F}:F(\G)\raw \C\G,\,\,a\mapsto \int_{\G}(\cdot a).$$
A distinguished functional on the dual, $\dsp \int_{\widehat{\G}}:\C\G\raw \C$, is given by
$$\int_{\widehat{\G}}\varphi=\int_{\widehat{\G}}\mathcal{F}(a)=\eps(a);$$
in other words $\dsp \int_{\widehat{\G}}=\eps\circ \mathcal{F}^{-1}$. Note this is \emph{not} the Haar state on the dual as it is not normalised --- $a\mapsto \int_{\widehat{\G}}a/\int_{\widehat{\G}}\eps$ is the Haar state on $\widehat{\G}$.

\bigskip

%

 Now, rewriting the work of Van Daele \citep{VD}, the basic properties of this map $\mathcal{F}$ are presented. First a lemma that leads to a Plancherel Identity.

 \bigskip

\begin{lemma}\label{lem251}
For $\varphi_1=\F(a_1),\,\varphi_2=\F(a_2)\in\C\G$
$$\int_{\widehat{\G}}\left(\varphi_1\star \varphi_2\right)=\varphi_1(S(a_2)).$$
\end{lemma}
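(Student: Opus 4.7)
The strategy is to identify the element $c \in F(\G)$ with $\mathcal{F}(c) = \varphi_1 \star \varphi_2$; by the definition of $\int_{\widehat{\G}}$ the lemma then reduces to computing $\varepsilon(c)$.

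Unfolding the convolution via Sweedler notation gives
$$(\varphi_1 \star \varphi_2)(b) = (\varphi_1 \otimes \varphi_2)(\Delta b) = \sum \int_{\G} b_{(1)} a_1 \cdot \int_{\G} b_{(2)} a_2 = \left(\int_{\G} \otimes \int_{\G}\right)\!\bigl(\Delta(b)(a_1 \otimes a_2)\bigr).$$
Guided by the classical case (where the analogous $c$ is a scalar multiple of the group-theoretic convolution $a_1 \star a_2$), I would propose the candidate
$$c := \left(\int_{\G} \otimes \, I_{F(\G)}\right)\!\bigl((a_1 \otimes \mathds{1}_{\G})(S \otimes I)\Delta(a_2)\bigr) = \sum \left(\int_{\G} a_1\, S(a_{2,(1)})\right) a_{2,(2)} \; \in F(\G).$$

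The crux is to verify $\int_{\G} bc = (\varphi_1 \star \varphi_2)(b)$ for every $b \in F(\G)$. Direct computation from the proposed form of $c$ reduces this to the Plancherel-type identity
$$\left(\int_{\G} \otimes \int_{\G}\right)\!\bigl((a_1 \otimes b)(S \otimes I)\Delta(a_2)\bigr) = \left(\int_{\G} \otimes \int_{\G}\right)\!\bigl(\Delta(b)(a_1 \otimes a_2)\bigr). \qquad (\star)$$
Classically $(\star)$ is just a substitution of variables in the double sum; in the quantum setting it is a form of the strong invariance of the Haar state, provable by combining the right-invariance $\bigl(\int_{\G} \otimes \, I\bigr)\circ\Delta = \int_{\G}(\cdot)\,\mathds{1}_{\G}$, the antipodal property $M\circ(S\otimes I)\circ\Delta = \eta\circ\varepsilon$, and the tracial property from Theorem \ref{propofHaar} (together with $S^2 = I_{F(\G)}$). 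This is the main obstacle of the argument; everything else is Sweedler bookkeeping.

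Granted $(\star)$, one has $\mathcal{F}(c) = \varphi_1 \star \varphi_2$, so that
$$\int_{\widehat{\G}}(\varphi_1 \star \varphi_2) = \varepsilon(c) = \sum \varepsilon(a_{2,(2)}) \int_{\G} a_1\, S(a_{2,(1)}) = \int_{\G} a_1 \, S\!\left(\sum a_{2,(1)}\varepsilon(a_{2,(2)})\right) = \int_{\G} a_1\, S(a_2),$$
where the last equalities use linearity of $S$ and the counital property. A final appeal to the tracial property of the Haar measure rewrites this as $\int_{\G} S(a_2)\,a_1 = \varphi_1(S(a_2))$, as required.
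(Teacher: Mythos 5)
Your proposal is correct in outline, but there is nothing in the paper to compare it against: the paper's ``proof'' of Lemma \ref{lem251} is a bare citation to Van Daele (Lemma 4.11 of the cited work), so any actual argument is a different route by default. Two remarks on yours. First, after one application of traciality your candidate $c=\sum\left(\int_{\G}a_1S(a_{2,(1)})\right)a_{2,(2)}$ is exactly $a_1\star_A a_2=\sum a_{2,(2)}\int_{\G}\left(S(a_{2,(1)})a_1\right)$, so the claim $\F(c)=\varphi_1\star\varphi_2$ --- your identity $(\star)$ --- is word-for-word Van Daele's Convolution Theorem (Theorem \ref{VDCT} of the paper, itself also only cited). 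In effect you have reduced Lemma \ref{lem251} to Theorem \ref{VDCT} plus the counit computation $\eps(a_1\star_A a_2)=\int_{\G}a_1S(a_2)$; that reduction is clean and correct, but it concentrates the entire difficulty in the one step you leave as an assertion. Second, that step does go through with the ingredients you name, though the ``combining'' deserves to be written out since it is the whole lemma. The missing link is the strong invariance of the Haar state: inserting $\eps(b_{(1)})\mathds{1}_{\G}=\sum S(b_{(1)})b_{(2)}$ and applying invariance in the form $(I_{F(\G)}\otimes\int_{\G})\Delta(x)=\int_{\G}x\cdot\mathds{1}_{\G}$ to $\Delta(b_{(2)}a)$ gives
$$\sum a_{(1)}\int_{\G}\left(b\,a_{(2)}\right)=\sum S(b_{(1)})\int_{\G}\left(b_{(2)}a\right),$$
and applying $S=S^{-1}$ to both sides yields $\sum b_{(1)}\int_{\G}(b_{(2)}a_2)=\sum S(a_{2,(1)})\int_{\G}(b\,a_{2,(2)})$. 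Pairing this against $a_1$ under $\int_{\G}$ and using traciality (Theorem \ref{propofHaar}) is precisely $(\star)$. With that supplied, your Sweedler bookkeeping, the counital collapse to $\int_{\G}a_1S(a_2)$, and the final traciality step giving $\varphi_1(S(a_2))$ are all correct; I verified the candidate $c$ also reproduces the classical change-of-variables argument when $F(\G)=F(G)$ is commutative.
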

\begin{proof}
See Van Daele \citep{VD2}, Lemma 4.11 for a proof $\bullet$
\end{proof}
This lemma yields a formula for the inverse of $\F$ (that is not used in the sequel).

\bigskip

\begin{theorem}
(Inversion Theorem)
Let $\G$ be a finite quantum group and $\widehat{\G}$ the associated  dual quantum group with Haar measure $\dsp\int_{\widehat{\mathbb{G}}}$. Consider an element $\varphi=\F(a)\in \C\G$. Then $\F^{-1}(\varphi)=a$ is an element of $\C\G^*\cong F(\G)$ whose action on $\mu\in\C\G$ is given by
$$\F^{-1}(\varphi)(\mu)=a(\mu)\cong\mu(a)=\int_{\widehat{\G}}\widehat{S}(\mu)\varphi.$$
\end{theorem}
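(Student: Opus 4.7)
The plan is to compute the right-hand side $\dsp\int_{\widehat{\G}}\widehat{S}(\mu)\varphi$ directly using Lemma \ref{lem251} and then unpack the antipode on the dual. Once the natural identification $F(\G)\cong (\C\G)^*$ via $a(\mu)\cong\mu(a)$ is accepted, the theorem reduces to showing that for every $\mu\in\C\G$ the scalar $\dsp\int_{\widehat{\G}}\widehat{S}(\mu)\star\varphi$ is equal to $\mu(a)$; that uniquely identifies $\F^{-1}(\varphi)\in(\C\G)^*$ with $a\in F(\G)$.

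First I would apply Lemma \ref{lem251} with $\varphi_1=\widehat{S}(\mu)$ and $\varphi_2=\varphi=\F(a)$, which gives
$$\int_{\widehat{\G}}\widehat{S}(\mu)\star\F(a)=\widehat{S}(\mu)(S(a)).$$
Next I would expand $\widehat{S}$ using the definition $\widehat{S}=S^*$ given earlier in this section, namely $\widehat{S}(\mu)(b)=\mu(S(b))$ for every $b\in F(\G)$; specialising to $b=S(a)$ yields
$$\widehat{S}(\mu)(S(a))=\mu(S^2(a)).$$

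Finally, I would invoke the Kac property from Theorem \ref{propofHaar}: for any finite quantum group the antipode is an involution, $S^2=I_{F(\G)}$. This collapses $\mu(S^2(a))$ to $\mu(a)$, which together with the identification $\F^{-1}(\varphi)(\mu)\cong\mu(a)$ completes the proof.

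The argument is essentially a one-line calculation once Lemma \ref{lem251} is available; the only genuinely non-trivial ingredient is the hypothesis that $\G$ is finite, which is used exactly where $S^2=I$ is invoked. I do not anticipate any obstacle within the finite setting. It is worth noting, however, that in the compact (infinite-dimensional) case the antipode need not be involutive and this clean inversion formula would have to be modified by a modular/scaling object --- in the present finite-dimensional framework no such correction is required.
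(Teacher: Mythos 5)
Your proposal is correct and follows essentially the same route as the paper's own proof: apply Lemma \ref{lem251} with $\varphi_1=\widehat{S}(\mu)$ and $\varphi_2=\F(a)$ to get $\widehat{S}(\mu)(S(a))$, unpack $\widehat{S}(\mu)=\mu\circ S$, and collapse $\mu(S^2(a))$ to $\mu(a)$ via the involutivity of the antipode from Theorem \ref{propofHaar}. Your closing remark about the compact non-Kac case is a sensible observation but is not needed here.
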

\begin{proof}
Let $\varphi=\widehat{S}(\mu)$ and apply the above lemma (recalling $S^2=I_{F(\G)}$):
\begin{align*}
\int_{\widehat{\G}}\widehat{S}(\mu)\F(a)&=\widehat{S}(\mu)(S(a))
\\&=\mu\circ S(S(a))=\mu(a)\qquad\bullet
\end{align*}
\end{proof}
\begin{theorem}
(Plancherel Theorem)\label{Planch}
Let $\G$ be a quantum group with Haar measure $\dsp\int_{\G}$ and $\dsp\int_{\widehat{\mathbb{G}}}$ the Haar measure on $\widehat{\G}$. Then for all $a \in F(\G)$:
$$\int_{\widehat{\G}}\left(\F(a)^*\star\F(a)\right)=\int_{\G}a^*a.$$
\end{theorem}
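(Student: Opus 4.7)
The plan is to reduce the theorem to a single application of Lemma \ref{lem251}, together with the explicit formula for the involution on $\C\G$ and the Kac property $S^2=I_{F(\G)}$ guaranteed by Theorem \ref{propofHaar}. Because the map $\F$ is a bijection $F(\G)\to\C\G$, every element of $\C\G$ is of the form $\F(b)$ for some $b\in F(\G)$, so $\F(a)^*=\F(b)$ for some $b$. The virtue of Lemma \ref{lem251} is that its right-hand side $\varphi_1(S(a_2))$ only references the preimage of the second factor under $\F$, so I will not need to identify $b$ explicitly.

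First I would set $\varphi_1=\F(a)^*$ and $\varphi_2=\F(a)$ and apply Lemma \ref{lem251} verbatim to obtain
\[
\int_{\widehat{\G}}\bigl(\F(a)^*\star\F(a)\bigr)=\F(a)^*(S(a)).
\]
Next, using the definition $\varphi^*(x)=\overline{\varphi(S(x)^*)}$ of the involution on $\C\G$ given in Section \ref{quantumgroupring}, together with the finite-dimensional Kac property $S^2=I_{F(\G)}$ from Theorem \ref{propofHaar}, I would compute
\[
\F(a)^*(S(a))=\overline{\F(a)\bigl(S(S(a))^*\bigr)}=\overline{\F(a)(a^*)}=\overline{\int_\G a^*a}.
\]
Finally, since $\dsp\int_\G$ is a state on the $\mathrm{C}^*$-algebra $F(\G)$, it is positive and in particular real on self-adjoint elements; as $a^*a$ is self-adjoint (indeed positive), the conjugate bar may be dropped, yielding $\dsp\int_\G a^*a$ as required.

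The proof is therefore essentially bookkeeping once Lemma \ref{lem251} is in hand, and I do not anticipate any serious obstacle. The only subtlety is making sure that the involution formula from the previous section is applied with the correct argument $S(a)$ (so that the inner $S$ cancels with the antipode already present in the lemma via $S^2=I$); getting this cancellation right is what forces the right-hand side to collapse to $\F(a)(a^*)$ rather than to something involving $S$, and it is what makes the appeal to the Kac property (finite-dimensionality) essential at exactly this step.
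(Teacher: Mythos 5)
Your proposal is correct and follows essentially the same route as the paper's own proof: apply Lemma \ref{lem251} with $\varphi_1=\F(a)^*$ and $\varphi_2=\F(a)$, unwind the involution on $\C\G$ using $S^2=I_{F(\G)}$ to collapse $\F(a)^*(S(a))$ to $\overline{\int_\G a^*a}$, and drop the conjugate by positivity of the Haar state. Your explicit remark that only the preimage of the \emph{second} factor needs to be identified (so that $\F(a)^*$ need not be computed as $\F(b)$ for an explicit $b$) is a small point of care that the paper's proof leaves implicit.
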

\begin{proof}
Applying Lemma \ref{lem251} to the left-hand side:
\begin{align*}
\int_{\widehat{\G}}(\F(a)^*\star\F(a))&=\F(a)^*(S(a))=\overline{\F(a)\left(S(S(a))^*\right)}
\\&=\overline{\F(a)(a^*)}=\overline{\int_{\G}a^*a}=\int_{\G}a^*a.
\end{align*}
The last equality follows from the fact that $\dsp\int_{\G}$ is a positive linear functional $\bullet$
\end{proof}

\newpage
There is a convolution theorem relating the `convolution product' of $a,\,b\in A=F(\G)$:
 \beq a\star_A b:=\sum b_{(2)}\int_{\G}\left( S(b_{(1)})a\right)\label{conv2}\enq
 (where the Sweedler notation has been used), to the ordinary convolution multiplication in $\C\G$:
 $$\mu\star\nu=(\mu\otimes\nu)\Delta.$$

 \bigskip

 \begin{theorem}
(Van Daele's Convolution Theorem)\label{VDCT}
 For all $a,\,b\in A=F(\G)$
 $$\mathcal{F}\left(a\right)\star\mathcal{F}\left(b\right)=\mathcal{F}(a\star_{A}b),$$
 \end{theorem}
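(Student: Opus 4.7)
The plan is to show that $\mathcal{F}(a)\star\mathcal{F}(b)=\mathcal{F}(a\star_A b)$ as elements of $\C\G$ by evaluating both functionals on an arbitrary $c\in F(\G)$. Unpacking the definitions,
\begin{align*}
(\mathcal{F}(a)\star\mathcal{F}(b))(c) &= \sum \int_{\G} c_{(1)}a\cdot\int_{\G} c_{(2)}b=\left(\int_{\G}\otimes\int_{\G}\right)\bigl(\Delta(c)(a\otimes b)\bigr),\\
\mathcal{F}(a\star_A b)(c) &= \int_{\G} c\,(a\star_A b) = \sum \int_{\G} c\,b_{(2)}\cdot \int_{\G} S(b_{(1)})a.
\end{align*}
The task reduces to transforming the first expression into the second, using the invariance and tracial properties of the Haar measure.

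The key technical input is a strong form of invariance: starting from the left-invariance $\sum y_{(1)}\int_{\G} y_{(2)} = \int_{\G} y\cdot\mathds{1}_\G$, applying the linear map $S$ (and using $S(\mathds{1}_\G)=\mathds{1}_\G$) gives $\sum S(y_{(1)})\int_{\G} y_{(2)} = \int_{\G} y\cdot\mathds{1}_\G$, whence for all $x,y\in F(\G)$,
\[
\sum\int_{\G} x\,S(y_{(1)})\cdot\int_{\G} y_{(2)} = \int_{\G} x\cdot\int_{\G} y.
\]
By linearity this yields the tensor invariance $(\int_{\G}\otimes\int_{\G})\circ\beta^{-1} = \int_{\G}\otimes\int_{\G}$, where $\beta^{-1}:F(\G)^{\otimes 2}\to F(\G)^{\otimes 2}$ is defined by $\beta^{-1}(x\otimes y) := \sum xS(y_{(1)})\otimes y_{(2)}$. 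Applying this to $\Delta(c)(a\otimes b)=\sum c_{(1)}a\otimes c_{(2)}b$, using $\Delta(c_{(2)}b)=\sum c_{(2)}b_{(1)}\otimes c_{(3)}b_{(2)}$ and antimultiplicativity of $S$, produces
\[
\beta^{-1}\bigl(\Delta(c)(a\otimes b)\bigr) = \sum c_{(1)}\,a\,S(b_{(1)})\,S(c_{(2)})\otimes c_{(3)}b_{(2)}.
\]

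By the tracial property of the Haar measure (Theorem~\ref{propofHaar}), one may cyclically move $S(c_{(2)})$ to the front of the first tensor factor. The Hopf identity $\sum S(c_{(2)})c_{(1)}\otimes c_{(3)} = \mathds{1}_\G\otimes c$ --- a consequence of $S^2=I_{F(\G)}$ (also Theorem~\ref{propofHaar}) via $\sum S(x_{(2)})x_{(1)} = \eps(x)\mathds{1}_\G$ together with the counit axiom --- then collapses the three-fold Sweedler sum, yielding
\[
\left(\int_{\G}\otimes\int_{\G}\right)\!\left(\sum c_{(1)}aS(b_{(1)})S(c_{(2)})\otimes c_{(3)}b_{(2)}\right) = \left(\int_{\G}\otimes\int_{\G}\right)\!\left(\sum aS(b_{(1)})\otimes c\,b_{(2)}\right).
\]
A final use of the trace ($\int_{\G} aS(b_{(1)}) = \int_{\G} S(b_{(1)})a$) converts this into $\mathcal{F}(a\star_A b)(c)$, finishing the proof. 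The main obstacle is the three-fold Sweedler bookkeeping in the middle step; conceptually, $\beta^{-1}$ is the quantum avatar of the measure-preserving change of variables $(s,t)\mapsto(s,s^{-1}t)$ on $\G\times\G$, which is precisely the classical reindexing that turns $\int\int c(st)a(s)b(t)\,ds\,dt$ into $\int\int c(t)a(s)b(s^{-1}t)\,ds\,dt$.
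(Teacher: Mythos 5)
Your argument is correct, and it is worth noting that the paper does not actually prove this theorem --- it simply cites Van Daele (\citep{VD}, Proposition 2.2) --- so you have supplied a self-contained proof where the text defers to the literature. Each of your steps checks out against the machinery the paper does establish: the identity $\sum \int_{\G}\bigl(xS(y_{(1)})\bigr)\int_{\G}y_{(2)}=\int_{\G}x\cdot\int_{\G}y$ follows, exactly as you say, by applying the antipode to the left-invariance relation and then multiplying by $x$ and integrating; the flipped antipode identity $\sum S(x_{(2)})x_{(1)}=\eps(x)\ind_{\G}$ is obtained by applying $S$ to $\sum S(x_{(1)})x_{(2)}=\eps(x)\ind_{\G}$ and using $S^2=I_{F(\G)}$; and both $S^2=I_{F(\G)}$ and traciality are available from Theorem \ref{propofHaar}, so the Kac-type hypotheses you lean on are legitimately in force for finite quantum groups. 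The three-fold Sweedler computation is also right: $\Delta(c_{(2)}b)=\sum c_{(2)}b_{(1)}\otimes c_{(3)}b_{(2)}$ together with antimultiplicativity of $S$ gives your expression for $\beta^{-1}\bigl(\Delta(c)(a\otimes b)\bigr)$, traciality lets you cycle $S(c_{(2)})$ to the front inside the first Haar integral, and the collapse $\sum S(c_{(2)})c_{(1)}y\otimes c_{(3)}b_{(2)}=y\otimes cb_{(2)}$ is valid because right-multiplication by $y=aS(b_{(1)})$ in the first leg commutes with the Hopf identity. One small caveat: your proof uses traciality and an involutive antipode, so it is genuinely a proof for the finite (Kac) setting rather than for Van Daele's more general algebraic quantum groups --- which is all the thesis needs, but is worth being aware of if you ever want the compact-quantum-group version. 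The closing interpretation of $\beta^{-1}$ as the quantisation of $(s,t)\mapsto(s,s^{-1}t)$ is exactly the right way to see why the statement is true.
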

\begin{proof}
See \citep{VD}, Proposition 2.2 for a proof $\bullet$
\end{proof}
%

\chapter{Quantisation of Markov Chains and Random Walks}
\section{Markov Chains\label{MC}}
Consider a particle in a finite space $X=\{x_i:i=1,2,\dots,n\}$. Suppose at time $t=0$ the particle is at the point $x$, and at times $1,2,\dots$ moves to another point in the space chosen `at random'. The probability that the particle moves to a certain point $x_j$ at a time $t$ is dependent only upon its position $x_i$ at the previous time. This is the Markov property. A \emph{time-homogeneous} Markov chain is a mathematical process which models these dynamics. Such a Markov chain can be described by the \emph{transition probabilities} $p(x_i,x_j)$, which give the probability of the particle being at point $x_j$ given that the particle is at the point $x_i$ at the previous time.

\bigskip

To formulate, let $X$ be a finite set. Denote by  $M_p(X)$ the probability measures on $X$.  The \textit{Dirac measures},  $\{\delta^x:x\in X\}$, $\delta^x(\{y\})=\delta_{x,y}$, are the standard basis for $\R^{|X|}\supseteq M_p(X)$.   Denote by $F(X)$ the complex functions on $X$ and $L(V)$ the linear operators on a vector space $V$. The  \textit{Dirac functions}  are the standard basis for $F(X)$.  With respect to this basis $P\in L(F(X))$ has a matrix representation $[p(x,y)]_{xy}$. A map $P\in L(F(X))$ is a \textit{stochastic  operator} if:
\begin{itemize}
\item[(i)] $p(x,y)\geq 0,\,\,\forall x,y\in X$
\item[(ii)]  $\sum_{y\in X}p(x,y)=1$, $\forall x\in X$ \hfill(row sum is unity)
\end{itemize}
Given $\nu\in M_p(X)$, a stochastic operator $P$ acts on $\nu$ as $P^T\nu(x)=\sum_yp(y,x)\nu(y)$. Stochastic operators are readily characterised without using matrix elements  as being $M_p(X)$-stable in the sense that $P^T(M_p(X))\subset M_p(X)$ if and only if $P$ is a stochastic operator. Equivalently, stochastic operators are positive, unital linear maps $F(X)\rightarrow F(X)$. In this context, \emph{positive} means that if $F(X)^+$ is the set of functions with $f(x)\geq0$ for all $x\in X$, then $P(F(X)^+)\subset F(X)^+$. \emph{Unital} means that $P(\mathds{1}_{X})=\mathds{1}_X$.

\begin{definition}
Let $X$ be a finite set and $\nu\in M_p(X)$, $P$ a stochastic operator on $X$, and $(Y,\mathbb{P})$ a probability space. A sequence $\{\xi_k\}_{k=0}^n$ of random variables $\xi_k:Y\raw X$ is a \textit{Markov chain with initial distribution $\nu$ and stochastic operator $P$}, if
\begin{itemize}
\item[(i)] $\mathbb{P}(\xi_0=x_0)=\nu(x_0)$, and for $k\geq 1$
\item[(ii)] $\mathbb{P}(\xi_{k+1}=x_{k+1}\,|\,\xi_0=x_0,\dots,\xi_k=x_k)=p(x_k,x_{k+1})$,\new  assuming $\mathbb{P}(\xi_0=x_0,\dots,\xi_k=x_k)>0$.
\end{itemize}
\end{definition}
Condition (ii) is the \textit{Markov property}. Call $\xi_k$ the position of the Markov chain after $k$ \emph{transitions}. Subsequent references to a Markov
chain $\xi$ refer to a Markov chain $\left(\{\xi_i\}_{i=0}^k,P,\nu\right)$

\bigskip

Many questions may be asked about the local and global behaviour of a  Markov chain $\xi$. One could define local behaviour as the behaviour of the Markov chain with respect to the points of $X=\{x_1,x_2,\dots,x_n\}$, while global behaviour as the behavior of the Markov chain with respect to the whole of $X$ (i.e. no reference is made to distinct points of $X$). Alternatively, imagine a lidded black box  containing an evolving Markov chain. Local questions are questions that would be asked with the lid \emph{off}, while global questions are questions that would be asked with the lid \emph{on}. When a Markov chain is quantised, the notion of a point is now defunct and there can no longer be interest in the local behaviour.

\bigskip

Central questions about the global behaviour of classical Markov chains include:
\begin{itemize}
\item do the random variables, $\{\xi_k\}$, display limiting behaviour as $k\raw \infty$?
\item do stationary distributions exist?
\item how many stationary distributions exist?
\end{itemize}
These are all questions that can be asked in the quantum case.

\subsection*{$\mathrm{C}^*$-algebra Quantisation of a Classical Markov Chain}
So where is the $\mathrm{C}^*$-algebra in a Markov chain? Well let $\xi$ be a Markov chain on a set $X=\{x_1,\dots,x_n\}$ with initial distribution $\nu$ and transition probabilities $ \mathbb{P}[\xi_{k+1}=x_j|\xi_{k}=x_i]=p(x_i,x_j)=p_{ij}$. The probability distribution of this walk, after $k$ transitions, is given by $(P^k)^T\nu $. However, the probability measures on $X$, $M_p(X)$, lie in the dual of the $\mathrm{C}^*$-algebra $F(X)$ ($M_p(X)\subset \mathbb{R}^{n}$ is equipped with the 1-norm while $F(X)$ is equipped with the supremum norm). In fact, the probability measures comprise the states (defined below) of the $\mathrm{C}^*$-algebra $A$ as for any $\theta\in M_p(X)\subset F(X)^*$, $\theta$ is a positive linear functional of norm one.

\bigskip

Actually in this specific case ($X$ is a finite set) the positivity of the functional $\theta$ has two equivalent definitions (the second is the same as saying $\theta(\delta_x)\geq 0$ for all $x\in X$):
\begin{enumerate}
\item $\theta(f)\in\C^+= \mathbb{R}^+$ for all positive functions $f\in F(X)^+$.
\item In the basis of Dirac measures, $(\delta^{x_1},\dots,\delta^{x_n})$ --- the dual basis to the Dirac functions, $(\delta_{x_1},\dots,\delta_{x_n})$ --- the coefficients of $\theta$ are all positive.
\end{enumerate}
Usually talking about functionals on $\mathrm{C}^*$-algebras being positive refers to the first definition: i.e. a linear map $\varphi:C_0(\mathbb{X})\rightarrow C_0(\mathbb{Y})$ between $\mathrm{C}^*$-algebras is said to be positive if $\varphi(C_0(\mathbb{X})^+)\subset C_0(\mathbb{Y})^+$. The positive elements of a $\mathrm{C}^*$-algebra $C_0(\mathbb{X})$ are given by:
$$C_0(\mathbb{X})^+=\{a\in C_0(\mathbb{X})\,:\,a=b^*b\,\text{ for some }b\in C_0(\mathbb{X})\}.$$
The states of a general $\mathrm{C}^*$-algebra are given by:
$$S(C_0(\mathbb{X}))=\{\varphi\in C_0(\mathbb{X})^*:\|\varphi\|=1,\,\varphi\geq 0\}.$$
States correspond in the classical case to probability measures on $X$. Therefore the following notation is used:
$$M_p(\mathbb{X})=S(C_0(\mathbb{X})).$$

In this global picture of a classical Markov chain --- which looks at the deterministic evolution of $\{(P^i)^T\nu:i=0,1,\dots,k\}$ --- rather than the random variable picture of the $\xi_k:(Y,\mathbb{P})\rightarrow X$, there is thus an initial distribution $\nu\in M_p(X)$, a stochastic operator:
$$C(X)^*\rightarrow C(X)^*\,,\,\theta\mapsto P^T\theta$$
which is $M_p(X)$-stable, and the set of distributions $\{(P^k)^T\nu:k=1,\dots,n\}$ can be looked at to
 tell everything  about the Markov chain. For example, if $(P^k)^T(\nu)$ is convergent then  the walk converges and a fixed point of the stochastic operator is a stationary distribution. The only thing left to do to complete the \emph{liberation} is to put some conditions on a stochastic operator being $M_p(\mathbb{X})$-stable --- $P^T$ being isometric and positive is certainly enough (although \emph{serious} references on quantum Markov chains via this approach --- such as Accardi \citep{Accardi} --- usually ask that $P^T$ be \emph{completely positive}).

\bigskip

The $\mathrm{C}^*$-algebra quantisation is then as follows. Let $C_0(\mathbb{X})$ be a $\mathrm{C}^*$-algebra with dual $C_0(\mathbb{X})^*$. Choose an element $ \psi\in M_p(\mathbb{X})$ and a positive linear isometry $P^T:C_0(\mathbb{X})^*\rightarrow C_0(\mathbb{X})^*$ (which is automatically $M_p(\mathbb{X})$-stable). The distribution of the quantum Markov chain generated by $\psi$ and $P$ after $k$ transitions would then be given by $(P^k)^T\psi$. Good references for quantum Markov chains may be found in the introduction to the paper of Franz and Gohm \citep{franzgohm}.

%
%
%
%
%
%

%

\bigskip

 This construction is leaning towards the fact that the deterministic evolution of the $(P^k)^T\psi$ can tell us all about the global behaviour --- and this is desirable for quantisation. Alternatively, note that a stochastic operator is a unital, positive operator $P:F(X)\rightarrow F(X)$ and work from there.

\bigskip

However, it is equally valid (and indeed far more common), to examine a classical Markov chain, not as a deterministic evolution $\{(P^k)^T\nu\}_{k\geq 0}$, but rather as a random variable. Therefore, instead of a $\mathrm{C}^*$-algebra quantisation up in $F(X)^*=\C X$, a category theory quantisation could translate the random variable picture of random variables $\xi_k:Y\raw X$ down in $X$ up into $F(X)$. Subsequently, there could be a `lifting' to a deterministic evolution using the associated quantised stochastic operators and distributions.

 In the below diagram, the right arrows are `liftings' from the random variable picture to the deterministic picture. The $\mathrm{C}^*$-algebra quantisation --- simply liberating from commutative $\mathrm{C}^*$-algebras to noncommutative $\mathrm{C}^*$-algebras --- is denoted by $\mathcal{Q}_{\mathrm{C}^*}$ while the category theory quantisation is denoted as before by $\mathcal{Q}$:

$$
\begin{CD}
\{j_k\} @>\mathcal{L}>>(P^k)^T (M_p(\mathbb{X}))\\
@AA\mathcal{Q}A @AA\mathcal{Q}_{\mathrm{C}^*}A\\
\{\xi_k\} @>\mathcal{L}>> (P^k)^T (M_p(X))
\end{CD}
$$

To quantise a random walk on a group the category theory approach is inevitable --- if the structure of the group acting on itself is to be encoded.

\bigskip

To quantise in the random variable picture, the sequence of space-valued random variables, $\{\xi_k\}$, must be replaced by a sequence of function-valued random variables and, essentially, it will be seen that this is done by defining a sequence $\{j_i\}_{i=0}^k$ of random variables $F(Y)\raw F(X)$ by
\beq
j_i:=f\circ \xi_i.
\enq
For those more interested in Markov chains rather than random walks on quantum groups specifically, Diaconis, Pang and Ram \citep{DPR} use the \emph{Hopf square} $M\Delta:H\raw H$ (for $H$ a Hopf algebra) to generate Markov chains on different structures.
\section{Random Walks}
A particularly nice class of Markov chain is that of a \textit{random
walk on a group}. The particle moves from group point to group
point by choosing a point $h$ of the group `at random' and
moving to the product of $h$ and the present position $s$, i.e. the particle moves from $s$ to $hs$.

\bigskip

To formulate, let $G$ be a finite group, $\nu\in M_p(G)$ and $(G,\mathbb{P})$  a probability space. Let
$\{\zeta_i\}_{i=0}^k: (G^{k+1},\mathbb{P})\raw G$ be a sequence of
random variables
$$\zeta_i(g_0,g_1,\dots,g_k)=g_i,$$
 with distributions
$$\delta^e\star \underbrace{\nu\star\cdots \star \nu}_{k\text{ times}}.$$ The sequence of random variables
$\{\xi_i\}_{i=0}^k:(G^{k+1},\mathbb{P})\raw G$ \beq
\xi_i=\zeta_i\zeta_{i-1}\cdots\zeta_1\zeta_0 \enq is a
\textit{right-invariant random walk on $G$}.

\bigskip

Consider the category theory quantisation of $$\xi_1:G\times G\raw G\,, \,\,\,(\zeta_1,\zeta_0)\mapsto m(\zeta_1,\zeta_0)=\zeta_1\zeta_0.$$
Under the $\mathcal{Q}$ functor, $\mathcal{Q}(\xi_1)=\Delta$.

\bigskip

Considering
$$\xi_2(\zeta_2,\zeta_1,\zeta_0)=m(\zeta_2,\zeta_1\zeta_0)=\zeta_2\zeta_1\zeta_0,$$
shows that
$$\xi_2=m\circ(I_G\times m),$$
and so
$$\mathcal{Q}(\xi_2)=(I_{F(G)}\otimes \Delta)\circ \Delta=\Delta^{(2)}.$$

Inductively, the quantisation of a random walk on a group simply replaces the random variables $\{\xi_k\}$ by the random variables $\{j_k\}$, where
$$j_k=\mathcal{Q}(\xi_k)=\Delta^{(k)}:F(G)\rightarrow \bigotimes_{k+1\text{ copies}} F(G)$$
is the iterated comultiplication. Making the appropriate identifications of tensor copies of $F(G)$ with the algebra of functions on cartesian products of $G$, $F(G^{k})$, it can be seen that for an $f\in F(G)$ the quantisation implies that
  $$j_k(f)=f\circ \delta^{\xi_k}.$$
Franz and Gohm \citep{franzgohm} also describe the $j_i$ in terms of random variables $z_i$ in the same way that the classical $\xi_i$ can be described in terms of the classical $\zeta_i$. Recall that the $\{\zeta_i\}$ are a family of random variables $\zeta_i:(G^{k+1},\mathbb{P})\rightarrow G$, $(g_j)\mapsto g_i$, and the random walk $\xi$ is given by (where $m^{(k)}$ is the group multiplication on $G^{k+1}$)
$$\xi_i=m^{(k)}(\zeta_i, \zeta_{i-1},\cdots ,\zeta_1,\zeta_0).$$
Define $z_i:F(G)\raw F(G^{k+1})$ by $z_i:=\mathcal{Q}(\zeta_i)$ by $f\mapsto f\circ \delta^{\zeta_i}$. Then
applying the quantisation functor to
$$\xi_i=m^{(k)}(\zeta_i, \zeta_{i-1},\cdots ,\zeta_1,\zeta_0),$$
yields
$$j_k=(z_k\otimes z_{k-1}\otimes\cdots\otimes z_1\otimes z_0)\Delta^{(k)}.$$
Franz and Gohm show how to extend this viewpoint to the quantum case.
Define $z_i:F(\G)\raw F(\G^{k+1})$ by
$$a\mapsto  \ind_{\G}\otimes\cdots\otimes \ind_{\G}\otimes a\otimes\ind_{\G}\otimes\cdots\otimes \ind_{\G},$$
where $a$ is inserted in the $i$th copy from the right; e.g.
$$z_2(a)=\ind_{\G}\otimes\cdots\otimes\ind_{\G}\otimes a\otimes\ind_{\G}\otimes\ind_{\G}.$$
Now note
\begin{align*}
(z_k\otimes\cdots\otimes z_0)\Delta^{(k)}(a)&=(z_k\otimes \cdots\otimes z_0)\sum a_{(1)}\otimes\cdots\otimes a_{(k+1)}
\\&=\sum z_k(a_{(1)})\otimes\cdots \otimes z_0(a_{k+1})
\\&=\sum (a_{(1)}\otimes\ind_{\G}\otimes\cdots\otimes\ind_{\G})\cdots(\ind_{\G}\otimes\cdots\otimes \ind_{\G}\otimes a_{(k+1)})
\\&=\sum a_{(1)}\otimes\cdots\otimes a_{(k+1)}
\\&=\Delta^{(k)}(a)=j_k(a).
\end{align*}
Using the natural embedding,
$$z_i:F(G)\raw F(G)\underset{i\text{th entry from the right}}{\hookrightarrow} F(G^{k+1}),$$
the classical $z_i(f)=f\circ \delta^{\zeta_i}$ fits into this framework:
$$z_i(f)=(\ind_G\otimes\cdots\otimes\ind_{\G}\otimes f\otimes \ind_G \otimes\cdots\otimes \ind_G)(\eps\otimes\cdots\otimes \eps\otimes \delta^{\zeta_i}\otimes \eps\otimes\cdots\otimes\eps)\cong f\circ \delta^{\zeta_i}.$$

 To understand how studying the random variables $\{\Delta^{(k)}\}$ --- or rather the algebra of functions $F(G)$ --- gives  an insight into the random variables $\{\xi_k\}$ --- or rather a random walk on a group --- consider a random walk on a finite group $G$. The random variables $\{\xi_i\}$ are just a sequence of points in $G$:
$$\xi_0,\xi_1,\xi_2,\xi_3,\dots$$
The quantisation regime above says that if one takes an element
$$f=\sum_{t\in G}\alpha_t\delta_t\in F(G),$$ and apply it at each transition of the random walk (ignore wave function collapse and other quantum mechanical concerns), then the random variables $\{j_i\}$ can use a function --- in this case $f$ --- to \emph{measure} the states of the random walk and the sequence $\{j_i(f)\}$ can be considered:
$$\alpha_{\xi_0},\alpha_{\xi_{1}},\alpha_{\xi_2},\alpha_{\xi_3}\dots$$
Of course, if the distribution of the $\{\xi_k\}$ converges as $k\raw\infty$ --- say for example to the uniform distribution on $G$ --- then the distribution of the $\{j_{k}(f)\}$ also converges --- to the average of $f$:
$$\overline{f}=\frac{1}{|G|}\sum_{i\in G}\alpha_i.$$
Of course this is nothing but $\dsp \int_G f$.

\bigskip

\bigskip

Of course, there is no need for the algebra $F(G)$ to be the algebra of functions on a classical group $G$: instead given the algebra of functions on a quantum group $\G$, the random variables $\{\Delta^{(k)}\}$ can be studied.

\bigskip

Let $\nu$ and $\mu\in M_p(G)$. The
\textit{convolution} of $\nu$ and $\mu$ is the probability \beq
\nu\star \mu(\delta_s):=\sum_{t\in G}\nu(\delta_{st^{-1}})\mu(\delta_t). \enq The distribution of a random walk after one transition is given by $\nu$. If $s\in G$, then the walk can go to $s$ in two transitions by going to \emph{some} $t\in G$ after one transition and going from there to $s$ in the next. The probability of going from $t$ to $s$ is given by the probability of choosing $st^{-1}$, i.e. $\nu(\delta_{st^{-1}})$. By summing over all intermediate transitions $t\in G$, and noting that $\nu\star\delta^e=\nu$, it is seen that if $\{\xi_i\}_{i=0}^k$ is a
random walk on $G$ driven by $\nu$, then $\nu^{\star k}$ --- defined inductively --- is the
probability distribution of $\xi_k$. In terms of the stochastic operator $P$ induced by $\nu\in M_p(G)$ --- $p(t,s)=\nu(\delta_{st^{-1}})$ --- given any $\mu\in M_p(G)$, $P^T\mu=\nu\star \mu$.

\bigskip

To study distributions, probability theory must be quantised --- probabilities, conditional expectations, independence, etc. Let $\G$ be a finite quantum group. As noted previously, the quantisation of probability measures on a finite classical group, $G$, are states on $F(\G)$, denoted by $M_p(\G)$. Form the  tensor product
$$F(\G^{k+1}):=\underbrace{F(\G)\otimes\cdots\otimes F(\G)}_{k+1\text{ copies}}.$$
Now consider probabilities, $\psi$, $\nu\in M_p(\G)$ and form product states:
\begin{align*}
\Psi_k=&\bigotimes_{i=1}^k\nu\otimes\psi
\end{align*}
With care, an infinite tensor product, $F(\G^\infty)$, and infinite product state, $\Psi_\infty$, can be defined. For the purposes of this work, everything can be studied in `finite time' and so these constructions are not included.

\begin{example}
For $n\geq 5$, consider $f\in F(\Z_n)$ given by
$$f=\sum_{i=0}^{n-1} \alpha_i\delta_i.$$
Note that the comultiplication $\Delta:F(\Z_n)\raw F(\Z_n)\otimes F(\Z_n)$ is given by
$$\Delta(\delta_{i})=\sum_{j=0}^{n-1}\delta_{ij^{-1}}\otimes \delta_{j}=\sum_{j=0}^{n-1}\delta_{i-j}\otimes \delta_{j}$$
so that
$$\Delta (f)=\sum_{i=0}^{n-1}\alpha_i\left(\sum_{j=0}^{n-1}\delta_{i-j}\otimes \delta_{j}\right)=\sum_{i,j=0}^{n-1}\alpha_i\delta_{i-j}\otimes\delta_{j}.$$
Now $j_0=I_{F(\Z_n)}$, $j_1=\Delta$ and $j_2=\Delta^{(2)}$ so
$$j_2(f)=\sum_{i,j,k=0}^{n-1}\alpha_i\delta_{i-j-k}\otimes \delta_{k}\otimes\delta_{j}.$$
Now suppose that the initial state is given by $\varepsilon=\delta^0$ and the transition state is given by $\nu=(\delta^{1}+\delta^{-1})/2$. Consider
\begin{align*}
\Psi_0(j_0)(f)&=\sum_{i=0}^{n-1}\alpha_i\varepsilon(\delta_{i})=\alpha_0.
\\ \Psi_1(j_1)(f)&=\sum_{i,j=0}^{n-1}\alpha_i\nu(\delta_{i-j})\eps(\delta_{j})
\\ &=\sum_{j=0}^{n-1}\alpha_i\phi(\delta_{i})
\\ & =\frac{1}{2}\alpha_1+\frac{1}{2}\alpha_{-1}.
\\ \Psi_2(j_2)(f)&=\sum_{i,j,k=0}^{n-1}\alpha_i\nu(\delta_{i-j-k})\nu(\delta_{k})\eps(\delta_{j})
\\ & =\sum_{i,k=0}^{n-1}\alpha_i\nu(\delta_{i-k})\nu(\delta_k)
\\ &=\frac12 \sum_{i=0}^{n-1}\alpha_i\nu(\delta_{i-1})+\frac12 \sum_{i=0}^{n-1}\alpha_i\nu(\delta_{i+1})
\\ &=\frac12\left(\frac12 \alpha_0+\frac12 \alpha_{-2}\right)+\frac12\left(\frac12 \alpha_2+\frac12 \alpha_0\right)
\\ & =\frac12 \alpha_0+\frac14 \alpha_2+\frac14 \alpha_{-2}
\end{align*}

Therefore the $\{j_k\}$ can be thought of as quantum random variables with distributions $\Psi_k\circ j_k$, and $\{j_k\}_{k\geq0}$ as a quantum stochastic process. Call $\psi$ the \emph{initial state} and $\nu$ the \emph{transition state}.
\end{example}

\bigskip

This can be considered in more generality. A series of calculations, for a general $f=\sum_t\alpha_{t}\delta_t\in F(G)$, leads  towards:
$$j_{k}(f)=\sum_{t_k,t_{k-1},\dots,t_1,t_0\in G}\alpha_{t_k}\delta_{t_kt_{k-1}^{-1}}\otimes\delta_{t_{k-1}t_{k-2}^{-1}}\otimes \cdots\otimes \delta_{t_1t_0^{-1}}\otimes \delta_{t_0}.$$
If the initial distribution is given by the counit, then looking at
\begin{align*}
\Psi_k(j_{k}(f))&=\sum_{t_k,\dots,t_0}\alpha_{t_k}\prod_{i=1}^k\nu(\delta_{t_it_{i-1}^{-1}})\eps(\delta_{t_0})
\\&=\sum_{t_k,\dots,t_0}\alpha_{t_k}\mathbb{P}[\xi_k=t_k]
\\&=\sum_{t_k,\dots,t_0}f(t_k)\mathbb{P}[\xi_k=t_k],
\end{align*}
these calculations yield an expectation so that $\Psi_k\circ j_k=\mathbb{E}_{\Psi_k}$.

\bigskip

\begin{definition}
Let $\G$ be a finite quantum group. If there exists a $\nu\in M_p(\G)$ such that the distribution of the random variables
$$j_k=\Delta^{(k)}:F(\G)\rightarrow \bigotimes_{k+1\text{ copies}}F(\G)$$
are given by
$$\Psi_k=\left(\bigotimes_{k\text{ copies }}\nu\right)\otimes\eps,$$
then the family $\{j_i\}_{i=0}^k$ is called  the \emph{right-invariant random walk on $\G$ driven by $\nu$.}
\end{definition}

\begin{example}
\emph{(Card Shuffling)}
Card shuffling provides a motivation  for the study of random walks on groups and remains a key example. Everyday shuffles such as the overhand shuffle or the riffle shuffle, as well as simpler but more tractable examples such as top-to-random or random transpositions all have the structure of a random walk on $S_{52}$. Each shuffle may be realised as sampling from a probability distribution $\nu\in M_p(S_{52})$. Let $\sigma\in S_{52}$ be any arrangement of the deck:
$$j_{k}(\delta_\sigma)=\sum_{\underset{\sigma_k\cdots \sigma_0=\sigma}{\sigma_i\in S_{52}}}\delta_{\sigma_k}\otimes\delta_{\sigma_1}\otimes\cdots \otimes\delta_{\sigma_0}.$$

\bigskip

 For example, consider the case of repeated random transpositions. A \emph{random transposition} consists chooses two cards at random (with replacement) from the deck and swapping the positions of these two cards. Suppose without loss of generality that the first card chosen is the ace of spades. The probability of choosing the ace of spaces again is 1/52. Swapping the ace the spades with itself leaves the deck unchanged. The choice of the first card is independent hence the probability that the shuffle leaves the deck unchanged is 1/52. What is the probability of transposing two given (distinct) cards?  Consider, again without loss of generality, the probability of transposing  the ace of spades and the ace of hearts. There are two ways this may be achieved: choose $A\spadesuit$-$A\heartsuit$ or choose $A\heartsuit$-$A\spadesuit$. Both of these have probability of 1/52$^2$. Any other given shuffle (not leaving the deck unchanged or transposing two cards) is impossible. Hence repeated shuffles may be modelled as repeatedly sampling by
\beqa
\nu(\delta_s):=\begin{cases}
1/52 & \text{ if }s=e,
\\ 2/52^2 & \text{ if $s$ is a transposition,}
\\ 0 & \text{ otherwise.}
\end{cases}
\enqa
If $f$ is any real-valued function on $S_{52}$, then the distribution of $f$ applied after $k$ transitions is given by $\Psi_k\circ j_k(f)$. For example, consider the function $A\spadesuit:S_{52}\raw \R$, which, if a starting order is specified with the $A\spadesuit$ on the bottom of the deck, is given by
$$A\spadesuit(\sigma)=\sigma(52).$$
The distribution of the position of the $A\spadesuit$ is given by $\Psi_k\circ j_k(A\spadesuit)$.
\end{example}

\bigskip

\begin{example}(Random Walks on the Dual Group $\widehat{G}$)
Let $G$ be a finite group. The dual group $\widehat{G}$, a virtual object when $G$ is non-abelian, is defined by $\C G=:F(\widehat{G})$. Let $\mu\in F(\widehat{G})$ be given by
$$\mu=\sum_{t\in G}a_t\delta^t.$$
The comultiplication is given by $\Delta(\delta^s)=\delta^s\otimes\delta^s$ and so
$$j_k(\mu)=\sum_{t\in G}a_t\left(\bigotimes_{k+1\text{ copies}}\delta^t\right).$$
The counit on $F(\widehat{G})$ is given by $\eps_{\C G}=\ind_{G}$. Using this, and supposing that the random walk is driven by $\nu\in M_p(\widehat{G})$, then
$$\Psi_k(j_k)(\mu)=\sum_{t\in G}a_t\nu(\delta^t)^k.$$
The $\mathrm{C}^*$-algebra $F(\widehat{G})$  is unital with unit $\ind_{\widehat{G}}=\delta^e$ and therefore (Murphy \citep{Murphy} Corollary 3.3.4) $\nu(\ind_{\widehat{G}})=1$ and thus $a_e=1$ and so for $\dsp \Psi_k\raw \delta_e$ it is necessary that $|\nu(\delta^s)|<1$ for all $s\in G\bs \{e\}$.
\end{example}

\bigskip

\subsection*{Stochastic Operators}
At the end of Section \ref{MC}, the stochastic operator approach to quantisation was abandoned in favour of the random variable approach. Given a random walk on a quantum group, it is straightforward to write down the associated stochastic operator. Let $\{j_i\}_{i=0}^k$ be a random walk on a finite quantum group $\G$ driven by $\nu\in M_p(\G)$. The distribution of $j_k$ is given by $\Psi_k$. Consider in particular the distribution of $j_1$:
$$\Psi_1(j_1)=(\nu\otimes \eps)\Delta=\nu\star \eps=\nu,$$
as $\eps$ is the unit for the convolution algebra $\C \G$:
$$\nu\star \eps=(\nu\otimes\eps)\Delta=(\nu\otimes I_{\C})(I_{F(\G)}\otimes\eps )\Delta=(\nu\otimes I_{\C})I_{F(\G)}\cong \nu.$$
Also
\begin{align*}
\Psi_2(j_2)&=(\nu\otimes\nu\otimes\eps)\Delta^{(2)}
\\&=(\nu\otimes\nu\otimes I_{\C}) (I_{F(\G)}\otimes I_{F(\G)}\otimes\eps)(I_{F(\G)}\otimes \Delta)\Delta
\\&\cong(\nu\otimes\nu) (I_{F(\G)}\otimes I_{F(\G)})\Delta
\\&=(\nu\otimes\nu)\Delta=\nu^{\star 2}.
\end{align*}
Similarly it can be shown that
$$\Psi_k(j_k)=\nu^{\star k}.$$

\bigskip

For $\nu\in M_p(\G)$, define $P_\nu \in L(F(\G))$
$$P_\nu=(\nu\otimes I_{F(\G)})\Delta.$$
\newpage
\begin{proposition}\label{propertiesofP} Let $\G$ be a finite quantum group and $\nu\in M_p(\G)$. Then the following hold:
\begin{enumerate}
\item[i.] $P_\nu^T\mu=\nu\star\mu$. Hence, in particular, $P_{\nu^{\star k}}=P_\nu^k$.
\item[ii.] $P_\nu$ is unital and positive.
\item[iii.] $M_p(\G)$ is stable under $P_\nu^T$.
\item[iv.] The map $\nu\raw P_{\nu}^T$ is an algebra homomorphism from $\C \G$ to $L(\C\G)$.
\item[v.] $\dsp P_\nu^T\int_{\G}=\int_{\G}$.
\item[vi.] $P_{\F(a)}b=S(a)\star_A b$ for all $b\in F(\G)=A$.
\end{enumerate}
\begin{proof}
\begin{enumerate}
\item[i.] Let $f\in F(\G)$:
\begin{align*}
P_\nu^T\mu(f)&=\mu(\nu\otimes I_{F(\G)})\Delta(f)
\\&=\mu\left(\sum \nu(f_{(1)})\otimes f_{(2)}\right)
\\&\cong \mu\left(\sum \nu(f_{(1)})f_{(2)}\right)
\\&=\sum \nu(f_{(1)})\mu(f_{(2)})
\\&=(\nu\star\mu)(f).
\end{align*}
\item[ii.] Note that $\Delta$ is unital and $\nu(\mathds{1}_{\G})=1$:
\begin{align*}
P_\nu(\mathds{1}_{\G})&=(\nu\otimes I_{F(\G)})\Delta(\ind_{\G})
\\&=(\nu\otimes I_{F(\G)})(\ind_{\G}\otimes \ind_{\G})
\\&=\nu(\ind_{\G})\ind_{\G}=\ind_{\G}.
\end{align*}
Note that as $\Delta$ is a *-homomorphism --- and $F(\G)^+$ is a convex cone:
\begin{align*}
P_\nu(f^*f)&=(\nu\otimes I_{F(\G)})\Delta(f^*f)
\\&=(\nu\otimes I_{F(\G)})\Delta(f)^*\Delta(f)
\\&=(\nu\otimes I_{F(\G)})\sum f_{(1)}^*f_{(1)}\otimes f_{(2)}^*f_{(2)}
\\&=\sum \nu\left(f_{(1)}^*f_{(1)}\right)f_{(2)}^*f_{(2)}\in F(\G)^+,
\end{align*}
Alternatively note that $P_{\nu}$ is positive as the composition of positive maps.
\item[iii.] This follows from the fact that $M_p(\G)$ is closed under convolution --- a consequence of $\Delta$ being a unital --- and i.
\item[iv.] Let $\varphi\in \C \G$. From i.:
\begin{align*}
P_\nu^TP^T_{\mu}\varphi&=P_\nu^T(\mu\star\varphi)
\\&=\nu\star\mu\star\varphi
\\&=P_{\nu\star \mu}^T\varphi.
\end{align*}
\item[v.] This follows from the fact that $\dsp \varphi\star \int_{\G}=\int_{\G}$ for all $\varphi\in\C\G$ and i.
\item[vi.]  Note that
 \begin{align*}
P_{\F(a)}b&=(\F(a)\otimes I_{F(\G)})\Delta(b)
\\&=(\mathcal{F}(a)\otimes I_{F(\G)})\sum b_{(1)}\otimes b_{(2)}
\\&=\sum b_{(2)}\int_{\G}b_{(1)}a
\\&=\sum b_{(2)}\int_{\G}S(b_{(1)})S(a),
\end{align*}
via $\dsp \int_{\G}\circ S=\int_{\G}$ (Theorem 2.2.6, \citep{Timm}) and the traciality of the Haar measure (Theorem \ref{propofHaar}, this work). Looking at (\ref{conv2}), note this is nothing other than $S(a)\star_A b$\qquad$\bullet$
\end{enumerate}
\end{proof}
\end{proposition}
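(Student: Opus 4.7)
The plan is to prove each item in turn, working almost entirely from the definitions of $P_\nu$, of convolution on $\C\G$, and of $\star_A$ in Sweedler notation. Most items unwind to one-line manipulations; the heavier ingredients from Section \ref{Haar} (invariance, $\int_\G\circ S=\int_\G$, and traciality of $\int_\G$) are only needed for (v) and (vi).

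For (i) I would evaluate $P_\nu^T\mu$ against a test element $f$: $\mu(P_\nu f)=\mu((\nu\otimes I_{F(\G)})\Delta f)=\sum \nu(f_{(1)})\mu(f_{(2)})=(\nu\otimes\mu)\Delta(f)=(\nu\star\mu)(f)$. The corollary $P_{\nu^{\star k}}=P_\nu^k$ follows by iterating this identity and dualising: $(P_\nu^T)^k\mu=\nu^{\star k}\star\mu=P_{\nu^{\star k}}^T\mu$ for every $\mu\in\C\G$. For (ii), unitality is immediate since $\Delta$ is unital and $\nu(\mathds{1}_\G)=1$, giving $P_\nu(\mathds{1}_\G)=(\nu\otimes I_{F(\G)})(\mathds{1}_\G\otimes \mathds{1}_\G)=\mathds{1}_\G$. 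For positivity I would observe that $\Delta$ is a *-homomorphism hence positive, and that the slice map $\nu\otimes I_{F(\G)}$ is positive because $\nu$ is a state; $P_\nu$ is the composition of the two. (One can also compute directly that $P_\nu(f^*f)=\sum\nu(f_{(1)}^*f_{(1)})f_{(2)}^*f_{(2)}$ is a positive linear combination of positive elements, using that $\Delta$ preserves the involution.) Item (iii) is then an immediate consequence of (i) and (ii): $\nu\star\mu$ is a state whenever $\nu,\mu\in M_p(\G)$ because convolution is unital ($\nu\star\mu(\mathds{1}_\G)=1$) and positivity-preserving.

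For (iv) the required multiplicativity is just associativity of convolution: $P_\nu^T P_\mu^T\varphi=\nu\star(\mu\star\varphi)=(\nu\star\mu)\star\varphi=P_{\nu\star\mu}^T\varphi$ for every $\varphi\in\C\G$. Item (v) follows from left-invariance of the Haar state: $\nu\star\int_\G=(\nu\otimes\int_\G)\Delta$, and applying the left-invariance identity $(I_{F(\G)}\otimes\int_\G)\Delta(a)=\int_\G(a)\mathds{1}_\G$ together with $\nu(\mathds{1}_\G)=1$ returns $\int_\G(a)$, so $\nu\star\int_\G=\int_\G$ and (i) repackages this as $P_\nu^T\int_\G=\int_\G$.

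The only step requiring real care is (vi). Starting from $P_{\F(a)}b=(\F(a)\otimes I_{F(\G)})\Delta(b)=\sum \F(a)(b_{(1)})\,b_{(2)}=\sum b_{(2)}\int_\G b_{(1)}a$, I would use $\int_\G\circ S=\int_\G$ (Timmermann's Theorem 2.2.6) together with antimultiplicativity of $S$ to rewrite $\int_\G b_{(1)}a=\int_\G S(a)S(b_{(1)})$, then use traciality of $\int_\G$ from Theorem \ref{propofHaar} to swap to $\int_\G S(b_{(1)})S(a)$. Comparing with the defining formula (\ref{conv2}) of $\star_A$, the sum $\sum b_{(2)}\int_\G S(b_{(1)})S(a)$ is precisely $S(a)\star_A b$. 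This is the step I would watch most carefully: the order of factors under $S$ and the precise placement of the antipode versus the trace swap are easy to invert, and this is the one point in the proof where the finite-dimensional hypotheses ($S^2=I_{F(\G)}$ and traciality of the Haar state) are genuinely essential.
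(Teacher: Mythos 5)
Your proposal is correct and follows essentially the same route as the paper's own proof for all six items: evaluation against a test element for (i), unitality of $\Delta$ plus positivity of the slice map for (ii), closure of states under convolution for (iii), associativity of convolution for (iv), invariance of the Haar state for (v), and the combination of $\int_{\G}\circ S=\int_{\G}$ with traciality for (vi). The only minor quibble is your closing remark: of the finite-dimensional consequences in Theorem \ref{propofHaar}, step (vi) genuinely uses only traciality, not $S^2=I_{F(\G)}$.
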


\bigskip

Amongst other results, this shows that
$$(P_\nu^T)^k\eps=\nu^{\star k}\qquad\text{and}\qquad P_\nu^k=(\nu^{\star k}\otimes I_{F(\G)})\Delta,$$
so that, as Franz and Gohm state \citep{franzgohm}, the semigroup of stochastic operators $\{P_{\nu}^k\}$ and the semigroup $\{\nu^{\star k}\}$ of convolution powers of the driving probability are essentially the same thing. For connections to quantum mechanics see Majid \citep{Majid1,Majid3}.

\chapter{Distance to Random\label{dist}}
\wxn

\section{Introduction}
In the classical case, under mild conditions a random
walk on a group converges to the uniform distribution. Therefore, initially
the walk is `far' from random and eventually the walk is `close' to random.
An appropriate question therefore is, given $\eps > 0$, how large should
$k$ be so that the walk is $\eps$-close to random after $k$ transitions? The first problem
here is to have a measure of `close to random'. This chapter introduces
a measure of `closeness to random' for measures on a finite quantum group.

\bigskip

Note that in the classical case $\Psi_k(j_k(f))$ is nothing but $\nu^{\star k}(f)$. In the ergodic case, $\nu^{\star k}$ converges to the uniform distribution $\pi$ and so the elements of the vector $\nu^{\star k}$ all converge to $1/|G|$ so that we have
$$\Psi_k(j_k(f))\underset{\text{`}k\raw\infty\text{'}}{\longrightarrow} \sum_{t\in G}f(\delta^t)\frac{1}{|G|}=\int_G f.$$
In other words the distribution $\nu^{\star k}=\Psi_k\circ j_k$ `converges' to the Haar measure on $G$.  In the quantum case, given a random walk on a quantum group, $\G$, and an appropriately chosen driving probability $\nu\in M_p(\G)$, the distribution of the $j_k$, also given by $\nu^{\star k}$, can also `converge' to the Haar measure.

\newpage

\section{Measures of Randomness}
The preceding remarks indicate that when $\nu^{\star k}\raw \dsp\int_{\G}$  a measure of closeness to random can be defined by defining a metric on $M_p(\G)$ or putting a norm on $\C\G\supseteq M_p(\G)$. Then a precise mathematical question may be asked: given $\eps>0$, how large should $k$ be so that $d\left(\nu^{\star k},\dsp\int_{\G}\right)<\eps$ or  $\left\|\nu^{\star k}-\dsp\int_{\G}\right\|<\eps$? In mirroring the classical notation, also denote the Haar measure on $\G$ by $\pi:=\dsp\int_{\G}$ and refer to it as the \emph{random distribution}.

\bigskip

In the classical case, the norm used is the \emph{total variation distance} and, for $\nu,\,\mu\in M_p(G)$, it comes in three equivalent guises:
$$\|\nu-\mu\|_{\text{TV}}=\sup_{S\subset G}|\nu(S)-\mu(S)|=\frac{1}{2}\sup_{\|f\|_{\infty}\leq 1}|\nu(f)-\mu(f)|=\frac12 \|\nu-\mu\|_{1}.$$
Although the first `$\sup_{S\subset G}$' is popular among the classical theorists, a na\"{i}ve translation/quantisation, `$\sup_{S\subset \G}$', needs work and indeed it is not immediately obvious how to define a quantum total variation distance.

\bigskip

It will be seen, however, that the $\sup_{S\subset G}$ presentation can be salvaged as follows. Consider a `subset' $\mathbb{B}\subset \G$ given by a subspace $F(\mathbb{B})\subset F(\G)$. It will be seen that, where the `indicator function' on $\mathbb{B}$ given by $\ind_{\mathbb{B}}:=p_{F(\mathbb{B})}$ --- the projection onto $F(\mathbb{B})$ --- that $\phi=2\ind_{\mathbb{B}}-\ind_{\G}$ is a suitable test function as $\|\phi\|_{F(\G)}^\infty=1$. That is for any subspace $F(\mathbb{B})\subset F(\G)$
\begin{align*}
\|\nu-\mu\|&\geq \frac12 |\nu(2\ind_{\mathbb{B}}-\ind_{\G})-\mu(2\ind_{\mathbb{B}}-\ind_{\G})|
\\&=\frac12|2\nu(\ind_{\mathbb{B}})-1-2\mu(\ind_{\mathbb{B}})+1|
\\&= |\nu(\ind_{\mathbb{B}})-\mu(\ind_{\mathbb{B}})|.
\end{align*}
An interesting question is, for a given $\nu\in M_p(\G)$, does there exist a $\mathbb{B}\subset \G$ such that
$$\|\nu-\mu\|=|\nu(\ind_{\mathbb{B}})-\mu(\ind_{\mathbb{B}})|\,\text{?}$$
The answer is yes in the classical case. Take $F(B)=\langle \delta_s\in G:\nu(\delta_s)>\mu(\delta_s)\rangle$. An answer in the quantum case is not given in this work.

 \newpage
 To measure $\nu^{\star k}-\pi$, three features that such a norm must have include
 \begin{enumerate}
 \item Agreement in the classical case:
 $$\|\mu\|_{\text{QTV}}=\|\mu\|_{\text{TV}}.$$
 \item A Cauchy--Schwarz-type inequality:
$$\|\mu\|_{\text{QTV}}\lessapprox \|\mu\|_2,$$
as the Diaconis--Shahshahani theory generates upper bounds for $\|\nu^{\star k}-\pi\|_2$.
\item  A presentation as a supremum
$$\|\mu\|_\text{QTV}=\sup_{s\in S}F(s,\mu).$$
This allows for the generation of lower bounds via `test elements' $s_0\in S$:
$$\|\nu-\pi\|\geq F(s_0,\nu-\pi).$$
 \end{enumerate}
A closer analysis of the classical case reveals the correct norm to use. On the one hand the conclusion is unsatisfactory because the quantum total variation distance is a norm on functions $F(\G)$ rather than on probability measures $M_p(\G)$. On the other it satisfies all of the three conditions and in particular is identical to the second `guise':
$$\|\nu-\pi\|_{\text{QTV}}=\frac{1}{2}\sup_{\phi\in F(G)\,:\,\|\phi\|_{\infty}\leq 1}|\nu(\phi)-\pi(\phi)|,$$
although identifying when $\|\phi\|_\infty\leq 1$ may be a non-trivial task.

\bigskip

Let $V$ be a finite-dimensional vector space and denote by $\|\cdot\|_{(\mathcal{L}^p,\mathcal{B})}$ the $p$-norm with respect to the basis $\mathcal{B}:=\{e_i\}_{i=1}^n$:
$$v=\sum_{i=1}^na_ie_i\Rightarrow \|v\|_{(\mathcal{L}^p,\mathcal{B})}=\left(\sum_{i=1}^n|a_i|^p\right)^{1/p}.$$
For example, if $\mathcal{B}=\{\delta_t\,:\,t\in G\}\subset F(\Z_3)$ is the standard basis then
$$\|3\delta_0+4\delta_1\|_{(\mathcal{L}^2,\mathcal{B})}=5.$$
In the richer category of von Neumann algebras with a normal, faithful trace $\tau$, for each $a\in A$ a  von Neumann algebra and $1\leq p<\infty$,
$$\|a\|_p^A=\left(\tau|a|^p\right)^{1/p}$$
defines a norm on $A$ \citep{PX03}. Set the infinity norm equal to the operator norm:
$$\|a\|_\infty^A=\|a\|.$$
In the case of a classical $A=F(G)$, with the standard basis $\mathcal{B}$, and with the normal, faithful trace given by the Haar measure;
\begin{align*}
\|f\|_1^{F(G)}&=\int_G |f|=\frac{1}{|G|}\sum_{t\in G}\delta^t(f^*f)^{1/2}=\frac{1}{|G|}\|f\|_{(\mathcal{L}^1,\mathcal{B})}\text{ and}
\\ \|f\|_2^{F(G)}&=\left(\int_G f^*f\right)^{1/2}=\left(\frac{1}{|G|}\sum_{t\in G}\delta^t(f^*f)\right)^{1/2}=\frac{1}{\sqrt{|G|}}\|f\|_{(\mathcal{L}^2,\mathcal{B})}
\end{align*}
Sections 2.A, 2.B, 2.C and especially 3.B of Diaconis \citep{PD} (covered in Sections 2.1, 2.2 and 3.2 of the MSc thesis \citep{MSc}) involve a blurring of the lines between elements of $F(G)$ and elements of $\C G$. Consider the vector space $\C G$ with basis $\widehat{\mathcal{B}}=\{\delta^t\,:\,t\in G\}$. The classical total variation norm is equal to
$$\|\nu-\pi\|_{\text{TV}}=\frac12 \|\nu-\pi\|_{\left(\mathcal{L}^1,\widehat{\mathcal{B}}\right)}.$$
However this $(\mathcal{L}^1,\widehat{\mathcal{B}})$-norm is not easily related to
$$\|\nu-\pi\|_1^{\C G}=\int_{\widehat{G}}\left(\left((\nu-\pi)^*\star(\nu-\pi)\right)^{1/2}\right).$$
For example, take
$$ \mu=\frac{1}{2}\delta^0+\frac13 \delta^1+\frac{1}{6}\delta^2\in M_p(\Z_3)\subset\C \Z_3.$$
While $\|\mu\|_{\left(\mathcal{L}^1,\widehat{\mathcal{B}}\right)}=1$,
\begin{align*}
\|\mu\|_1^{\C \Z_3}&=\int_{\widehat{\Z_3}}(\mu^*\star\mu)^{1/2}
\\&=\int_{\widehat{\Z_3}}\left(\frac{7}{18}\delta^0+\frac{11}{36}\delta^1+\frac{11}{36}\delta^2\right)^{1/2},
\end{align*}
where the fact that
$$\left(\sum_{t\in G}\alpha_t\delta^t\right)^*=\sum_{t\in G}\overline{\alpha_t}\delta^{t^{-1}}$$
was used. There is a potential confusion now because while all elements of $M_p(G)$ are positive functionals not all of them are positive in the $\mathrm{C}^*$-algebra $\C G$. For $\varphi=\sum_{t\in G}\alpha_t\delta^t\in\C G$ to be positive in the $\mathrm{C}^*$-algebra $\C G$ there must a $\phi=\sum_{t}\beta_t\delta^t\in\C G$ such that $\varphi=\phi^*\star \phi$. That is there must exist complex constants $\{\beta_t\,:\,t\in G\}$ such that
$$\varphi=\sum_{s\in G}\left(\sum_{t\in G} \overline{\beta_t}\beta_{ts}\right)\delta^s.$$
In particular, $\mu$ given above is not positive. Note that
$$\varphi=\left(\frac{1}{3}+\frac{1}{9}\sqrt{3}\right)\delta^0+\left(\frac{1}{3}-\frac{1}{18}\sqrt{3}\right)\delta^1+\left(\frac{1}{3}-\frac{1}{18}\sqrt{3}\right)\delta^2$$
is a square root of $\mu^*\star\mu$. Also $\varphi$ is positive because it is equal to $\phi^*\star \phi$ where
$$\phi=\left(\frac{1}{2}-\frac{1}{6}i\sqrt{3+2\sqrt{3}}\right)\delta^0+\left(\frac{1}{2}+\frac{1}{6}i\sqrt{3+2\sqrt{3}}\right)\delta^1.$$

\bigskip

The positive $\varphi=|\mu|$ can be found by concretely realising $\C\Z_3$ via ($\omega=e^{2\pi i/3}$)
$$\delta^0=\left(\begin{array}{ccc} 1 & 0 & 0 \\ 0 & 1 & 0 \\ 0 & 0 & 1 \end{array}\right),\quad \delta^1=\left(\begin{array}{ccc} 1 & 0 & 0 \\ 0 & \omega & 0 \\ 0 & 0 & \omega^2 \end{array}\right),\quad \delta^2=\left(\begin{array}{ccc} 1 & 0 & 0 \\ 0 & \omega^2 & 0 \\ 0 & 0 & \omega \end{array}\right),$$
writing $\mu$ in this basis and finding that
$$|\mu|=\left(\begin{array}{ccc} 1 & 0 & 0 \\ 0 & \sqrt{\frac{1}{12}} & 0 \\ 0 & 0 & \sqrt{\frac{1}{12}} \end{array}\right).$$
When written in the standard basis this is positive and the same as $\varphi$ above.

\bigskip

Therefore
\begin{align*}
\|\mu\|_{1}^{\C \Z_3}=\int_{\widehat{\Z_3}}\varphi &=\eps(\mathcal{F}^{-1}(\varphi))
\\&=\eps\left(\left(1+\frac{1}{\sqrt{3}}\right)\delta_0+\left(1-\frac{1}{6}\sqrt{3}\right)\delta^1+\left(1-\frac{1}{6}\sqrt{3}\right)\delta^2\right)
\\&=1+\frac{1}{\sqrt{3}},
\end{align*}
as $$\mathcal{F}(\delta_s)=\delta^s/|G|\Leftrightarrow \mathcal{F}^{-1}(\delta^s)=|G|\,\delta_s$$ for classical groups. Therefore, with the $(\mathcal{L}^1,\widehat{\mathcal{B}})$-norm equal to one and this norm giving $1+1/\sqrt{3}$, it is clear that the `one-norm' as used by the classical theorists is not  a scalar multiple of $\|\cdot\|_{1}^{\C G}$. Furthermore, this rules out using a multiple of $\|\cdot\|_1^{\C \G}$ to define quantum total variation distance, as although such a norm does satisfy a useful Cauchy--Schwarz inequality\footnote{$\|\eps\|_2^{\C\G}=\sqrt{|\G|}$ can be shown using the Diaconis--Van Daele Inversion Theorem (later)}
$$\|\nu\|_1^{\C \G}=\|\eps\star \nu\|_1^{\C \G}\leq \|\eps\|_2^{\C \G}\|\nu\|_2^{\C \G}\leq\sqrt{|\G|}\|\nu\|_2^{\C\G},$$
and has a presentation as a supremum
$$\|\nu-\pi\|_1^{\C \G}=\sup_{\mu\in \C \G\,:\,\|\mu\|_{\infty}^{\C \G}\leq 1}\left|\int_{\widehat{\G}}\left((\nu-\pi)\star \mu\right)\right|,$$
the fact that, in the classical case,
$$\|\nu-\pi\|_1^{\C G}\neq k\cdot \|\nu-\pi\|_{\text{TV}},$$
means that this norm is not going to be the preferred option.

\bigskip

 The total variation distance \emph{is} however related to the norm $\|\cdot\|_{1}^{F(G)}$. Consider a $\nu=\sum_t \nu(\delta_t)\delta^t\in M_p(G)$ and consider $\tilde{\nu}\in F(G)$ given by
$$\tilde{\nu}=\sum_{t\in G}\nu(\delta_t)\delta_t,$$
i.e. viewing the function $\tilde{\nu}$ as the probability measure $\nu$. Where $\mathcal{B}$ is the standard basis of $F(G)$, note that \begin{align*}
\|\nu\|_{\left(\mathcal{L}^1,\widehat{\mathcal{B}}\right)}&=\|\tilde{\nu}\|_{\left(\mathcal{L}^1,\mathcal{B}\right)}
\\&=\sum_{t\in G}\delta^t|\tilde{\nu}|
\\&=|G|\int_G|\tilde{\nu}|=|G|\|\tilde{\nu}\|_1^{F(G)},
\end{align*}
Therefore, again in the classical case,
\begin{align*}
\|\nu-\pi\|_{\text{TV}}&=\frac{1}{2}\|\nu-\pi\|_{(\mathcal{L^1},\widehat{\mathcal{B}})}
\\&=\frac{|G|}{2}\|\tilde{\nu}-\tilde{\pi}\|_1^{F(G)}
\\&=\frac{1}{2}\||G|\tilde{\nu}-|G|\tilde{\pi}\|_1^{F(G)}
\\&=\frac{1}{2}\|\mathcal{F}^{-1}(\nu)-\mathcal{F}^{-1}(\pi)\|_1^{F(G)}
\\&=\frac{1}{2}\|\mathcal{F}^{-1}(\nu-\pi)\|_1^{F(G)}.
\end{align*}
In the classical case, the relationship between the four norms $\|\cdot\|_2^{\C G}$, $\|\cdot\|_{\left(\mathcal{L}^2,\widehat{\mathcal{B}}\right)}$, $\|\cdot\|_2^{F(G)}$ and $\|\cdot\|_{\left(\mathcal{L}^2,\mathcal{B}\right)}$ can be examined. Indeed considering
$$\nu=\sum_{t\in G}\alpha_t\delta^t\in \C G$$
and then
$$\tilde{\nu}=\sum_{t\in G}\alpha_t \delta_t=\mathcal{F}^{-1}(\nu/ |G|)\in F(G),$$
and defining
$$\|\nu\|_{\mathcal{L}^2}=\|\nu\|_{\left(\mathcal{L}^2,\widehat{\mathcal{B}}\right)}=\|\tilde{\nu}\|_{\left(\mathcal{L}^2,\mathcal{B}\right)},$$
\begin{align*}
\left(\|\nu\|_2^{\C G}\right)^2&=\int_{\widehat{G}}(\nu^*\star\nu)
\\&=\eps\circ \left(\mathcal{F}^{-1}\left(\sum_{s\in G}\left(\sum_{t\in G} \overline{\alpha_t}\alpha_{ts}\right)\delta^s\right)\right)
\\ &=|G|\eps\left(\sum_{s\in G}\left(\sum_{t\in G} \overline{\alpha_t}\alpha_{ts}\right)\delta_s\right)
\\&=|G|\sum_{t\in G}\overline{\alpha_t}\alpha_t
\\&=|G|\sum_{t\in G}|\alpha_t|^2
\\&=|G|\|\nu\|_{\mathcal{L}^2}^2.
\end{align*}
Consider also
\begin{align*}
\left(\|\tilde{\nu}\|_2^{F(G)}\right)^2&=\int_G\tilde{\nu}^*\tilde{\nu}=\int_G|\tilde{\nu}|^2
\\&=\frac{1}{|G|}\sum_{t\in G}\delta^t|\tilde{\nu}|^2
\\&=\frac{1}{|G|}\sum_{t\in G}|\alpha_t|^2=\frac{1}{|G|}\|\nu\|_{\mathcal{L}^2}^2
\\ \Raw \|\nu\|_2^{\C G}&=|G|\|\tilde{\nu}\|_2^{F(G)}=\|\mathcal{F}^{-1}(\nu)\|_2^{F(G)}.
\end{align*}
This is nothing but a Plancherel theorem and as has been seen in Section \ref{quantumgroupring}, the Plancherel theorem is also true in the quantum setting:
$$\|a\|_2^{F(\G)}=\|\mathcal{F}(a)\|_2^{\C \G}.$$
In fact, analysing the classical case in more detail it is clear that
$$\|\nu-\pi\|_{\text{TV}}=\frac{1}{2}\|\mathcal{F}^{-1}(\nu)-\mathcal{F}^{-1}(\pi)\|_1^{F(G)}$$
is equivalent to the definition used by the classical theorists. Indeed the full chain of inequalities and equalities is:
\begin{align*}
\|\nu-\pi\|_{\text{TV}}&:=\frac{1}{2}\|\mathcal{F}^{-1}(\nu)-\mathcal{F}^{-1}(\pi)\|_1^{F(G)}
\\&=\frac{1}{2}\|\mathds{1}_{G}\mathcal{F}^{-1}(\nu-\pi)\|_1^{F(G)}
\\&\underset{\text{C--S}}{\leq }\frac{1}{2}\|\mathds{1}_{G}\|_2^{F(G)}\cdot \|\mathcal{F}^{-1}(\nu-\pi)\|_2^{F(G)}
\\&\underset{\text{Planch.}}{=}\frac{1}{2}\|\nu-\pi\|_2^{\C G},
\end{align*}
and then representation theory is used to write down an explicit formula for $\|\nu-\pi\|_2^{\C G}$.

Consider again the norm $\|\cdot\|_p^A$. Noting that $\C \G$ and $F(\G)$ are indeed von Neumann algebras, with the trace given by their Haar measure, the following result may be used.

\begin{theorem}
(Properties of von Neumann $p$-norms\label{props})
Let $A$ be a finite von Neumann algebra equipped with a normal, faithful trace $\tau$. Denote the completion of $(A,\|\cdot\|_p^{A})$ by $\mathcal{L}^p(A)$. If $x,\,y\in\mathcal{L}^2(A)$ then $xy\in\mathcal{L}^1(A)$ and the Cauchy--Schwarz inequality holds:
$$\|xy\|^{A}_1\leq \|x\|^{A}_2\|y\|^{A}_2.$$
Furthermore, if $\|\cdot\|^{A}_{\infty}$ is defined as the operator norm of the von Neumann algebra, then the following supremum-presentations hold:
$$\|b\|^{A}_{\infty}:=\sup_{\|a\|_1^{A}\leq 1}|\tau(ba)|,$$
$$\|a\|_1^{A}=\sup_{\|b\|_{\infty}^{A}\leq1}|\tau(ab)|.$$
\end{theorem}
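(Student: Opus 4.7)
The plan is to establish Cauchy--Schwarz on the dense subalgebra $A$ and extend by continuity to $\mathcal{L}^2(A)$, and then to handle the two duality statements separately by exploiting polar decomposition inside $A$. For Cauchy--Schwarz, I would polar-decompose $xy = v|xy|$ inside $A$, giving
$$\|xy\|_1^A = \tau(|xy|) = \tau(v^*xy).$$
The form $\langle a,b\rangle_\tau := \tau(b^*a)$ is a positive-definite inner product (by faithfulness of $\tau$) whose associated norm is $\|\cdot\|_2^A$. Rewriting $\tau(v^*xy) = \langle y, x^*v\rangle_\tau$ and applying the ordinary Hilbert-space Cauchy--Schwarz yields $|\tau(v^*xy)| \leq \|y\|_2^A\,\|x^*v\|_2^A$, and the auxiliary bound $\|x^*v\|_2^{A\,2} = \tau(v^*xx^*v) = \tau(xx^*vv^*) \leq \tau(xx^*) = \|x\|_2^{A\,2}$ follows from traciality together with $vv^* \leq 1$. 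This simultaneously establishes $xy \in \mathcal{L}^1(A)$ and the inequality.

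For $\|a\|_1^A = \sup_{\|b\|_\infty^A \leq 1}|\tau(ab)|$, the $\geq$ direction is Hölder: operator-monotonicity of $t \mapsto \sqrt{t}$ applied to $b^*b \leq \|b\|_\infty^{A\,2}\cdot 1$ gives $|ab| \leq \|b\|_\infty^A\,|a|$, hence $|\tau(ab)| \leq \|ab\|_1^A \leq \|b\|_\infty^A\|a\|_1^A$. Equality is realised by the polar decomposition $a = u|a|$: take $b := u^*$, a partial isometry with $\|u^*\|_\infty^A \leq 1$, so that
$$\tau(au^*) = \tau(u|a|u^*) = \tau(|a|\,u^*u) = \tau(|a|) = \|a\|_1^A,$$
where the middle step is traciality and the penultimate uses that $u^*u$ is the support projection of $|a|$.

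The main obstacle is the identity $\|b\|_\infty^A = \sup_{\|a\|_1^A \leq 1}|\tau(ba)|$. The $\geq$ direction is again the Hölder bound from the previous paragraph. For the reverse, given $\varepsilon > 0$, I would select a non-zero spectral projection $p$ of $|b|$ associated to the interval $[\|b\|_\infty^A - \varepsilon,\|b\|_\infty^A]$ (which exists because $\|b\|_\infty^A \in \sigma(|b|)$ and, $A$ being a finite von Neumann algebra with faithful trace, $\tau(p)>0$), polar-decompose $b = v|b|$, and test against $a := pv^*/\tau(p)$. Because $p \leq v^*v$ (the support of $|b|$), $vp$ is a partial isometry, whence $|pv^*| = (vpv^*)^{1/2} = vpv^*$ is itself a projection; traciality then yields $\|a\|_1^A = \tau(vpv^*)/\tau(p) = \tau(pv^*v)/\tau(p) = 1$. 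Similarly, $|b|p \geq (\|b\|_\infty^A - \varepsilon)p$ in the operator order forces $\tau(ba) = \tau(v|b|pv^*)/\tau(p) \geq (\|b\|_\infty^A - \varepsilon)\tau(vpv^*)/\tau(p) = \|b\|_\infty^A - \varepsilon$, and $\varepsilon \downarrow 0$ concludes. The delicate point is the interaction between the spectral projection $p$ and the partial isometry $v$; the crucial observation making everything collapse cleanly is that $p$ lies below the initial projection $v^*v$ of $v$, which is immediate from $p$ being supported in the range of $|b|$.
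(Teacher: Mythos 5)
The paper offers no proof of this theorem at all --- it simply cites standard references on noncommutative $\mathcal{L}^p$-spaces --- so your self-contained argument already goes beyond the text, and most of it is correct. The Cauchy--Schwarz step is fine: with $xy=v|xy|$ one has $\tau(|xy|)=\tau(v^*xy)=\langle y,x^*v\rangle_\tau$, and $\tau(v^*xx^*v)=\tau(xx^*vv^*)\leq\tau(xx^*)$ by traciality and $vv^*\leq 1$. The two attainment arguments are also correct: $b=u^*$ realises $\|a\|_1^A$ because $u^*u$ is the support of $|a|$, and your test element $a=pv^*/\tau(p)$ works precisely because $p\leq v^*v$ makes $vp$ a partial isometry, so that $|pv^*|=vpv^*$ is a projection of trace $\tau(p)$ and $\tau(v|b|pv^*)\geq(\|b\|_\infty^A-\varepsilon)\tau(p)$. (For the paper's purposes $A$ is finite-dimensional, so the density/extension issues and the existence of polar decompositions for general elements of $\mathcal{L}^1(A)$ are immaterial.)

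The one step that fails is your justification of H\"older: the operator inequality $|ab|\leq\|b\|_\infty^A\,|a|$ is false. In $M_2(\C)$ take $a=\left(\begin{smallmatrix}1&0\\0&0\end{smallmatrix}\right)$ and the flip unitary $b=\left(\begin{smallmatrix}0&1\\1&0\end{smallmatrix}\right)$; then $\|b\|_\infty^A=1$ and $|a|=a$, but $|ab|=\left(\begin{smallmatrix}0&0\\0&1\end{smallmatrix}\right)$, which is not dominated by $|a|$. The underlying problem is that $b^*cb\leq\|b\|^2c$ fails for general positive $c$ (conjugation moves supports), so operator monotonicity of $t\mapsto\sqrt{t}$ has no valid premise to act on. Fortunately only the trace inequality $\tau(|ab|)\leq\|b\|_\infty^A\,\tau(|a|)$ is needed, and it follows from the Cauchy--Schwarz inequality you have already established: write $a=u|a|$ and $ab=\left(u|a|^{1/2}\right)\left(|a|^{1/2}b\right)$, so that $\|ab\|_1^A\leq\|u|a|^{1/2}\|_2^A\,\||a|^{1/2}b\|_2^A$; here $\tau(|a|^{1/2}u^*u|a|^{1/2})=\tau(|a|)$ since $u^*u$ is the support of $|a|$, and $\tau(b^*|a|b)=\tau(|a|^{1/2}bb^*|a|^{1/2})\leq\left(\|b\|_\infty^A\right)^2\tau(|a|)$, whence $\|ab\|_1^A\leq\|b\|_\infty^A\|a\|_1^A$. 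With that substitution in place of the false pointwise inequality, your proof is complete.
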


\begin{proof}
Standard results of non-commutative $\mathcal{L}^p$-spaces: see \citep{TAK02} and \citep{PX03} \,\,\,$\bullet$
\end{proof}

\bigskip

Define for $\nu\in \C G$ and $\pi$ the Haar measure on $\G$
$$\|\nu-\pi\|_{\text{QTV}}:=\frac{1}{2}\|\mathcal{F}^{-1}(\nu-\pi)\|_1^{F(\G)}.$$
This definition satisfies the three properties that a quantum total variation distance must have.

\bigskip

Earlier calculations show, in the classical case with $\nu\in F(G)$, that:
$$\|\nu-\pi\|_{QTV}=\|\nu-\pi\|_{\text{TV}}.$$
Secondly, using the Cauchy--Schwarz inequality for $\|\cdot\|_1^{F(\G)}$ and the Plancherel Theorem \ref{Planch} note
\begin{align*}
\|\nu-\pi\|_{\text{QTV}}&=\frac{1}{2}\|\mathcal{F}^{-1}(\nu-\pi)\|_1^{F(\G)}
\\&=\frac{1}{2}\|\mathds{1}_{\G}\mathcal{F}^{-1}(\nu-\pi)\|_1^{F(\G)}
\\&\underset{\text{C--S}}{\leq }\frac12 \|\mathds{1}_{\G}\|_2^{F(\G)}\|\mathcal{F}^{-1}(\nu-\pi)\|_2^{F(\G)}
\\ &\underset{\text{Planch.}}{=}\frac12 \|\nu-\pi\|_2^{\C\G}.
\end{align*}
Finally,
\begin{align}
\|\nu-\pi\|_{\text{QTV}}&=\frac{1}{2}\|\mathcal{F}^{-1}(\nu-\pi)\|_1^{F(\G)}\nonumber
\\ &\underset{\ref{props}}{=}\frac12 \sup_{\phi\in F(\G)\,:\,\|\phi\|^{F(\G)}_{\infty}\leq 1}\left|\int_{\G}\left(\left(\mathcal{F}^{-1}(\nu-\pi)\right)\phi\right)\right|\nonumber
\end{align}
As the Haar measure is tracial (Theorem \ref{propofHaar})
\begin{align}
\Raw \|\nu-\pi\|_{\text{QTV}}&=\frac12 \sup_{\phi\in F(\G)\,:\,\|\phi\|^{F(\G)}_{\infty}\leq 1}\left|\mathcal{F}\left(\mathcal{F}^{-1}(\nu-\pi)\right)\phi\right|\nonumber
\\&=\frac12 \sup_{\phi\in F(\G)\,:\,\|\phi\|^{F(\G)}_{\infty}\leq 1}\left|(\nu-\pi)\phi\right|\nonumber
\\&=\frac12 \sup_{\phi\in F(\G)\,:\,\|\phi\|^{F(\G)}_{\infty}\leq 1}\left|\nu(\phi)-\pi(\phi)\right|.\label{lb}
\end{align}
In particular, if $\phi$ has zero expectation under the Haar measure,
$$\|\nu-\pi\|_{\text{QTV}}\geq \frac{1}{2}|\nu(\phi)|.$$
There is potentially a problem in easily identifying when $\|\phi\|_{\infty}^{F(\G)}\leq 1$. Later it will be seen that the matrix elements of one dimensional representations are particularly nice for generating lower bounds.

\bigskip

Altogether then, $\|\cdot\|_{\text{QTV}}$ satisfies all the desirable properties of a quantum total variation distance and from now on it will simply be denoted by $\|\cdot\|$ and called the total variation distance.

\bigskip

In the classical case, standard results about the norms of matrices can be used to show that the total variation distance is decreasing in $k$. In the truly quantum case things are not as straightforward. Using $\mathrm{C}^*$-algebraic machinery, it is not difficult to show that a \emph{quantum separation distance} is decreasing in $k$.

\bigskip

Define a norm on $\C\G$ by $\|\mu\|_{\infty}=\|\mathcal{F}^{-1}(\mu)\|_\infty^{F(\G)}$. Recall that in the classical case, commutativity of $F(G)$ means that $\|\cdot\|_\infty^{F(G)}$, the operator norm, is nothing but the supremum norm. Let $\nu=\sum_{t\in G}\nu(\delta_t)\delta^t\in\C G$ and consider:
\begin{align*}
\|\nu-\pi\|_{\infty}&=\|\mathcal{F}^{-1}(\nu-\pi)\|_{\infty}^{F(G)}
\\&=\left\||G|\left(\sum_{t\in G}\nu(\delta_t)\delta_t-\frac{1}{|G|}\ind_{G}\right)\right\|_{\infty}^{F(G)}
\\&=|G|\max_{t\in G}\left|\frac{1}{|G|}-\nu(\delta_t)\right|.
\end{align*}
This is precisely the classical separation `distance'\footnote{it is not actually a metric} used by e.g. Aldous and Diaconis \citep{chenthree} except for the absolute value. Not worrying about this slight difference  (as it will not be used in the sequel), for a fixed $\nu\in M_p(\G)$, call by the quantum separation distance the quantity $s(k):=\|\nu^{\star k}-\pi \|_{\infty}$.

\bigskip

\begin{theorem} The quantum separation distance is decreasing in $k$.
\begin{proof}
Suppose that $\nu=\F(a)$. Note by Proposition \ref{propertiesofP} that
$$P_{\F(S(a))}a=a\star_A a$$
so that $P_{{\F(S(a))}}a^{\star k}=a^{\star k+1}$. Recalling that $P_{\F(S(a))}(\ind_{\G})=\ind_{\G}$,
\begin{align*}
s(k+1)=\|\nu^{\star k+1}-\pi\|_\infty&=\|\F^{-1}(\nu^{\star k+1}-\pi)\|_\infty^{F(\G)}
\\&=\|a^{\star {k+1}}-\ind_{\G}\|_\infty^{F(\G)}
\\&=\|P_{{\F(S(a))}}(a^{\star k}-\ind_{\G})\|_\infty^{F(\G)}
\\&\leq \|P_{{\F(S(a))}}\|_{\infty}^{F(\G)}\|a^{\star k}-\ind_{\G}\|_{\infty}^{F(\G)}
\end{align*}
Note that $F(\G)$ together with $\|\cdot\|_\infty^{F(\G)}$ is a $\mathrm{C}^*$-algebra. Therefore as $P_{\F(S(a))}:F(\G)\raw F(\G)$ is a positive map (Proposition \ref{propertiesofP}) between unital $\mathrm{C}^*$-algebras, it satisfies the hypotheses of Corollary 2.9 of Paulsen \citep{Paul}. This gives
$$\|P_{{\F(S(a))}}\|_{\infty}^{F(\G)}=\|P_{{\F(S(a))}}\ind_{\G}\|_{\infty}^{F(\G)}=\|\ind_{\G}\|_{\infty}^{F(\G)}=1.$$
Therefore $s(k+1)\leq s(k)$ $\bullet$
\end{proof}
\end{theorem}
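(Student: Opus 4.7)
The plan is to pull the problem back from $\C\G$ to $F(\G)$ via $\F^{-1}$, construct a concrete linear endomorphism of $F(\G)$ that sends the relevant difference at stage $k$ to the one at stage $k+1$, and then argue that this endomorphism is a contraction in the operator norm. First I would write $\nu = \F(a)$ (which is possible since $\F:F(\G)\to\C\G$ is a bijection), and observe that by Van Daele's Convolution Theorem~\ref{VDCT} one has $\nu^{\star k} = \F(a^{\star_A k})$, while the very definition of $\F$ gives $\F(\ind_\G) = \pi$. Consequently
$$s(k) \;=\; \|\F^{-1}(\nu^{\star k}-\pi)\|_\infty^{F(\G)} \;=\; \|a^{\star_A k} - \ind_\G\|_\infty^{F(\G)}.$$

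Next I would hunt for a linear map $T:F(\G)\to F(\G)$ satisfying $T(a^{\star_A k}-\ind_\G) = a^{\star_A(k+1)}-\ind_\G$. Proposition~\ref{propertiesofP}(vi) states $P_{\F(c)}b = S(c)\star_A b$, so the natural choice is $c=S(a)$; the Kac property $S^2=I_{F(\G)}$ (Theorem~\ref{propofHaar}), which is available because $\G$ is finite, then gives $P_{\F(S(a))}b = a\star_A b$ for every $b\in F(\G)$. In particular $P_{\F(S(a))}a^{\star_A k} = a^{\star_A(k+1)}$, and Proposition~\ref{propertiesofP}(ii) gives $P_{\F(S(a))}\ind_\G = \ind_\G$. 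Thus $T = P_{\F(S(a))}$ does exactly what is required, and a telescoping application across all $k$ is automatic.

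The final and only delicate step is to establish $\|P_{\F(S(a))}\|_{F(\G)\to F(\G)} \leq 1$. Here I would use the fact that $P_{\F(S(a))}$ is a unital positive linear map between the unital $\mathrm{C}^*$-algebras $F(\G)$ and $F(\G)$ (both properties come from Proposition~\ref{propertiesofP}(ii)); a standard result for positive maps on unital $\mathrm{C}^*$-algebras (e.g.\ Paulsen's Corollary~2.9, or Russo--Dye) then forces the operator-norm bound $\|P_{\F(S(a))}\| = \|P_{\F(S(a))}\ind_\G\|_\infty^{F(\G)} = 1$. Combining this with the previous paragraph yields
$$s(k+1) \;=\; \|P_{\F(S(a))}(a^{\star_A k} - \ind_\G)\|_\infty^{F(\G)} \;\leq\; \|a^{\star_A k} - \ind_\G\|_\infty^{F(\G)} \;=\; s(k).$$
I expect the main conceptual obstacle to be spotting the appearance of $S(a)$ (rather than $a$) in the correct choice of $c$, which is the place where involutivity of the antipode is crucial; everything else is essentially a bookkeeping exercise using the properties of $\F$ and $P_\nu$ already collected in Proposition~\ref{propertiesofP}.
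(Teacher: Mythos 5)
Your proposal is correct and follows essentially the same route as the paper's own proof: pull back via $\F^{-1}$, identify $P_{\F(S(a))}$ as the unital positive map advancing $a^{\star_A k}-\ind_\G$ to $a^{\star_A (k+1)}-\ind_\G$, and bound its operator norm by $1$ using Paulsen's Corollary 2.9. The only difference is that you make explicit two details the paper leaves implicit — the use of Van Daele's Convolution Theorem to get $\nu^{\star k}=\F(a^{\star_A k})$ and the role of $S^2=I_{F(\G)}$ in turning $S(S(a))\star_A b$ into $a\star_A b$ — which is a welcome clarification rather than a departure.
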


\bigskip

Proving the corresponding result for total variation distance does not seem so straightforward.

\bigskip

\begin{theorem}
For a random walk on a classical group, the total variation distance is decreasing.
\end{theorem}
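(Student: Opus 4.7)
The plan is to combine two facts from Proposition \ref{propertiesofP}: that $P_\nu^T$ acts as convolution by $\nu$, and that it fixes the Haar measure $\pi$. This immediately gives the identity
$$\nu^{\star(k+1)} - \pi = P_\nu^T(\nu^{\star k}) - P_\nu^T(\pi) = P_\nu^T(\nu^{\star k} - \pi),$$
so that the problem reduces to showing $P_\nu^T$ is a contraction in the total variation norm on signed measures.

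To establish this contraction, I would exploit the duality, proven earlier in Chapter 4 (see equation (\ref{lb})), that expresses total variation as a supremum:
$$2\|\mu - \mu'\|_{\text{TV}} = \sup_{f\in F(G)\,:\,\|f\|_\infty\leq 1}|\mu(f) - \mu'(f)|.$$
The predual viewpoint is crucial: evaluating $P_\nu^T\sigma$ on a test function $f$ is the same as evaluating $\sigma$ on $P_\nu f$, via $P_\nu^T\sigma(f) = \sigma(P_\nu f)$. So what is needed is the norm bound $\|P_\nu f\|_\infty \leq \|f\|_\infty$ for every $f \in F(G)$. In the classical setting this is immediate: either by the direct computation $(P_\nu f)(s) = \sum_t \nu(\delta_t)f(ts)$, whence $|(P_\nu f)(s)| \leq \|f\|_\infty$ because $\nu$ is a probability measure; or by invoking the fact that $P_\nu$ is a positive, unital map between unital commutative $\mathrm{C}^*$-algebras and applying Corollary 2.9 of Paulsen \citep{Paul}, exactly as was used in the proof that the separation distance is decreasing.

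Putting these pieces together, for any $f$ with $\|f\|_\infty \leq 1$ the function $P_\nu f$ still satisfies $\|P_\nu f\|_\infty \leq 1$, so
$$2\|\nu^{\star(k+1)} - \pi\|_{\text{TV}} = \sup_{\|f\|_\infty\leq 1}\bigl|(\nu^{\star k} - \pi)(P_\nu f)\bigr| \leq \sup_{\|g\|_\infty\leq 1}\bigl|(\nu^{\star k} - \pi)(g)\bigr| = 2\|\nu^{\star k} - \pi\|_{\text{TV}},$$
which is exactly the required monotonicity. An alternative, perhaps more elementary, route is to compute coefficient-by-coefficient: writing $\sigma_k = \nu^{\star k} - \pi$, the recursion $\sigma_{k+1}(\delta_s) = \sum_t \nu(\delta_{st^{-1}})\sigma_k(\delta_t)$ together with $\sum_s \nu(\delta_{st^{-1}}) = 1$ yields $\sum_s|\sigma_{k+1}(\delta_s)| \leq \sum_t|\sigma_k(\delta_t)|$ by the triangle inequality and a change of summation order.

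The main obstacle, and the reason the paper isolates the classical case, is that the step $\|P_\nu f\|_\infty \leq \|f\|_\infty$ uses only the positive-unital-map contraction, which in fact holds in the quantum setting as well, so the argument above should in principle extend. I expect that the genuine subtlety in the quantum case lies not here but in a clean supremum presentation of $\|\cdot\|_{\text{QTV}}$ over a set that is manifestly $P_\nu$-stable; the coefficient-by-coefficient route is the one that truly breaks, since the $(\mathcal{L}^1,\widehat{\mathcal{B}})$-norm is no longer equal to $\|\cdot\|_1^{F(\mathbb{G})}$ outside the classical world, as the author illustrates explicitly with the example on $\Z_3$.
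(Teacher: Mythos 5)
Your proof is correct, but it takes a genuinely different route from the one in the paper. The paper works on the $\mathcal{L}^1$ side: it writes $\F^{-1}(\nu^{\star k+1}-\pi)=P_{\F(S(a))}(a^{\star k}-\ind_{G})$ and shows that $P_{\F(S(a))}$ is a contraction for $\|\cdot\|_1^{F(G)}$, which it does by invoking the Riesz-space structure of $F(G)_{\text{sa}}$ --- the $\mathcal{L}^1$ norm is a Riesz norm, so by Robinson's lemma the operator norm of a positive map is determined on the positive cone, where invariance of the Haar integral gives $\|P_{\F(S(a))}(f)\|_1=\|f\|_1$. You instead dualise: you combine the identity $\nu^{\star(k+1)}-\pi=P_\nu^T(\nu^{\star k}-\pi)$ with the supremum presentation (\ref{lb}) of total variation over the $\|\cdot\|_\infty$-unit ball, and observe that $P_\nu$, being positive and unital, satisfies $\|P_\nu \phi\|_\infty\leq\|\phi\|_\infty$ by Corollary 2.9 of Paulsen \citep{Paul} --- precisely the ingredient the paper already deploys for the separation distance. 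Your route is shorter, avoids Riesz-space theory entirely, and --- this is worth emphasising --- every ingredient in it (the presentation (\ref{lb}), positivity and unitality of $P_\nu$ from Proposition \ref{propertiesofP}, the Paulsen estimate) is established in the paper for an arbitrary finite quantum group, so your argument quantises verbatim and yields monotonicity of $\|\nu^{\star k}-\pi\|_{\text{QTV}}$ in general; the obstruction from Sherman's theorem is an artifact of the paper's $\mathcal{L}^1$-side approach, not of the statement. Your closing worry about finding a ``manifestly $P_\nu$-stable'' supremum presentation is therefore unfounded: the operator-norm unit ball used in (\ref{lb}) is stable under any positive unital map. The coefficient-by-coefficient alternative you sketch is also correct classically and is, as you say, the part that genuinely fails outside the commutative world.
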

\begin{proof}
In the same notation as before, consider
\begin{align*}
\|\nu^{\star k+1}-\pi\|&=\frac12 \|\F^{-1}(\nu^{\star k+1}-\pi)\|_1^{F(G)}
\\&=\frac12 \|a^{\star {k+1}}-\ind_{G}\|_1^{F(G)}
\\&=\frac12 \|P_{{\F(S(a))}}(a^{\star k}-\ind_{G})\|_1^{F(G)}
\\&\leq \frac12\|P_{{\F(S(a))}}\|_{1}^{F(G)}\|a^{\star k}-\ind_{G}\|_{1}^{F(G)}
\end{align*}
If $\mu,\phi\in M_p(G)$ then $\mathcal{F}^{-1}(\mu-\phi)\in F(G)_{\text{sa}}$: that is $\mathcal{F}^{-1}(\mu-\phi)$ is a real-valued function.
The algebra of real-valued functions on $G$, $F(G)_{\text{sa}}$ is a real sub-*-algebra of the *-algebra $F(G)$. Positivity gives a partial order on $F(G)_{\text{sa}}\subset F(G)$:
$$f\geq g\Leftrightarrow f-g\geq 0.$$
Furthermore, $F(G)_{\text{sa}}$ is a Riesz space with
$$(f\vee g)(t)=\max\{f(t),g(t)\}.$$
Furthermore the norm $\|\cdot\|_1^{F(G)}$ is a \emph{Riesz} norm (Example 1.3.3, Batty and Robinson \citep{robinson}) and Robinson shows (Lemma 3.3, \citep{robin}) that for such a space, the norms of positive operators are determined by their behaviour on the positive cone and so
$$\|P_{\F(S(a))}\|_{F(G)_{\text{sa}}\raw F(G)_{\text{sa}}}=\sup_{\underset{f\geq 0}{\|f\|_1^{F(G)_{\text{sa}}}\leq 1}}\|P_{\F(S(a))}(f)\|_1^{F(G)_{\text{sa}}}.$$

\bigskip

Therefore consider an $f\geq 0$ with $\|f\|_1^{F(G)_{\text{sa}}}\leq 1$. Note that as $P_{\F(S(a))}$ is positive, $|P_{\F(S(a))}(f)|=P_{\F(S(a))}(f)$ and by Proposition \ref{propertiesofP} v., $\dsp\int_G=\int_GP_{\F(S(a))}$:
\begin{align*}\|P_{\F(S(a))}(f)\|_1^{F(G)_{\text{sa}}}&=\int_G|P_{\F(S(a))}(f)|=\int_GP_{\F(S(a))}(f)
\\&=\int_Gf=\int_G|f|
\\&=\|f\|_1^{F(G)_{\text{sa}}}\leq 1.\end{align*}
Note, however, that $\|P_{{\F(S(a))}}(\ind_G)\|_1^{F(G)_{\text{sa}}}=1$. Clearly $\|\cdot \|_1^{F(G)}$ and $\|\cdot\|_1^{F(G)_{\text{sa}}}$ coincide for elements of $F(G)_{\text{sa}}$ and so
$$\|\nu^{\star k+1}-\pi\|\leq \frac12\|P_{{\F(S(a))}}\|_{1}^{F(G)}\|a^{\star k}-\ind_{G}\|_{1}^{F(G)}\leq \frac{1}{2}\|a^{\star k}-\ind_{G}\|_{1}^{F(G)}=\|\nu^{\star k}-\pi\|\,\,\,\bullet$$

\bigskip

This approach does not work for truly quantum groups because of Sherman's Theorem \citep{sherman} which says that the self-adjoint elements of a  $\mathrm{C}^*$-algebra form a Riesz space in this way if and only if the algebra is commutative. Thus, in the truly quantum case, the results of Robinson may not be used.

\end{proof}

\wxo
\chapter{Diaconis--Shahshahani Theory}
In a seminal monograph \citep{PD}, Diaconis shows how to exploit representation theory to produce upper bounds for the distance to random of a random walk on a finite classical group. As this work extensively uses the algebra of functions and `sum over points' arguments rather than points of the space $G$, it is ripe for exploitation via the transfer principle spoken about in the introduction. The foundation --- the representation theory of finite quantum groups --- has been set by Woronowicz \citep{193} and \citep{202} in his development of the corepresentation theory of compact quantum groups. For a presentation of the classical Diaconis--Shahshahani theory see Chapter Three of the author's MSc thesis \citep{MSc}. In this chapter, a brief introduction to classical representation theory is presented  followed by the quantisation of this theory. The Fourier theory for finite groups is not quantised  by the quantisation functor but the necessary generalisation, which leans very strongly on the work of Van Daele \citep{VD}, is presented. Finally the Quantum Diaconis--Shahshahani Upper Bound Lemma is presented. It is  applied to two commutative examples and a cocommutative example. Finally the formula is used to analyse all symmetric random walks on $\mathbb{KP}$ as well as a family of random walks on $\mathbb{KP}_n$.

\section{Basics of Classical Representation Theory\label{Basics}}
When it comes to developing the theory of group representations, \emph{Group Representations in Probability and Statistics} --- the seminal monograph of Diaconis \citep{PD} --- follows Serre \citep{Serre} quite closely. A more random-walk focussed summary of this material may be found in Section 3.1 of the author's MSc thesis \citep{MSc}.

\bigskip

A \textit{representation} $\rho$
of a finite group $G$ is a group homomorphism from $G$ into  $GL(V)$ for some vector space $V$. The
dimension of the vector space\footnote{at this point the underlying vector space may be infinite dimensional but it can be shown that the only representations of any interest are of finite dimension. Also the underlying field is unspecified at this point but  it can be shown that the only representations of any interest \emph{for this work} are over complex vector spaces.} is called the \textit{dimension of
$\rho$} and is denoted by $d_\rho$.   If $W$ is a subspace of $V$
invariant under $\rho(G)$, then
$\rho_{|W}$ is called a \textit{subrepresentation}. It can be shown that every representation splits into a direct sum of subrepresentations. Both
$\{\bld{0}\}$ and $V$ itself yield trivial subrepresentations in the obvious way. A
representation $\rho$ that admits no non-trivial subrepresentations
is called \textit{irreducible}.  Inductively, therefore, every representation is a direct sum of irreducible
representations. Given representations $\rho$ acting on $V$ and $\varrho$ acting on $W$, a linear map $f\in L(V,W)$ is said to \emph{interwine} $\varrho$ and $\rho$ (and be an \emph{intertwiner}) if  $\varrho\circ f=f\circ \rho$. If there is an invertible intertwiner between $\varrho$ and $\rho$ they are said to be
 \textit{equivalent as representations}, denoted $\rho\equiv \varrho$. Furthermore the operators $\rho(s)$  can be assumed to be unitary as every irreducible representation is equivalent to a unitary one.

\bigskip

Note that when a basis of $V$ is fixed, the representation $\rho$ maps from $G$ into $M_{d_\rho}(\C)$:
$$\rho(s)=\left(\begin{array}{ccc}\rho_{11}(s) & \cdots &\rho_{1d_{\rho}(s)}\\ \vdots & & \vdots \\ \rho_{d_\rho1}(s)&\cdots &\rho_{d_{\rho}d_{\rho}}(s)\end{array}\right).$$

Note that the matrix coefficients $\{\rho_{ij}\,:\,i,j=1\dots d_\rho\}$ are functions on $G$, $s\mapsto \rho_{ij}(s)$ and so $\rho_{ij}\in F(G)$. Schur's Lemma, a vital result in the area, says that:
\begin{itemize}
\item if two representations are inequivalent then their only intertwiner is the zero map
\item if two representations on a vector space, $V$, are equivalent then all their intertwiners are scalars.
\end{itemize}
Associated to the Haar measure on a group, $\dsp\int_G:F(G)\raw \C$,  there is an inner product $\langle f,g\rangle=\dsp\int_G f^*g$. By considering a certain intertwiner between two representations in the context of Schur's Lemma, it can be seen that the matrix elements of irreducible representations are orthogonal. In fact, if $\rho^\alpha$ and $\rho^\beta$ are two unitary irreducible representations:

$$\langle \rho^\alpha_{ij},\rho^\beta_{k\ell}\rangle=\begin{cases}\dsp\frac{\delta_{ik}\delta_{j\ell}}{d_{\rho}} & \text{ if }\rho^\alpha\equiv \rho^\beta,
\\ 0 & \text{ otherwise }.\end{cases}$$
The category of representations of a finite group (with intertwiners as morphisms) is a \emph{monoidal category} and considering the \textit{regular representation}, defined with respect to a complex
vector space with basis $\{e_s\}$ indexed by $s\in G$ via $r(s)(e_t):=e_{st}$, in light of this fact, gives us the following theorem.

\bigskip

\begin{theorem}\label{PeterW}
(Finite Peter--Weyl Theorem)
Let $\mathcal{I}=\operatorname{Irr}(G)$ be an index set for a family of pairwise-inequivalent irreducible representations of $G$. Where $d_{\alpha}$ is the dimension of the vector space on which $\rho^\alpha$ acts ($\alpha\in\mathcal{I}$),
$$\left\{\rho_{ij}^\alpha\,|\,i,j=1\dots d_\alpha,\,\alpha\in \mathcal{I}\right\},$$
the set of matrix elements of $G$, is an orthogonal basis of $F(G)$.
\end{theorem}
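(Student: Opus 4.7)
The plan is to prove the theorem in two halves: first establish that the matrix elements are mutually orthogonal with respect to the Haar inner product (which gives linear independence for free), and then show that the set has cardinality $\dim F(G) = |G|$, so that linear independence upgrades to basis.

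For the orthogonality half, I would unpack the Schur's Lemma argument already hinted at in the surrounding text. Given unitary irreducibles $\rho^\alpha$ acting on $V_\alpha$ and $\rho^\beta$ acting on $V_\beta$, and any linear map $T : V_\beta \to V_\alpha$, the averaged operator $\widetilde{T} := \int_G \rho^\alpha(t)\, T\, \rho^\beta(t^{-1})$ intertwines $\rho^\beta$ and $\rho^\alpha$. By Schur, $\widetilde{T} = 0$ if $\rho^\alpha \not\equiv \rho^\beta$, and $\widetilde{T} = \lambda I$ (with $\lambda$ determined by taking traces) when $\alpha = \beta$. Taking matrix units $T = E_{jk}$ and reading off coefficients of $\widetilde{T}$ produces exactly the orthogonality relation
\[
\langle \rho^\alpha_{ij}, \rho^\beta_{k\ell} \rangle = \frac{\delta_{\alpha\beta}\,\delta_{ik}\,\delta_{j\ell}}{d_\alpha}
\]
quoted in the preceding exposition. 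In particular the family $\{\rho^\alpha_{ij}\}$ is linearly independent in $F(G)$.

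For the spanning half, the key move is to decompose the regular representation $r$ on $\mathbb{C}G$ into irreducibles, $r \cong \bigoplus_{\alpha \in \mathcal{I}} m_\alpha\, \rho^\alpha$. I would compute the multiplicity $m_\alpha$ by evaluating the character $\chi_r$ of $r$ at the identity and at $s \neq e$: $\chi_r(e) = |G|$ while $\chi_r(s) = 0$ for $s\neq e$, because the basis vector $e_t$ is sent to $e_{st}$ and hence contributes to the diagonal only if $st = t$. Using the character inner product and Schur orthogonality gives
\[
m_\alpha = \langle \chi_{\rho^\alpha}, \chi_r \rangle = \chi_{\rho^\alpha}(e) = d_\alpha.
\]
Summing dimensions on both sides yields the classical identity $\sum_\alpha d_\alpha^2 = |G|$, so that the cardinality of the proposed matrix-element set equals $\dim F(G)$. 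Combined with linear independence from the previous paragraph, this forces the set to be an orthogonal basis.

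The genuinely delicate step is the multiplicity count $m_\alpha = d_\alpha$; everything else (orthogonality, counting dimensions) is formal once this is in hand. One has to be slightly careful that every irreducible actually occurs in the regular representation --- equivalently, that $\operatorname{Irr}(G)$ is ``complete'' in the sense that the matrix coefficients separate points. This is not a problem because if $\rho^\alpha$ did not appear, the matrix coefficients of $\rho^\alpha$ would still be orthogonal to all matrix coefficients appearing in $r$, yet those already span $F(G)$ by the dimension count after quotienting out missing summands; chasing this contradiction pins down the completeness. With that observed, the theorem follows.
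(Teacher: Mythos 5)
Your proposal is correct and follows essentially the same route the paper sketches (and delegates to the MSc thesis): Schur orthogonality of matrix coefficients for linear independence, then decomposition of the regular representation to get $\sum_\alpha d_\alpha^2 = |G| = \dim F(G)$ and hence spanning. The completeness worry in your last paragraph resolves itself immediately, since the computation $m_\alpha = \langle \chi_{\rho^\alpha}, \chi_r\rangle = d_\alpha \geq 1$ already shows every irreducible occurs in the regular representation.
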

\begin{proof}
See the discussion on P.35-36 of \citep{MSc} $\bullet$
\end{proof}
Once the notion of a representation has been quantised, it will be seen that there is also a Finite Peter--Weyl Theorem for quantum groups.

\section{Representations of Quantum Groups}
Employing a process very similar to that of using the quantisation functor, it is possible to quantise the representation theory of finite groups. Let $V$ be a vector space. A \emph{representation} of $G$ on $V$ is a right linear group action $\Phi:V\times G\raw V$. We can define a \emph{representation matrix} $\rho:G\raw GL(V)$ by
$$\Phi(v,s)=\rho(s^{-1})v,$$
which is a group homomorphism $\rho:G\raw GL(V)$. Linearly extending $\Phi$ to $V\times \C G$ (and using the embedding of $G$ into $\C G$, $s\hookrightarrow \delta^s$) gives the bilinear map:
$$\overline{\Phi}:V\times\C G\raw V\,,\,\,\,(v,\delta^s)\mapsto \rho(\delta^{s^{-1}})v$$
and then the linear map
$$\tilde{\Phi}:V\otimes\C G\raw V\,,\,\,\,v\otimes\delta^s\mapsto \rho(\delta^{s^{-1}})v.$$
The properties that make $\Phi$ an action are encoded by two relations involving $\tilde{\Phi}$, $\nabla$ and $\eta_{\C G}$. The first is compatibility:
$$\tilde{\Phi}\circ(I_{V}\otimes \nabla)(v\otimes \delta^s\otimes \delta^h)=\tilde{\Phi}\circ (\tilde{\Phi}\otimes I_{\C G})(v\otimes\delta^s\otimes \delta^h).$$
The relation for identity emanates from
$$\tilde{\Phi}\circ(I_V\otimes \eta_{\C G})v\cong\tilde{\Phi}\circ(I_V\otimes \eta_{\C G})(v\otimes 1_{\C})=\tilde{\Phi}(v\otimes\delta^e)=I_{V}v,$$
i.e. $\tilde{\Phi}\circ (I_V\otimes \eta_{\C G})=I_V$. Fix a basis $\{e_i\}$ of $V$ and let $\{e^i\}$ be the basis of $V^*$ dual to this basis. Now apply the dual functor to this:
$$\kappa_{\rho}:=\mathcal{D}(\tilde{\Phi}):V^*\raw V^*\otimes F(G)\,,\,\,\,\kappa_{\rho}(e^i)=e^i\circ \tilde{\Phi}.$$
Together with the dual statements for compatibility (mpatibility!) and identity:
\begin{eqnarray}
(I_{V^*}\otimes \Delta)\circ \kappa_\rho=(\kappa_{\rho}\otimes I_{F(G)})\circ \kappa_\rho\label{5.1}
\\ (I_{V^*}\otimes\eps)\circ \kappa_\rho=I_{V^*},
\end{eqnarray}
this motivates the definition of a corepresentation of the algebra of functions on a quantum group on a complex vector space.
\begin{definition}
A \emph{corepresentation} of the algebra of functions on a quantum group $\mathbb{G}$ on a complex vector space is a linear map $\kappa:V\raw V\otimes F(\mathbb{G})$ that satisfies:
$$(\kappa\otimes I_{A})\circ \kappa=(I_V\otimes\Delta)\circ\kappa\qquad\text{and}\qquad(I_V\otimes \eps)\circ \kappa=I_V.$$
\end{definition}

\begin{proposition}
For any group homomorphism $\rho:G\rightarrow GL(V)$ on a finite group, the map $\kappa_\rho$, \emph{induced by the representation $\rho$}, given by
$$\kappa_\rho(v)=\sum_{t\in G}\rho(t)v\otimes \delta_t$$
is a corepresentation of $F(G)$ on $V$.

\bigskip

On the other hand, suppose that $\kappa_\alpha$ is a corepresentation of $F(G)$ on $V_\alpha$ (with basis $\{e_i\}$) given by
$$\kappa_\alpha(e_j)=\sum_{i=1}^{d_\alpha}e_i\otimes\rho_{ij}.$$
Then the map $\rho_{\alpha}:G\raw GL(V_\alpha)$, $s\mapsto (\rho_{ij}(s))$ is a group homomorphism.
\end{proposition}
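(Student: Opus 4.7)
The plan is to directly verify the two coaction axioms for $\kappa_\rho$ using the explicit formula, and then to read off the group homomorphism property of $\rho_\alpha$ by unpacking the coaction axioms for $\kappa_\alpha$ through the definition $\Delta f(s,t)=f(st)$. Neither direction should involve any real obstacle; the work is essentially bookkeeping once the formulas are chosen correctly.

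For the first direction, I would start with the counital condition, which is the easier of the two. Applying $I_V\otimes \eps$ to $\kappa_\rho(v)=\sum_{t\in G}\rho(t)v\otimes \delta_t$ and using $\eps(\delta_t)=\delta_{t,e}$ picks out the $t=e$ term, giving $\rho(e)v=v$ since $\rho$ is a group homomorphism. Then I would check compatibility by computing both sides of $(\kappa_\rho\otimes I_{F(G)})\circ\kappa_\rho=(I_V\otimes\Delta)\circ\kappa_\rho$. The left-hand side unfolds to $\sum_{s,t}\rho(s)\rho(t)v\otimes\delta_s\otimes\delta_t=\sum_{s,t}\rho(st)v\otimes\delta_s\otimes\delta_t$, and the right-hand side, using $\Delta(\delta_t)=\sum_{u}\delta_{tu^{-1}}\otimes\delta_u$, becomes $\sum_{t,u}\rho(t)v\otimes \delta_{tu^{-1}}\otimes\delta_u$. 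A change of summation variable $s=tu^{-1}$ (so $t=su$) turns this into the same expression, establishing the axiom.

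For the second direction, I would reverse-engineer both corepresentation axioms on the chosen basis. Applying $(I_{V_\alpha}\otimes\eps)\circ \kappa_\alpha$ to $e_j$ gives $\sum_i \eps(\rho_{ij})e_i=e_j$, so $\eps(\rho_{ij})=\delta_{ij}$, i.e.\ $\rho_{ij}(e)=\delta_{ij}$, which says $\rho_\alpha(e)=I_{V_\alpha}$. Next I would compute $(\kappa_\alpha\otimes I_{F(G)})\circ\kappa_\alpha(e_j)=\sum_{i,k}e_k\otimes \rho_{ki}\otimes \rho_{ij}$ and $(I_{V_\alpha}\otimes\Delta)\circ\kappa_\alpha(e_j)=\sum_{i}e_i\otimes\Delta(\rho_{ij})$. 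Comparing coefficients of $e_k$ yields $\Delta(\rho_{kj})=\sum_i \rho_{ki}\otimes\rho_{ij}$. Evaluating both sides at $(s,t)\in G\times G$ via the identification $F(G)\otimes F(G)\cong F(G\times G)$ and using $\Delta(f)(s,t)=f(st)$, this reads $\rho_{kj}(st)=\sum_i\rho_{ki}(s)\rho_{ij}(t)$, which is precisely the matrix-entry statement that $\rho_\alpha(st)=\rho_\alpha(s)\rho_\alpha(t)$. Combined with $\rho_\alpha(e)=I_{V_\alpha}$, this gives the group homomorphism property.

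The step requiring the most attention is the index manipulation in the compatibility check for part one, specifically making the reindexing $s=tu^{-1}$ carefully so that the two triple sums are manifestly equal. Everything else is essentially definition-chasing, and the proof can be kept to under half a page.
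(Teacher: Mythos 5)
Your proof is correct and follows essentially the same route as the paper: the first direction is verified by the same direct computation with a change of summation variable in the compatibility axiom, and the second direction rests on the relation $\Delta(\rho_{kj})=\sum_i\rho_{ki}\otimes\rho_{ij}$ evaluated at $(s,t)$, which is exactly what the paper does (it merely defers the derivation of that relation to its Proposition on matrix elements, whereas you derive it inline). Your additional explicit check that $\eps(\rho_{ij})=\delta_{ij}$ gives $\rho_\alpha(e)=I_{V_\alpha}$ is a small but welcome completion of the invertibility claim.
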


\begin{proof}
Clearly $\kappa_\rho$ is a linear map. Let $v\in V$:
\begin{align*}
(\kappa_\rho\otimes I_{F(G)})\circ\kappa_\rho(v)&=\sum_{s\in G}\kappa_\rho(\rho(s)v)\otimes \delta_s=\sum_{t,s\in G}\rho(t)\rho(s)v\otimes \delta_t\otimes\delta_s
\\&=\sum_{t,s\in G}\rho(ts)v\otimes \delta_t\otimes \delta_s\underset{\ell=ts}{=}\sum_{\ell,t\in G}\rho(\ell)v\otimes\delta_{\ell s^{-1}}\otimes\delta_s
\\&=\sum_{\ell\in G}\rho(\ell)v\otimes\Delta(\delta_{\ell})=(I_V\otimes \Delta)\circ\kappa_\rho(v).
\end{align*}
Also
\begin{align*}
(I_V\otimes\eps)\circ\kappa_\rho(v)&=\sum_{t\in G}\rho(t)v\otimes \eps(\delta_t)
\\&=\rho(e)v=v=I_V(v).
\end{align*}
The converse statement is easily seen to be a consequence of hitting $\delta^s\otimes\delta^t$ with the relation:
$$\Delta \rho_{ij}=\sum_{k}\rho_{ik}\otimes\rho_{kj}.$$
This relation, a consequence of (\ref{5.1}) will be proved in the sequel (Proposition \ref{matrixel})\,\,\,$\bullet$
\end{proof}

Therefore, using the Gelfand philosophy, a corepresentation of the algebra of functions on a quantum group $F(\mathbb{G})$ may be called a \emph{representation} of the quantum group $\mathbb{G}$.

Recall that $\Phi(u,s)=\rho(s^{-1})u$ and so
$$\kappa_\rho(v)=\sum_{t\in F}\Phi(v,t)\otimes\delta_{t^{-1}}.$$


\begin{example}
\emph{(Examples from Classical Groups)}
\begin{enumerate}
\item Define a representation $\Phi$ of  the quaternion group on $\C^2$, via the group homomorphism $\rho:\mathcal{Q}\raw GL(\C^2)$ by
\beq
\begin{array}{cc}
\rho(i)=\left(\begin{array}{cc}i & 0\\ 0 & -i\end{array}\right), & \rho(k)=\left(\begin{array}{cc}0 & -1\\ 1 & 0\end{array}\right),
\\ \rho(j)=\left(\begin{array}{cc}0 & i\\ i & 0\end{array}\right), & \rho(1)=I,\,\,\rho(-s)=-\rho(s)
\end{array}
\enq
This yields a corepresentation of $F(\mathcal{Q})$ on $\C^2$:
\begin{align*}
\kappa_{\rho}\left(\begin{array}{c}z_1\\ z_2\end{array}\right)&=\left(\begin{array}{c}z_1\\ z_2\end{array}\right)\otimes\delta_1+\left(\begin{array}{c}-z_1\\ -z_2\end{array}\right)\otimes\delta_{-1}+\cdots +\left(\begin{array}{c}z_2\\ -z_1\end{array}\right)\otimes\delta_{-k}
\end{align*}
\item Recall the \emph{trivial representation} $\tau$, defined for any group $G$ on $\C$ by $\tau(s)\lambda=\lambda$ for all $\lambda\in \C$. As a corepresentation
    \begin{align*}
    \kappa_\tau(\lambda)&=\sum_{t\in G}\tau(t)\lambda\otimes\delta_t\cong\sum_{t\in G}\lambda\otimes\delta_t
    \\&\cong\lambda\,\sum_{t\in G}\delta_t=\lambda\otimes\ind_{G}.
    \end{align*}
Exactly analogously, the map $\tau:\C\raw\C\otimes F(\G)$,
$$\tau(\lambda)=\lambda\otimes\ind_{\G}$$
will be called the \emph{trivial corepresentation} of a quantum group $\G$ on $\C$. Note that $\tau\cong \eta_{F(\G)}$.

\bigskip

\item Let $G$ be a finite group. The \emph{regular representation} $R:\C G\times G\raw \C G$  is defined by $R(\delta^s,t)=\delta^{st}$. Applying the quantisation routine to this regular representation  gives the comultiplication on $F(G)$. Therefore the comultiplication is a corepresentation of $F(G)$.

\end{enumerate}

\end{example}

By looking at how statements about a group representation $\rho:G\raw \operatorname{GL}(V)$ are translated into statements about the induced corepresentation $\kappa_\rho$, some quantum analogues of classical definitions may be motivated.

\bigskip

Let $\kappa_\rho$ be a corepresentation on a finite group with an invariant subspace $W=\text{ span}\{w_1,\dots,w_m\}$. Then
$$\text{ span}\left\{\kappa_\rho(w_i)\right\}=\text{ span}\left\{\sum_{t\in G}\underbrace{\rho(t)w_i}_{\in W}\otimes\delta_t \right\}\subset W\otimes F(G)$$
If $V$ is an inner product space and $A$ a $\mathrm{C}^*$-algebra, an $A$-valued sesquilinear inner product on $V\otimes A$ can be defined  by
$$\la v\otimes a,w\otimes b\ra_A:=\la v,w\ra a^*b.$$
If $\rho$ is unitary then
\begin{align*}
\left\la \kappa_\rho(u),\kappa_\rho(v)\right\ra_{F(G)}&=\left\la\sum_{t\in G} \rho(t)u\otimes\delta_t,\sum_{s\in G}\rho(s)v\otimes\delta_s\right\ra_{F(G)}
\\&=\sum_{t,s\in G}\left\la\rho(t)u,\rho(s)v\right\ra\delta_t^*\delta_s
\\&\underset{s=t}{=}\sum_{t\in G}\la\rho(t)u,\rho(t)v\ra\,\delta_t
\\&\underset{\rho\text{ unit.}}{=}\sum_{t\in G}\la u,v\ra\,\delta_t=\la u,v\ra\,\sum_{t\in G}\delta_t=\la u,v\ra\,\mathds{1}_{G}.
\end{align*}
Suppose that $T$ intertwines representations of $\rho$  and $\varrho$ of $G$:
\begin{align*}\kappa_\rho(t)\left(T(u)\right)&=\sum_{t\in G}\rho(t)T(u)\otimes\delta_t
\\&\underset{\rho\circ T=T\circ\varrho}{=}\sum_{t\in G}T(\varrho(t)u)\otimes\delta_t
\\&=(T\otimes I_{F(G)})\kappa_{\varrho}(u).
\end{align*}
\emph{Liberating} these relations gives the following series of definitions.
\begin{definition}
Let $\G$ be a finite quantum group with a representation $\kappa$ on $V$. A subspace $W\subset V$ is \emph{invariant} with respect to $\kappa$ if $\kappa(W)\subset W\otimes F(\G)$. If $V$ contains no non-trivial subspace, $\kappa$ is said to be \emph{irreducible}. If for all $v,\,u\in V$
$$\left\la\kappa(v),\kappa(u)\right\ra_{F(\G)}=\la v,u\ra\,\mathds{1}_{\G},$$
the representation $\kappa$ is said to be \emph{unitary}. When a linear map $T:V\raw V_0$ satisfies
$$\kappa_0\circ T=(T\otimes I_{F(\G)})\circ \kappa,$$
for a representation $\kappa_0$ of $\G$ on a vector space $V_0$, it is said to \emph{intertwine} $\kappa$ and $\kappa_0$ and be an \emph{intertwiner}. Furthermore if $T$ is invertible then $\kappa$ and $\kappa_0$ are \emph{equivalent}.

\bigskip

It can be seen that $V$ can be chosen to be finite dimensional (Theorem 3.2.1, \citep{Timm}). Letting $d_{\kappa}$ denote the dimension of $V$, the linearity of $\kappa$ implies the existence of $d_{\kappa}^2$ elements $\rho_{ij}$ of $F(\G)$:
$$\kappa(e_j)=\sum_{i=1}^{d_\kappa}e_i\otimes\rho_{ij}.$$
These are the \emph{matrix elements} of the representation $\kappa$.

\end{definition}

\bigskip

Note that for the corepresentation $\kappa_\rho$ induced by a representation $\rho$ on a classical group
\begin{align*}
\kappa_{\rho}(e_j)&=\sum_{t\in G}\rho(t)e_j\otimes \delta_t=\sum_{\underset{i}{t\in G}}\rho_{ij}(t)e_i\otimes\delta_t
\\&=\sum_{\underset{i}{t\in G}}e_i\otimes \rho_{ij}(t)\delta_t=\sum_{i}e_i\otimes\rho_{ij},
\end{align*}
justifying the notation and name for these elements of $F(\G)$.

\bigskip

The rest of this section will be concerned with outlining some key results used in the proving the quantum version of the Finite Peter--Weyl Theorem and the approach follows very closely that of Section 3.1.2 of Timmermann \citep{Timm}.

\bigskip

\begin{proposition}\label{matrixel}
For any matrix element $\rho_{ij}\in F(\G)$
$$\Delta(\rho_{ij})=\sum_k \rho_{ik}\otimes\rho_{kj}\text{ and }\eps(\rho_{ij})=\delta_{i,j}.$$
\end{proposition}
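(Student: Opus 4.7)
The plan is to apply the two defining axioms of a corepresentation directly to the basis vector $e_j$ and then read off the required identities by comparing coefficients of the linearly independent $\{e_i\}$.

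First I would prove the comultiplication formula. Applying the right-hand side of the coassociativity axiom, $(I_V\otimes \Delta)\circ \kappa$, to $e_j$ gives
$$(I_V\otimes \Delta)\kappa(e_j)=(I_V\otimes\Delta)\sum_i e_i\otimes \rho_{ij}=\sum_i e_i\otimes \Delta(\rho_{ij}).$$
Applying the left-hand side, $(\kappa\otimes I_{F(\G)})\circ\kappa$, to $e_j$ and then expanding the inner $\kappa(e_k)$ in terms of matrix elements yields
$$(\kappa\otimes I_{F(\G)})\kappa(e_j)=\sum_k \kappa(e_k)\otimes \rho_{kj}=\sum_{i,k} e_i\otimes \rho_{ik}\otimes \rho_{kj}.$$
Since the two expressions are equal in $V\otimes F(\G)\otimes F(\G)$ and $\{e_i\}$ is a basis of $V$, one may compare the coefficients of each $e_i$ to conclude $\Delta(\rho_{ij})=\sum_k \rho_{ik}\otimes \rho_{kj}$.

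Next I would prove the counit formula. Applying $(I_V\otimes \eps)\circ \kappa$ to $e_j$ gives
$$(I_V\otimes \eps)\kappa(e_j)=\sum_i e_i\otimes \eps(\rho_{ij})\cong \sum_i \eps(\rho_{ij})\,e_i,$$
after invoking the isomorphism $V\otimes \C\cong V$. The counital axiom for a corepresentation forces this to equal $I_V(e_j)=e_j$, so comparing coefficients of $e_i$ yields $\eps(\rho_{ij})=\delta_{i,j}$.

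There is no real obstacle: the argument is an entirely routine unpacking of the two axioms in a chosen basis, and the only thing to be careful about is the identification $V\otimes\C\cong V$ used in the counital step (which is exactly the isomorphism already invoked elsewhere in the paper when defining corepresentations). The computation can equivalently be phrased without choosing a basis by viewing $\kappa$ as an element of $L(V)\otimes F(\G)$ and interpreting the axioms as matrix identities, but the basis-level calculation above is the most transparent.
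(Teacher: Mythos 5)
Your proposal is correct and follows essentially the same route as the paper: both apply the coassociativity and counital axioms to a basis vector $e_j$, expand via $\kappa(e_j)=\sum_i e_i\otimes\rho_{ij}$, and compare coefficients of the linearly independent $e_i$. The paper's own proof is precisely this computation, so there is nothing to add.
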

\begin{proof}
The first result follows after calculating
\begin{align*}
(\kappa\otimes I_{F(\G)})\circ \kappa(e_j)&=(\kappa\otimes I_{F(\G)})\left(\sum_k e_k\otimes \rho_{kj}\right)
\\&=\sum_{i,k}e_i\otimes\rho_{ik}\otimes\rho_{kj}=\sum_ie_i\otimes\left(\sum_k\rho_{ik}\otimes\rho_{kj}\right),
\end{align*}
and
\begin{align*}
(I_V\otimes\Delta)\circ\kappa(e_j)&=(I_V\otimes\Delta)\left(\sum_i e_i\otimes \rho_{ij}\right)
\\&=\sum_i e_i\otimes\Delta(\rho_{ij}),
\end{align*}
and noting that $(\kappa\otimes I_{F(\G)})\circ\kappa=(I_V\otimes\Delta)\circ\kappa$.

\bigskip

The second result follows because with $(I_V\otimes\eps)\circ\kappa=I_V$
$$(I_V\otimes\eps)\kappa(e_j)=\sum_ie_i\otimes \eps(\rho_{ij})=\sum_i \eps(\rho_{ij})e_i\overset{!}{=}e_j\,\,\,\bullet$$
\end{proof}
Let $\kappa$ be a corepresentation on a vector space $V$. Denote by $\overline{V}$ the conjugate vector space of $V$ and by $v\mapsto \overline{v}$ the canonical conjugate-linear isomorphism. Since $\Delta$ and $\eps$ are $*$-homomorphisms, the map
$$\overline{\kappa}:\overline{V}\raw \overline{V}\otimes F(\G)\,,\,\,\,\overline{e_j}\mapsto \sum \overline{e_i}\otimes \rho_{ij}^*,$$
is a representation again, called the \emph{conjugate} of $\kappa$.

\bigskip

For each $\phi\in \C\G$ define the map\footnote{this map will appear again in the next section} $\widehat{\phi}\in L(\overline{V})$:
$$\widehat{\phi}(\kappa)=(I_{\overline{V}}\otimes \phi)\circ \overline{\kappa}.$$
\begin{proposition}
\label{conv1}
For $\phi,\,\varphi\in \C\G$ and $\kappa$ a representation of $\G$ on $V$
$$\widehat{\phi\star\varphi}(\kappa)=\widehat{\phi}(\kappa)\circ \widehat{\varphi}(\kappa).$$
\end{proposition}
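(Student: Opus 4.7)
The plan is to unfold the definitions of $\widehat{\phi\star\varphi}(\kappa)$ and exploit the coaction axiom $(I_{\overline{V}}\otimes\Delta)\circ\overline{\kappa}=(\overline{\kappa}\otimes I_{F(\G)})\circ\overline{\kappa}$ that $\overline{\kappa}$ itself satisfies as a representation. Since $\phi\star\varphi=(\phi\otimes\varphi)\circ\Delta$, the starting point is
$$\widehat{\phi\star\varphi}(\kappa)=(I_{\overline{V}}\otimes(\phi\star\varphi))\circ\overline{\kappa}=(I_{\overline{V}}\otimes\phi\otimes\varphi)\circ(I_{\overline{V}}\otimes\Delta)\circ\overline{\kappa}.$$
Applying the coaction identity converts the right-hand side into
$$(I_{\overline{V}}\otimes\phi\otimes\varphi)\circ(\overline{\kappa}\otimes I_{F(\G)})\circ\overline{\kappa}.$$

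Next I would push the functional $\phi$ past the first tensor slot so as to reassemble a copy of $\widehat{\phi}(\kappa)$. Concretely, the block $(I_{\overline{V}}\otimes\phi\otimes\varphi)\circ(\overline{\kappa}\otimes I_{F(\G)})$ maps $\overline{V}\otimes F(\G)\to\overline{V}$ and factors as $\bigl((I_{\overline{V}}\otimes\phi)\circ\overline{\kappa}\bigr)\otimes\varphi=\widehat{\phi}(\kappa)\otimes\varphi$, under the canonical isomorphism $\overline{V}\otimes\mathbb{C}\cong\overline{V}$. Composing with the remaining outer $\overline{\kappa}$ gives
$$\widehat{\phi\star\varphi}(\kappa)=(\widehat{\phi}(\kappa)\otimes\varphi)\circ\overline{\kappa}=\widehat{\phi}(\kappa)\circ(I_{\overline{V}}\otimes\varphi)\circ\overline{\kappa}=\widehat{\phi}(\kappa)\circ\widehat{\varphi}(\kappa),$$
where the middle equality uses that $\widehat{\phi}(\kappa)$ acts only on the $\overline{V}$ factor and commutes trivially with extracting the scalar $\varphi(\cdot)$ from the $F(\G)$ factor.

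As a sanity check I would also verify the identity in Sweedler notation: writing $\overline{\kappa}(\overline{v})=\sum\overline{v}_{(0)}\otimes\overline{v}_{(1)}$, the coaction axiom gives $\sum(\overline{v}_{(0)})_{(0)}\otimes(\overline{v}_{(0)})_{(1)}\otimes\overline{v}_{(1)}=\sum\overline{v}_{(0)}\otimes(\overline{v}_{(1)})_{(1)}\otimes(\overline{v}_{(1)})_{(2)}$, so that
$$\widehat{\phi}(\kappa)\bigl(\widehat{\varphi}(\kappa)(\overline{v})\bigr)=\sum\phi\bigl((\overline{v}_{(1)})_{(1)}\bigr)\varphi\bigl((\overline{v}_{(1)})_{(2)}\bigr)\overline{v}_{(0)}=\sum(\phi\star\varphi)(\overline{v}_{(1)})\overline{v}_{(0)},$$
which is exactly $\widehat{\phi\star\varphi}(\kappa)(\overline{v})$. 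The only real obstacle is the tensor-factor bookkeeping, in particular keeping track of which slots the various identity, coaction, and scalar-valued maps act on; everything else is formal manipulation driven by the corepresentation axiom.
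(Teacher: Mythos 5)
Your proof is correct and follows essentially the same route as the paper: unfold $\phi\star\varphi=(\phi\otimes\varphi)\circ\Delta$, apply the compatibility (coaction) axiom for $\overline{\kappa}$, and reassemble the two Fourier transforms, exactly as in the paper's adaptation of Timmermann's argument. The Sweedler-notation verification is a harmless bonus not present in the paper but confirms the same computation.
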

\begin{proof}
Taking the approach of Timmermann (proof of Proposition 3.1.7 ii., \citep{Timm}, start with
\begin{align*}
\widehat{\phi\star\varphi}(\kappa)&=\left(I_{\overline{V}}\otimes(\phi\star \varphi)\right)\circ \overline{\kappa}
\\&=\left(I_{\overline{V}}\otimes(\phi\otimes\varphi)\circ \Delta\right)\circ\overline{\kappa}
\\&=(I_{\overline{V}}\otimes\phi\otimes\varphi)\circ(I_{\overline{V}}\otimes \Delta)\circ\overline{\kappa}.
\end{align*}
Using compatibility,
\begin{align*}
\widehat{\phi\star\varphi}(\kappa)&=(I_{\overline{V}}\otimes\phi\otimes\varphi)\circ(\overline{\kappa}\otimes I_{F(\G)})\circ \overline{\kappa}
\\&=(\widehat{\phi}(\overline{\kappa})\otimes\varphi)\circ\overline{\kappa}
\\&=\left(\widehat{\phi}(\kappa)\otimes I_{\C}\right)\circ(I_{\overline{V}}\otimes \varphi)\circ\overline{\kappa}
\\&\cong \widehat{\phi}(\kappa)\circ\widehat{\varphi}(\kappa)\,\,\,\bullet
\end{align*}
\end{proof}
The following results are presented in Timmermann \citep{Timm} Sections 3.2.1 to 3.2.4.

\bigskip

\begin{theorem}\label{directsum}
Every representation of a finite quantum group $\G$ is equivalent to a direct sum of finite-dimensional irreducible unitary representations $\bullet$
\end{theorem}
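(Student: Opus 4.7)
The plan is to follow the classical Maschke-style strategy, adapted to the quantum setting via the Haar state. First, I would invoke the result of Timmermann (Theorem 3.2.1) cited just above the statement, which allows me to assume that the underlying vector space $V$ of the representation $\kappa : V \to V \otimes F(\G)$ is finite-dimensional. So the problem reduces to showing that any finite-dimensional representation is equivalent to a direct sum of irreducible unitary ones.

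The second step, which I expect to be the main obstacle, is the \emph{unitarization} of an arbitrary finite-dimensional representation. Starting from any inner product $(\cdot,\cdot)_0$ on $V$ (which need not make $\kappa$ unitary), I would construct an $F(\G)$-valued sesquilinear form $\langle v\otimes a, w\otimes b\rangle_{F(\G),0} := (v,w)_0\, a^* b$ and then define a new scalar inner product on $V$ by averaging against the Haar measure:
$$\langle v,w\rangle := h\!\left(\langle \kappa(v),\kappa(w)\rangle_{F(\G),0}\right).$$
I would verify that this is indeed an inner product (nondegeneracy uses the faithfulness of $h$ established in Section \ref{quantumgroupring}), and then that $\kappa$ becomes unitary with respect to it. The key computation here uses coassociativity $(\kappa\otimes I)\circ\kappa = (I\otimes \Delta)\circ \kappa$ together with the right-invariance of $h$ in the form $(I\otimes h)\circ \Delta = h(\cdot)\mathds{1}_{\G}$; this allows one to push the extra comultiplication through and collapse the inner Haar integral to a scalar times $\mathds{1}_{\G}$, yielding $\langle \kappa(v),\kappa(w)\rangle_{F(\G)} = \langle v,w\rangle\,\mathds{1}_{\G}$. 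Finally, choosing an orthonormal basis of $V$ for the new inner product produces a unitary representation equivalent to the original.

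Third, I would show that for a unitary representation $\kappa$, every invariant subspace $W\subset V$ has an invariant orthogonal complement. Suppose $\kappa(W)\subset W\otimes F(\G)$ and let $w\in W^\perp$. For any $u\in W$, I would compute $\langle \kappa(u),\kappa(w)\rangle_{F(\G)} = \langle u,w\rangle\,\mathds{1}_{\G} = 0$; decomposing $\kappa(w) = \sum_i e_i\otimes a_i$ in an orthonormal basis adapted to $W\oplus W^\perp$ and pairing against arbitrary $u\in W$, this forces the $W$-components of $\kappa(w)$ to vanish, so $\kappa(w)\in W^\perp\otimes F(\G)$.

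The fourth and final step is a straightforward induction on $d_\kappa = \dim V$. If $\kappa$ is already irreducible, there is nothing to do; otherwise pick a nontrivial invariant subspace $W$, note that $\kappa$ restricts to representations on both $W$ and $W^\perp$ (still unitary), and apply the inductive hypothesis to each summand. Stringing the summands together gives the required decomposition into finite-dimensional irreducible unitary representations. The genuine content is concentrated in Step 2, where the availability and invariance properties of the Haar state do all of the work that averaging over a finite group does classically.
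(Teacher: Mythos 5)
The paper offers no proof of this theorem at all --- it defers wholesale to Timmermann, Sections 3.2.1--3.2.4 --- and your Maschke-style argument (unitarize by averaging an arbitrary inner product against the Haar state, split off invariant subspaces by orthogonal complementation, induct on dimension) is precisely the standard proof given in that reference. Steps 1, 2 and 4 are sound as you describe them: in Step 2, positivity of the averaged form follows by factoring the Gram matrix of $(\cdot,\cdot)_0$ as $C^*C$, definiteness follows from faithfulness of $h$ together with $(I_V\otimes\eps)\circ\kappa=I_V$, and the unitarity computation is exactly the combination of $\Delta(\rho_{ij})=\sum_k\rho_{ik}\otimes\rho_{kj}$ with invariance of $h$ that you indicate. (One cosmetic slip: for the right coaction $\kappa:V\to V\otimes F(\G)$ the invariance you actually apply is $(h\otimes I_{F(\G)})\circ\Delta=h(\cdot)\mathds{1}_{\G}$, not $(I_{F(\G)}\otimes h)\circ\Delta$; since $h$ is two-sided invariant this is immaterial.)

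Step 3, however, does not close as written. Work in an orthonormal basis adapted to $W\oplus W^\perp$ with $\dim W=m$, and write the corepresentation matrix $U=(\rho_{ij})\in M_{d_\kappa}(F(\G))$ in block form with upper-left block $A$ (the $W$--$W$ entries), upper-right block $B$ (the $W$-components of $\kappa(w)$ for $w\in W^\perp$), lower-left block $0$ (invariance of $W$) and lower-right block $D$. Your pairing $\langle\kappa(u),\kappa(w)\rangle_{F(\G)}=0$ for $u\in W$, $w\in W^\perp$ yields only the relation $A^*B=0$; it does not by itself force $B=0$, because the entries live in the noncommutative algebra $F(\G)$ and a relation $\sum_{i\le m}b_i^*a_i=0$ for the available coefficients $b_i$ is strictly weaker than $a_i=0$. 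You need to add that unitarity restricted to $W$ gives $A^*A=I_m\otimes\mathds{1}_{\G}$, and that an isometry in the finite-dimensional unital $\mathrm{C}^*$-algebra $M_m(F(\G))$ is automatically invertible, whence $B=(A^*)^{-1}(A^*B)=0$. In a general unital $\mathrm{C}^*$-algebra an isometry need not be invertible (the unilateral shift), so finite-dimensionality is genuinely used at this point --- it is the quantum surrogate for the classical fact that $\rho(s)$ maps $W$ \emph{onto} $W$. With that observation inserted, the induction in Step 4 goes through and the proof is complete.
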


\bigskip

\begin{proposition}\label{ortho1}
Let $\G$ be a finite quantum group with Haar measure $\dsp\int_{\mathbb{G}}$ and let $\kappa_\alpha$ and $\kappa_\beta$ be inequivalent irreducible representations of $\G$ on vector spaces $V_\alpha$ and $V_\beta$ with matrix elements $\rho^{\alpha}_{ij}$ and $\rho^{\beta}_{ij}$. Then
$$\int_{\G}\left(\rho^\beta_{ij}\right)^*\rho^\alpha_{kl}=0=\int_{\G}{\rho_{ij}}^\beta\left(\rho_{kl}^\alpha\right)^*.\qquad\bullet$$
\end{proposition}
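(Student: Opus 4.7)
The plan is to transfer the classical Schur orthogonality argument to the quantum setting, with the Haar state $\int_\G$ and its invariance taking the place of translation-invariant integration over the group. By Theorem \ref{directsum} I may assume both $\kappa_\alpha$ and $\kappa_\beta$ are unitary, in which case the unitarity relation combined with the antipodal property forces $S(\rho^\gamma_{ij})=(\rho^\gamma_{ji})^*$ on matrix elements; this lets one exchange the involution on matrix elements for the antipode whenever convenient.

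For any linear map $T\colon V_\alpha\to V_\beta$ with matrix $(T_{lm})$, I will define an averaged map $\widetilde T\colon V_\alpha\to V_\beta$ by
$$(\widetilde T)_{jk}:=\sum_{l,m}T_{lm}\int_\G \rho^\beta_{jl}\,S(\rho^\alpha_{mk}),$$
which is the natural quantisation of the classical construction $\widetilde T=\int_G\rho^\beta(t)\,T\,\rho^\alpha(t^{-1})\,dt$. The crucial step is to verify that $\widetilde T$ intertwines $\kappa_\alpha$ and $\kappa_\beta$, i.e.\ $\kappa_\beta\circ\widetilde T=(\widetilde T\otimes I_{F(\G)})\circ\kappa_\alpha$. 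Writing this out in components using Proposition \ref{matrixel} reduces the intertwining to an identity in $F(\G)$ that I would establish by applying $(I_{F(\G)}\otimes\int_\G)$ to $\Delta\!\bigl(\rho^\beta_{il}\,S(\rho^\alpha_{mk})\bigr)$, using the right invariance $(I\otimes\int_\G)\Delta=\int_\G(\,\cdot\,)\mathds{1}_\G$, the Hopf identity $\Delta\circ S=(S\otimes S)\circ\tau\circ\Delta$, and the traciality of the Haar state (Theorem \ref{propofHaar}).

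Once $\widetilde T$ is known to intertwine, Schur's lemma for corepresentations — which follows from Theorem \ref{directsum} because the image and kernel of any intertwiner are invariant subspaces, and inequivalence rules out invertibility — forces $\widetilde T=0$ for every $T$. Specialising to $T=E_{lm}$ gives $\int_\G \rho^\beta_{jl}\,S(\rho^\alpha_{mk})=0$ for all index choices, and applying $S(\rho^\alpha_{mk})=(\rho^\alpha_{km})^*$ yields the second identity after relabelling. The first identity is obtained by running the identical argument with the conjugate representations $\overline{\kappa_\alpha}$, $\overline{\kappa_\beta}$ (still inequivalent irreducible unitary representations), equivalently by averaging with $\int_\G S(\rho^\beta_{jl})\,\rho^\alpha_{mk}$ instead.

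The principal obstacle is Step 3: matching the classical substitution $t\mapsto st$ to a Sweedler-notation manipulation of $\Delta$ applied to a product of matrix elements, together with the coexistence of both the involution and the antipode, demands careful bookkeeping. The Schur step and the translation to the stated orthogonality formulae are then formal.
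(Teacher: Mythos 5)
Your argument is correct and is essentially the standard quantum Schur orthogonality proof that the paper itself defers to (Timmermann, Sections 3.2.1--3.2.4): average an arbitrary $T$ against $\int_{\G}\rho^\beta_{jl}\,S(\rho^\alpha_{mk})$, verify the result intertwines $\kappa_\alpha$ and $\kappa_\beta$ using invariance of the Haar state and $\Delta\circ S=(S\otimes S)\circ\tau\circ\Delta$, and invoke Schur's lemma for corepresentations. Two cosmetic remarks: traciality is not actually needed, since the intertwining verification closes by contracting the invariance identity with $\sum_k S(\rho^\alpha_{pk})\rho^\alpha_{kq}=\delta_{pq}\mathds{1}_{\G}$; and the passage to conjugate representations for the first identity is legitimate because conjugates of inequivalent irreducibles remain inequivalent and irreducible (unproblematic here, where $S^2=I_{F(\G)}$ by Theorem \ref{propofHaar}).
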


\bigskip

\begin{proposition}\label{ortho}
Let $\G$ be a finite quantum group with Haar measure $\dsp\int_{\mathbb{G}}$ and let $\kappa$ be an irreducible unitary representation of $\G$ on $V$. Then for all $i,\,j,\,k,\,l$:
\begin{align*}
\int_{\G}\rho_{ij}^*\rho_{kl}=\frac{\delta_{i,k}\delta_{j,l}}{d_{\kappa}}=\int_{\G}\rho_{ij}\rho_{kl}^*.
\end{align*}
Furthermore, the elements $\{\rho_{ij}\}$ are linearly independent $\bullet$
\end{proposition}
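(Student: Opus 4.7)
The strategy mirrors the classical Schur orthogonality argument: for each $T\in L(V)$ we build, using the Haar state $\int_{\G}$, an averaged operator $\widetilde T \in L(V)$ that intertwines $\kappa$ with itself; Schur's lemma then forces $\widetilde T$ to be a scalar, a trace computation pins down the scalar, and specialising $T$ to matrix units yields the orthogonality relations. Linear independence will fall out as a one-line corollary of orthogonality.

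Collecting the matrix elements into the matrix $\mathbf{R}:=(\rho_{ij})\in M_{d_\kappa}(F(\G))$ --- which is unitary by hypothesis, so both $\mathbf{R}^*\mathbf{R}=I$ and $\mathbf{R}\mathbf{R}^*=I$ --- and for any $T=(t_{ij})\in M_{d_\kappa}(\C)$ regarded as having entries in $\C\cdot\mathds{1}_\G$, set $\widetilde T := (\operatorname{id}\otimes \int_{\G})(\mathbf{R}\,T\,\mathbf{R}^*)$, i.e.\ the scalar matrix with $(k,l)$-entry $\sum_{i,j}t_{ij}\int_\G\rho_{ki}\rho_{lj}^*$. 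The first step is to verify the intertwining identity $\mathbf{R}\widetilde T = \widetilde T\,\mathbf{R}$ in $M_{d_\kappa}(F(\G))$. Using that $\Delta$ is a $*$-homomorphism together with Proposition \ref{matrixel}, one computes entrywise that $\Delta\bigl((\mathbf{R}T\mathbf{R}^*)_{ij}\bigr) = \sum_{m,p}\rho_{im}\rho_{jp}^*\otimes (\mathbf{R}T\mathbf{R}^*)_{mp}$. Applying $\operatorname{id}\otimes\int_\G$ and invoking invariance of the Haar state in the form $(I_{F(\G)}\otimes \int_\G)\Delta(a) = \bigl(\int_\G a\bigr)\mathds{1}_\G$ yields $\mathbf{R}\widetilde T\mathbf{R}^* = \widetilde T\cdot\mathds{1}_\G$; right-multiplication by $\mathbf{R}$ and the unitarity relation $\mathbf{R}^*\mathbf{R}=I$ then give $\mathbf{R}\widetilde T = \widetilde T\,\mathbf{R}$, which is the intertwining.

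Irreducibility of $\kappa$ combined with Schur's lemma --- whose usual proof via $\kappa$-invariance of eigenspaces transfers verbatim to the corepresentation setting --- forces $\widetilde T = \lambda(T)\,I_V$ for some scalar $\lambda(T)$. Taking traces: $d_\kappa \lambda(T) = \operatorname{tr}(\widetilde T) = \int_\G \operatorname{tr}(\mathbf{R}T\mathbf{R}^*) = \int_\G\sum_{k,l}t_{kl}\sum_i\rho_{ik}\rho_{il}^*$; although $\sum_i \rho_{ik}\rho_{il}^*$ is not a direct unitarity identity, applying $\int_\G$ and using traciality of the Haar state (Theorem \ref{propofHaar}) rewrites it as $\int_\G \sum_i\rho_{il}^*\rho_{ik} = \int_\G (\mathbf{R}^*\mathbf{R})_{lk} = \delta_{lk}$, giving $\lambda(T)=\operatorname{tr}(T)/d_\kappa$. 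Specialising to the matrix unit $T=E_{ab}$ and reading off the $(i,j)$-entry of $\widetilde T = (\delta_{ab}/d_\kappa)I_V$ yields $\int_\G \rho_{ia}\rho_{jb}^* = \delta_{ij}\delta_{ab}/d_\kappa$, which is the second orthogonality relation after relabelling. The first then follows from traciality of $\int_\G$ one more time: $\int_\G \rho_{ij}^*\rho_{kl} = \int_\G \rho_{kl}\rho_{ij}^*$. Linear independence is immediate: any relation $\sum c_{ij}\rho_{ij}=0$ paired with $\rho_{kl}^*$ under $\int_\G$ forces $c_{kl}/d_\kappa = 0$ for all $k,l$.

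The main obstacle is the intertwining step. Classically one invokes translation invariance of Haar pointwise on the group; here the invariance is only available abstractly as $(I_{F(\G)}\otimes \int_\G)\Delta = \bigl(\int_\G(\cdot)\bigr)\mathds{1}_\G$, and routing this through the comultiplication formula for both the $\rho_{ij}$ and their adjoints --- using that $\Delta$ is a $*$-homomorphism --- to produce the clean operator identity $\mathbf{R}\widetilde T\mathbf{R}^* = \widetilde T\cdot\mathds{1}_\G$ is where the argument does its real work. Once that algebraic step is in hand, Schur's lemma and the trace computation dispatch the remainder cleanly, and the need to invoke traciality of $\int_\G$ (Theorem \ref{propofHaar}) in both the trace step and the passage between the two orthogonality relations explains why this proof relies on finiteness of $\G$.
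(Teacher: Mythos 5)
Your proof is correct and complete. The paper itself offers no proof of Proposition \ref{ortho}, deferring entirely to Timmermann (Sections 3.2.1--3.2.4); your averaging argument --- form the self-intertwiner $(\operatorname{id}\otimes\int_{\G})(\mathbf{R}T\mathbf{R}^*)$ via invariance of the Haar state, apply Schur's lemma, and pin down the scalar by a trace computation --- is exactly the standard route taken there, correctly specialised to the finite (Kac) case, where your repeated appeal to traciality (Theorem \ref{propofHaar}) is precisely what makes the constant come out as $1/d_{\kappa}$ on both sides instead of involving a modular matrix.
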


\bigskip

\begin{theorem}
\emph{(Quantum Finite Peter--Weyl Theorem)}\label{QPW}
Let $\mathcal{I}=\operatorname{Irr}(\G)$ be an index set for a family of pairwise-inequivalent irreducible representations of $\G$. If $d_{\alpha}$ is the dimension of the vector space on which $\rho^\alpha$ acts ($\alpha\in\mathcal{I}$),
$$\left\{\rho_{ij}^\alpha\,|\,i,j=1\dots d_\alpha,\,\alpha\in \mathcal{I}\right\},$$
the set of matrix elements of $\mathcal{I}$, is an orthogonal basis of $F(\G)$ with respect to the inner product
\beq
\langle a,b\rangle:=\int_{\G}a^*b\,,\,\,\,a,b\,\in F(\G)\qquad\bullet
\enq

\end{theorem}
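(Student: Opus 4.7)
The plan is to follow the strategy of the classical proof, using the comultiplication in place of the regular representation. The orthogonality half is essentially already in hand: Proposition \ref{ortho1} gives $\langle \rho^\alpha_{ij}, \rho^\beta_{kl} \rangle = \int_\G (\rho^\alpha_{ij})^* \rho^\beta_{kl} = 0$ when $\alpha \not\equiv \beta$, while Proposition \ref{ortho} gives $\langle \rho^\alpha_{ij}, \rho^\alpha_{kl} \rangle = \delta_{ik}\delta_{jl}/d_\alpha$. Together these show $\{\rho^\alpha_{ij}\}$ is an orthogonal family, hence automatically linearly independent.

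For the spanning half, I would exploit the fact that the comultiplication $\Delta: F(\G) \to F(\G) \otimes F(\G)$ is itself a representation of $\G$ on the vector space $F(\G)$: coassociativity is exactly the compatibility axiom $(\Delta \otimes I)\Delta = (I \otimes \Delta)\Delta$, and the counit axiom $(I \otimes \eps)\Delta = I$ holds by construction. By Theorem \ref{directsum}, $(F(\G), \Delta)$ decomposes as $F(\G) = \bigoplus_\lambda W_\lambda$ into irreducible unitary sub-corepresentations, each equivalent to some $\kappa_{\alpha(\lambda)}$ with $\alpha(\lambda) \in \mathcal{I}$ via an invertible intertwiner $T_\lambda: V_{\alpha(\lambda)} \to W_\lambda$. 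Setting $f_j^\lambda := T_\lambda(e_j^{\alpha(\lambda)})$, the intertwining relation forces
$$\Delta(f_j^\lambda) = \sum_i f_i^\lambda \otimes \rho^{\alpha(\lambda)}_{ij},$$
so that in the adapted basis $\bigcup_\lambda \{f_j^\lambda\}$ of $F(\G)$ the matrix coefficients of the regular representation are literally elements of the family $\{\rho^\alpha_{ij} : \alpha \in \mathcal{I}\}$.

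Applying now the other counit identity $(\eps \otimes I)\Delta = I$, which is available because $F(\G)$ is a bialgebra, to any $a = \sum_{\lambda, j} c_j^\lambda f_j^\lambda \in F(\G)$ yields
$$a = (\eps \otimes I)\Delta(a) = \sum_{\lambda, i, j} c_j^\lambda\, \eps(f_i^\lambda)\, \rho^{\alpha(\lambda)}_{ij},$$
a finite linear combination of the matrix elements $\rho^\alpha_{ij}$. Hence $\{\rho^\alpha_{ij}\}$ spans $F(\G)$, and combined with the orthogonality above it is an orthogonal basis.

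The substantive ingredient is the complete-reducibility statement of Theorem \ref{directsum}; once this is granted the argument is Sweedler-type bookkeeping together with the observation that the intertwiner $T_\lambda$ transports the matrix elements of $\kappa_{\alpha(\lambda)}$ unchanged into those of $\Delta|_{W_\lambda}$. The main point requiring care in the write-up is to invoke the bialgebra counit identity $(\eps \otimes I)\Delta = I$ rather than the corepresentation axiom $(I \otimes \eps)\Delta = I$, since the latter only returns the tautology $a = a$ whereas the former is what genuinely exhibits every $a$ as a combination of the chosen matrix elements.
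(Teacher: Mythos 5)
Your proof is correct and follows essentially the same route as the source the paper defers to (the theorem is stated with only a citation to Timmermann, whose argument in the finite case is exactly this): orthogonality from Propositions \ref{ortho1} and \ref{ortho}, and spanning by decomposing the regular corepresentation $\Delta$ via Theorem \ref{directsum} and applying the counit. Your closing remark is the one genuinely delicate point — the spanning step needs the bialgebra identity $(\eps\otimes I)\circ\Delta=I$ rather than the corepresentation axiom $(I\otimes\eps)\circ\Delta=I$ — and you have it the right way around.
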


\bigskip

Note this reads exactly   as the classical version. In particular it also means that there is a finite number of inequivalent irreducible representations.

\section{Diaconis--Van Daele Theory}
The following definition is similar to that of Simeng Wang (formula (2.5), \citep{Wang}) save for a choice of left-right. As remarked upon by Simeng Wang, his definition is similar to earlier definitions of Kahng and also Caspers save for the presence of the conjugate representation $\overline{\kappa}_\alpha$ rather than $\kappa_{\alpha}$ itself. As Wang explains, the conjugate representation is used to be compatible with standard definitions in classical analysis on compact groups and hence most welcome for this work.
\begin{definition}
(The Fourier Transform)
Let $\G$ be a quantum group with representation notation as before. Then the \emph{Fourier transform} is a map:
$$F(\G)\rightarrow \bigoplus_{\alpha\in\operatorname{Irr}(\G)}L(\overline{V}_{\alpha})\,,\,\,a\mapsto \widehat{a},$$
defined, with some abuse of notation, for each $\alpha\in \operatorname{Irr}(\G)$:
$$\widehat{a}(\alpha)=\left(I_{\overline{V_\alpha}}\otimes \F(a)\right)\circ \overline{\kappa}_\alpha.$$
Each $\widehat{a}(\alpha)$ is called the \emph{Fourier transform of $a$ at the representation $\alpha$}. For $\varphi\in\C\G$, as has been seen:
$$\widehat{\varphi}:=(I_{\overline{V_\alpha}}\otimes \varphi)\circ\overline{\kappa_\alpha}.$$
\end{definition}
The maps $\{\widehat{a}(\alpha)\,:\,\alpha\in\operatorname{Irr}(\G)\}$ play a key role in the sequel.

\bigskip

\begin{theorem}
(Diaconis--Van Daele Inversion Theorem)\label{DVDIT}
Let $\varepsilon$ be the counit of a quantum group $\G$ and $a\in F(\G)$. Then
\beq\int_{\widehat{\G}}\mathcal{F}(a)=\eps\left(a\right)=\sum_{\alpha\in\text{Irr}(\G)} d_\alpha \text{Tr}\left(\widehat{a}\left(\alpha\right)\right).\enq
where the sum is over the irreducible representations of $F(\G)$.
\end{theorem}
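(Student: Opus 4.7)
The first equality $\int_{\widehat{\G}}\F(a)=\eps(a)$ is essentially the definition of $\int_{\widehat{\G}}$ given earlier, so the substantive content lies in establishing the second equality $\eps(a)=\sum_\alpha d_\alpha \operatorname{Tr}(\widehat{a}(\alpha))$. The plan is to expand $a$ in the orthogonal basis of matrix elements supplied by the Quantum Finite Peter--Weyl Theorem \ref{QPW} and then identify both sides as giving the ``diagonal sum'' of the resulting coefficients.

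First I would write $a = \sum_{\alpha,i,j} c_{ij}^\alpha\, \rho_{ij}^\alpha$, with the sum over $\alpha \in \operatorname{Irr}(\G)$ and $i,j = 1,\dots,d_\alpha$. Using the orthogonality relation of Proposition \ref{ortho} (so $\int_\G (\rho_{ij}^\alpha)^*\rho_{kl}^\alpha = \delta_{ik}\delta_{jl}/d_\alpha$) together with the cross-representation orthogonality of Proposition \ref{ortho1}, the coefficients are recovered by $c_{ij}^\alpha = d_\alpha \int_\G (\rho_{ij}^\alpha)^* a$. Applying the counit and invoking Proposition \ref{matrixel}, which gives $\eps(\rho_{ij}^\alpha)=\delta_{i,j}$, the sum collapses to the diagonal:
\begin{equation*}
\eps(a) = \sum_{\alpha,i,j} c_{ij}^\alpha \delta_{i,j} = \sum_\alpha \sum_i d_\alpha \int_\G (\rho_{ii}^\alpha)^* a.
\end{equation*}

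The remaining task is to verify that this expression coincides with $\sum_\alpha d_\alpha \operatorname{Tr}(\widehat{a}(\alpha))$, which is a direct unpacking of the definition. Since $\overline{\kappa}_\alpha(\overline{e_j})=\sum_i \overline{e_i}\otimes (\rho_{ij}^\alpha)^*$ and $\F(a)(b)=\int_\G ba$, applying $(I_{\overline{V_\alpha}}\otimes\F(a))$ gives the matrix of $\widehat{a}(\alpha)$ in the basis $\{\overline{e_i}\}$ as $(\widehat{a}(\alpha))_{ij} = \int_\G (\rho_{ij}^\alpha)^* a$. Taking the trace yields $\operatorname{Tr}(\widehat{a}(\alpha)) = \sum_i \int_\G (\rho_{ii}^\alpha)^* a$, and inserting this into $\sum_\alpha d_\alpha \operatorname{Tr}(\widehat{a}(\alpha))$ reproduces the formula for $\eps(a)$ above.

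The argument is essentially bookkeeping once one has decided to expand in the Peter--Weyl basis, so no single step presents a real obstacle; the only subtlety worth being careful about is the role of the conjugate representation $\overline{\kappa}_\alpha$ in the definition of $\widehat{a}(\alpha)$, which is exactly what produces $(\rho_{ij}^\alpha)^*$ (rather than $\rho_{ij}^\alpha$) in the matrix entries and makes the combinatorics match. It is worth noting that linearity of all the maps involved reduces the entire proof to verifying the identity on a single matrix element $a = \rho_{kl}^\beta$, where it becomes $\delta_{k,l} = d_\beta \cdot \delta_{k,l}/d_\beta$, so one could alternatively present the proof in that streamlined form if preferred.
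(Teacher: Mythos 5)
Your proof is correct and is essentially the paper's argument: both rest on the Peter--Weyl orthogonality relations, the identity $\eps(\rho_{ij}^\alpha)=\delta_{i,j}$, and the computation $(\widehat{a}(\alpha))_{ij}=\int_\G(\rho_{ij}^\alpha)^*a$. The paper simply presents it in the ``streamlined form'' you mention at the end, reducing by linearity to $a=\rho_{kl}^\beta$ rather than expanding a general $a$ in the Peter--Weyl basis.
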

\begin{proof}
Both sides are linear in $a$ so it suffices to check $a=\rho_{kl}^{\beta}$ for $\beta\in\text{Irr}(\G)$. The left-hand side reads
$$\eps\left(\rho_{kl}^\beta\right)=\delta_{k,l}.$$

To calculate the right-hand-side, calculate for a given representation the trace of $\widehat{a}\left(\alpha\right)$. Let $\overline{e_j}\in \overline{V_\alpha}$ and calculate
\begin{align*}
\widehat{\rho_{kl}^{\beta}}\left(\alpha\right)\overline{e_j}&=\left(I_{\overline{V_\alpha}}\otimes\mathcal{F}\left(\rho_{kl}^\beta\right)\right)\sum_{i} \overline{e_i}\otimes\left(\rho_{ij}^\alpha\right)^*
\\&=\sum_i \mathcal{F}\left(\rho_{kl}^\beta\right)\left(\rho_{ij}^{\alpha}\right)^* \overline{e_i}
\\&=\sum_i \int_{\G}\left(\left(\left(\rho_{ij}^{\alpha}\right)^*\rho_{kl}^\beta\right)\right) \overline{e_i}
\end{align*}
This is zero unless $\alpha\equiv \beta$. If $\alpha\equiv\beta$ then we have
\begin{align*}
\widehat{\rho_{kl}^{\beta}}\left(\alpha\right)\overline{e_j}&=\int_{\G}\left(\left(\rho_{kj}^\beta\right)^*\rho_{kl}^\beta\right)\cdot\overline{e_k}
\\&=\frac{1}{d_\beta}\delta_{j,l}\overline{e_k}
\\ \Raw \operatorname{Tr}\left(\widehat{\rho_{kl}^\beta}(\beta)\right)&=\sum_j\langle \overline{e_j},\widehat{\rho_{kl}^\beta}(\beta)\overline{e_j}\rangle_{\overline{V_\beta}}
\\&=\sum_j \left\langle \overline{e_j},\frac{1}{d_\beta}\delta_{j,l}\overline{e_k}\right\rangle
\\&=\frac{1}{\delta_\beta}\delta_{k,l}.
\end{align*}

Multiply this by $d_\beta$ to get $\delta_{k,l}$ $\bullet$
 \end{proof}

 \bigskip

 \begin{theorem}
(Diaconis--Van Daele Convolution Theorem)\label{DVDCT}
For a representation $\kappa_\alpha$ of $\G$ and $a,\,b\in F(\G)$
 $$\widehat{a\star_{A}b}\left(\alpha\right)=\widehat{a}\left(\alpha\right)\circ\widehat{b}\left(\alpha\right).$$
\end{theorem}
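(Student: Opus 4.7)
The plan is to reduce this to two results already established in the excerpt: Van Daele's Convolution Theorem \ref{VDCT}, which states $\mathcal{F}(a)\star\mathcal{F}(b)=\mathcal{F}(a\star_A b)$, and Proposition \ref{conv1}, which gives the convolution identity $\widehat{\phi\star\varphi}(\kappa)=\widehat{\phi}(\kappa)\circ\widehat{\varphi}(\kappa)$ for functionals $\phi,\varphi\in\C\G$ acting on a representation $\kappa$.

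First I would unwind the definition of the Fourier transform. Observe that $\widehat{a}(\alpha)$ is literally the map $\widehat{\mathcal{F}(a)}(\kappa_\alpha)$ appearing in Proposition \ref{conv1}, since both are defined as $(I_{\overline{V_\alpha}}\otimes\mathcal{F}(a))\circ\overline{\kappa}_\alpha$. So the statement to be proved rewrites as
\[
\widehat{\mathcal{F}(a\star_A b)}(\kappa_\alpha)=\widehat{\mathcal{F}(a)}(\kappa_\alpha)\circ\widehat{\mathcal{F}(b)}(\kappa_\alpha).
\]

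Next, applying Van Daele's Convolution Theorem to the left-hand side replaces $\mathcal{F}(a\star_A b)$ by $\mathcal{F}(a)\star\mathcal{F}(b)$, giving
\[
\widehat{\mathcal{F}(a\star_A b)}(\kappa_\alpha)=\widehat{\mathcal{F}(a)\star\mathcal{F}(b)}(\kappa_\alpha).
\]
Then Proposition \ref{conv1}, applied with $\phi=\mathcal{F}(a)$, $\varphi=\mathcal{F}(b)$ and $\kappa=\kappa_\alpha$, factors the right-hand side as $\widehat{\mathcal{F}(a)}(\kappa_\alpha)\circ\widehat{\mathcal{F}(b)}(\kappa_\alpha)$, which is $\widehat{a}(\alpha)\circ\widehat{b}(\alpha)$ by the first observation. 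This completes the argument.

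There is no real obstacle here: once the notational identification $\widehat{a}(\alpha)=\widehat{\mathcal{F}(a)}(\kappa_\alpha)$ is made, the theorem is simply the composition of two prior results. The only point to be careful about is bookkeeping with the isomorphism $\overline{V_\alpha}\otimes\C\cong\overline{V_\alpha}$ when composing the maps $(I\otimes\phi)\circ\overline{\kappa}_\alpha$, which was already handled in the proof of Proposition \ref{conv1}. Thus the proof is essentially a one-line chain of equalities citing Theorems \ref{VDCT} and Proposition \ref{conv1}.
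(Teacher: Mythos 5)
Your proposal is correct and follows exactly the same route as the paper's own proof: unwind the definition of $\widehat{a}(\alpha)$ as $\widehat{\mathcal{F}(a)}(\kappa_\alpha)$, apply Van Daele's Convolution Theorem \ref{VDCT} to replace $\mathcal{F}(a\star_A b)$ by $\mathcal{F}(a)\star\mathcal{F}(b)$, and then invoke Proposition \ref{conv1} to split the result into a composition. Nothing is missing.
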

 \begin{proof}
 \begin{align*}
 \widehat{\left(a\star_A b\right)}\left(\alpha\right)&=\left(I_{\overline{V_\alpha}}\otimes\mathcal{F}\left(a\star_A b\right)\right)\overline{\kappa_\alpha}
 \\&=\left(I_{\overline{V_\alpha}}\otimes \mathcal{F}\left(a\right)\star\mathcal{F}\left(b\right)\right)\overline{\kappa_\alpha}
 \\&=\widehat{\F(a)\star \F(b)}(\kappa_\alpha)
\\&=\widehat{\F(a)}(\kappa_\alpha)\circ\widehat{\F(b)}(\kappa_\alpha)
 \\&=\widehat{a}\left(\alpha\right)\circ\widehat{b}\left(\alpha\right).
 \end{align*} The fourth line is Proposition \ref{conv1} and  Van Daele's Convolution Theorem \ref{VDCT} was also used $\bullet$
 \end{proof}

\bigskip

\begin{lemma}\label{lemmax}
Where the sum is over unitary irreducible representations,
$$\int_{\widehat{\G}}\left(\mathcal{F}\left(a\right)\star\mathcal{F}\left(b\right)\right)=\sum_{\alpha\in\text{Irr}(\G)}{d_\alpha}\text{ Tr}\left(\widehat{a}\left(\alpha\right)\widehat{b}\left(\alpha\right)\right).$$
\end{lemma}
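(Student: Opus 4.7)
The plan is to simply chain together the three theorems that have just been established: Van Daele's Convolution Theorem \ref{VDCT}, the Diaconis--Van Daele Inversion Theorem \ref{DVDIT}, and the Diaconis--Van Daele Convolution Theorem \ref{DVDCT}. No new computation appears to be required.

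First I would apply Van Daele's Convolution Theorem to rewrite the integrand entirely inside the Fourier transform:
$$\mathcal{F}(a)\star\mathcal{F}(b)=\mathcal{F}(a\star_{A} b).$$
Next, with a single element of $F(\G)$ sitting inside the integral, I would invoke the Diaconis--Van Daele Inversion Theorem applied to $a\star_A b\in F(\G)$ to obtain
$$\int_{\widehat{\G}}\mathcal{F}(a\star_A b)=\sum_{\alpha\in\operatorname{Irr}(\G)}d_\alpha\operatorname{Tr}\!\left(\widehat{a\star_A b}(\alpha)\right).$$
Finally, I would apply the Diaconis--Van Daele Convolution Theorem inside the trace, namely $\widehat{a\star_A b}(\alpha)=\widehat{a}(\alpha)\circ\widehat{b}(\alpha)$, to arrive at the claimed identity.

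There is no real obstacle here: the lemma is, essentially by design, the composite of the three preceding results, and its role is to package this composite once for the convenience of the Upper Bound Lemma that follows. The only care to be taken is formal, namely to note that $a\star_A b$ really is an element of $A=F(\G)$ (so that Theorem \ref{DVDIT} applies to it) and that the hat notation in Theorem \ref{DVDIT} and in Theorem \ref{DVDCT} is the same, so that the substitution is legitimate.
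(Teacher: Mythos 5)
Your proposal is correct and is essentially identical to the paper's own proof: Van Daele's Convolution Theorem \ref{VDCT} gives $\mathcal{F}(a)\star\mathcal{F}(b)=\mathcal{F}(a\star_A b)$, the Inversion Theorem \ref{DVDIT} converts the integral over $\widehat{\G}$ into $\eps(a\star_A b)=\sum_\alpha d_\alpha\operatorname{Tr}(\widehat{a\star_A b}(\alpha))$, and the Diaconis--Van Daele Convolution Theorem \ref{DVDCT} splits the trace. The paper merely makes the intermediate value $\eps(a\star_A b)$ explicit, which your chain of substitutions passes through implicitly.
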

\begin{proof}
The proof uses the convolution theorem of Van Daele and the definition of the Haar measure on $\widehat{\G}$ to find
$$\int_{\widehat{\G}}\left(\mathcal{F}\left(a\right)\star\mathcal{F}\left(b\right)\right)=\int_{\widehat{\G}}\mathcal{F}\left(a\star_{A}b\right)=\eps\left(a\star_A b\right).$$
Now use the Diaconis--Van Daele Inversion Theorem \ref{DVDIT}
\begin{align*}
\eps\left(a\star_A b\right)&=\sum_{\alpha\in\text{Irr}(\G)}d_\alpha\text{ Tr}\left(\widehat{a\star_A b}\left(\alpha\right)\right)
\\&=\sum_{\alpha\in\text{Irr}(\G)}d_\alpha\text{ Tr}\left(\widehat{a}\left({\alpha}\right)\widehat{b}\left({\alpha}\right)\right)\qquad\bullet
\end{align*}
\end{proof}
%
%

\begin{proposition}
Suppose that $\mathcal{F}\left(a\right)$ is a state. If $\kappa_\tau$ is the trivial representation, $\lambda\mapsto \lambda\otimes \mathds{1}_{\G}$,  then $\widehat{a}\left(\tau\right)=I_\C$.
\end{proposition}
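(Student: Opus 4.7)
The plan is to unwind the definition of the Fourier transform at the trivial representation, and then use the fact that any state is unital. The trivial representation $\kappa_\tau$ acts on the one-dimensional space $\mathbb{C}$ by $\lambda \mapsto \lambda \otimes \mathds{1}_{\mathbb{G}}$, and since $\mathds{1}_{\mathbb{G}}^* = \mathds{1}_{\mathbb{G}}$, the conjugate representation $\overline{\kappa_\tau}:\overline{\mathbb{C}}\to\overline{\mathbb{C}}\otimes F(\mathbb{G})$ is given by $\overline{\lambda}\mapsto \overline{\lambda}\otimes \mathds{1}_{\mathbb{G}}$. Under the canonical identification $\overline{\mathbb{C}}\cong\mathbb{C}$, this is the same map.

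Next, I would substitute into the definition of $\widehat{a}(\tau)$. For any $\overline{\lambda}\in\overline{\mathbb{C}}$,
$$\widehat{a}(\tau)(\overline{\lambda}) = \bigl(I_{\overline{\mathbb{C}}}\otimes \mathcal{F}(a)\bigr)\circ \overline{\kappa_\tau}(\overline{\lambda}) = \bigl(I_{\overline{\mathbb{C}}}\otimes \mathcal{F}(a)\bigr)(\overline{\lambda}\otimes \mathds{1}_{\mathbb{G}}) = \overline{\lambda}\cdot \mathcal{F}(a)(\mathds{1}_{\mathbb{G}}).$$
By the defining formula $\mathcal{F}(a)(b) = \int_{\mathbb{G}} b\, a$, the scalar $\mathcal{F}(a)(\mathds{1}_{\mathbb{G}})$ equals $\int_{\mathbb{G}} a$.

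Finally, I would invoke the hypothesis that $\mathcal{F}(a)\in M_p(\mathbb{G})$. As $M_p(\mathbb{G}) = S(F(\mathbb{G}))$ is the set of states of the $\mathrm{C}^*$-algebra $F(\mathbb{G})$, states are positive functionals of norm one, and for any state $\varphi$ on a unital $\mathrm{C}^*$-algebra one has $\varphi(\mathds{1}) = \|\varphi\| = 1$. Consequently $\mathcal{F}(a)(\mathds{1}_{\mathbb{G}}) = 1$, so $\widehat{a}(\tau)(\overline{\lambda}) = \overline{\lambda}$, i.e. $\widehat{a}(\tau) = I_{\mathbb{C}}$ after identification. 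There is no real obstacle here; the only mild subtlety is the identification of $\overline{\mathbb{C}}$ with $\mathbb{C}$, which is harmless because the conjugate-linear isomorphism carries the identity to the identity.
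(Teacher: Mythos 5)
Your proposal is correct and follows essentially the same route as the paper: unwind the definition of $\widehat{a}(\tau)$ via the conjugate of the trivial representation, use $\mathds{1}_{\G}^*=\mathds{1}_{\G}$, and conclude from $\mathcal{F}(a)(\mathds{1}_{\G})=1$ since $\mathcal{F}(a)$ is a state. You merely make explicit the unitality of states, which the paper leaves implicit.
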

\begin{proof}
\begin{align*}
\widehat{a}\left(\tau\right)\lambda&=\left(I_{\overline{\C}}\otimes\mathcal{F}\left(a\right)\right)\overline{\kappa_{\tau}}\left(\lambda\right)
\\&=\left(I_{\overline{\C}}\otimes \mathcal{F}\left(a\right)\right)(\lambda\otimes \mathds{1}_{\G}^*)=\lambda\otimes1=\lambda\qquad\bullet
\end{align*}
\end{proof}

\begin{proposition}
Suppose that $\kappa_{\alpha}$ is a non-trivial and irreducible representation, then $\widehat{\mathds{1}_{\G}}\left(\alpha\right)=0$.\label{vanish}
\end{proposition}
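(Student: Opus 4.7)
The plan is to unpack the definition of $\widehat{\mathds{1}_{\G}}(\alpha)$ and reduce the claim to the orthogonality of matrix elements of inequivalent irreducible representations (Proposition \ref{ortho1}), using the trivial representation as the comparison irreducible.

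First I would write, for a basis element $\overline{e_j} \in \overline{V_\alpha}$,
\begin{align*}
\widehat{\mathds{1}_{\G}}(\alpha)\overline{e_j} &= \left(I_{\overline{V_\alpha}} \otimes \mathcal{F}(\mathds{1}_{\G})\right)\circ \overline{\kappa_\alpha}(\overline{e_j}) \\
&= \left(I_{\overline{V_\alpha}} \otimes \mathcal{F}(\mathds{1}_{\G})\right)\left(\sum_i \overline{e_i} \otimes (\rho_{ij}^\alpha)^*\right) = \sum_i \mathcal{F}(\mathds{1}_{\G})\!\left((\rho_{ij}^\alpha)^*\right)\overline{e_i},
\end{align*}
so the statement reduces to showing $\mathcal{F}(\mathds{1}_{\G})\!\left((\rho_{ij}^\alpha)^*\right)=0$ for every $i,j$. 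Next I would observe that, by definition, $\mathcal{F}(\mathds{1}_{\G})(b) = \int_{\G} b\cdot \mathds{1}_{\G} = \int_{\G} b$, so $\mathcal{F}(\mathds{1}_{\G})$ is just the Haar measure on $\G$.

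The last step is the key one: recognise that the trivial representation $\kappa_\tau(\lambda)=\lambda\otimes \mathds{1}_{\G}$ is an irreducible representation whose single matrix element is $\rho_{11}^\tau = \mathds{1}_{\G}$. Since $\kappa_\alpha$ is irreducible and \emph{not} equivalent to $\kappa_\tau$, Proposition \ref{ortho1} applied to the pair $(\tau,\alpha)$ gives
\[
\int_{\G} (\rho_{ij}^\alpha)^* = \int_{\G} \rho_{11}^\tau \,(\rho_{ij}^\alpha)^* = 0 \qquad\text{for all } i,j.
\]
Substituting this into the earlier display yields $\widehat{\mathds{1}_{\G}}(\alpha)\overline{e_j} = 0$ for every $j$, and hence $\widehat{\mathds{1}_{\G}}(\alpha) = 0$, as required.

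The argument is short and the only thing one could call an obstacle is simply the bookkeeping: making sure $\mathcal{F}(\mathds{1}_{\G})$ is correctly identified with the Haar state and that the trivial representation is legitimately one of the inequivalent irreducibles indexed by $\operatorname{Irr}(\G)$, so that Proposition \ref{ortho1} really does apply. Once these identifications are made, the orthogonality relation does all the work.
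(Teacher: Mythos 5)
Your argument is correct and is essentially identical to the paper's own proof: both unpack $\widehat{\mathds{1}_{\G}}(\alpha)\overline{e_j}$ into the coefficients $\int_{\G}(\rho_{ij}^\alpha)^*\mathds{1}_{\G}$ and then kill them by recognising $\mathds{1}_{\G}$ as the matrix element of the trivial representation and invoking the first orthogonality relation (Proposition \ref{ortho1}) for inequivalent irreducibles. No gaps.
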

\begin{proof}
A calculation:
\begin{align*}
\widehat{\mathds{1}_{\G}}\left(\alpha\right)\overline{e_j}&=\left(I\otimes\mathcal{F}\left(\mathds{1}_{\G}\right)\right)\sum_i\overline{e_i}\otimes\left(\rho_{ij}^\alpha\right)^*
\\&=\sum_i\overline{e_i}\mathcal{F}\left(\mathds{1}_{\G}\right)\left(\rho_{ij}^\alpha\right)^*
\\&=\sum_{i}\int_{\G}\left(\left(\rho_{ij}^\alpha\right)^*\mathds{1}_{\G}\right)\cdot \overline{e_i}.
\end{align*}
Note that $\mathds{1}_{\G}$ is the matrix element of the trivial representation and $\alpha$ is not equivalent to the trivial representation. Therefore, by the first orthogonality relation $\widehat{\mathds{1}_{\G}}\left(\alpha\right)=0$ as required $\bullet$
\end{proof}

\bigskip

Note in particular that $\mathcal{F}\left(\mathds{1}_{\G}\right)=\dsp\int_{\G}=\pi$.

\bigskip

\begin{proposition}
Let $\G$ be a quantum group. Then $\mathcal{F}(a)\in M_p(\G)$ if and only if $a\in F(\G)^+$ such that $\dsp\int_{\G}a=1$.
\begin{proof}
If $a$ is positive write it as $b^*b$ and note that for $f\in F(\G)$
$$\F(a)(f^*f)=\int_{\G}(f^*fb^*b)=\int_{\G}(bf^*fb^*)=\int_{\G}((fb^*)^*(fb^*))\geq 0.$$
Clearly $\F(a)\ind_{\G}=1$ if $\dsp \int_{\G}a=1$.

\bigskip

Considering the other direction; note that if $\dsp\int_{\G} a\neq 1$ then $\F(a)\ind_{\G}\neq 1$ so assume $\dsp\int_{\G}a=1$.
Suppose that $a$ is self-adjoint so that $a=a_+-a_-$ where $a_+,a_-\in F(\G)^+$ and  $a_+a_-=0$. If $a$ is not positive $a_-\neq0$. Noting that, consider:
$$\F(a)(a_-)=\int_{\G}a_-(a_+-a_-)=-\int_{\G}a_-^2<0.$$
Therefore, as $\int_{\G}$ is faithful, $\F(a)$ cannot be a state if $a$ is not positive. Suppose that $a$ is not self-adjoint but equal to $c+id=c+i(d_+-d_-)$ ($d\neq 0$) then:
\begin{align*}
\F(c+id)(\pm d_{\pm})&=\pm\int_{\G}d_-(c+id)
\\&=\pm\int_{\G}d_{\pm}(c+i(d_+-d_-))
\\ &=\pm \int_{\G}d_{\pm}c\pm \int_{\G}id_{\pm}(d_+-d_-)
\\ &=\pm \int_{\G}d_{\pm}c+ i\int_{\G}d_{\pm}^2\not\in\R.
\end{align*}
 Therefore $a$ must be positive $\bullet$
\end{proof}
\end{proposition}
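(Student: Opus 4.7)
The plan is to prove both implications using the faithfulness and the tracial property of the Haar measure (Theorem \ref{propofHaar}), together with the spectral decomposition available in the finite-dimensional $\mathrm{C}^*$-algebra $F(\G)$. Recall that $\F(a)$ is defined by $\F(a)(b) = \int_\G ba$, so each computation reduces to evaluating the Haar measure on products and then cycling factors by traciality.

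For the forward direction, assume $a \in F(\G)^+$ with $\int_\G a = 1$. Unitality is immediate: $\F(a)(\ind_\G) = \int_\G \ind_\G\, a = \int_\G a = 1$. For positivity, write $a = b^*b$ with $b \in F(\G)$ and compute, for arbitrary $f \in F(\G)$,
$$\F(a)(f^*f) = \int_\G f^*f\, b^*b = \int_\G (fb^*)^*(fb^*),$$
where the second equality uses the traciality of $\int_\G$ to cycle $b$ around. The right-hand side is nonnegative since the Haar state is positive. Because any positive functional on a unital $\mathrm{C}^*$-algebra satisfies $\|\phi\| = \phi(\ind)$, unitality plus positivity gives $\F(a) \in M_p(\G)$.

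For the converse, assume $\F(a)$ is a state. The identity $\int_\G a = \F(a)(\ind_\G) = 1$ is again immediate. The substantive task is to show $a \geq 0$. I would first decompose $a = c + id$ with $c, d$ self-adjoint via the Cartesian decomposition. If $d \neq 0$, then using the spectral decomposition $d = d_+ - d_-$ into orthogonal positives, I would evaluate $\F(a)$ on $d_\pm$; expanding gives a real part plus a nonzero imaginary part $\pm i \int_\G d_\pm^2$, contradicting that a state must take real values on self-adjoint elements (faithfulness of $\int_\G$ prevents the imaginary term from vanishing). Hence $a$ is self-adjoint. Then decomposing $a = a_+ - a_-$ with $a_+ a_- = 0$ and testing on $a_-$ yields $\F(a)(a_-) = \int_\G a_-(a_+ - a_-) = -\int_\G a_-^2$, which is strictly negative if $a_- \neq 0$, again by faithfulness. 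This contradicts positivity of $\F(a)$, forcing $a_- = 0$ and hence $a \geq 0$.

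The main obstacle is the converse direction: the forward implication is essentially a bookkeeping exercise that uses only the trace property, but the converse needs the finite-dimensional spectral theorem to extract the negative and imaginary parts explicitly, together with the faithfulness of the Haar measure to ensure that these "witness" elements produce strictly negative or non-real values, not merely nonpositive ones. Without faithfulness the argument would only conclude that $\int_\G a_-^2 = 0$, which would not force $a_- = 0$.
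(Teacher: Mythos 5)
Your proof is correct and follows essentially the same route as the paper's: the forward direction cycles $b$ via traciality to get $\F(b^*b)(f^*f)=\int_\G (fb^*)^*(fb^*)\geq 0$, and the converse uses the Cartesian decomposition $a=c+id$ with test elements $d_\pm$ to force self-adjointness, then the orthogonal decomposition $a=a_+-a_-$ with test element $a_-$ and faithfulness of the Haar state to force positivity. The only (harmless) differences are that you treat the non-self-adjoint case before the self-adjoint one, which is the logically cleaner ordering, and that you explicitly record why positivity plus unitality yields a state.
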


\begin{proposition}\label{star}
If $\kappa_\alpha$ is unitary and $\F(a)\in M_p(\G)$ then
$$\widehat{\mathcal{F}(a)^*}(\kappa_\alpha)=\widehat{a}(\kappa_{\alpha})^*.$$
Note the first involution is in $\C\G$ and the second is in $L(\overline{V_\alpha})$.\end{proposition}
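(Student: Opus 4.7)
The plan is to compute the matrix elements of both sides with respect to the basis $\{\overline{e_i}\}$ of $\overline{V_\alpha}$ and check they agree. Writing $\overline{\kappa_\alpha}(\overline{e_j})=\sum_i \overline{e_i}\otimes (\rho_{ij}^\alpha)^*$, the definition of the Fourier transform gives
\[
\widehat{\F(a)^*}(\kappa_\alpha)\overline{e_j}=\sum_i \F(a)^*\!\left((\rho_{ij}^\alpha)^*\right)\overline{e_i},\qquad \widehat{a}(\alpha)\overline{e_j}=\sum_i\left(\int_{\G}(\rho_{ij}^\alpha)^*a\right)\overline{e_i},
\]
so the $(i,j)$-entry of $\widehat{a}(\alpha)^*$ (the second involution in $L(\overline{V_\alpha})$) is $\overline{\int_{\G}(\rho_{ji}^\alpha)^*a}$. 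Thus the claim reduces to verifying
\[
\F(a)^*\!\left((\rho_{ij}^\alpha)^*\right)=\overline{\int_{\G}(\rho_{ji}^\alpha)^*a}\qquad\text{for all }i,j.
\]

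The computation of the left-hand side uses the formula $\varphi^*(b)=\overline{\varphi(S(b)^*)}$ for the involution on $\C\G$, which gives
\[
\F(a)^*\!\left((\rho_{ij}^\alpha)^*\right)=\overline{\F(a)\!\left(S((\rho_{ij}^\alpha)^*)^*\right)}=\overline{\int_{\G}S((\rho_{ij}^\alpha)^*)^*\,a}.
\]
So what must be shown is $S((\rho_{ij}^\alpha)^*)^*=(\rho_{ji}^\alpha)^*$, equivalently $S(\rho_{ij}^\alpha)=(\rho_{ji}^\alpha)^*$ after applying the involution and using the Kac property.

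The key input, then, is a pair of facts about the antipode. First, Theorem \ref{propofHaar} guarantees $S^2=I_{F(\G)}$; combining this with the Hopf $*$-algebra identity $(S\circ *)^2=I$ gives $S(a^*)=S(a)^*$. Second, for a unitary representation $\kappa_\alpha$ the matrix elements satisfy $S(\rho_{ij}^\alpha)=(\rho_{ji}^\alpha)^*$. The quickest route to this is to combine the antipodal property applied to $\rho_{ij}^\alpha$, namely $\sum_k \rho_{ik}^\alpha S(\rho_{kj}^\alpha)=\delta_{ij}\mathds{1}_{\G}$, with the unitarity relation $\sum_k \rho_{ik}^\alpha(\rho_{jk}^\alpha)^*=\delta_{ij}\mathds{1}_{\G}$ obtained from $\langle\kappa_\alpha(e_j),\kappa_\alpha(e_i)\rangle_{F(\G)}=\langle e_j,e_i\rangle\mathds{1}_{\G}$; invertibility of the matrix $(\rho_{ik}^\alpha)$ in a suitable sense (or linear independence of the matrix elements, Proposition \ref{ortho}) then forces $S(\rho_{kj}^\alpha)=(\rho_{jk}^\alpha)^*$. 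I expect this to be the only delicate step, but it is a standard fact and should slot in cleanly.

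With both facts in hand, $S((\rho_{ij}^\alpha)^*)^*=S(\rho_{ij}^\alpha)=(\rho_{ji}^\alpha)^*$, so the two expressions for the $(i,j)$-entry match and the proof is complete. Positivity of $\F(a)$ is not actually needed for the identity itself; the hypothesis $\F(a)\in M_p(\G)$ presumably signals that the result will only be used for states in the sequel.
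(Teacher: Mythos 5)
Your proposal is correct and follows essentially the same route as the paper: both compute the two operators entrywise on the basis $\{\overline{e_i}\}$, apply the involution formula $\varphi^*(b)=\overline{\varphi(S(b)^*)}$ on $\C\G$, and reduce everything to $S(\rho_{ij}^\alpha)=(\rho_{ji}^\alpha)^*$ together with $S^2=I_{F(\G)}$. The only cosmetic differences are that the paper simply cites Timmermann for $S(\rho_{ij}^\alpha)=(\rho_{ji}^\alpha)^*$ where you sketch its derivation from the antipodal and unitarity relations, and the paper pushes one step further (using positivity and traciality of the Haar state to rewrite $\overline{\int_\G(\rho_{ji}^\alpha)^*a}$ as $\int_\G\rho_{ji}^\alpha a^*$), which your version correctly avoids as unnecessary.
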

\begin{proof}
First take $\overline{e_i}$ (and using $S(\rho_{ij}^{\alpha})=\left(\rho_{ji}^\alpha\right)^*$ (Proposition 3.1.7 v., \citep{Timm}) and $S^2=I_{F(\G)}$)
\begin{align*}
\widehat{\mathcal{F}(a)^*}(\kappa_\alpha)\overline{e_i}&=\left(I_{\overline{V_\alpha}}\otimes \mathcal{F}(a)^*\right)\overline{\kappa_\alpha}(\overline{e_i})
\\&=\left(I_{\overline{V_\alpha}}\otimes \mathcal{F}(a)^*\right)\sum_j \overline{e_j}\otimes\left(\rho_{ij}^\alpha\right)^*
\\&=\sum_j\overline{\mathcal{F}(a)S\left(\left(\rho_{ij}^\alpha\right)^*\right)^*}\overline{e_j}
\\&=\sum_j\overline{\int_{\G}\left(\left(\rho_{ji}^\alpha\right)^* a\right)}\cdot \overline{e_j}
\\&=\sum_j \int_{\G}\left(\left(\rho_{ji}^\alpha\right) a^*\right)\cdot \overline{e_j}.
\end{align*}
Now looking at the right-hand side:
\begin{align*}
\widehat{a}(\alpha)\overline{e_i}&=(I_{\overline{V_\alpha}}\otimes \mathcal{F}(a))\overline{\kappa_\alpha}(\overline{e_i})
\\&=(I_{\overline{V_\alpha}}\otimes \mathcal{F}(a))\sum_j\overline{e_j}\otimes \left(\rho_{ij}^\alpha\right)^*
\\&=\sum_j\int_{\G}\left(\left(\rho_{ij}^\alpha\right)^*a\right)\cdot \overline{e_j}.
\end{align*}
Considering that the involution in $L(\overline{V_\alpha})$ is the conjugate-transpose, this is enough to show the result $\bullet$
\end{proof}

\bigskip

Recall, from Chapter \ref{dist}, that the \emph{total variation distance} is defined as
$$\|\nu^{\star k}-\pi\|:=\frac12 \|\mathcal{F}^{-1}(\nu^{\star k}-\pi)\|_1^{F(\G)}$$
and that
$$\|\nu^{\star k}-\pi\|\leq \frac12 \|\nu^{\star k}-\pi\|_2^{\C \G},$$

where
$$\|\mu\|_2^{\C G}=\left(\int_{\widehat{\G}}(\mu^*\star \mu)\right)^{1/2}.$$

\bigskip

\begin{lemma}
(Quantum Diaconis--Shahshahani Upper Bound Lemma)
Let $\nu\in M_p(\G)$. Then
\beq
\|\nu^{\star k}-\pi\|^2\leq \frac14\sum_{\alpha\in\text{Irr}(\G)\backslash\{\tau\}}d_\alpha\text{Tr} \left[\left(\hat{\nu}\left(\alpha\right)^*\right)^k\left(\hat{\nu}\left(\alpha\right)\right)^k\right],
\enq
where the sum is over all non-trivial, irreducible representations.
\end{lemma}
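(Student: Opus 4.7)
The plan is to chain together three ingredients that have all been set up in the preceding sections: the Cauchy--Schwarz bound relating the total variation distance to the $2$-norm on $\C\G$, Lemma \ref{lemmax} which expresses the $\dsp\int_{\widehat{\G}}$-integral of a convolution as a sum of traces over irreducible representations, and the Diaconis--Van Daele Convolution Theorem \ref{DVDCT} which converts the Fourier transform of a convolution power into a power of the Fourier transform. The role of the restriction to non-trivial representations will be taken care of by Proposition \ref{vanish} together with the fact that $\nu$ is a state.

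First I would square the Cauchy--Schwarz estimate already established in Chapter \ref{dist}, namely
$$\|\nu^{\star k}-\pi\|^2 \leq \tfrac14 \left(\|\nu^{\star k}-\pi\|_2^{\C\G}\right)^2 = \tfrac14 \int_{\widehat{\G}}\bigl((\nu^{\star k}-\pi)^* \star (\nu^{\star k}-\pi)\bigr).$$
Writing $\nu = \F(a)$ for some $a \in F(\G)$, Van Daele's Convolution Theorem \ref{VDCT} gives $\nu^{\star k} = \F(a^{\star_A k})$; and since $\pi = \F(\mathds{1}_\G)$ we have $\nu^{\star k} - \pi = \F(a^{\star_A k} - \mathds{1}_\G)$. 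Applying Lemma \ref{lemmax} together with Proposition \ref{star} (which gives $\widehat{\F(b)^*}(\alpha) = \widehat{b}(\alpha)^*$) yields
$$\int_{\widehat{\G}}\bigl((\nu^{\star k}-\pi)^* \star (\nu^{\star k}-\pi)\bigr) = \sum_{\alpha\in\operatorname{Irr}(\G)} d_\alpha \operatorname{Tr}\!\left[\widehat{\nu^{\star k}-\pi}(\alpha)^* \, \widehat{\nu^{\star k}-\pi}(\alpha)\right].$$

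Next I would identify each Fourier transform in the sum. By the Diaconis--Van Daele Convolution Theorem \ref{DVDCT}, $\widehat{a^{\star_A k}}(\alpha) = \widehat{a}(\alpha)^k = \widehat{\nu}(\alpha)^k$, and by linearity $\widehat{\nu^{\star k} - \pi}(\alpha) = \widehat{\nu}(\alpha)^k - \widehat{\pi}(\alpha)$. For any non-trivial irreducible $\alpha$, Proposition \ref{vanish} gives $\widehat{\pi}(\alpha) = \widehat{\mathds{1}_\G}(\alpha) = 0$, so the summand simplifies to $d_\alpha \operatorname{Tr}\bigl[(\widehat{\nu}(\alpha)^*)^k (\widehat{\nu}(\alpha))^k\bigr]$ upon using $(A^k)^* = (A^*)^k$.

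The last point to verify is that the trivial representation $\tau$ contributes zero. Since $\nu \in M_p(\G)$ and $\pi \in M_p(\G)$ are both states, the proposition preceding Proposition \ref{vanish} gives $\widehat{\nu}(\tau) = I_\C = \widehat{\pi}(\tau)$, so $\widehat{\nu^{\star k}-\pi}(\tau) = 1^k - 1 = 0$ and the $\tau$-term drops out of the sum. Combining everything then gives the stated bound. There is no real obstacle here beyond bookkeeping -- every tool has been proven upstream; the mild subtlety is simply to check that Proposition \ref{star} and linearity of $\mu \mapsto \widehat{\mu}$ apply to the signed functional $\nu^{\star k} - \pi$, not only to states, which is immediate from the definitions.
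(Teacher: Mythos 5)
Your proposal is correct and follows essentially the same route as the paper: Cauchy--Schwarz down to $\|\cdot\|_2^{\C\G}$, Lemma \ref{lemmax} with Proposition \ref{star} to turn the integral over $\widehat{\G}$ into a sum of traces, Proposition \ref{vanish} plus the state property to kill the $\widehat{\mathds{1}_\G}$ terms and the trivial representation, and the Diaconis--Van Daele Convolution Theorem to convert $\widehat{\nu^{\star k}}(\alpha)$ into $\widehat{\nu}(\alpha)^k$. The only cosmetic difference is that you push the convolution theorem through at the start (working with $a^{\star_A k}-\mathds{1}_\G$ directly) whereas the paper runs the trace computation for a general $\F(a)-\pi$ and applies the convolution theorem $k$ times at the very end; the content is the same.
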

\begin{proof}
Let $\nu=\mathcal{F}(a)$, and recalling that $\F(\ind_{\G})=\pi$ write
\begin{align*}
\left(\|\mathcal{F}\left(a\right)-\pi\|_2^{\C G}\right)^2&=\int_{\widehat{\G}}\left(\left(\mathcal{F}\left(a\right)-\mathcal{F}\left(\mathds{1}_{\G}\right)\right)^*\star\left(\mathcal{F}\left(a\right)-\mathcal{F}\left(\mathds{1}_{\G}\right)\right)\right)
\\&=\int_{\widehat{\G}}\mathcal{F}\left(a-\mathds{1}_{\G}\right)^*\star\mathcal{F}\left(a-\mathds{1}_{\G}\right).
\end{align*}
Now using Lemma \ref{lemmax} and Proposition \ref{star}, this is equal to
\begin{align*}
\left(\|\mathcal{F}\left(a\right)-\pi\|_2^{\C G}\right)^2&=\sum_{\alpha\in\operatorname{Irr(\G)}}d_{\alpha}\text{Tr} \left[\widehat{\left(a-\mathds{1}_{\G}\right)}\left(\alpha\right)^*\widehat{\left(a-\mathds{1}_{\G}\right)}\left(\alpha\right)\right].
\end{align*}
Now note that
\begin{align*}\widehat{\left(a-\mathds{1}_{\G}\right)}\left(\alpha\right)&=\widehat{a}\left(\alpha\right)-\widehat{\mathds{1}_{\G}}\left(\alpha\right).
\end{align*}
If $\alpha=\tau$, the trivial representation, then this yields zero as both terms are the identity on $\C$. If $\alpha$ is non-trivial, then $\widehat{\mathds{1}_{\G}}\left(\alpha\right)=0$ and thus (using the notation $\widehat{a}(\alpha)=\widehat{\nu}(\alpha)$:
$$\|\nu-\pi\|^2\leq \frac14\left(\|\mathcal{F}\left(a\right)-\pi\|_2^{\C G}\right)^2 =\frac14\sum_{\alpha\in\text{Irr}(\G)\backslash\{\tau\}}d_\alpha\text{Tr} \left[\widehat{\nu}\left(\alpha\right)^*\widehat{\nu}\left(\alpha\right)\right].$$
Apply the Diaconis--Van Daele Convolution Theorem \ref{DVDCT} $k$ times   $\bullet$
\end{proof}

\bigskip

Note that this is \emph{exactly} the same as the classical Diaconis--Shahshahani Upper Bound Lemma \citep{PD}, save for replacing $G$ with $\G$.

\bigskip

\begin{lemma}\label{lbl}
(Lower Bound Lemma)
Suppose that $\nu\in M_p(\G)$ and $\rho$ the matrix element of a non-trivial one dimensional representation. Then
\beq
\|\nu^{\star k}-\pi \|\geq \frac12 |\nu(\rho)|^k.
\enq

\end{lemma}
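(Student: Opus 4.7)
The plan is to use $\rho$ itself as a test function in the supremum presentation of the total variation distance obtained in equation~(\ref{lb}):
$$\|\nu^{\star k} - \pi\| = \frac{1}{2}\sup_{\phi \in F(\G)\,:\,\|\phi\|_{\infty}^{F(\G)} \leq 1} \left|\nu^{\star k}(\phi) - \pi(\phi)\right|.$$
For this choice to yield the claimed lower bound, three facts must be verified: $\|\rho\|_{\infty}^{F(\G)} \leq 1$, $\pi(\rho) = 0$, and $\nu^{\star k}(\rho) = \nu(\rho)^k$.

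First, since every irreducible representation is equivalent to a unitary one (Theorem~\ref{directsum}), I may assume the one-dimensional representation is unitary. Unitarity on a one-dimensional space collapses to $\rho^* \rho = \mathds{1}_{\G}$; in the finite-dimensional $\mathrm{C}^*$-algebra $F(\G)$ every isometry is a unitary, so $\rho$ is unitary and $\|\rho\|_{\infty}^{F(\G)} = 1$. Second, $\pi(\rho) = \int_{\G} \rho = 0$ is exactly the content of Proposition~\ref{vanish} (equivalently, apply Proposition~\ref{ortho1} with $\beta$ the trivial representation, whose matrix element is $\mathds{1}_{\G}$).

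Third, and this is the step that actually produces the $k$-th power, I use that $\rho$ is a matrix element of a \emph{one}-dimensional representation, so the formula of Proposition~\ref{matrixel} reduces to $\Delta(\rho) = \rho \otimes \rho$. A straightforward induction using coassociativity gives $\Delta^{(k-1)}(\rho) = \rho^{\otimes k}$, and hence
$$\nu^{\star k}(\rho) = (\nu \otimes \cdots \otimes \nu)\,\Delta^{(k-1)}(\rho) = \nu(\rho)^k.$$

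Combining these three observations with the supremum presentation yields
$$\|\nu^{\star k} - \pi\| \geq \frac{1}{2}\left|\nu^{\star k}(\rho) - \pi(\rho)\right| = \frac{1}{2}|\nu(\rho)|^k,$$
which is the desired inequality. I don't foresee a genuine obstacle here; the only mildly delicate point is the passage from $\rho^*\rho = \mathds{1}_{\G}$ to $\|\rho\|_\infty = 1$, which requires noting that $F(\G)$ is finite-dimensional so that a one-sided inverse is a two-sided inverse.
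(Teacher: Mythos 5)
Your proposal is correct and follows essentially the same route as the paper: $\rho$ as a test function in the supremum presentation (\ref{lb}), zero Haar expectation via Proposition \ref{vanish}, and unitarity giving $\|\rho\|_\infty^{F(\G)}=1$. The only difference is that you obtain $\nu^{\star k}(\rho)=\nu(\rho)^k$ directly from the grouplike relation $\Delta(\rho)=\rho\otimes\rho$ and induction, whereas the paper routes this step through the Diaconis--Van Daele Convolution Theorem \ref{DVDCT}; both are valid, and yours is arguably the more elementary justification.
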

\begin{proof}
Starting with (\ref{lb}), note that the argument from Proposition \ref{vanish} shows that $\rho$ has zero expectation under the random distribution. Note also that $\rho$ is unitary (Proposition 3.1.7 v., \citep{Timm}), thus norm one and thus  a suitable test function.

\bigskip

Note that for a one dimensional representation, by the Convolution Theorem \ref{DVDCT}:
$$|\nu^{\star k}(\rho)|=\left|\overline{\nu^{\star k}(\rho)}\right|=\left|\widehat{\nu^{\star k}}(\rho)\right|=|\widehat{\nu}(\rho)^k|=\left|\overline{\nu(\rho)}\right|^k=|\nu(\rho)|^{k}\,\,\,\bullet$$
\end{proof}

\section{Commutative Examples}
\subsection*{Simple Walk on the Circle}
Consider the random walk on\footnote{note that $\Z_n$ is the circle group of order $n$ and  \emph{not} some virtual object!} $F(\Z_n)$  driven by $\nu_n\in M_p(\Z_n)$:
\beq
\nu_n(\delta_s):=\begin{cases}
\half & \text{ if }s=\pm1,
\\[1ex] 0 & \text{ otherwise}.
\end{cases}
\enq

Note that $\Z_n$ is an abelian group, so all irreducible representations have degree 1. By the classical theory, each $\alpha=0,1,2,\dots,n-1$, gives a representation
$$\kappa_\alpha(\lambda)=\sum_{s\in \Z_n}\lambda e^{2\pi i\alpha s/n}\otimes\delta_s=\sum_{s\in \Z_n}\lambda\otimes e^{2\pi i\alpha s/n}\delta_s.$$
The family of groups $\Z_n$ has the random distribution $\pi_n:F(\Z_n)\raw\C$:
$$\pi_n(f)=\frac{1}{n}\sum_{t=0}^{n-1}\delta^t(f).$$
\subsubsection*{Upper Bounds\label{circ}}
\emph{For $k\geq n^2/40$, with $n$ odd,}
\beq
\|\nu_n^{\star k}-\pi_n \|\leq e^{-\pi^2 k/2n^2}
\enq

\begin{proof}
The Fourier transform of $\nu_n$ at $\kappa_\alpha$ is:
\begin{align*}
\widehat{\nu_n}(\alpha)(\lambda)&=\left(I_{\overline{\C}}\otimes\nu_n\right)\overline{\kappa}_{\alpha}(\lambda)
\\&=\left(I_{\overline{\C}}\otimes\nu_n\right)\sum_{s\in \Z_n}\lambda\otimes e^{-2\pi i\alpha s/n}\delta_s
\\&=\lambda\left(\frac12 e^{-2\pi i \alpha/n}+\frac12 e^{+2\pi i \alpha/n}\right)
\\&=\lambda\cos\left(2\pi \alpha/n\right).
\end{align*}
At this point the classical and quantum  theories coincide as can be seen by consulting Diaconis \citep{PD} (Section 3.C, Theorem 2) or Ceccherini-Silberstein \citep{cecc} (Theorem 2.2.1) $\bullet$
\end{proof}
\subsubsection*{Lower Bounds\label{circlb}}
\emph{For $n\geq 7$, and \text{any} $k$}
\beq
\|\nu_n^{\star k}-\pi_n\|\geq\half e^{-\pi^2 k/2n^2-\pi^4 k/2n^4}.
\enq

\begin{proof}
Use Lemma \ref{lbl} with $\kappa_{\left(\frac{n-1}{2}\right)}$. See Ceccherini-Silberstein \citep{cecc} (Theorem 2.2.1) for details $\bullet$
\end{proof}

\subsection*{Nearest Neighbour Walk on the $n$-Cube}
Consider the walk on $F(\Z_2^n)$, $n>1$,  driven by
\beq
\nu_n(\delta_s):=\begin{cases}
\frac{1}{n+1} & \text{ if }w(s)=0\text{ or }1,
\\[2ex] 0 & \text{ otherwise}
\end{cases}
\enq
where $w(s)$, the weight of $s=(s_1,s_2,\dots,s_n)$, is given by the sum in $\N$:
\beq
w(s)=\sum_{i=1}^ns_i
\enq

\subsubsection*{Upper Bounds}
\emph{For $k=(n+1)(\log n+c)/4$, $c>0$:}
\beq
\|\nu_n^{\star k}-\pi_n \|^2\leq \frac12 \left(e^{e^{-c}}-1\right)
\enq
\begin{proof}
A similar story to the above. See, for example, Diaconis \citep{PD} (Section 3.C, Theorem 3)\,\,$\bullet$
\end{proof}

\subsubsection*{Lower Bounds}
 Along with the upper bound extracted from the Diaconis--Fourier theory, tedious but elementary calculations bound the variation distance away from 0 for $k=(n+1)(\log n-c)/4$ for $n$ large and $c>0$. Define $\phi\in F(\Z_2^n)$ by $\phi(s)=n-2w(s)$. A set $A_\beta\subset \Z_2^n$ is  defined as the elements whose weight is sufficiently close to $n/2$ for some $\beta$:
\beqa
A_\beta:=\{s\in\Z_2^d:|\phi(s)|<\beta \sqrt{n}\}
\enqa
Using the norm-one function $2\ind_{A_\beta}-\ind_{\Z_2^d}\in F(\Z_2^n)$, and the same calculation as before:
$$\|\nu_n^{\star k}-\pi_n\|\geq |\nu_n^{\star k}(\ind_{A_\beta})-\pi_n(\ind_{A_{\beta}})|.$$
Careful calculations, referenced in the MSc, yield the desired lower bound. A more precise definition of $\beta$ in terms of $c$ makes this lower bound useful\footnote{if $\beta=e^{c/2}/2$ then the lower bound is $1-20/e^c$, which clearly tends to $1$ as $c$ increases}. Hence it follows that the random walk has a cut-off at time $t_n=n\log n/4$ --- for times sufficiently smaller than $t_n$ the variation distance is close to one, while for times sufficiently larger than $t_n$ the variation distance is close to zero.
\section{Cocommutative Examples: Random Walks on the Dual Group}
Let $\C G=F(\widehat{G})$ be the algebra of functions on the dual group of a finite group $G$. It is not immediately straightforward to recognise a probability on $\widehat{G}$. Elements of $M_p(\widehat{G})$ --- states on $\C G$ --- lie in $(\mathbb{C}G)'=F(G)$ and must be positive and have $u(1_{\widehat{G}})=1$.
 Let $\varphi=\sum_{t\in G}\alpha_t\delta^t\in F(\widehat{G})$ so that using the involution and multiplication in $F(\widehat{G})$,
\begin{align*}\varphi^*\star\varphi&=\left(\sum_{s\in G}\overline{\alpha_s}\delta^{s^{-1}}\right)\star\left(\sum_{t\in G}\alpha_t\delta^t\right)
\\ &=\sum_{s,t\in G}\overline{\alpha_s}\alpha_t \delta^{s^{-1}t}.
\end{align*}
Let $u=\sum_{t\in G}u(t)\delta_t\in F(\widehat{G})'$. For $u$ to be a positive functional:
$$u(\varphi^*\star \varphi)=\sum_{s,t\in G}\overline{\alpha_s}\alpha_tu(\delta^{s^{-1}t})\geq 0.$$
Such a function --- that has this property for all $\{\alpha_s:s\in G\}\subset \C$ --- is called \emph{positive definite}. It was noted earlier that for $u\in M_p(\widehat{G})$ it is required that $u(e)=1$. Also, $|u(s)|\leq u(e)$ (and so $|u(s)|\leq 1$) and $u(s^{-1})=\overline{u(s)}$ for all positive definite functions. See Bekka, de la Harpe and Valette (Proposition C.4.2., \citep{BdHV}) for a proof.

\bigskip

Furthermore, there is a correspondence between positive definite functions and unitary representations on $G$ together with a vector. In particular, for each positive definite function $u$ there exists a unitary representation $\rho:G\raw \operatorname{GL}(V)$ and a  vector $\xi\in V$ such that
\beq u(s)=\langle\rho(s)\xi,\xi\rangle,\label{posdef}\enq
and for each unitary representation and  vector (\ref{posdef}) defines a positive definite function on $G$.

\bigskip

For $u$ to be a state it is required that $u(e)=1$ and so $\langle \xi,\xi\rangle=1$; i.e. $\xi$ is a unit vector. Therefore probabilities on $\hat{G}$ can be chosen by selecting a given representation and unit vector.

\bigskip

Since $\Delta(\delta^s)=\delta^s\otimes\delta^s$, it follows that $\kappa_s(\lambda)=\lambda\otimes \delta^s$ defines a (co)representation (with $\tau=\kappa_e$), and thus all irreducible representations are of this form by counting. This makes the application of the upper bound lemma straightforward. Let $u\in M_p(\widehat{G})$ so that $\widehat{u}(\kappa_s)=\overline{u(s)}$
and so
$$\widehat{u}(\kappa_s)^*\widehat{u}(\kappa_s)=|u(s)|^2.$$
Therefore the upper bound lemma yields:
$$\|u^{\star k}-\pi\|^2\leq \frac{1}{4}\sum_{t\in G\backslash \{e\}}|u(t)|^{2k}.$$
\subsection*{A Walk on $\widehat{S_n}$}
Consider, for $n\geq 4$, the quantum group $\widehat{S_n}$ (given by $F(\widehat{S_n}):=\C S_n$) with a state $u\in M_p(\widehat{S_n})$ given by the permutation representation on $\C^n$ given by $\pi(\sigma)(e_i)=e_{\sigma(i)}$ together with the unit vector $\xi$ with components
$$\alpha_i=\sqrt{n^{n-i}\frac{n-1}{n^n-1}}.$$
For large $n$, this vector is approximately given by:
$$\xi\approx \left(1,\frac{1}{\sqrt{n}},\frac{1}{\sqrt{n^2}},\cdots \right)\approx (1,0,0,\cdots).$$
Following this through
$$u(\sigma)\approx\begin{cases}
                    1 & \mbox{if } \sigma(1)=1, \\
                    0 & \mbox{otherwise}.
                  \end{cases}$$
\subsubsection*{Upper Bounds}
\emph{For $k=\alpha n^n$ and $n\geq 4$}
$$\|u^{\star k}-\pi\|^2\leq \frac{e^2}{4} \frac{(n-1)^{n-\frac12}}{e^n}e^{-2(n-1)(\sqrt{n}-1)^2\alpha}\left[1+(n-1)e^{-(n-4)n^{n-1}\alpha}\right]$$
\begin{proof}
Note that
\begin{align*}
u(\sigma)&=\langle\xi,\pi(\sigma)\xi\rangle
\\ &=\left\langle \sum_{i=1}^n\alpha_ie_{\sigma(i)},\sum_{j=1}^n\alpha_je_j\right\rangle
\\ &=\sum_{i=1}^n\alpha_i\alpha_{\sigma(i)}
\\ &=\frac{n^{n+1}-n^n}{n^n-1}\sum_{i=1}^n\frac{1}{\sqrt{n^{i+\sigma(i)}}}.
\end{align*}
Therefore, using the Upper Bound Lemma,
\begin{align*}
\|u^{\star k}-\pi\|^2&\leq\frac{1}{4}\left(\frac{n^{n+1}-n^n}{n^n-1}\right)^{2k}\sum_{\sigma\in S_n\backslash \{e\}}\left[\sum_{i=1}^n\frac{1}{\sqrt{n^{i+\sigma(i)}}}\right]^{2k}.
\end{align*}
Define for $a_i=1/\sqrt{n^i}$
$$S(\sigma)=\sum_{i=1}^na_ia_{\sigma(i)}.$$
An \emph{inversion} is an ordered pair $(j,k)$ with $j,k\in \{1,\dots,n\}$ with $j<k$ and $\sigma(j)>\sigma(k)$. All non-identity permutations have at least one inversion.

 \newpage

Take any inversion $(j,k)$   and define a new permutation by

$$\tau_1(i):=(j\quad k)\sigma(i)=\begin{cases}\sigma(i)&\text{ if }i\neq j,k,\\ \sigma(j)&\text{ if } i=k,\\ \sigma(k)&\text{ if }i=j.\end{cases}$$

The calculation on P.79 of \citep{perm} shows that
$$S(\sigma)\leq S(\tau_1).$$
That is, multiplying by $(j\quad k)$, whenever $(j,k)$ is an inversion,  in this fashion, increases the number of fixed points, reduces the number of inversions and increases $S$. This can always be done until $\tau_r=e$.  If the maximising $\sigma\in S_{n}/\{e\}$ were not a transposition, then it would be the product of at least two transpositions. Take one of the transpositions $(j\quad k)$: it is certainly an inversion. By the referenced calculation, $\tau_i:=(j\quad k)\sigma$ has $S(\tau_i)\geq S(\sigma)$.  Therefore, no matter what the starting permutation $\sigma$, $\displaystyle\tau_{r-1}$ is a transposition and therefore to maximise $S$ on $S_n\backslash\{e\}$ one just maximises over transpositions.

\bigskip

Now again consider the decreasing sequence $\displaystyle a_i=\frac{1}{\sqrt{n^i}}$.
Define $$f(x)=\frac{1}{\sqrt{n^x}}-\frac{1}{\sqrt{n^{x+1}}}.$$
Note $f(x)$ is positive for $x\geq 1$. Furthermore
$$f'(x)=-\frac{1}{2}\ln n\cdot f(x),$$
and so $f(x)$ --- the one-step differences between the $a_i$ --- is decreasing and so the smallest one-step difference is between $a_{n-1}$ and $a_n$.

\bigskip

Let $(j\quad k)$ be a transposition with $j<k$. Note from the result about the one-step difference and $a_k\leq a_{j+1}$. Note
\begin{align*}
S((n-1\quad n))-S((j\quad k))&=a_j^2+a_k^2+2a_{n-1}a_n
\\&-a_{n-1}^2-a_n^2-2a_ja_k
\\&=(a_j-a_k)^2-(a_n-a_{n-1})^2\geq 0,
\end{align*}
so that $S$ is maximised at $(n-1\quad n)$.

\bigskip

Partition $S_n\bs \{e\}$ into $F_1$ and $F_1^C$ where $F_1$ is the set of permutations with $\sigma(1)=1$. On $F_1$,
\begin{align*}
S(\sigma)\leq S((n-1\quad n))&=\sum_{i=1}^{n-2}\frac{1}{n^i}+\frac{1}{\sqrt{n^{n-1+n}}}+\frac{1}{\sqrt{n^{n+n-1}}}
\\&=\frac{1}{n^n}\frac{n^n-n^2}{n-1}+\frac{2\sqrt{n}}{n^n}=:f_0.
\end{align*}
Now consider the maximum of $S$ on $F_1^C$. From \citep{perm}, it is known that strictly increasing the number of fixed points (by multiplying by suitably chosen transpositions), increases $S$.  Also, if written in the disjoint cycle notation, elements of $F_1^C$ must contain a cycle of the form $(1\quad i_2 \quad \dots \quad i_N)$. By multiplying by suitably chosen transpositions, any disjoint cycle not containing $1$ may be factored out whilst increasing $S$. Then write
$$(1\quad i_2 \quad \dots \quad i_N)=(1\quad i_N)(1\quad i_{N-1})\cdots (1\quad i_2),$$
so that the maximum of $S$ on $F_1^C$ occurs at an element of the form
$$\sigma=\prod_{k=N}^2(1\quad i_k).$$
All transpositions are inversions therefore can be removed --- all the time increasing $S$ --- until one gets a transposition of the form $(1\quad i)$. The maximum must occur at such a transposition. Note that
\begin{align*}
S((1\quad 2))-S((1\quad i))&=2a_1a_2+a_i^2-2a_1a_i-a_2^2
\\&=(a_1-a_i)^2-(a_1-a_2)^2\geq 0.
\end{align*}
Therefore $S(1\quad i)\leq S(1\quad 2)$ and for any $\sigma\in F_1^C$:
\begin{align*}
S(\sigma)&\leq S(1\quad 2)
\\ &=\frac{1}{\sqrt{n^{1+2}}}+\frac{1}{\sqrt{n^{2+1}}}+\sum_{i=3}^n\frac{1}{n^{i}}
\\&=\frac{2\sqrt{n}}{n^2}+\frac{1}{n^2}\frac{1}{n^n}\frac{n^n-n^2}{n-1}=:f_1.
\end{align*}
For $n\geq 4$, $f_1\leq 2f_0/\sqrt{n}$ as
\begin{align*}
\frac{2}{\sqrt{n}}f_0-f_1&=\frac{1}{n^2n^n(n-1)}[n^3\sqrt{n}((n^{n-3}-2)+n^{n-4}(n-\sqrt{n}))+n^2(4n-3)]\geq0.
\end{align*}

\bigskip

Therefore the Upper Bound Lemma yields:
\begin{align*}
\|u^{\star k}-\pi\|^2 &\leq \frac{1}{4}\left(\frac{n^{n+1}-n^n}{n^n-1}\right)^{2k}\left(\sum_{\sigma\in F_1}S(\sigma)^{2k}+\sum_{\sigma\in F_1^C}S(\sigma)^{2k}\right)
\\&\leq \frac{1}{4}\left(\frac{n^{n+1}-n^n}{n^n-1}\right)^{2k}\left[((n-1)!-1)f_0^{2k}+(n!-(n-1)!)\left(\frac{2}{\sqrt{n}}f_0\right)^{2k}\right]
\\&=\frac{1}{4}\left(\frac{n^{n+1}-n^n}{n^n-1}f_0\right)^{2k}\left[((n-1)!-1)+\left(\frac{4}{n}\right)^k(n!-(n-1)!)\right]
\\&\leq\frac{1}{4}\left(\frac{n^n-(n^2-2n\sqrt{n}+2\sqrt{n})}{n^n-1}\right)^{2k}\left[(n-1)!+\left(\frac{4}{n}\right)^k(n!-(n-1)!)\right]
\\&\leq\frac{1}{4}\left(1-\frac{(n-1)(\sqrt{n}-1)^2}{n^n-1}\right)^{2k}\left[\sqrt{n-1}(n-1)^{n-1}e^{2-n}\right.
\\ &\qquad+\left.\left(\frac{4}{n}\right)^k(n!-(n-1)!)\right],
\end{align*}
where the Stirling approximation upper bound was used.

\bigskip

Let $k=\alpha n^n$ so that
\begin{align*}
\left(\frac{4}{n}\right)^{k}&=\left(1-\frac{n-4}{n}\right)^{\alpha n^n}
\\&=\left[\left(1-\frac{n-4}{n}\right)^n\right]^{\alpha n^{n-1}}
\\&\leq e^{-(n-4)n^{n-1}\alpha},
\end{align*}
where $(1-x/n)^n\leq e^{-x}$ for $x<n$ was used. Also
\begin{align*}
\left(1-\frac{(n-1)(\sqrt{n}-1)^2}{n^n-1}\right)^{2k}&=\left(1-\frac{(n-1)(\sqrt{n}-1)^2}{n^n-1}\right)^{2\alpha n^n}
\\&\leq \left(1-\frac{(n-1)(\sqrt{n}-1)^2}{n^n-1}\right)^{2\alpha n^n-2}
\\&=\left[\left(1-\frac{(n-1)(\sqrt{n}-1)^2}{n^n-1}\right)^{n^n-1}\right]^{2\alpha}
\\&\leq e^{-2(n-1)(\sqrt{n}-1)^2\alpha}.
\end{align*}
Also note
$$n!-(n-1)!=(n-1)(n-1)!\leq\sqrt{n-1}(n-1)^{n}e^{2-n}.$$
using Stirling again. Putting these altogether
\begin{align*}
\|u^{\star k}-\pi\|^2&\leq \frac{e^2}{4} e^{-2(n-1)(\sqrt{n}-1)^2\alpha}\left[\frac{(n-1)^{n-1}\sqrt{n-1}}{e^n}+\right.
\\ &\qquad\left.\frac{\sqrt{n-1}(n-1)^{n-1}}{e^n}(n-1)e^{-(n-4)n^{n-1}\alpha}\right]\,\,\,\bullet
\end{align*}
\end{proof}
\subsubsection*{Lower Bounds}
\emph{For $k=\beta(n^n-1)$}
$$\|u^{\star k}-\pi\|^2\geq \frac14 \exp\left[-2\left(\frac{(n-1)^2(\sqrt{n}-1)^4}{n^n-1}+(n-1)(\sqrt{n}-1)^2\right)\beta\right].$$

\begin{proof}
First, a lemma:
\begin{lemma}\label{lem4}
  For $x>0$ and $n>2x$
  \beq
  \left(1-\frac{x}{n}\right)^n\geq e^{-x^2/n-x}.
  \enq
\end{lemma}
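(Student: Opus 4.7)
The plan is to reduce the claim to a one-variable calculus inequality by taking logarithms. Since the right-hand side is strictly positive and $1 - x/n > 0$ (because $n > 2x > x$), it suffices to show
$$n \ln\!\left(1 - \frac{x}{n}\right) \;\geq\; -x - \frac{x^2}{n}.$$
Substituting $t = x/n$ and dividing through by $n$, this is equivalent to showing
$$\ln(1-t) \;\geq\; -t - t^2 \qquad \text{for } t \in \bigl[0,\tfrac{1}{2}\bigr).$$

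To prove this last inequality, I would set $g(t) := \ln(1-t) + t + t^2$ and argue that $g$ is non-decreasing on $[0,1/2)$ with $g(0) = 0$. A direct differentiation gives
$$g'(t) \;=\; -\frac{1}{1-t} + 1 + 2t \;=\; \frac{-1 + (1-t)(1+2t)}{1-t} \;=\; \frac{t(1-2t)}{1-t}.$$
On the interval $t \in (0, 1/2)$ each of the three factors $t$, $(1-2t)$ and $(1-t)$ is strictly positive, so $g'(t) > 0$. Thus $g(t) \geq g(0) = 0$ on $[0,1/2)$, which is exactly what is needed.

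The hypothesis $n > 2x$ translates precisely to $t = x/n < 1/2$, which is where the factor $(1-2t)$ switches sign; this is why the hypothesis cannot be relaxed by this method. There is no real obstacle here — the whole argument is a short calculus exercise — and exponentiating the established inequality $n\ln(1-x/n) \geq -x - x^2/n$ yields the desired bound $(1-x/n)^n \geq e^{-x^2/n - x}$.
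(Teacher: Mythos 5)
Your proof is correct and is essentially the same argument as the paper's: the paper establishes $\ln(1+t)\geq t-t^2$ on $(-1/2,1/2)$ by differentiating the auxiliary function $v(t)=t-t^2-\ln(1+t)$ and then substitutes $t=-x/n$, which is exactly your inequality $\ln(1-t)\geq -t-t^2$ after the reflection $t\mapsto -t$. The only cosmetic difference is that you work directly with $1-t$ and restrict to $t\in[0,1/2)$ from the start.
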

\begin{proof}
  Consider the function $v:(-1/2,1/2)\raw \R$ given by
  \begin{align*}
  v(t)&=t-t^2-\ln(1+t)
  \\ \Raw v'(t)&=1-2t-\frac{1}{1+t}
  \\&=-\frac{t(2t+1)}{1+t}.
  \end{align*}
  This is positive for $t<0$ and negative for $t>0$ and so $v(0)=0$ is the absolute max. Therefore
  $$\ln(1+t)\geq t-t^2.$$
  Note that $x\mapsto e^{nx}$ is increasing on the same domain so that:
  $$(1+t)^n\geq e^{nt-nt^2}.$$
  Let $t=y/n$. If $|t|=|y/n|<1/2\Raw n>2|y|$ then
  $$\left(1+\frac{y}{n}\right)^n\geq e^{y-y^2/n}.$$
  To complete the proof let $y=-x$ $\bullet$
\end{proof}
Now using the Lower Bound Lemma \ref{lbl} with the matrix element $\delta^{(n-1\quad n)}$
\begin{align*}
\|u^{\star k}-\pi\|&\geq \frac12 |u(\delta^{(n-1\quad n)})|^k
\\ \Raw \|u^{\star k}-\pi\|^2&\geq \frac14 \left(u(n-1\quad n)\right)^{2\beta(n^n-1)}
\\&=\frac14\left[\left(1-\frac{(n-1)(\sqrt{n}-1)^2}{n^n-1}\right)^{n^n-1}\right]^{2\beta}.
\end{align*}
An application of  Lemma \ref{lem4} completes the proof $\bullet$
\end{proof}
\section{Random Walks on the Kac--Paljutkin Quantum Group}
Let $A=F(\mathbb{KP})$ be the algebra of functions of the Kac--Paljutkin Quantum Group $\mathbb{KP}$ as described in \citep{franzgohm}.

\subsection*{Example: A Periodic Random Walk on $\mathbb{KP}$}
Let $\nu$ be the state $e^2$ (dual to $e_2$). It can be shown \citep{franzgohm} that
$$\nu^{\star k}=\left\{\begin{array}{ccc}e^2 & \text{ if } & k \text{ is odd,}\\ \eps & \text{ if }& k\text{ is even.}
\end{array}\right.$$
Therefore
$$\|\nu^{\star k}-\pi\|=\left\{\begin{array}{ccc}\|e^2-\pi\|& \text{ if } & k \text{ is odd,}\\ \|\eps-\pi\| & \text{ if }& k\text{ is even.}
\end{array}\right.$$
Consider first $k$ odd:
\begin{align*}
\|e^2-\pi\|&=\frac{1}{2}\left\|\F^{-1}(e^2-\pi)\right\|_{1}^{F(\mathbb{KP})}
\\&=\frac12 \|-e_1+7e_2-e_3-e_4-I_2\|_1^{F(\mathbb{KP})}
\\&=\frac12 \int_{\KP}\left(\left(-e_1+7e_2-e_3-e_4-I_2)^*(-e_1+7e_2-e_3-e_4-I_2\right)\right)^{1/2}
\\&=\frac12 \int_{\KP}(e_1+49e_2+e_3+e_4+I_2)^{1/2}
\\&=\frac12 \int_{\KP}(e_1+7e_2+e_3+e_4+I_2)
\\&=\frac12 \left[\frac{1}{8}(1+7+1+1+2+2)\right]=\frac{1}{16}(14)=\frac{7}{8}.
\end{align*}
A similar calculation shows that $\|\eps-\pi\|=7/8$. In fact this is, as a random walk, pretty much the same as the random walk on $\Z_8$ driven by $\nu=\delta^4$ which just alternates between $0$ and $4$.

\subsection*{The States of $F(\mathbb{KP})$}
A parameterisation of the states of $F(\mathbb{KP})$ --- with respect to the dual basis to the natural basis on $F(\mathbb{KP})$ --- is described in Franz and Gohm \citep{franzgohm}. However what is more interesting  are the entries of $\mathbb{CKP}$ that are dual to the matrix elements of the irreducible representations (see the next section) --- and Franz and Gohm write the states with respect to this basis also. Where $\rho^{\tau}$ is dual to the trivial representation matrix element $\rho_{\tau}=\mathds{1}_{\KP}$, the $\rho^i$ are dual to the $\rho_i$ and the $\rho^{ij}$ are dual to the $\rho_{ij}$, all states are of the form
\begin{align}
\nu&=(\mu_1+\mu_2+\mu_3+\mu_4+\mu_5)\rho^{\tau}+(\mu_1-\mu_2-\mu_3+\mu_4-z\mu_5)\rho^a\nonumber
\\&+(\mu_1-\mu_2-\mu_3+\mu_4+z\mu_5)\rho^b+(\mu_1+\mu_2+\mu_3+\mu_4-\mu_5)\rho^c\nonumber
\\&+(\mu_1+\mu_2-\mu_3-\mu_4)\rho^{11}+(\mu_1-\mu_2+\mu_3-\mu_4)\rho^{22}\label{bas}
\\&+\frac{x+y}{\sqrt{2}}\mu_5 \rho^{12}+\frac{x-y}{\sqrt{2}}\mu_5\rho^{22}. \nonumber
\end{align}
The $\mu_i$ and $x,y,z$ are parameters. The $x,y,z\in\R$ are parameters such that the state on the $M_2(\C)$-factor of $F(\mathbb{KP})$ is a state; i.e. $x^2+y^2+z^2\leq 1$. The $\mu_i$ are convex coefficients so that $\mu_i\in\R^+$ such that $\sum_i\mu_i=1$:
\begin{align*}
\nu&=\mu_1e^1+\mu_2e^2+\mu_3e^3+\mu_4e^4
\\&+\frac{\mu_5}{2}\left((1+z)E^{11}+(x-iy)E^{12}+(x+iy)E^{21}+(1-z)E^{22}\right),
\end{align*}
where the $e^i$ are dual to the $e_i$ and the $E^{ij}$ are dual to the $E_{ij}$.

\subsection*{Representation Theory of $\KP$}
On Page 147 of Izumi and Kosaki \citep{mxe} the matrix elements of the non-trivial irreducible unitary representations of $\mathbb{KP}$ are listed. There are three non-trivial one dimensional representations $\{\rho_a,\rho_b,\rho_c\}$:
\begin{align*}
\rho_a&=e_1+e_2+e_3+e_4-I_2,
\\ \rho_b&=e_1+e_2-e_3-e_4\oplus\left(\begin{array}{cc}-1 & 0 \\ 0 & 1\end{array}\right)
\\ \rho_c&=e_1+e_2-e_3-e_4\oplus\left(\begin{array}{cc}1 & 0 \\ 0 & -1\end{array}\right)
\end{align*}
and one two dimensional representation $\rho$ with elements:
\begin{align*}
\rho_{11}&=e_1-e_2-e_3+e_4
\\ \rho_{12}&=0\oplus 0\oplus 0\oplus 0\oplus\left(\begin{array}{cc}0 & (1+i)/\sqrt{2} \\ (1-i)/\sqrt{2} & 0\end{array}\right)
\\ \rho_{21}&=0\oplus 0\oplus 0\oplus 0\oplus\left(\begin{array}{cc}0 & (1-i)/\sqrt{2} \\ (1+i)/\sqrt{2} & 0\end{array}\right)
\\ \rho_{22}&=e_1-e_2+e_3-e_4.
\end{align*}

\subsubsection*{Symmetric Random Walks on $\KP$}
Consider a random walk on $\KP$ driven by $\nu\in M_p(\KP)$: invoking the Upper Bound Lemma yields:
$$\|\nu^{\star k}-\pi\|^2\leq\frac12 \operatorname{Tr}\left[\left(\widehat{\nu}(\rho)^*\right)^k\widehat{\nu}(\rho)^k\right]+\frac14 \sum_{i\in\{a,b,c\}}\operatorname{Tr}\left[\left(\widehat{\nu}(\rho_i)^*\right)^k\widehat{\nu}(\rho_i)^k\right]$$
Note that the trace of a linear map $T:\C\rightarrow \C$, $\lambda\mapsto a\lambda$ (as the $\widehat{\nu}(\rho_i))^*\widehat{\nu}(\rho_i)$ are) is just given by $T(1_{\C})=a$. Note also that for such maps $T^*(\lambda)=\bar{a}\lambda$ so that $\text{Tr}(T^*T)=|a|^2$. Finally such maps commute so that $(T^*)^kT^k=(T^*T)^{2k}$ so
$$\text{Tr}((T^*)^kT^k)=|a|^{2k}.$$
Note further that where $\rho_i$ is a one dimensional representation;
\begin{align*}
\widehat{\nu}(\rho_i)(1)&=(I_{\overline{C}}\otimes\nu)\overline{\rho_i}(1)=(I_{\overline{C}}\otimes\nu)(1\otimes\rho_i^*)
\\&=1\otimes \nu(\rho_i^*)\cong \overline{\nu(\rho_i)},
\end{align*}
as $\nu$ is a state and so:

$$\|\nu^{\star k}-\pi\|^2\leq\frac12 \operatorname{Tr}\left[\left(\widehat{\nu}(\rho)^*\right)^k\widehat{\nu}(\rho)^k\right]+\frac14 \sum_{i\in\{a,b,c\}}|\nu(\rho_i)|^{2k}$$
 As a result of (\ref{bas}), these one-dimensional terms are particularly easy to calculate, for example:
 \begin{align*}
 \overline{\nu(\rho_b)}& =\overline{(\mu_1-\mu_2-\mu_3+\mu_4+z\mu_5)}=\mu_1-\mu_2-\mu_3+\mu_4+z\mu_5,
 \end{align*}
 with similar results for $\rho_a$ and $\rho_c$.

 \bigskip

 The term for the two dimensional representation $\rho$ is potentially more troublesome. Elementary calculations show that:
 $$\widehat{\nu}(\rho)=\left(\begin{array}{cc}\overline{\nu(\rho_{11})} & \overline{\nu(\rho_{12})} \\ \overline{\nu(\rho_{21})} & \overline{\nu(\rho_{22})}\end{array}\right),$$

 Note that all of the entries are real. In the classical case, $A=F(G)$, with $G$ a finite group, the assumption of symmetry of the driving measure allows linear algebraic facts to be exploited.   `Up' in $M_p(G)$ this is equivalent to $\nu=\nu\circ S$ where $S$ is the antipode on $F(G)$.

 \bigskip

  In the case of $A=F(\mathbb{KP})$, symmetric states, $\nu=\nu\circ S$, have the property that $\nu(\rho_{12})=\nu(\rho_{21})$ and so $\widehat{\nu}(\rho)^*=\widehat{\nu}(\rho)$. In order to guarantee symmetry of $\nu$, it is necessary that $y=0$ or (the stronger condition) $\mu_5=0$. With this assumption,
  $$\left(\widehat{\nu}(\rho)^*\right)^k\widehat{\nu}(\rho)^k=\widehat{\nu}(\rho)^k\widehat{\nu}(\rho)^k=\widehat{\nu}(\rho)^{2k}.$$
  Now as $\widehat{\nu}(\rho)$ is symmetric it is diagonalisable with eigenvalues $\lambda_1$ and $\lambda_2$. Furthermore in this basis of eigenvectors (of $\overline{\C^2}$), $\widehat{\nu}(\rho)^{2k}$ is given by
  $$\left(\begin{array}{cc}\lambda_1^{2k} & 0 \\ 0 & \lambda_2^{2k}\end{array}\right),$$
  so that $\text{Tr}[\widehat{\nu}(\rho)^{2k}]=\lambda_1^{2k}+\lambda_2^{2k}$.
  The eigenvalues of $\widehat{\nu}(\rho)$ are given by
  $$\lambda_{\pm}=\mu_1-\mu_4\pm\sqrt{(\mu_2-\mu_3)^2+\frac{\mu_5^2x^2}{2}}.$$
  This gives us the upper bound for symmetric $\nu$:
  \begin{align}
  \|\nu^{\star k}-\pi\|^2&\leq\frac14(\mu_1-\mu_2-\mu_3+\mu_4-z\mu_5)^{2k}\nonumber
  \\&+\frac14 (\mu_1-\mu_2-\mu_3+\mu_4+z\mu_5)^{2k}+\frac14 (\mu_1+\mu_2+\mu_3+\mu_4-\mu_5)^{2k}\nonumber
  \\ & +\frac12\left(\mu_1-\mu_4+\sqrt{(\mu_2-\mu_3)^2+\frac{\mu_5^2x^2}{2}}\right)^{2k}\nonumber
  \\ & +\frac12\left(\mu_1-\mu_4-\sqrt{(\mu_2-\mu_3)^2+\frac{\mu_5^2 x^2}{2}}\right)^{2k} \label{kpup}
  \end{align}

  Note that $e^2$ is given by $\mu_2=1$ and all other parameters zero and $\eps$ is given by $\mu_1=1$ and all other parameters zero.... and so applying the formula to the random walk driven by $\nu=e^2$ yields,
  $$\frac78 =\|(e^2)^{\star k}-\pi\|\leq\frac{\sqrt{7}}{2}.$$
A good question at this point is to find conditions on the parameters that guarantees convergence to zero. Using a CAS it is not hard to come up with examples of symmetric random walks on $\mathbb{KP}$ that converge. Note that if $\nu\in M_p(\mathbb{KP})$ is such that $\mu_5=0$, there cannot be convergence to the Haar measure because $\widehat{\nu}(\rho_c)=1$ in that case.
  \subsubsection*{Examples}
  \begin{enumerate}
  \item   Consider the state:
  $$\nu=\frac{1}{4}(e^2+e^3+e^4)+\frac{1}{8}(E^{11}+E^{22})$$
  The convolution powers converge to the Haar measure. Using (\ref{kpup})
  $$\|\nu^{\star k}-\pi\|\leq \sqrt{\frac32 \left(\frac14 \right)^{2k}+\frac14\left(\frac{1}{2}\right)^{2k}}\leq \frac{\sqrt{7}}{2}\left(\frac12 \right)^{k}$$
Using the Lower Bound Lemma --- and (\ref{bas}) to calculate $\nu(\rho_a)=1/2$ --- yields

$$\frac{1}{2}\cdot \left(\frac12 \right)^{k}\leq \|\nu^{\star k}-\pi\|\leq \frac{\sqrt{7}}{2}\cdot\left(\frac12 \right)^{k}.$$

This is not particularly interesting... the walk is supported on a commutative subalgebra!

\item Consider the state:
  $$\nu=\frac{1}{4}(e^3+e^4)+\frac{1}{4}(E^{11}+E^{12}+E^{21}+E^{22}),.$$
  The convolution powers converge to the Haar measure. Using (\ref{kpup})
  $$\|\nu^{\star k}-\pi\|\leq \sqrt{\frac12\left(\frac{\sqrt{2}-1}{4}\right)^{2k}+\frac12\left(\frac{\sqrt{2}+1}{4}\right)^{2k}}\leq \left(\frac{\sqrt{2}+1}{4} \right)^{k}.$$
This time the Lower Bound Lemma is of no use as each of one dimensional matrix elements have expectation zero under $\nu$. However (\ref{bas}) means that with a bit of combinatorics, $\nu^{\star k}(\rho_{ij})$ can be calculated for $\rho_{ij}$ a matrix element of the two dimensional representation $\rho$. In particular, $\rho_{12}$ is  unitary and has zero expectation under the random distribution and so is a suitable `test' function. The following result is used:
\begin{lemma}
For $N\in\N$, $\alpha=2+\sqrt{3}$ and $\beta=2-\sqrt{3}$:\label{lemmasum}
\begin{align*}
\sum_{w=0}^N{\binom{N+w}{2w+1}}2^w&=\frac{\alpha^N-\beta^N}{2\sqrt{3}}.
\\ \sum_{w=0}^N{\binom{N+w}{2w}}2^w&=\frac{(5\alpha-1)\alpha^N+(\alpha+1)\beta^N}{6\alpha}.
\end{align*}
\begin{proof}
Let $P_1(N)$ be the first claim and $P_2(N)$ be the second. A quick calculation shows that $P_1(1)$ and $P_2(1)$ are true. Assume $P_1(k)$ and $P_2(k)$ and consider $P_1(k+1)$:
\begin{align*}
\sum_{w=0}^{k+1}\binom{(k+1)+w}{2w+1}2^w&=\sum_{w=0}^{k+1}\left[\binom{k+w}{2w}+\binom{k+w}{2w+1}\right]2^w
\\&\underset{P_2(k)\text{ and }P_1(k)}{=}\frac{(5\alpha-1)\alpha^k+(\alpha+1)\beta^k}{6\alpha}+\frac{\alpha^k-\beta^k}{2\sqrt{3}}
\\&=\frac{\sqrt{3}[(5\alpha-1)\alpha^k+(\alpha+1)\beta^k]}{6\sqrt{3}\alpha}+\frac{3\alpha(\alpha^k-\beta^k)}{6\sqrt{3}\alpha}
\\&=\frac{1}{6\sqrt{3}\alpha}[\alpha^k(5\sqrt{3}-\sqrt{3}+3\alpha)-\beta^k(3\alpha-\sqrt{3}\alpha-\sqrt{3})]
\\&=\frac{1}{6\sqrt{3}\alpha}[\alpha^k(21+12\sqrt{3})-3\beta^k]
\\&\underset{\alpha\beta=1}{=}\frac{1}{2\sqrt{3}}\left[\alpha^k\beta\underbrace{(7+4\sqrt{3})}_{\alpha^2}-\beta^{k+1}\right]
\\&=\frac{\alpha^{k+1}-\beta^{k+1}}{2\sqrt{3}},
\end{align*}
and so $P_1(k+1)$ is true.

\bigskip

Now consider $P_2(k+1)$:
\begin{align*}
\sum_{w=0}^{k+1} \binom{(k+1)+w}{2w}2^w&=\binom{k+1}{0}2^0+\sum_{w=1}^{k+1}\binom{(k+1)+w}{2w}2^w
\\&=\binom{k+0}{0}2^0+\sum_{w=1}^{k+1}\left[\binom{k+w}{2w-1}+\binom{k+w}{2w}\right]2^w
\\&\underset{u=w-1}{=}\sum_{u=0}^k\binom{(k+1)+u}{2u+1}2^{u+1}+\sum_{w=0}^{k+1}\binom{k+w}{2w}2^w
\end{align*}
Note that
$$\binom{(k+1)+(k+1)}{2(k+1)+1}=\binom{k+(k+1)}{2(k+1)}=0$$
so
\begin{align*}
\sum_{w=0}^{k+1}\binom{(k+1)+w}{2w}2^w&=2\underbrace{\cdot \sum_{u=1}^{k+1}\binom{(k+1)+u}{2u+1}2^u}_{P_1(k+1)}+\underbrace{\sum_{w=0}^k\binom{k+w}{2w}2^w}_{P_2(k)}
\\&=2\cdot \frac{\alpha^{k+1}-\beta^{k+1}}{2\sqrt{3}}+\frac{(5\alpha-1)\alpha^k+(\alpha+1)\beta^k}{6\alpha}
\\&=\frac{1}{6\sqrt{3}\alpha}[\alpha^k(6\alpha^2+5\sqrt{3}\alpha-\sqrt{3})+\beta^k(-6+\sqrt{3}\alpha+\sqrt{3})]
\\&=\frac{1}{6\sqrt{3}\alpha}[\alpha^{k}(57+33\sqrt{3})+\beta^k(3\sqrt{3}-3)]
\\&\frac{1}{6\alpha}[\alpha^k(33+19\sqrt{3})+\beta^k(3-\sqrt{3})]
\\&=\frac{(5\alpha-1)\alpha^{k+1}+(\alpha+1)\beta^{k+1}}{6\alpha}
\end{align*}
and so $P_2(k+1)$ is also true. By induction the result holds $\bullet$
\end{proof}
\end{lemma}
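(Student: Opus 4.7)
The plan is to exploit the fact that $\alpha=2+\sqrt{3}$ and $\beta=2-\sqrt{3}$ are the two roots of the quadratic $x^2-4x+1=0$ (they sum to $4$ and multiply to $1$). Consequently, every sequence of the form $A\alpha^N+B\beta^N$ satisfies the second-order linear recurrence $a_{N+1}=4a_N-a_{N-1}$. My strategy is to show that the two sums
$$S_1(N):=\sum_{w=0}^N\binom{N+w}{2w+1}2^w\qquad\text{and}\qquad S_2(N):=\sum_{w=0}^N\binom{N+w}{2w}2^w$$
satisfy this same recurrence, and then to pin down the constants with two initial values each. This explains, conceptually, why these particular algebraic numbers appear on the right-hand side.

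First I would use Pascal's identity $\binom{n}{k}=\binom{n-1}{k}+\binom{n-1}{k-1}$ twice in succession. Splitting $\binom{N+1+w}{2w+1}$ and then splitting again should express $S_1(N+1)$ as a combination of $S_1(N)$ and $S_2(N)$; the same manoeuvre applied to $S_2(N+1)$ produces an analogous mutual recurrence. Eliminating one of the sums from this pair of coupled linear recurrences should then yield the second-order scalar recurrence $S_i(N+1)=4S_i(N)-S_i(N-1)$ for $i\in\{1,2\}$, provided boundary terms vanish — which they do, since $\binom{N+(N+1)}{2(N+1)}=\binom{N+(N+1)}{2(N+1)+1}=0$ ensures the highest-index contribution drops out cleanly.

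Once both $S_1$ and $S_2$ are known to obey the recurrence $a_{N+1}=4a_N-a_{N-1}$, the identities reduce to the base cases. Direct computation gives $S_1(0)=0$, $S_1(1)=1$, $S_2(0)=1$, $S_2(1)=3$; matching these against the proposed closed forms needs only $\alpha-\beta=2\sqrt{3}$, $\alpha+\beta=4$, $\alpha\beta=1$ to verify, e.g. $(\alpha-\beta)/(2\sqrt{3})=1=S_1(1)$, and a short arithmetic check for $S_2$ using the relation $5\alpha-1 = 9+5\sqrt{3}$.

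The main obstacle will be the index-bookkeeping when applying Pascal's identity: because $w$ indexes both the upper entry and (twice) the lower entry of the binomial, a naive reindexing creates off-by-one terms that must be absorbed into $S_1$ or $S_2$ carefully, and one must verify the vanishing of the boundary terms explicitly at each step. An entirely mechanical alternative is simultaneous induction on $N$, treating $P_1(N)$ and $P_2(N)$ as a single inductive statement and using Pascal's identity inside the inductive step — this is essentially the route the author takes, and while less illuminating, it avoids the reindexing subtleties of the recurrence approach.
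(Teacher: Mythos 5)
Your proposal is correct, and it takes a genuinely different (and cleaner) route than the paper. The paper proves $P_1$ and $P_2$ by simultaneous induction: Pascal's identity gives the coupled relations $S_1(k+1)=S_1(k)+S_2(k)$ and $S_2(k+1)=2S_1(k+1)+S_2(k)$, and the inductive step then consists of verifying by direct surd arithmetic in $\sqrt{3}$ that the proposed closed forms satisfy these relations. You derive the same coupled system from Pascal's identity, but instead of grinding through the algebra you decouple it: the transition matrix $\left(\begin{smallmatrix}3&2\\1&1\end{smallmatrix}\right)$ has trace $4$ and determinant $1$, so both sums satisfy the scalar recurrence $a_{N+1}=4a_N-a_{N-1}$, whose characteristic polynomial $x^2-4x+1$ has roots exactly $\alpha$ and $\beta$. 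Every expression of the form $A\alpha^N+B\beta^N$ then satisfies the recurrence automatically, and the lemma reduces to checking the four base values $S_1(0)=0$, $S_1(1)=1$, $S_2(0)=1$, $S_2(1)=3$ against the closed forms — all of which check out, e.g.\ $(5\alpha-1)\alpha+(\alpha+1)\beta=36+18\sqrt{3}=18\alpha$, giving $S_2(1)=3$. Your route buys two things: it explains \emph{why} $2\pm\sqrt{3}$ appear (they are the eigenvalues of the coupled system), and it concentrates all the error-prone surd manipulation into a single characteristic-polynomial observation plus base cases, whereas the paper must repeat that manipulation in both halves of the inductive step. The one point to be careful about, which you correctly flag, is that the reindexed sum $\sum_w\binom{N+w}{2w-1}2^w=2S_1(N+1)$ requires the top boundary terms $\binom{2N+2}{2N+3}$ and $\binom{2N+1}{2N+2}$ to vanish — they do, so the finite sums may be treated as if they ran over all $w\geq 0$.
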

A simple inductive argument shows that any $\nu\in \C \G$
$$\nu^{\star k}(a)=\nu^{\otimes k}\left(\Delta^{(k-1)}(a)\right).$$
Therefore
$$\nu^{\star k}(\rho_{12})=\sum_{m_1,m_2,\dots,m_{k-1}}\nu(\rho_{1m_1})\nu(\rho_{m_1 m_2})\cdots\nu(\rho_{m_{k-1}2}).$$
Note the indices
$$1\raw m_1\raw m_2\raw m_3\raw\cdots \raw m_{k-1}\raw 2.$$
In the particular case of $\rho_{12}\in F(KP)$, the $m_i\in\{1,2\}$ and for the specific $\nu\in M_p(\KP)$ given above, $\nu(\rho_{22})=0$ means that $m_s=2\Raw m_{s+1}\neq 2$, $m_{k-1}\overset{!}{=}1$ and the $k+1$ indices can be considered as a path $1\raw 2$ of length $k$ in the graph shown below.

 \begin{figure}[ht]\cone\epsfig{figure=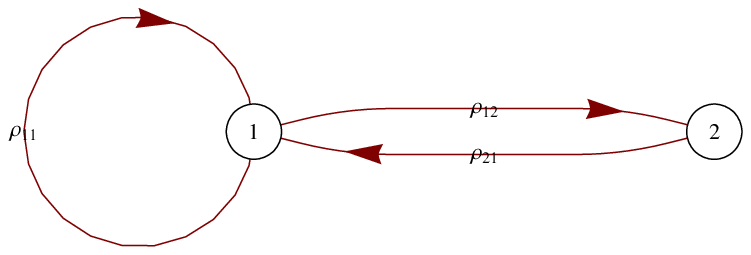}\ctwo\end{figure}

 Call $1\raw 2\raw 1$ a \emph{return}, $R$, $1\raw 1$ a \emph{loop}, $L$ and $1\raw 2$ a \emph{go}, $G$. Clearly
 $$1\raw m_1\raw \cdots\raw m_{k-2}\raw 1\raw 2$$
 consists of returns and loops followed by a go.  Note that
$$\nu(\rho_{11})=-\frac12,\qquad\nu(\rho_{12})=\nu(\rho_{21})=\frac{1}{2\sqrt{2}}\qquad\text{ and }\qquad\nu(\rho_{22})=0.$$
 Furthermore define
 $$\nu(R)=\nu(\rho_{12})\nu(\rho_{21}),\qquad\,\,\nu(L)=\nu(\rho_{11}),\qquad\,\,\,\text{ and }\nu(G)=\nu(\rho_{12})$$
 so that
 $$\nu^{\star k}(\rho_{ij})=\sum_{\underset{X_i\in\{L,R\}}{\text{paths }1\raw 1}}\nu(X_1)\nu(X_2)\cdots \nu(X_{\#L+\#R})\nu(G).$$
 Note that $|R|=2$ and so
 $$\#L+2\#R+\#G=k\Raw \#R=\frac{k-\#L-1}{2}.$$
 Suppose there are $\ell$ loops and so $\dsp \frac{k-\ell-1}{2}$ returns and the length $k$ path with $\ell$ loops looks like:
 $$\underbrace{X_1X_2\cdots X_M}_{\text{$\ell$ $L$s and $(k-\ell-1)/2$ $R$s}}G$$
 and so there are $\dsp \binom{\frac{k+\ell-1}{2}}{\ell}$ paths from $1\raw 1$ with $\ell$ loops.  For each of these paths with $\ell$ loops
 \begin{align*}
 \prod_{i=1}^{\frac{k+\ell-1}{2}}\nu(X_i)\nu(G)&=\nu(L)^\ell \nu(R)^{\frac{k-\ell-1}{2}}\nu(G)
 \\&=\left(-\frac{1}{2}\right)^{\ell}\left(\frac{1}{8}\right)^{\frac{k-\ell-1}{2}}\left(\frac{1}{2\sqrt{2}}\right)
 \\&=\frac{1}{2^{\frac32 k}}(-\sqrt{2})^\ell.
 \end{align*}
.

 Let $k$ be even so that $\ell$ is odd. Therefore, summing over the paths with $\ell$ loops from 1 to $k-1$:
 $$\|\nu^{\star k}-\pi\|\geq \frac{1}{2^{\frac32 k+1}}\left|\sum_{\underset{\text{odd}}{\ell=1}}^{k-1}\binom{\frac{k+\ell-1}{2}}{\ell}(-\sqrt{2})^{\ell}\right|.$$
 Reindexing using $\ell=2w+1$, and using $\binom{k/2+k/2}{2(k/2)+1}=0$ gives
 \begin{align*}
 \|\nu^{\star k}-\pi\|&\geq \frac{1}{2^{\frac{3}{2} k+1}}\left|\sum_{w=0}^{\frac{k}{2}-1}\binom{\frac{k}{2}+w }{2w+1}(-\sqrt{2})^{2w+1}\right|
 \\ &=\frac{1}{2^{\frac32 k+1}}\sqrt{2}\cdot\sum_{w=0}^{k/2}\binom{\frac{k}{2}+w}{2w+1}2^w
 \\ &\underset{\text{Lemma \ref{lemmasum}}}{=}\frac{1}{2^{\frac32 k+1}}\sqrt{2}\cdot\frac{\alpha^{k/2}-\beta^{k/2}}{2\sqrt{3}}
 \\&=\frac{1}{2^{\frac{3}{2}k+\frac{3}{2}}\sqrt{3}}(\alpha^{k/2}-\beta^{k/2})
 \\&=\frac{1}{2\sqrt{6}}\left((8\beta)^{-k/2}-(8\alpha)^{-k/2}\right)
\\&\approx \frac{1}{2\sqrt{6}}\left(8\beta\right)^{-k/2}
 \end{align*}
 for $k$ large.

\newpage

In the case of $k$ odd, the number of loops is even and the lower bound is given by
 $$\|\nu^{\star k}-\pi\|\geq \frac{1}{2^{\frac32 k+1}}\left|\sum_{\underset{\text{even}}{\ell=0}}^{k-1}\binom{\frac{k+\ell-1}{2}}{ \ell}(-\sqrt{2})^{\ell}\right|.$$
 Reindexing using $\ell=2u$ gives
 \begin{align*}
 \|\nu^{\star k}-\pi\|&\geq \frac{1}{2^{\frac{3}{2} k+1}}\left|\sum_{u=0}^{\frac{k-1}{2}}\binom{\frac{k-1}{2}+u}{2u}(-\sqrt{2})^{2u}\right|
 \\ &=\frac{1}{2^{\frac32 k+1}}\sum_{u=0}^{(k-1)/2}\binom{\frac{k-1}{2}+u}{2u}2^u
 \\ &\underset{\text{Lemma \ref{lemmasum}}}{=}\frac{1}{2^{\frac32 k+1}}\frac{(5\alpha-1)\alpha^{(k-1)/2}+(\alpha+1)\beta^{(k-1)/2}}{6\alpha}
\\&=\frac{1}{2\sqrt{6}\,8^{k/2}}\left[\left(\frac{5\alpha-1}{\alpha^{3/2}\,\sqrt{6}}\right)\alpha^{k/2}+\left(\frac{\alpha+1}{\alpha\,\sqrt{6}\,\sqrt{\beta}}\right)\beta^{k/2}\right] \end{align*}
Using the fact that $\sqrt{\alpha}=(\sqrt{2}+\sqrt{6})/2$ it can be shown that this is the same as the $k$ even case except for a sign change:
\begin{align*}
  \|\nu^{\star k}-\pi\|&\geq \frac{1}{2\sqrt{6}}\left((8\beta)^{-k/2}+(8\alpha)^{-k/2}\right)
 \\&\geq \frac{1}{2\sqrt{6}}\left((8\beta)^{-k/2}-(8\alpha)^{-k/2}\right)
\\&\approx \frac{1}{2\sqrt{6}}\left(8\beta\right)^{-k/2}
 \end{align*}
 for $k$ large.

 \bigskip

Therefore, for any $k$:

$$\frac{1}{2\sqrt{6}}\left((8\beta)^{-k/2}-(8\alpha)^{-k/2}\right)\leq \|\nu^{\star k}-\pi\|\leq \left(\frac{\sqrt{2}+1}{4}\right)^{k}.$$
\item Non-symmetric walks on $\KP$ can still be analysed but things are slightly messier as $\widehat{\nu}(\rho)$ is no longer equal to $\widehat{\nu}(\rho)^*$ necessarily. To see what needs to be done see the analysis for the representations $\kappa^{1,v}$ and $\kappa^{k-1,v}$ of $\KP_n$ below.

  \end{enumerate}

\section{Families of Walks on the Sekine Quantum Groups}
To use the quantum Diaconis--Shahshahani Upper Bound Lemma, the representation theory of the quantum group must be well understood.
The representation theory of the Sekine quantum groups changes according to the parity of the parameter $n$ and the below restricts to $n$ odd.
 \subsection*{Representation Theory for $n$ Odd}
 For $n$ odd there are $2n$ one dimensional representations and $\binom{n}{2}$ two dimensional representations. Consider the convolution algebra $(F(\mathbb{KP}_n),\star_A)$. Sekine gives $2n$ minimal one-dimensional central projections, $\binom{n}{2}$ minimal two-dimensional central projections  and matrix units in the two-dimensional subspaces. S\'{e}bastian Palcoux (private communication, March 2016) suggests a connection between projections and matrix units in the convolution algebra and the comultiplication in the algebra of functions. Palcoux's approach uses slightly different Fourier transforms and convolutions --- and the language of planar algebras (see \citep{Palc}) --- therefore the result could not be used directly. However there was enough to find the correct matrix elements (of the irreducible representations). See the Appendix to see the proof that these are indeed the matrix elements. As far as the author knows this is not in the existing literature.

 \bigskip

  Let $\ell\in\{0,1,\dots,n-1\}$. Then
\beq
\rho_\ell^{\pm}=\sum_{i,j\in\Z_n}\zeta_n^{i\ell}e_{(i,j)}\pm\sum_{m=1}^n E_{m,m+\ell},
\enq
are the $2n$ matrix elements of the one dimensional representations so that
$$\kappa_\ell^{\pm}(\lambda)=\lambda\otimes\rho_\ell^{\pm}\text{ and }\Delta(\rho_\ell^{\pm})=\rho_\ell^{\pm}\otimes \rho_\ell^{\pm}.$$
 Note that $\rho_0^+=\ind_{\KP_n}$ is the matrix element of the trivial representation.

Now let $u\in\{0,1,\dots,n-1\}$ and $v\in\{1,2,\dots,(n-1)/2\}$. Each pair gives a two dimensional representation $\kappa^{u,v}:\C^2\raw \C^2\otimes F(\KP_n)$ with matrix elements:
\begin{align*}
\rho_{11}^{u,v}&=\sum_{i,j\in\Z_n} \zeta_n^{iu+jv}e_{(i,j)}
\\ \rho_{12}^{u,v}&=\sum_{m=1}^n\zeta_n^{-mv}E_{m,m+u}
\\ \rho_{21}^{u,v}&=\sum_{m=1}^n\zeta_n^{mv}E_{m,m+u}
\\ \rho_{22}^{u,v}&=\sum_{i,j\in \Z_n}\zeta_n^{iu-jv}e_{(i,j)}.
\end{align*}
Consider the basis of  $\C\KP_n$ dual to $\{e_{(i,j)}\,:\,i,j\in\Z_n\}\cup \{E_{ij}\,:\,i,j=1,2,\dots,n\}$ given by
\begin{align*}
e^{(i,j)}e_{(r,s)}=\delta_{i,r}\delta_{j,s}&\text{\qquad and }&e^{(i,j)}E_{rs}=0,
\\ E^{ij}e_{(r,s)}=0&\text{\qquad and }&E^{ij}E_{rs}=\delta_{i,r}\delta_{j,s}.
\end{align*}
Let $\mu\in \C\KP_n$:
$$\mu=\sum_{i,j\in\Z_n}x_{(i,j)}e^{(i,j)}+\sum_{p,q=1}^na_{pq}E^{pq}.$$
Franz and Skalski \citep{idempotent} show that $\mu\in M_p(\KP_n)$ if and only if
\begin{itemize}
\item $x_{(i,j)}\geq0$ for all $i,j\in\Z_n$,
\item the matrix $A=(a_{pq})$ is positive,
\item $\operatorname{Tr}(\mu)=\sum_{i,j\in\Z_n}x_{(i,j)}+\sum_{p=1}^na_{pp}=1$.
\end{itemize}
\subsection*{A Random Walk on $\mathbb{KP}_n$ for $n$ Odd}
Consider the state $\dsp \nu=\frac{1}{4}(e^{(0,1)}+e^{(1,0)}+E^{11}+E^{12}+E^{21}+E^{22})\in M_p(\KP_n)$. The Quantum Diaconis--Shahshahani Upper Bound Lemma gives:
$$\|\nu^{\star k}-\pi\|^2\leq\frac14 \sum_{\alpha\in\operatorname{Irr}(\KP_n)\backslash\{\tau\}}d_\alpha\operatorname{Tr}\left[\left(\hat{\nu}(\alpha)^*\right)^k\hat{\nu}(\alpha)^k\right]$$
Unlike the commutative examples above, and like the example of random walks on $\KP$, the calculation must be split up as there is very different behaviour over different representations.

\bigskip

\subsubsection*{Upper Bounds}
\emph{For $\dsp k=\frac{n^2}{80}+\alpha n^2$ with $\alpha\geq 1$ and  $n\geq 7$}
$$\|\nu^{\star k}-\pi\|\leq 1.11 e^{-\alpha\pi^2}.$$
\begin{proof}

Define
\beq
f(k,n):=e^{-\pi^2(2k-1)/n^2}.
\enq

\subsubsection*{$\kappa_{0}^-$, $\kappa_{1}^-$ and $\kappa_{n-1}^-$}
As has been seen in the example of random walks on $\KP$, for a one-dimensional representation with matrix element $\rho_\alpha$, $d_\alpha\operatorname{Tr}\left[\left(\hat{\nu}(\alpha)^*\right)^k\hat{\nu}(\alpha)^k\right]=|\nu(\rho_\alpha)|^{2k}$. \new Therefore consider
$$|\nu(\rho_{1}^-)|^{2k}=\left|\frac{1+\zeta_n-1}{4}\right|^{2k}=\left|\frac{\zeta_n}{4}\right|^{2k}=\frac{1}{4^{2k}}.$$
Similarly, the contribution from $\kappa_{n-1}^-$ is the same while $\nu(\rho_{0}^-)=0$ so the contribution to the sum from these three representations is
\begin{align*}\frac{2}{4^{2k}}&=2\left(\frac{1}{4}\right)^{2k}\overbrace{\left(e^{\pi^2/n^2}\right)^{2k}e^{-\pi^2/n^2}f(k,n)}^{=1}
\\&\leq 2\left(\frac{e^{\pi^2/n^2}}{4}\right)^{2k}f(k,n),\end{align*}
where the fact that $e^{-\pi^2/n^2}\leq 1$ was used.
\subsubsection*{$\kappa_{1}^+$ and $\kappa_{n-1}^+$}
In both cases
$$|\nu(\rho_{i}^+)|^{2k}=\frac{|2+\zeta_n|^{2k}}{4^{2k}}\leq \left(\frac{3}{4}\right)^{2k}.$$
Therefore the contribution to the sum is given by:
\begin{align*}
2\left(\frac{3}{4}\right)^{2k}&=2\left(\frac{3}{4}\right)^{2k}\left(e^{\pi^2/n^2}\right)^{2k}e^{-\pi^2/n^2}f(k,N)
\\&\leq 2\left(\frac34 e^{\pi^2/n^2}\right)^{2k}f(k,n)
\end{align*}
\subsubsection*{$\kappa_{\ell}^{\pm}$ for $\ell=2,\dots,n-2$}
For each $\ell$,
$$\nu(\rho_{\ell}^\pm)=\frac{1+\zeta_n^\ell}{4},$$
and because there is a term from the $\rho_{\ell}^+$ as well as the $\rho_{\ell}^-$, the relevant sum is
$$\sum_{\underset{\ell=2,\dots,n-2}{\alpha=\kappa_{\ell}^{\pm}}}d_\alpha\operatorname{Tr}\left[\left(\hat{\nu}(\alpha)^*\right)^k\hat{\nu}(\alpha)^k\right]=\frac{2}{4^{2k}}\sum_{\ell=2}^{n-2}|1+\zeta_n^{\ell}|^{2k}$$
Note that
\begin{align*}
|1+\zeta_n^{\ell}|^2&=\left|1+\cos\left(\frac{2\pi \ell}{n}\right)+i\sin\left(\frac{2\pi \ell}{n}\right)\right|^2
\\&=1^2+\cos^2\left(\frac{2\pi \ell}{n}\right)+\sin^2\left(\frac{2\pi \ell}{n}\right)+2\cos\left(\frac{2\pi \ell}{n}\right)
\\&=2+2\cos\left(\frac{2\pi \ell}{n}\right)
\\&=4\cos^2\left(\frac{\pi \ell}{n}\right)
\end{align*}
and so the sum is
\begin{align*}
\frac{2}{4^{2k}}\sum_{\ell=2}^{n-2}4^k\cos^{2k}\left(\frac{\pi \ell}{n}\right)&\leq \frac{2}{4^k}\sum_{\ell=1}^{n-1}\cos^{2k}\left(\frac{\pi \ell}{n}\right)
\end{align*}
Sums such as these have been tackled in the authors MSc Thesis. The following appear in Lemma 3.4.1 of that work:

\begin{enumerate}
\item \emph{For $x\in[0,\pi/2]$},
\beq
\cos x\leq e^{-x^2/2}\label{lem2}
\enq
\item \emph{For any $x>0$}
\beq
\sum_{j=1}^\infty e^{-(j^2-1)x}\leq \sum_{j=0}^\infty e^{-3jx}\label{lem3}
\enq
\end{enumerate}
Now using the fact that (proved in the appendix of \citep{MSc})
\beq
\left|\cos\left(\ell\frac{\pi}{n}\right)\right|=\left|\cos\left(s\frac{\pi}{n}\right)\right|\,\,\text{ for any }s\in [\ell]_n,\label{lem321}
\enq
note that for $\dsp \ell=1,\dots,\frac{n-1}{2}$ that
$$\left|\cos\left(\frac{\pi}{n}\ell\right)\right|=\left|\cos\left(\frac{\pi}{n}(-\ell)\right)\right|\underset{(\ref{lem321})}{=}\left|\cos\left(\frac{\pi}{n}(n-\ell)\right)\right|.$$
Therefore the sum is
$$\frac{2}{4^k}\sum_{\ell=1}^{n-1}\cos^{2k}\left(\frac{\pi \ell}{n}\right)=\frac{1}{4^{k-1}}\sum_{\ell=1}^{\frac{n-1}{2}}\cos^{2k}\left(\frac{\pi \ell}{n}\right).$$
Applying (\ref{lem2}) yields
\begin{align*}
\frac{1}{4^{k-1}}\sum_{\ell=1}^{\frac{n-1}{2}}\cos^{2k}\left(\frac{\pi \ell}{n}\right)&\leq\sum_{\ell=1}^{(n-1)/2}e^{-\pi^2\ell^2k/n^2}
\\& \leq \frac{1}{4^{k-1}}\,e^{-\pi^2k/n^2}\sum_{\ell=1}^\infty e^{-\pi^2 (\ell^2-1)k/n^2},
\end{align*}
and so with (\ref{lem3})
\begin{align*}
\frac{1}{4^{k-1}}\,e^{-\pi^2k/n^2}\sum_{\ell=1}^\infty e^{-\pi^2 (\ell^2-1)k/n^2}&\leq\frac{1}{4^{k-1}}e^{-\pi^2k/n^2}\sum_{\ell=0}^\infty e^{-3\pi^2 \ell k/n^2}
\\& = \frac{1}{4^{k-1}}\,\frac{e^{-\pi^2 k/n^2}}{1-e^{-3\pi^2 k/n^2}}.
\end{align*}
Now if $k\geq n^2/40$, $\dsp \left(1-e^{-3\pi^2k/n^2}\right)>\frac{1}{2}$, and
it follows that
\begin{align*}
\sum_{\underset{\ell=2,\dots,n-2}{\alpha=\kappa_{\ell}^{\pm}}}d_\alpha\operatorname{Tr}\left[\left(\hat{\nu}(\alpha)^*\right)^k\hat{\nu}(\alpha)^k\right]&=\frac{2}{4^{k-1}}e^{-\pi^2 k/n^2}
\\&=\frac{8}{4^k}e^{\pi^2k/n^2}e^{-\pi^2/n^2}f(k,n)
\\&\leq 8\left(\frac{e^{\pi^2/n^2}}{4}\right)^kf(k,n)
\end{align*}

\subsubsection*{$\kappa^{0,v}$ for $v=1,\dots,\frac{n-1}{2}$}
Using the fact that $\hat{\nu}(\kappa)=(I_{\overline{V}}\otimes \nu)\overline{\kappa}$ and the definition of $\overline{\kappa}$ shows that
$$\hat{\nu}(\kappa^{0,v})=\left(\begin{array}{cc}\overline{\nu(\rho_{11}^{0,v})} & \overline{\nu(\rho_{12}^{0,v})}
\\ \overline{\nu(\rho_{21}^{0,v})} & \overline{\nu(\rho_{22}^{0,v})}\end{array}\right)=\frac{1}{4}\left(\begin{array}{cc}1+\zeta_n^{-v} & \zeta_n^v+\zeta_n^{2v} \\ \zeta_n^{-v}+\zeta_{n}^{-2v} & 1+\zeta_n^v\end{array}\right).$$
In a private communication, the following approach was suggested.
Write
$$A_v:=4\hat{\nu}(\kappa^{0,v})=(1+\zeta_n^v)\left(\begin{array}{cc}\zeta_n^{-v} & \zeta_n^v \\ \zeta_n^{-2v} & 1\end{array}\right)=(1+\zeta_n^v)\alpha^T\otimes\beta,$$
where $\alpha=\left(1\;\;\zeta_n^{-v}\right)$ and $\beta=\left(\zeta_n^{-v}\;\; \zeta_n^v\right)$. This implies that
$$A_v^*=\left(1+\zeta_n^{-v}\right)\beta^*\otimes\bar{\alpha}.$$
That both matrices have rank one reduces the computation of traces to scalar products of $\alpha,\bar{\alpha},\beta,\bar{\beta}$. In particular
\begin{align*}
\operatorname{Tr}\left(\left(A_v^*\right)^kA_v^k\right)&=
\left(1+\zeta_n^v\right)^k\left(1+\zeta_n^{-v}\right)^k \left(\bar\alpha \cdot \bar{\beta}^T\right)^{k-1}\left(\bar{\alpha}\cdot \alpha^T\right)\left(\beta\cdot\alpha^T\right)^{k-1}\left(\beta\cdot\bar{\beta}^T\right)
\\ &=4\left(1+\zeta_n^v\right)^{2k-1}\left(1+\zeta_n^{-v}\right)^{2k-1}.
\end{align*}
Note this includes a product of a complex number and its conjugate and so is
$$
4\left(|1+\zeta_n^v|^{2}\right)^{2k-1}.
$$
Note that
\begin{align*}
|1+\zeta_n^v|^2&=4\cos^2\left(\frac{\pi v}{n}\right)
\\\Raw \operatorname{Tr}\left(\left(A_v^*\right)^kA_v^k\right)&=4\left(|1+\zeta_n^v|^{2}\right)^{2k-1}
\\ &=4(4^{2k-1})\cos^{4k-2}\left(\frac{\pi v}{n}\right)=4^{2k}\cos^{4k-2}\left(\frac{\pi v}{n}\right)
\end{align*}
Noting that
$$\operatorname{Tr}\left(\left(\hat{\nu}(\kappa^{0,v})^*\right)^k\hat{\nu}(\kappa^{0,v})^k\right)=\frac{1}{4^{2k}}\operatorname{Tr}\left(\left(A_v^*\right)^kA_v^k\right),$$
the contribution to the Upper Bound Lemma sum is
$$\sum_{\alpha=\kappa^{0,v}}d_\alpha\operatorname{Tr}\left[\left(\hat{\nu}(\alpha)^*\right)^k\hat{\nu}(\alpha)^k\right]=2\sum_{v=1}^{\frac{n-1}{2}}\cos^{4k-2}\left(\frac{\pi v}{n}\right).$$
Applying (\ref{lem2}) yields
\begin{align*}
\sum_{\alpha=\kappa^{0,v}}d_\alpha\operatorname{Tr}\left[\left(\hat{\nu}(\alpha)^*\right)^k\hat{\nu}(\alpha)^k\right]&\leq2 \sum_{v=1}^{\frac{n-1}{2}}e^{-\pi^2v^2(2k-1)/n^2}
\\&\leq 2\,e^{-\pi^2(2k-1)/n^2}\sum_{v=1}^\infty e^{-\pi^2 (v^2-1)(2k-1)/n^2},
\end{align*}
and so with (\ref{lem3})
\begin{align*}
\sum_{\alpha=\kappa^{0,v}}d_\alpha\operatorname{Tr}\left[\left(\hat{\nu}(\alpha)^*\right)^k\hat{\nu}(\alpha)^k\right]&\leq 2\,e^{-\pi^2(2k-1)/n^2}\sum_{v=0}^\infty e^{-3\pi^2 v (2k-1)/n^2}
\\&= 2\,\frac{e^{-\pi^2(2k-1)/n^2}}{1-e^{-3\pi^2 (2k-1)/n^2}}.
\end{align*}
If $\dsp k\geq\frac{n^2}{80}+\frac{1}{2}$, then $\dsp \left(1-e^{-3\pi^2(2k-1)/n^2}\right)>\frac{1}{2}$, and
it follows that
\beqa
\sum_{\alpha=\kappa^{0,v}}d_\alpha\operatorname{Tr}\left[\left(\hat{\nu}(\alpha)^*\right)^k\hat{\nu}(\alpha)^k\right]\leq 4 e^{-\pi^2(2k-1)/n^2}=4f(k,n).
\enqa
\subsubsection*{$\kappa^{u,v}$ for $u=2...n-2$ and $v=1...\frac{n-1}{2}$}
In this case the Fourier transform at the representation is diagonal and so calculating the relevant trace is straightforward:
$$\sum_{\underset{u\neq0,1,n-1}{\alpha=\kappa^{u,v}}}d_\alpha\operatorname{Tr}\left[\left(\hat{\nu}(\alpha)^*\right)^k\hat{\nu}(\alpha)^k\right]=\frac{2}{4^{2k}}\sum_{\substack{u=2,\dots, n-2\\ v=1,\dots,\frac{n-1}{2}}}(|\zeta_n^u+\zeta_n^v|^{2k}+|\zeta_n^u+\zeta_n^{-v}|^{2k}).$$
Note
\begin{align*}
|\zeta_n^u+\zeta_n^v|=|(\zeta_n^u+\zeta_n^v)\zeta_n^{-v}|&=|\zeta_n^{u-v}+1|
\\&=4\cos^2\left(\frac{\pi(u-v)}{n}\right)\text{ and similarly}
\\ \left|\zeta_n^u+\zeta_n^{-v}\right|^2&=4\cos^2\left(\frac{\pi(u+v)}{n}\right).
\end{align*}

Therefore the relevant sum is
\begin{align*}
&\frac{2}{4^{2k}}\sum_{\substack{u=2,\dots,n-2\\ v=1,\dots,\frac{n-1}{2}}}\left(4^k\cos^{2k}\left(\frac{\pi(u-v)}{n}\right)+4^k\cos^{2k}\left(\frac{\pi(u+v)}{n}\right)\right)
\\&=\frac{2}{4^{k}}\sum_{\substack{u=2,\dots, n-2\\ v=1,\dots,\frac{n-1}{2}}}\left(\cos^{2k}\left(\frac{\pi(u-v)}{n}\right)+\cos^{2k}\left(\frac{\pi(u+v)}{n}\right)\right)
\\&=\frac{2}{4^{k}}\sum_{\underset{v=-\frac{n-1}{2},\dots,-1,1,\dots,\frac{n-1}{2}}{u=2,\dots, n-2}}\cos^{2k}\left(\frac{\pi(u+v)}{n}\right)
\\&=\frac{2}{4^{k}}\sum_{u=2}^{n-2}\left(\sum_{t\in\{u-\frac{n-1}{2},\dots,u+\frac{n-1}{2}\}\backslash\{u\}}\cos^{2k}\left(\frac{\pi t}{n}\right)\right)
\end{align*}
Note that for each $u$, because $\dsp u-\frac{n-1}{2}+n=u+\frac{n-1}{2}+1$, the following, $\mod n$, is a length $n-1$ sequence of consecutive terms:
$$u+1,u+2,\dots,u+\frac{n-1}{2},u-\frac{n-1}{2},u-\left(\frac{n-1}{2}-1\right),\dots,u-1.$$
Using (\ref{lem321}) the sum is therefore given by
$$\frac{2}{4^{k}}\sum_{u=2}^{n-2}\sum_{s=1}^{n-1}\cos^{2k}\left(\frac{\pi s}{n}\right)=\frac{2(n-3)}{4^k}\sum_{s=1}^{n-1}\cos^{2k}\left(\frac{\pi s}{n}\right).$$

Therefore, using similar techniques to those employed handling $\kappa_{\ell}^{\pm}$ ($\ell\neq0,1,n-1$), shows that if $\dsp k\geq n^2/40$:
\begin{align*}
\sum_{\underset{u\neq0,1,n-1}{\alpha=\kappa^{u,v}}}d_\alpha\operatorname{Tr}\left[\left(\hat{\nu}(\alpha)^*\right)^k\hat{\nu}(\alpha)^k\right]&
\leq \frac{2(n-3)}{4^{k-1}}e^{-\pi^2k/n^2}
\\&=\frac{2(n-3)}{4^{k-1}}e^{\pi^2k/n^2}e^{-\pi^2/n^2}f(k,n)
\\ &\leq 8(n-3)\left(\frac{e^{\pi^2/n^2}}{4}\right)^kf(k,n).
\end{align*}
\subsubsection*{$\kappa^{1,v}$ and $\kappa^{n-1,v}$ for $v=1,\dots,\frac{n-1}{2}$}
In this case
$$\hat{\nu}(\kappa^{1,v})=\frac{1}{4}\underbrace{\left(\begin{array}{cc}\zeta_n^{-1}+\zeta_n^{-v} & \zeta_n^v
\\ \zeta_n^{-v} & \zeta_n^{-1}+\zeta_n^v\end{array}\right)}_{=:B_v}.$$
The eigenvalues of $B_v$ are $\alpha_v:=\zeta_n^v+\zeta_n^{-v}+\zeta_n^{-1}$ and $\zeta_n^{-1}$ with eigenvectors $(1\,\,\,1)^T$ and $(\zeta_n^{-2v}\,\,\,-1)^T$. Therefore writing $B_v=PDP^{-1}$
\begin{align*}
B_v&=\frac{1}{1+\zeta_n^{2v}}\left(\begin{array}{cc}1 & \zeta_n^{2v}\\ 1 & -1\end{array}\right)\left(\begin{array}{cc}\alpha_v & 0 \\ 0 & \zeta_n^{-1}\end{array}\right)\left(\begin{array}{cc}1 & \zeta_n^{2v}\\ 1 & -1\end{array}\right)
\\ \Raw B_v^k&=\frac{1}{1+\zeta_n^{2v}}\left(\begin{array}{cc}1 & \zeta_n^{2v}\\ 1 & -1\end{array}\right)\left(\begin{array}{cc}\alpha_v^k & 0 \\ 0 & \zeta_n^{-k}\end{array}\right)\left(\begin{array}{cc}1 & \zeta_n^{2v}\\ 1 & -1\end{array}\right)
\\&=PD^kP^{-1}.
\end{align*}
From this it is a tedious but straightforward calculation to calculate
$$B_v^k(B_v^*)^k=PD^kP^{-1}(P^*)^{-1}\overline{D^k}P^*,$$
and find that its trace is given by
\begin{align*}
&\frac{1}{|1+\zeta_n^{2v}|^2}\left(4-2\zeta_n^k\alpha_v^k+\zeta_n^{k-2v}\alpha_v^k+\zeta_n^{k+2v}\alpha_v^k\right.
\\&\left.-2\zeta_n^{-k}\overline{\alpha_v}^k+\zeta_n^{-k-2v}\overline{\alpha_v}^k+\zeta_n^{-k+2v}\overline{\alpha_v}^k+4|\alpha_v|^{2k}\right).
\end{align*}
Note that
\begin{align*}&-2\zeta_n^k\alpha_v^k+\zeta_n^{k-2v}\alpha_v^k+\zeta_n^{k+2v}\alpha_v^k-2\zeta_n^{-k}\overline{\alpha_v}^k+\zeta_n^{-k-2v}\overline{\alpha_v}^k+\zeta_n^{-k+2v}\overline{\alpha_v}^k\\&=\zeta_n^{-k-2v}(\overline{\alpha_v}^k+\alpha_v^k\zeta_n^{2k})(\zeta_n^{2v}-1)^2,
\end{align*}
which can be seen by multiplying out. Secondly, similarly to above work,
\begin{align*}
|1+\zeta_n^{2v}|^2&=4\cos^{2}\left(\frac{2\pi v}{n}\right)
\\ \Raw\frac{1}{|1+\zeta_n^{2v}|^2}&=\frac14\sec^2\left(\frac{2\pi v}{n}\right).
\end{align*}
Recalling that $B_v=4\hat{\nu}(\kappa^{1,v})$ and that $\operatorname{Tr}(AB)=\operatorname{Tr}(BA)$
\begin{align*}
&\sum_{\underset{v=1,\dots,\frac{n-1}{2}}{\alpha=\kappa^{1,v}}}d_\alpha\operatorname{Tr}\left[\left(\hat{\nu}(\alpha)^*\right)^k\hat{\nu}(\alpha)^k\right]
\\&=\frac{2}{4^{2k+1}}\sum_{v=1}^{\frac{n-1}{2}}\sec^2\left(\frac{2\pi v}{n}\right)\left[4+4|\alpha_v|^{2k}+\zeta_n^{-k-2v}(\overline{\alpha_v}^k+\alpha_v^k\zeta_n^{2k})(\zeta_n^{2v}-1)^2\right].
\end{align*}
Note
\begin{align*}
\zeta_n^{-k-2v}(\overline{\alpha_v}^k+\alpha_v^k\zeta_n^{2k})(\zeta_n^{2v}-1)^2&=\frac{\overline{\alpha_v}^k+\alpha_v^k\zeta_n^{2k}}{\zeta_n^k}\cdot\left(\frac{\zeta_n^{2v}-1}{\zeta_n^v}\right)^2
\\&=\left(\overline{\alpha_v\zeta_n}^k+\left(\alpha_v\zeta_n\right)^k\right)\cdot\left(\zeta_n^v-\overline{\zeta_n}^v\right)^2
\\&=-8\Re\left((\alpha_v\zeta_n)^k\right)\sin^2\left(\frac{2\pi v}{n}\right)
\end{align*}
and so
\begin{align*}
&\sum_{\underset{v=1,\dots,\frac{n-1}{2}}{\alpha=\kappa^{1,v}}}d_\alpha\operatorname{Tr}\left[\left(\hat{\nu}(\alpha)^*\right)^k\hat{\nu}(\alpha)^k\right]
\\&=\frac{2}{4^{2k+1}}\sum_{v=1}^{\frac{n-1}{2}}\sec^2\left(\frac{2\pi v}{n}\right)\left[4+4|\alpha_v|^{2k}-8\sin^2\left(\frac{2\pi v}{n}\right)\Re\left((\alpha_v\zeta_n)^k\right)\right].
\end{align*}
A similar analysis shows that
$$\sum_{\underset{v=1,\dots,\frac{n-1}{2}}{\alpha=\kappa^{n-1,v}}}d_\alpha\operatorname{Tr}\left[\left(\hat{\nu}(\alpha)^*\right)^k\hat{\nu}(\alpha)^k\right],$$
gives the same trace and so the contribution from these two representations is
\beq\frac{1}{4^{2k-1}}\sum_{v=1}^{\frac{n-1}{2}}\sec^2\left(\frac{2\pi v}{n}\right)\left[1-2\sin^2\left(\frac{2\pi v}{n}\right)\Re\left((\alpha_v\zeta_n)^k\right)+|\alpha_v|^{2k}\right]\enq
Consider first
\begin{align*}
1-2\sin^2\left(\frac{2\pi
 v}{n}\right)\Re\left((\alpha_v\zeta)^k\right)+|\alpha_v|^{2k}&\leq 1+2\sin^2\left(\frac{2\pi
 v}{n}\right)\left|\Re\left((\alpha_v\zeta)^k\right)\right|+|\alpha_v|^{2k}
\\&\leq 1+2\sin^2\left(\frac{2\pi
 v}{n}\right)\left|(\alpha_v\zeta)^k\right|+|\alpha_v|^{2k}
\\&\leq 1+2\sin^2\left(\frac{2\pi
 v}{n}\right)\left|\alpha_v\right|^k+|\alpha_v|^{2k}
\\&\leq 1+2\sin^2\left(\frac{2\pi
 v}{n}\right)3^k+3^{2k}
\end{align*}
In terms of efficiency, while $n$ can be considered large, $k=\mathcal{O}(n^2)$ and for $k\approx \frac{n}{2}\mod n$,
$$-\Re\left((\alpha_v\zeta)^k\right)\approx +\Re\left((\alpha_v)^k\right).$$
The largest problem is that
$$\alpha_v=2\cos\left(\frac{2\pi v}{n}\right)+\zeta^{-1}$$
has a large real part for $n$ large and $v$ small but as $v\rightarrow \frac{n-1}{2}$
$$\alpha_v\approx -1,$$
rather than $\alpha_v\approx 3$ as is the case for $v$ small.

\bigskip

Therefore
\begin{align*} &\frac{1}{4^{2k-1}}\sum_{v=1}^{\frac{n-1}{2}}\sec^2\left(\frac{2\pi v}{n}\right)\left(1-2\sin^2\left(\frac{2\pi
 v}{n}\right)\Re\left((\alpha_v\zeta)^k\right)+|\alpha_v|^{2k}\right)\\ &\leq \frac{1}{4^{2k-1}}\sum_{v=1}^{\frac{n-1}{2}}\sec^2\left(\frac{2\pi v}{n}\right)\left(1+2\sin^2\left(\frac{2\pi
 v}{n}\right)3^k+3^{2k}\right)
\\&=\frac{1}{4^{2k-1}}\left((1+3^{2k})\sum_{v=1}^{\frac{n-1}{2}}\sec^2\left(\frac{2\pi v}{n}\right)+2\cdot 3^{k}\sum_{v=1}^{\frac{n-1}{2}}\tan^2\left(\frac{2\pi v}{n}\right)\right).
\end{align*}
Starting with $\exp(iw\pi/N)^{2N}=(-1)^w$ and then using Euler's Formula, the Binomial Theorem and taking imaginary parts, the following may be derived:
\beq
\sum_{v=1}^{\frac{n-1}{2}}\tan^2\left(\frac{2 \pi v}{n}\right)=\binom{n}{2}.
\enq
Using this, and $\sec^2A=1+\tan^2A$:
\begin{align*}
&\frac{1}{4^{2k-1}}\left((1+3^{2k})\left(\frac{n-1}{2}+\frac{n(n-1)}{2}\right)+2\cdot 3^{k}\frac{n(n-1)}{2}\right)
\\&=2\frac{n-1}{4^{2k}}\left[(n+1)3^{2k}+2n\cdot 3^k+n+1\right]
\\&\leq 2\frac{n-1}{4^{2k}}\left[(n+1)3^{2k}+2(n+1)3^k+n+1\right]
\\&\leq 2\frac{n^2-1}{4^{2k}}\left[3^{2k}+2\cdot 3^k+1\right]
\\&=2(n^2-1)\left(\frac{3}{4}\right)^{2k}\left(1+\frac{2}{3^k}+\frac{1}{3^{2k}}\right)
\\&\underset{k\geq 49}{\leq} 4(n^2-1)\left(\frac{3}{4}\right)^{2k}
\\&= 4(n^2-1)\left(\frac{3}{4}\right)^{2k}e^{2k\pi^2/n^2}e^{-\pi^2/n^2}f(k,n)
\\&\leq 4(n^2-1)\left(\frac{3}{4}e^{\pi^2/n^2}\right)^{2k}f(k,n).
\end{align*}
Putting all the bounds together, with $\gamma_n=e^{\pi^2/n^2}$:
\begin{align*}
\|\nu^{\star k}-\pi\|^2&\leq f(k,n)\left[\frac12 \cdot \left(\frac{\gamma_n}{4}\right)^{2k}+\frac12 \cdot \left(\frac34 \gamma_n\right)^{2k}+2\cdot\left(\frac{\sqrt{\gamma_n}}{2}\right)^{2k}\right.
\\&\left.+1+2(n-3)\cdot\left(\frac{\sqrt{\gamma_n}}{2}\right)^{2k}+ (n^2-1)\left(\frac34 \gamma_n\right)^{2k}\right]
\\&=f(k,n)\left[1+\frac12 \cdot \left(\frac{\gamma_n}{4}\right)^{2k}+\left( n^2-\frac12\right)\left(\frac{3}{4}\gamma_n\right)^{2k}\right.
\\&\qquad \left.+(2n-4)\left(\frac{\sqrt{\gamma_n}}{2}\right)^{2k}\right]
\end{align*}
\end{proof}
Take $n\geq 7$ and $k=\frac{n^2}{80}+\alpha n^2$ with $\alpha\geq 1$.

\bigskip

Using the fact that $n\geq 7$:
\begin{align*}
\frac12 \cdot \left(\frac{\gamma_n}{4}\right)^{2k}&=\frac12 \left(\frac{e^{\pi^2/40}}{4^{n^2/40}}\right)\left(\frac{e^{\pi^2}}{4^{n^2}}\right)^{2\alpha}
\\&\leq \frac{1}{8}(10^{-25})^{2\alpha}=\frac18 (10^{-50})^{\alpha}.
\end{align*}
Using the fact that $(x-1/2)(3/4)^x$ is decreasing for $x>4$ (at least),
\begin{align*}
\left(n^2-\frac{1}{2}\right)\left(\frac{3}{4}\gamma_n\right)^{2k}&=\left(n^2-\frac12 \right)\underbrace{\left(\frac34 \right)^{n^2/40}e^{\pi^2/40}}_{< 0.8997}\left(\frac34\right)^{2\alpha n^2}(e^{\pi^2})^{2\alpha}
\\&\leq \frac{9}{10}\underbrace{\left(n^2-\frac{1}{2}\right)\left(\frac{3}{4}\right)^{n^2}}_{<0.00004}\underbrace{e^{\pi^2}}_{< 19334}\left[\left(\frac{3}{4}\right)^{n^2}e^{\pi^2}\right]^{2\alpha-1}
\\&\leq \frac{9}{10}\cdot 10^{-4}\cdot 19334\left[\underbrace{\left(\frac{3}{4}\right)^{n^2}e^{\pi^2}}_{< 0.0146}\right]^{2\alpha-1}
\\&\leq \frac{7}{4}\left(\frac{3}{200}\right)^{2\alpha-1}
\\&=\frac{21}{800}\left(\frac{9}{40000}\right)^{\alpha}\leq \frac{7}{250}\left(\frac{1}{40000}\right)
\end{align*}
Using the fact that $(2x-4)(1/2)^{x^2}$ is decreasing for $x>3$ (at least),
\begin{align*}
(2n-4)\left(\frac{\sqrt{\gamma_n}}{2}\right)^{2k}&=(2n-4)\left(\frac{e^{\pi^2/80}}{2^{n^2/40}}\right)\left(\frac{e^{\pi^2}}{2^{2n^2}}\right)^{\alpha}
\\&=\underbrace{\frac{e^{\pi^2/80}}{2^{n^2/40}}}_{< 8273}\cdot \underbrace{(2n-4)\cdot \left(\frac{1}{2^{\alpha}}\right)^{n^2}}_{< 10^{-13}}\left(\underbrace{\frac{e^{\pi^2}}{2^{n^2}}}_{<10^{-10}}\right)^{\alpha}
\\&\leq 8300\cdot 10^{-13}\cdot 10^{-10\alpha}\leq 10^{-9-10\alpha}.
\end{align*}
Putting these all together
\begin{align*}
\|\nu^{\star k}-\pi\|&\leq \sqrt{f(k,n)\left[1+\frac18 10^{-50\alpha}+\frac{7}{250}\left(\frac{1}{40000}\right)^{\alpha}+10^{-9-10\alpha}\right]}
\\&=\sqrt{e^{-2k\pi^2/n^2}e^{\pi^2/n^2}\left[1+\frac18 10^{-50\alpha}+\frac{7}{250}\left(\frac{1}{4000}\right)^{\alpha}+10^{-9-10\alpha}\right]}
\\&\leq 1.11 e^{-k\pi^2/n^2}\qquad\bullet
\end{align*}
This  gets closer to $1e^{-k\pi^2/n^2}$ for $k$ a larger multiple of $n^2$ --- and $n$ itself large also.

\subsubsection*{Lower Bounds}
Using the Lower Bound Lemma, note that
$$\|\nu^{\star k}-\pi\|\geq \frac12 |\nu(\rho)|^k,$$
for any one dimensional representation matrix element $\rho$. In particular, the largest in magnitude occurs for $\rho_{1}^+$ and this yields:
$$\|\nu^{\star k}-\pi\|\geq \frac{1}{2}\left|\frac{2+\zeta_n}{4}\right|^k=\frac{1}{2^{2k+1}}\left(\sqrt{5+4\cos\left(\frac{2\pi }{n}\right)}\right)^k\approx \frac12\left(\frac{3}{4}\right)^k,$$
for $n$ large. Unfortunately this bound is wildly ineffective for $n$ large.

\chapter{Further Problems}
\section{Conditions for Convergence to Random}
In the classical case, a random walk on a finite group, $G$, starting at the identity and driven by a probability $\nu\in M_p(G)$ converges to the Haar measure on $G$ if and only if the probability is not concentrated on a subgroup (irreducibility) or on the coset of a normal subgroup (aperiodicity). Subgroups can be quantised using the quantisation functor. A subgroup $(H,m_H,e_H,{}^{-1,H})$ of a group $(G,m,e,{}^{-1})$ is a group together with a monomorphism/injection $\iota:H\raw G$ that satisfies:
$$\iota\circ m_H=m\circ(\iota\times \iota).$$
Applying the $\mathcal{Q}$ functor --- and noting that the dual of an injection is a surjection --- leads to the following definition (consistent with the standard definition (Definition 1.17, \citep{idempotent}). In particular, $\pi=\mathcal{Q}(\iota)$, $\Delta_{F(\mathbb{H})}=\mathcal{Q}(m_H)$ and $\Delta=\mathcal{Q}(m)$:

\bigskip

\begin{definition}
If $\G$ and $\mathbb{H}$ are  quantum groups and $\pi:F(\G) \raw F(\mathbb{H})$ is a surjective
unital $^*$-homomorphism such that
$$\Delta_{F(\mathbb{H})}\circ \pi=(\pi\otimes\pi)\circ \Delta,$$
then $\mathbb{H}$ is called a \emph{quantum subgroup} of $\G$.
\end{definition}

\bigskip

What are necessary and sufficient conditions on a probability on a quantum group that ensure its convolution powers converge to the random distribution, $\pi$? There are some results that explore the question at hand --- such as by Franz and Skalski (Proposition 2.4, \citep{Erg}) that shows if $\nu\in M_p(\G)$ is faithful then $\dsp \nu^{\star k}\raw \int_{\G}$. Of course, this is a very strong requirement in the classical case: equivalent to $\nu(\delta_s)>0$ for all $s\in G$.

\bigskip

In the classical case, if the convolution powers converge to an idempotent probability ($\phi\star \phi=\phi$),  then $\phi$ must be the Haar measure on a subgroup \citep{Hey77}. However, Pal \citep{Pal} shows that the idempotent $(e^1+e^4)/4+E^{11}/2\in M_p(\mathbb{KP})$ is not the Haar measure on any subgroup of $\mathbb{KP}$. Franz and Skalski suggest that this shows that the conditions for convergence of random walks on not-necessarily commutative quantum groups are ``clearly more complicated'' than the classical case. Franz and Skalski show, however, that idempotent probabilities on finite quantum groups \emph{are} Haar measures on sub-\emph{hyper}-subgroups. See \citep{idempotent}  for details. A 2013 paper by Wang \citep{Wang1} explores the concept of a quantum \emph{normal} subgroup and perhaps adapting these ideas to the realm of hypergroups might lead towards a satisfactory result.

\bigskip

On the one hand, not having this result is a deficiency of this work: on the other hand the quantum Diaconis--Shahshahani Upper Bound Lemma holds when the random walk does not converge in distribution to the random distribution but \emph{can} be used to show that specific $\nu\in M_p(\G)$ do converge to random.

\section{Spectral Analysis}
In the classical case, if the driving probability is symmetric, $\nu=\nu\circ S$, then the stochastic operator is a self-adjoint operator and therefore the stochastic operator can be diagonalised  and the powers easily calculated. This should also be possible for random walks on quantum groups.

\begin{theorem}
If a probability $\nu$ on a finite quantum group $\G$ is symmetric in the sense that $\nu=\nu\circ S$, then the stochastic operator $P_\nu\in L(F(\G))$ is self-adjoint.
\end{theorem}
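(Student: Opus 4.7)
The plan is to equip $F(\G)$ with the Hilbert space structure $\langle a, b\rangle = \int_\G a^* b$ coming from the Haar measure, and to show that $\nu = \nu \circ S$ translates --- via the Fourier machinery of Section \ref{quantumgroupring} --- into the equality $\langle P_\nu a, b\rangle = \langle a, P_\nu b\rangle$ for all $a, b \in F(\G)$. Writing $\nu = \F(c)$ for the unique $c \in F(\G)^+$ with $\int_\G c = 1$, the problem reduces to showing $S(c) = c$ and then matching the two sides via Van Daele's Convolution Theorem \ref{VDCT} together with Proposition \ref{propertiesofP} vi.

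First I would pass the symmetry hypothesis from $\nu$ to $c$. Since $\nu(x) = \int_\G xc$,
$$\nu(S(x)) \;=\; \int_\G S(x)\,c \;=\; \int_\G S\!\left(S(x)\,c\right) \;=\; \int_\G S(c)\,x \;=\; \int_\G x\,S(c),$$
where I use, in order, $\int_\G \circ S = \int_\G$, antimultiplicativity of $S$ combined with $S^2 = I_{F(\G)}$, and traciality of the Haar measure (Theorem \ref{propofHaar}). Hence $\nu = \nu\circ S$ forces $\int_\G x(c - S(c)) = 0$ for every $x \in F(\G)$, and faithfulness of the Haar measure yields $c = S(c)$.

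Next I would compute the two inner products in parallel. On one side, traciality and the defining formula $\F(a^*)(y) = \int_\G y\,a^*$ give
$$\langle a, P_\nu b\rangle \;=\; \int_\G P_\nu(b)\,a^* \;=\; \F(a^*)(P_\nu b) \;=\; \left(P_\nu^T \F(a^*)\right)(b) \;=\; \left(\nu \star \F(a^*)\right)(b),$$
and Van Daele's Convolution Theorem converts $\nu \star \F(a^*) = \F(c) \star \F(a^*)$ into $\F(c \star_A a^*)$, so $\langle a, P_\nu b\rangle = \int_\G b\,(c \star_A a^*)$. On the other side, a direct calculation using $\Delta(a^*) = \Delta(a)^*$ and $\nu(x^*) = \overline{\nu(x)}$ shows $(P_\nu a)^* = P_\nu(a^*)$, and so Proposition \ref{propertiesofP} vi together with traciality yields
$$\langle P_\nu a, b\rangle \;=\; \int_\G P_\nu(a^*)\,b \;=\; \int_\G \left(S(c) \star_A a^*\right)b \;=\; \int_\G b\,\left(S(c) \star_A a^*\right).$$
With $S(c) = c$, the two expressions coincide.

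The main obstacle is really just the bookkeeping of involutions and tracial rearrangements; the only step that genuinely uses $\nu$ being a state (rather than an arbitrary functional in $\C\G$) is the $*$-preservation $(P_\nu a)^* = P_\nu(a^*)$. Conceptually the proof is transparent: the symmetry condition $\nu = \nu\circ S$ is engineered precisely to kill the discrepancy between $c$ and $S(c)$ that otherwise distinguishes $P_\nu$ from its adjoint, and once that discrepancy is removed the Fourier/convolution formalism of Section \ref{quantumgroupring} does the rest of the work.
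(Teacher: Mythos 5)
Your proof is correct, but it takes a genuinely different route from the one in the text. The paper's proof is a basis computation: using the quantum Peter--Weyl Theorem \ref{QPW} it evaluates $P_\nu$ on the matrix elements $\rho_{ij}^{\alpha}$ via $\Delta(\rho_{ij}^\alpha)=\sum_k\rho_{ik}^\alpha\otimes\rho_{kj}^\alpha$, then uses $\nu=\nu\circ S$, $S(\rho_{ik}^\alpha)=(\rho_{ki}^\alpha)^*$ and the state property $\nu(a^*)=\overline{\nu(a)}$ to recognise the resulting matrix as Hermitian (implicitly leaning on the orthogonality relations of Proposition \ref{ortho}, which guarantee that within a fixed block all the $\rho_{ij}^\alpha$ have the same norm, so that Hermitian-in-this-basis really does mean self-adjoint). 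You instead work basis-free: you transfer the symmetry of $\nu$ to the $S$-invariance of its density $c=\F^{-1}(\nu)$ using invariance of the Haar measure under $S$, antimultiplicativity, $S^2=I$, traciality and faithfulness, and then compare $\langle a,P_\nu b\rangle$ with $\langle P_\nu a,b\rangle$ via Proposition \ref{propertiesofP} (parts i and vi) and Van Daele's Convolution Theorem \ref{VDCT}; all your intermediate steps check out, including the reduction of both sides to $\int_\G b\,(c\star_A a^*)$ versus $\int_\G b\,(S(c)\star_A a^*)$. The paper's argument is shorter and dovetails with the spectral discussion that follows (the block structure of $P_\nu$ in the Peter--Weyl basis is exactly the collection of Fourier transforms $\widehat{\nu}(\alpha)$), whereas yours makes no appeal to representation theory, isolates precisely where symmetry enters (as $S(c)=c$) and where positivity of $\nu$ enters (only in $(P_\nu a)^*=P_\nu(a^*)$), and would survive in settings where an explicit orthogonal basis of matrix elements is less readily available.
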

\newpage
\begin{proof}
By the quantum finite Peter--Weyl Theorem, the set\newline $\{\rho_{ij}^{\alpha},\,\alpha\in\operatorname{Irr}(\G),\, i,j=1\dots d_\alpha \}$ is a basis for $F(\G)$. Calculate
\begin{align*}
P_\nu(\rho_{ij}^{\alpha})&=(\nu\otimes I_{F(\G)})\circ \Delta(\rho_{ij}^{\alpha})
\\&=\sum_{k}\nu(\rho_{ik}^{\alpha})\rho_{kj}^{\alpha}
\\&=\sum_{k}\nu(S(\rho_{ik}^{\alpha}))\rho_{kj}^{\alpha}
\\&=\sum_{k}\nu(\left(\rho_{ki}^{\alpha}\right)^*)\rho_{kj}^{\alpha}
\\&=\sum_{k}\overline{\nu(\rho_{ki}^{\alpha})}\rho_{kj}^{\alpha}=P_{\nu}^*(\rho_{ij}^{\alpha}),
\end{align*}
where Proposition \ref{matrixel}, the fact that $\nu$ is a state ($\nu(a^*)=\overline{\nu(a)}$) and $S(\rho_{ij}^{\alpha})=\left(\rho_{ji}^{\alpha}\right)^*$ were used $\bullet$
\end{proof}

\bigskip

Preceding the development of the Diaconis approach to random walks on finite quantum groups was this spectral analytic approach which culminates in the result that the convergence to random is largely controlled by the second largest (in magnitude) eigenvalue of the stochastic operator, $\lambda_{\star}$. Diaconis' approach is superior as the calculation of the second highest eigenvalue is far from straightforward for larger groups and furthermore the bound is not particularly sharp due to the information
loss in disregarding the rest of the spectrum of the stochastic operator. Regardless, it might be fruitful to try and prove a result similar to the classical result
$$\|\nu^{\star k}-\pi\|^2\leq \frac{|G|-1}{4}\lambda_{\star}^{2k},$$
to see are the upper bounds derived in this paper much of an improvement on the rough second-largest-eigenvalue-in-magnitude analysis.

\section{Lower Bounds and Cut-Off\label{LBCOP}}
As detailed in the introduction, a failure to generate effective lower bounds means that the sharpness of the upper bounds has not been tested. There are two ways in which lower bounds can show the efficiency of upper bounds.

\bigskip

For the simple walk on the circle, for $n\geq 7$ and $k\geq n^2/40$, the bounds are given by:
$$\frac12 e^{-\pi^4k/2n^4}\cdot e^{-\pi^2k/2n^2}\leq \|\nu_n^{\star k}-\pi\|\leq e^{-\pi^2k/2n^2},$$
and so for $k=\mathcal{O}(n^2)$, the total variation distance lies in an envelope between two relatively close bounds. With this lower bound being of the order of the upper bound, it is clear the upper bound is relatively sharp.

\bigskip

For the nearest neighbour walk on the $n$-cube, there are instead a pair of bounds --- one for prior and one for after $t_n=\frac{1}{4}(n+1)\ln n$. More specifically there are a pair of bounds for (separately $c<0$, $c>0$):
$$k_c=t_n+c\cdot \frac{n+1}{4}$$
For $n$ large, roughly, these bounds are of the form:
$$ 1\underset{c<0}{\approx}\|\nu_n^{k_c}-\pi\|\underset{c>0}{\approx} 0.$$
This shows something far more qualitatively interesting. This shows that for $k<t_n$, the random walk is far from random but then, quite abruptly, for $k>t_n$ the random walk converges to random. Apart from proving this \emph{cut-off phenomenon} holds, such a pair of bounds show the correct number of transitions required to force convergence. There are various formalisations of this cut-off phenomenon, but all, for a given random walk, require a family of groups and driving probabilities $(G_n,\nu_n)$ and a \emph{mixing time} $t_n$ such that for $k\ll t_n$ the walk is far from random and for $k\gg t_n$ the walk is close to random. In practise, one must find such a mixing time and also another function $g(n)$, $g(n)\ll t_n$, such that for $k=t_n-c\cdot g(n)$
$$\|\nu^k-\pi\|\approx 1$$
and for $k=t_n+c\cdot g(n)$
$$\|\nu^k-\pi\|\approx 0.$$
Intuition, based on the fact that for random walks of \emph{moderate} growth, such as the simple walk on the circle and a particular walk on the Heisenberg Group \citep{chenseventeen}, suggests that the family of random walks on the Sekine Quantum Groups does not experience the cut-off phenomenon. If this is true, future work on this family of random walks must concentrate on finding lower bounds of the order of the upper bound. The bounds in this work read
$$\frac12 \left(\frac34\right)^k\leq \|\nu^{\star k}-\pi\|\leq 1.11\left(e^{-\pi^2/n^2}\right)^k,$$
and for $k>32$ the lower bound is less than $1/20000$ and ineffective as the upper bound needs far more transitions (for larger $n$) --- $n^2$ --- to get below $1/200000$. If the random walk truly does not have the cut-off phenomenon, then possibly using a subspace $F(\mathbb{S}_n)\subset F(\mathbb{KP}_n)$, a more effective lower bound might be found and this will be an object of future study.

\bigskip

If the random walk does exhibit the cut-off phenomenon, then it might not actually be the case that the mixing time is $\mathcal{O}(n^2)$. Perhaps if the ideas of the papers of Diaconis and Saloff-Coste \citep{chensixteen,chenseventeen} --- which are useful for finding the correct order of the mixing time \emph{and} for finding lower bounds  --- can be adapted to the quantum group setting then this study of this random walk on $\mathbb{KP}_n$ can be brought to a satisfactory conclusion.

\bigskip

If it is the case that the random walk on $\mathbb{KP}_n$ does not experience cut-off and the barrier is a moderate growth condition, then perhaps by putting some $n$- dependence on the driving probability $\nu\in M_p(\mathbb{KP}_n)$ could give a walk that does indeed exhibit the cut-off phenomenon. Note that the driving probability on the walk on the simple group (and the referenced walk on the Heisenberg group) has no $n$-dependence. This leads to moderate growth and no cut-off phenomenon. In contrast, the driving probability for the nearest neighbour walk on the $n$-cube \emph{has} $n$-dependence, \emph{exponential} growth, and experiences the cut-off phenomenon.

\bigskip

A more thorough study of the walk on $\widehat{S_n}$ will also be an object of future study.

\section{Compact Matrix Quantum Groups\label{compact}}
Consider a compact group as defined in Section \ref{Haar}. Consider $\mathcal{L}^2(G)$, the space of all square-integrable (with respect to the Haar measure) functions on $G$. So much of Section \ref{Basics} carries through to the compact case and, in fact, the Peter--Weyl Theorem is true for compact groups.

\begin{theorem}(Peter--Weyl Theorem)\label{PWC}
  Let $\mathcal{I}=\operatorname{Irr}(G)$ be an index set for a family of pairwise-inequivalent \emph{finite dimensional} irreducible representations of a compact group $G$. Where $d_{\alpha}$ is the dimension of the vector space on which $\rho^\alpha$ acts ($\alpha\in\mathcal{I}$),
$$\left\{\rho_{ij}^\alpha\,|\,i,j=1\dots d_\alpha,\,\alpha\in \mathcal{I}\right\},$$
the set of matrix elements of $G$, is an orthogonal basis of $L^2(G)$.
\end{theorem}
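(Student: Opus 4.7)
The plan is to establish the theorem in two stages, mirroring the structure of the finite case but grafting in analytic tools suited to the compact topological setting. The first stage is orthogonality of matrix elements, which proceeds almost verbatim as in the finite case. I would begin by noting that every finite dimensional representation of a compact group is equivalent to a unitary one, via averaging an inner product against the Haar measure. Then, for two unitary irreducibles $\rho^\alpha$ and $\rho^\beta$ and an arbitrary linear map $T\colon V_\alpha\to V_\beta$, the averaged operator $\tilde T = \int_G \rho^\beta(g^{-1})\,T\,\rho^\alpha(g)\,d\mu(g)$ intertwines $\rho^\alpha$ and $\rho^\beta$. Schur's Lemma forces $\tilde T = 0$ when $\alpha\not\equiv \beta$, and $\tilde T = (\operatorname{tr} T/d_\alpha)\,I$ when $\alpha=\beta$. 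Specialising $T$ to each rank-one matrix unit and reading off coefficients recovers exactly
\[
\int_G \overline{\rho^\beta_{ij}(g)}\,\rho^\alpha_{kl}(g)\,d\mu(g) \;=\; \frac{\delta_{\alpha\beta}\,\delta_{ik}\,\delta_{jl}}{d_\alpha}.
\]

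The second stage, completeness, is the substantive analytic part. I would exploit the compactness of convolution operators. For any continuous real-valued $f$ on $G$ with $f(g^{-1})=\overline{f(g)}$, the convolution operator $T_f\colon \phi\mapsto f*\phi$ on $L^2(G)$ is self-adjoint (by symmetry of $f$) and compact (via Arzelà--Ascoli, using uniform continuity of $f$ on the compact $G$). The spectral theorem for compact self-adjoint operators then decomposes $L^2(G) = \overline{\bigoplus_\lambda E_\lambda}$ into finite dimensional eigenspaces. Since $T_f$ commutes with the left regular representation $L_s\phi(t)=\phi(s^{-1}t)$, each $E_\lambda$ is an $L$-invariant, finite dimensional subspace, and hence decomposes into irreducible invariant subspaces. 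On any such irreducible subspace $W\subset L^2(G)$ with orthonormal basis $\{w_i\}$, expanding $L_s w_j = \sum_i \rho^\alpha_{ij}(s) w_i$ identifies the $w_i$ with explicit linear combinations of matrix elements of $\rho^\alpha$ (via $w_i(t) = \sum_j \overline{w_j(e)}\,\rho^\alpha_{ij}(t)$, after evaluating at the identity).

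To conclude density, I would suppose $\phi\in L^2(G)$ is orthogonal to every matrix element and derive $\phi=0$. Choose a sequence $\{f_n\}$ of symmetric continuous functions forming an approximate identity, supported in shrinking neighbourhoods of $e\in G$, so that $f_n*\phi \to \phi$ in $L^2(G)$. Each $f_n*\phi$ lies in the closure of the sum of eigenspaces of $T_{f_n}$, hence in the closure of the span of matrix elements; but by hypothesis $\phi$ annihilates that span, forcing $f_n*\phi=0$ and thus $\phi=0$.

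The hard part will be arranging the analytic prerequisites in the genuinely topological (not merely finite) setting: establishing compactness of $T_f$ rigorously, constructing a symmetric approximate identity supported in arbitrarily small neighbourhoods of $e$ using regularity of the Haar measure, and confirming that the eigenspaces of $T_f$ genuinely exhaust $\operatorname{range}(T_f)$ so that approximation by continuous functions transfers to approximation by matrix elements. Once these analytic hooks are in place, the algebraic content of the argument is essentially the same Schur-theoretic content as in the finite Peter--Weyl Theorem \ref{PeterW}, which bodes well for the further quantisation to the compact quantum case flagged in Section \ref{compact}.
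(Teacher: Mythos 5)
Your proposal is correct in substance, but note that the paper does not actually prove Theorem \ref{PWC}: it remarks only that ``the proof is similar to the finite case apart from the fact that the regular representation is no longer finite dimensional'' and defers entirely to \citep{Pont}. What you have written is essentially the standard analytic proof that such references contain, so you are supplying the content the paper elides rather than diverging from it. Your Stage 1 (unitarisability by averaging, the intertwiner $\tilde T=\int_G\rho^\beta(g^{-1})T\rho^\alpha(g)\,d\mu(g)$, Schur, matrix units) is exactly the Schur-orthogonality argument the finite case uses, with the sum over $G$ replaced by the Haar integral. Your Stage 2 correctly identifies the genuinely new ingredient --- compactness of the convolution operator, the spectral theorem giving finite-dimensional eigenspaces for nonzero eigenvalues, and an approximate identity to pass from $\operatorname{range}(T_{f_n})$ to all of $L^2(G)$ --- which is precisely what substitutes for the finite-dimensional regular representation argument of Theorem \ref{PeterW}.

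One detail to repair before this would compile into a rigorous proof: with $T_f\phi=f*\phi$ and $(f*\phi)(x)=\int_G f(xy^{-1})\phi(y)\,d\mu(y)$, the operator $T_f$ commutes with the \emph{right} regular representation, not the left one as you assert; left convolution intertwines right translations (and $\phi\mapsto\phi*f$ intertwines left translations). This is a convention slip rather than a gap --- the nonzero eigenspaces of $T_f$ are then right-translation-invariant finite-dimensional subspaces, and the evaluation-at-$e$ trick identifies them inside the span of matrix elements just as you describe --- but the write-up should pick one side and stay consistent. Also, in the final step it is cleaner to observe that the closed span $M$ of the matrix elements is $T_{f_n}$-invariant and $T_{f_n}$ is self-adjoint, so $M^\perp$ is also invariant, whence $f_n*\phi\in M\cap M^\perp=\{0\}$ for $\phi\perp M$; your conclusion $f_n*\phi=0$ then follows honestly rather than by assertion. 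With those two points tidied, the argument is complete and is the same proof the cited reference gives.
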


 \begin{proof}
 The proof is similar to the finite case apart from the fact that the regular representation is no longer finite dimensional necessarily and a few other issues. See  \citep{Pont} for a full proof $\bullet$
 \end{proof}

This result means that  the (classical) Diaconis--Shahshahani Upper Bound \new Lemma still holds for compact groups --- although it seems to be all but intractable except for conjugate-invariant measures. Rosenthal \citep{RosDiac} was the first author to successfully use the Upper Bound Lemma in order to get rates of convergence for a random walk on a compact group ($SO(n)$). With great difficulty and a lot of non-trivial computation, Hough and Jiang \citep{hough} extended Rosenthal's work greatly. Varj\'{u} \citep{Varju} takes another approach to random walks on compact groups  which might also be quantisable.

\bigskip

In a seminal paper, Woronowicz introduced compact matrix quantum groups \citep{193}. Compact matrix quantum groups are well-behaved and not-necessarily finite dimensional quantum groups.
\begin{definition}
A \emph{compact matrix quantum group} is given by a $\mathrm{C}^*$-algebra $A=C(\mathbb{G})$ generated by the entries of a unitary matrix $u\in M_n(A)$ such that the following formulae define morphisms of $\mathrm{C}^*$-algebras:
$$\Delta(u_{ij})=\sum_k u_{ik}\otimes u_{kj},\qquad\eps(u_{ij})=\delta_{i,j},\qquad S(u_{ij})=u_{ji}^*.$$
\end{definition}
As the Peter--Weyl Theorem holds for compact matrix quantum groups (see  \citep{202} for a full proof) it should be possible --- barring technical problems such as a non-involutive antipode --- to prove the quantum Diaconis--Shahshahani Upper Bound Lemma for compact matrix quantum groups and analyse random walks on them.

\section{Convolution Factorisations of the Random Distribution\label{securb}}
In the classical case, Urban \citep{urban} studies the problem of factorising the random distribution as
\beq \int_G=\nu_m\star\nu_{m-1}\star\cdots \star \nu_1.\label{urban}\enq
As far as the author knows, this problem has not been studied in the quantum group setting. In the quantum setting, Urban's more precise question asks given a subspace $F(\mathbb{S})\subset F(\G)$ with the property that, where $S:F(\G)\raw F(\G)$ is the antipode, $S(F(\mathbb{S}))\subset F(\mathbb{S})$, does there exists a finite number of convolutions of symmetric probability measures $\nu_i\in M_p(\mathbb{G})$ supported on $F(\mathbb{S})$ such that (\ref{urban}) holds. As Urban uses Diaconis--Fourier theory to attack this problem, this is ripe for an attack in the quantum group case using the quantised machinery.

\section{Ces\`{a}ro Averages}
Another possible arena for future study, and perhaps a departure from the realm of random walks on quantum groups, would be a study of the Ces\`{a}ro means of a state $\nu\in M_p(\G)$
$$\nu_n=\frac{1}{n}\sum_{k=1}^n\nu^{\star k}.$$
It is not hard to see that $\nu_n\in M_p(\G)$ and it can be shown that $\nu_n$ always converges to an idempotent state $\nu_\infty$ (Theorem 7.1, \citep{franzgohm}). If $\nu$ is faithful, then the Ces\`{a}ro means converge to the random distribution. The reason that a study of these probabilities might be fruitful is that these Ces\`{a}ro means are well studied by the quantum group community. Indeed, in various contexts, the existence of the Haar measure is shown by taking a faithful state $\nu$ and showing that necessarily $\nu_n$ converges to an invariant state. It might also be interesting to see what Fourier Theory can say in light of calculations such as:
$$P_\nu (\nu_n)=\frac{n+1}{n}\cdot \nu_{n+1}-\frac{1}{n}\cdot \nu.$$
Of course, any problem in the theory of random walks on finite groups  --- if concerning global rather local behaviour --- is suitable for an attack in the quantum group setting. The classic work of Diaconis \citep{PD} contains a metaphorical ream of questions and problems that could be asked in the quantum group setting.

\begin{appendices}
\chapter{Matrix Elements of the Odd Sekine Quantum Groups}
Timmermann (Proposition 3.1.7 ii., iii., \citep{Timm}) shows that if a (finite) family $\{\rho_{ij}\}$ of elements of $F(\G)$ satisfy
$$\Delta(\rho_{ij})=\sum_k\rho_{ik}\otimes \rho_{kj}\quad\text{ and }\quad\eps(\rho_{i,j})=\delta_{i,j},$$
then the $\{\rho_{ij}\}$ are the matrix elements of a representation of $\G$.

\bigskip

Consider for $\ell\in\{0,\dots,n-1\}$
$$\rho_\ell^{\pm}=\sum_{i,j\in\Z_n}\zeta_n^{i\ell}e_{(i,j)}\pm\sum_{m=1}^n E_{m,m+\ell}.$$
Note that the counit of $F(\mathbb{KP}_n)$ is projection onto the $e_{(0,0)}$ factor and so $\eps(\rho_{\ell}^{\pm})=\zeta_n^{0\cdot \ell}=1$. Using the comultiplication given by (\ref{oneD}) and (\ref{Mfact}) consider
\begin{align*}
  \Delta(\rho_{\ell}^{\pm}) &= \sum_{i,j\in \Z_n}\zeta_n^{i\ell}\Delta(e_{(i,j)})\pm \sum_{m=1}^n\Delta(E_{m,m+\ell}) \\
   &= \sum_{i,j\in \Z_n}\zeta_{n}^{i\ell}\left(\sum_{u,v\in \Z_n}e_{(u,v)}\otimes e_{(i-u,j-v)}+\frac{1}{n}\sum_{u,v=1}^{n}\zeta_n^{i(u-v)}E_{u,v}\otimes E_{u+j,v+j}\right) \\
   & \pm \sum_{m=1}^n\left(\sum_{u,v\in\Z_n}\zeta_n^{-uL}e_{(-u,-v)}\otimes E_{m-v,m+\ell-v}+\sum_{u,v\in \Z_n}\zeta_n^{uL}E_{m-v,m+\ell-v}\otimes e_{(u,v)}\right) \\
\end{align*}
Write this as a sum of four terms:
\begin{align*}
  \Delta(\rho_{\ell}^{\pm}) &= \sum_{i,j,u,v\in \Z_n}\zeta_n^{i\ell}e_{(u,v)}\otimes e_{(i-u,j-v)}+\frac{1}{n}\sum_{\underset{i,j\in\Z_n}{u,v=1}}^n \zeta_n^{i(\ell+u-v)}E_{u,v}\otimes E_{u+j,v+j} \\
  &\pm \sum_{\underset{u,v\in\Z_n}{m=1}}^n\zeta_n^{-u\ell}e_{(-u,-v)}\otimes E_{m-v,m+\ell-v}\pm\sum_{\underset{u,v\in\Z_n}{m=1}}^{n}\zeta_n^{u\ell}E_{m-v,m+\ell-v}\otimes e_{(u,v)}
 \end{align*}

From the second term extract
\begin{align*}
\frac{1}{n}\sum_{i\in \Z_n}\zeta_n^{i(\ell+u-v)}E_{u,v}\otimes E_{u+j,v+j}&=\frac{1}{n}E_{u,v}\otimes E_{u+j,v+j}\sum_{i=0}^{n-1}\zeta_n^{i(\ell+u-v)}.
\end{align*}
If $\ell+u-v=0\Raw v=u+\ell$ then the summands are one and thus the second term equals $E_{u,v}\otimes E_{u+j,v+j}$. Otherwise
$$\sum_{i=0}^{n-1}\left(\zeta_{n}^{\ell+u-v}\right)^i=\frac{\zeta_n^{(\ell+u-v)n}-1}{\zeta_n^{\ell+u-v}-1}=0.$$
Therefore $v\overset{!}{=}u+\ell$ and so the second term is
$$\sum_{\underset{j\in\Z_n}{u=1}}^nE_{u,u+\ell}\otimes E_{u+j,u+j+\ell}.$$
Now consider:
\begin{align*}
\rho_{\ell}^{\pm}\otimes \rho_{\ell}^{\pm}&=\left(\sum_{s,t\in\Z_n}\zeta_n^{s\ell}e_{(s,t)}\pm\sum_{w=1}^n E_{w,w+\ell}\right)\otimes \left(\sum_{p,q\in\Z_n}\zeta_n^{p\ell}e_{(p,q)}\pm\sum_{r=1}^n E_{r,r+\ell}\right)
\\ &=\sum_{s,t,p,q\in \Z_n}\zeta_n^{(s+p)\ell}e_{(s,t)}\otimes e_{(p,q)}+\sum_{w,r=1}^nE_{w,w+\ell}\otimes E_{r,r+\ell}
\\&\pm \sum_{\underset{s,t\in\Z_n}{r=1}}^n\zeta_n^{s\ell}e_{(s,t)}\otimes E_{r,r+\ell}\pm\sum_{\underset{p,q\in\Z_n}{w=1}}^n\zeta_n^{p\ell}E_{w,w+\ell}\otimes e_{(p,q)}
\end{align*}
In the first term, reindex $s\raw u$, $t\raw v$, $s+p\raw i$, $t+q\raw j$. In the second reindex $w\raw u$ and $r\raw u+j$. In the third term reindex $s\raw -u$, $t\raw -v$ and $r\raw m-v$. In the fourth term reindex $p\raw u$, $q\raw v$ and $w\raw m-v$. Applying these shows that the $\rho_{\ell}^{\pm}$ are matrix elements of a one dimensional representation (and thus irreducible).

\bigskip

Now let $u\in\{0,1,\dots,n-1\}$ and $v\in\{1,2,\dots,(n-1)/2\}$ and consider elements:
\begin{align*}
\rho_{11}^{u,v}&=\sum_{i,j\in\Z_n} \zeta_n^{iu+jv}e_{(i,j)}
\\ \rho_{12}^{u,v}&=\sum_{m=1}^n\zeta_n^{-mv}E_{m,m+u}
\\ \rho_{21}^{u,v}&=\sum_{m=1}^n\zeta_n^{mv}E_{m,m+u}
\\ \rho_{22}^{u,v}&=\sum_{i,j\in \Z_n}\zeta_n^{iu-jv}e_{(i,j)}.
\end{align*}
Note that $\eps(\rho_{ij}^{u,v})=\delta_{i,j}$. If the $\rho_{ij}^{u,v}$ are to be matrix elements it must hold that
$$\Delta(\rho_{ij}^{u,v})=\rho_{i1}^{u,v}\otimes \rho_{1j}^{u,v}+\rho_{i2}^{u,v}\otimes \rho_{2j}^{u,v}.$$
Consider
\begin{align*}
\Delta(\rho_{11}^{u,v})&=\sum_{i,j\in\Z_n}\zeta_n^{iu+jv}\Delta(e_{(i,j)})
\\&=\sum_{i,j\in\Z_n}\zeta_n^{iu+jv}\left(\sum_{s,t\in\Z_n}e_{(s,t)}\otimes e_{(i-s,j-t)}+\frac{1}{n}\sum_{p,q=1}^n\zeta_n^{i(p-q)}E_{pq}\otimes E_{p+j,q+j}\right)
\\&=\sum_{i,j,s,t\in\Z_n}\zeta_n^{iu+jv}e_{(s,t)}\otimes e_{(i-s,j-t)}
\\&+\frac{1}{n}\sum_{\underset{i,j\in\Z_n}{p,q=1}}^n\zeta_n^{i(u+p-q)+jv}E_{p,q}\otimes E_{p+j,q+j}.
\end{align*}
For a fixed $p,q$, the second term is given by
$$\frac{1}{n}\sum_{i,j\in\Z_n}\left(\zeta_n^{u+p-q}\right)^i\zeta_n^{jv}E_{p,q}\otimes E_{p+j,q+j}=\frac{1}{n}\sum_{j=0}^{n-1}\zeta_n^{jv}E_{p,q}\otimes E_{p+j,q+j}\sum_{i=0}^{n-1}\left(\zeta_n^{u+p-q}\right)^{i}.$$
Similarly to the geometric series calculation above, the $\sum_i$ term is zero unless $q=p+u$ in which case it is equal to $n$. Therefore
$$\Delta(\rho_{11}^{u,v})=\sum_{\underset{j\in\Z_n}{p=1}}^{n}\zeta_n^{jv}E_{p,p+u}\otimes E_{p+j,p+j+u}.$$

Now consider
\begin{align*}
  \rho_{11}^{u,v}\otimes \rho_{11}^{u,v}+\rho_{12}^{u,v}\otimes \rho_{21}^{u,v} & =\left(\sum_{a,b\in \Z_n}\zeta_n^{au+bv}e_{(a,b)}\right)\otimes \left(\sum_{c,d\in\Z_n}\zeta_n^{cu+dv}e_{(c,d)}\right) \\
   & +\left(\sum_{r=1}^n\zeta_n^{-rv}E_{r,r+u}\right)\otimes\left(\sum_{w=1}^n\zeta_n^{wv}E_{w,w+u}\right) \\
   & =\sum_{a,b,c,d\in \Z_n}\zeta_n^{(a+c)u+(b+d)v}e_{(a,b)}\otimes e_{(c,d)} \\
   & +\sum_{r,w=1}^n\zeta_n^{(w-r)v}E_{r,r+u}\otimes E_{w,w+u}.
\end{align*}
Apply the reindexing $a\raw s$, $b\raw t$, $c\raw i-s$, $d\raw j-t$, $r\raw p$ and $w\raw p+j$ to see that this equals $\Delta(\rho_{11}^{u,v})$.

\bigskip

Consider now
\begin{align*}
\Delta(\rho_{12}^{u,v})&=\sum_{m=1}^n\zeta_n^{-mv}\Delta(E_{m,m+u})
\\&=\sum_{\underset{i,j\in\Z_n}{m=1}}^n\zeta_n^{-mv-iu}e_{(-i,-j)}\otimes E_{m-j,m+u-j}+\sum_{\underset{i,j\in\Z_n}{m=1}}^n\zeta_n^{iu-mv}E_{m-j,m+u-j}\otimes e_{(i,j)}
\\&\underset{\text{change sign of first term indices}}{=}\sum_{\underset{i,j\in\Z_n}{m=1}}^n\zeta_n^{iu-mv}e_{(i,j)}\otimes E_{m+j,m+u+j}
\\&+\sum_{\underset{i,j\in\Z_n}{m=1}}^n\zeta_n^{iu-mv}E_{m-j,m-j+u}\otimes e_{(i,j)}.
\end{align*}
Now consider
\begin{align*}
\rho_{11}^{u,v}\otimes \rho_{12}^{u,v}+\rho_{12}^{u,v}\otimes \rho_{22}^{u,v}&=\left(\sum_{i,j\in\Z_n}\zeta_n^{iu+jv}e_{(i,j)}\right)\otimes\left(\sum_{s=1}^n\zeta_n^{-sv}E_{s,s+u}\right)
\\&+\left(\sum_{t=1}^n\zeta_n^{-tv}E_{t,t+u}\right)\otimes \left(\sum_{i,j\in \Z_n}\zeta_n^{iu-jv}e_{(i,j)}\right)
\\&=\sum_{\underset{i,j\in\Z_n}{s=1}}^n\zeta_n^{iu+v(j-s)}e_{(i,j)}\otimes E_{s,s+u}
\\&+\sum_{\underset{i,j\in\Z_n}{t=1}}^n\zeta_n^{iu+v(-j-t)}E_{t,t+u}\otimes e_{(i,j)}\underset{s\raw m+j\text{ and }t\raw m-j}{=}\Delta(\rho_{12}^{u,v}).
\end{align*}

Similar calculations for $\rho_{21}^{u,v}$ and $\rho_{22}^{u,v}$ show that the $\rho_{ij}^{u,v}$ are the matrix elements of a two dimensional representation denoted by $\kappa^{u,v}$. It remains to show that the representations are irreducible.

\bigskip

\begin{definition}
The \emph{character} of a representation $\kappa$, with matrix elements $\{\rho_{ij}:1\leq i,j\leq d_\kappa\}$ is the element
$$\chi(\kappa)=\sum_{i=1}^{d_{\kappa}}\rho_{ii}.$$
\end{definition}
The irreducibility or otherwise of a representation can be tested using characters.

\bigskip

\begin{theorem}
A representation of a finite quantum group $\G$ is irreducible if and only if $\dsp\int_{\G} \chi(\kappa)^*\chi(\kappa)=1$.
\end{theorem}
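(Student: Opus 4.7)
The plan is to imitate the classical Schur-orthogonality proof, using Theorem \ref{directsum} to reduce the general case to a computation on irreducibles and then applying the two orthogonality statements (Propositions \ref{ortho1} and \ref{ortho}). Concretely, by Theorem \ref{directsum} every representation $\kappa$ is equivalent to a direct sum $\bigoplus_{\alpha \in \mathrm{Irr}(\G)} m_\alpha \kappa_\alpha$ of finite-dimensional irreducible unitaries, where only finitely many multiplicities $m_\alpha \in \N_0$ are non-zero. The first step will be a small bookkeeping lemma that the character is an invariant of the equivalence class (if $T$ intertwines $\kappa$ and $\kappa_0$ invertibly, then writing the matrix elements as matrices $[\rho^0] = [T][\rho][T]^{-1}$ over $F(\G)$ gives $\chi(\kappa_0) = \chi(\kappa)$) and that $\chi(\kappa_1 \oplus \kappa_2) = \chi(\kappa_1) + \chi(\kappa_2)$, since the matrix elements of a direct sum are block-diagonal. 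These two observations let us write, unambiguously,
$$\chi(\kappa) \;=\; \sum_{\alpha \in \mathrm{Irr}(\G)} m_\alpha\, \chi(\kappa_\alpha).$$

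Next I would compute the cross-terms $\int_\G \chi(\kappa_\alpha)^* \chi(\kappa_\beta)$. For $\alpha \not\equiv \beta$, Proposition \ref{ortho1} gives $\int_\G (\rho^\alpha_{ii})^* \rho^\beta_{jj} = 0$ term by term, so the cross-term vanishes. For $\alpha = \beta$, Proposition \ref{ortho} (with $\kappa_\alpha$ unitary and of dimension $d_\alpha$) gives
$$\int_\G \chi(\kappa_\alpha)^* \chi(\kappa_\alpha) \;=\; \sum_{i,j=1}^{d_\alpha} \int_\G (\rho^\alpha_{ii})^* \rho^\alpha_{jj} \;=\; \sum_{i,j=1}^{d_\alpha} \frac{\delta_{i,j}\delta_{i,j}}{d_\alpha} \;=\; 1.$$
Combining these two orthogonality computations yields
$$\int_\G \chi(\kappa)^* \chi(\kappa) \;=\; \sum_{\alpha \in \mathrm{Irr}(\G)} m_\alpha^2.$$

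The theorem then follows immediately: a sum of non-negative integers $m_\alpha^2$ equals $1$ precisely when exactly one $m_\alpha$ equals $1$ and the rest vanish, i.e.\ precisely when $\kappa$ is equivalent to a single irreducible (and hence is itself irreducible, since irreducibility is preserved under equivalence). I expect the main obstacle to be not the orthogonality calculations themselves, which are essentially quoted from Propositions \ref{ortho1} and \ref{ortho}, but rather the preliminary verification that characters descend to equivalence classes and add over direct sums in the quantum setting; these require a careful handling of the matrix of matrix-elements $[\rho_{ij}]$ as a matrix with entries in the noncommutative algebra $F(\G)$, but the trace identity $\mathrm{tr}([T][\rho][T]^{-1}) = \mathrm{tr}([\rho])$ still holds because the conjugating matrix $[T]$ has scalar entries that commute past everything.
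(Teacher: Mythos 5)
Your proof is correct and follows essentially the same route as the paper: decompose $\kappa$ into irreducibles via Theorem \ref{directsum} and evaluate $\int_{\G}\chi(\kappa)^*\chi(\kappa)$ using the orthogonality relations of Propositions \ref{ortho1} and \ref{ortho}. If anything, your bookkeeping is slightly more careful than the paper's: you obtain the value $\sum_\alpha m_\alpha^2$, whereas the paper applies Proposition \ref{ortho1} to \emph{all} cross-terms and reports the value $r$, which is only valid when the irreducible summands are pairwise inequivalent --- though the conclusion that the integral exceeds $1$ in the reducible case survives either way.
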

\begin{proof}
Suppose that $\kappa$ is irreducible with matrix elements $\{\rho_{ij}\}$:
\begin{align*}
  \int_{\G}\chi(\kappa)^*\chi(\kappa) & =\int_{\G}\left(\sum_{i=1}^{d_\kappa}\rho_{ii}\right)^*\left(\sum_{j=1}^{d_\kappa}\rho_{jj}\right) \\
   & =\sum_{i,j=1}^{d_\kappa}\int_{\G}\rho_{ii}^*\rho_{jj}.
\end{align*}
Using Proposition \ref{ortho} this is easily seen to be one.

\bigskip

On the other hand if $\kappa$ is not irreducible then by Theorem \ref{directsum} it is the direct sum of $r>1$ irreducible representations $\kappa_i$ and Timmermann (Proposition 3.2.14, \citep{Timm}) shows that in that case
$$\chi(\kappa)=\sum_{i=1}^r\chi(\kappa_i),$$

and so
\begin{align*}
\int_{\G}\chi(\kappa)^*\chi(\kappa)&=\int_{\G}\left(\sum_{i=1}^r\chi(\kappa_i)\right)^*\left(\sum_{j=1}^r\chi(\kappa_j)\right)
\\&\underset{\text{Prop. \ref{ortho1}}}{=}\sum_{i=1}^r\int_{\G}\chi(\kappa_i)^*\chi(\kappa_i)=\sum_{i=1}^r1=r\,\,\,\bullet
\end{align*}
\end{proof}

\bigskip

Note that the character of $\kappa^{u,v}$ is given by
\begin{align*}
\chi(\kappa^{u,v})&=\sum_{i,j\in\Z_n}\zeta_n^{iu+jv}e_{(i,j)}+\sum_{i,j\in\Z_n}\zeta_n^{iu-jv}e_{(i,j)}
\\&=\sum_{i,j\in\Z_n}(\zeta_n^{iu+jv}+\zeta_n^{iu-jv})e_{(i,j)}
\\ \Raw \chi(\kappa^{u,v})^*&=\sum_{i,j\in\Z_n}(\zeta_n^{-iu-jv}+\zeta_n^{-iu+jv})e_{(i,j)}
\\ \Raw \chi(\kappa^{u,v})^*\chi(\kappa^{u,v})&=\sum_{i,j\in \Z_n}|\zeta_n^{iu+jv}+\zeta_n^{iu-jv}|^2e_{(i,j)}
\\ \Raw \int_{\mathbb{KP}_n}\chi(\kappa^{u,v})^*\chi(\kappa^{u,v}) &=\frac{1}{2n^2}\sum_{i,j\in \Z_n}|\zeta_n^{iu}(\zeta_n^{jv}+\zeta_n^{jv})|^2
\\&=\frac{1}{2n^2}\sum_{i,j\in\Z_n}|\zeta_n^{jv}+\zeta_n^{-jv}|^2
\\&=\frac{1}{2n^2}\cdot n\sum_{j=0}^{n-1}\left|2\cos\left(\frac{2\pi j v}{n}\right)\right|^2
\\&=\frac{2}{n}\sum_{j=0}^{n-1}\cos^2\left(\frac{2\pi j v}{n}\right)
\\&=\frac{2}{n}\sum_{j=0}^{n-1}\left(\frac12 +\frac12 \cos\left(\frac{4\pi j v}{n}\right)\right)
\\&=1+\frac{1}{n}\sum_{j=0}^{n-1}\cos\left(\frac{4\pi j v}{n}\right).
\end{align*}
Note that
\begin{align*}
\sum_{j=0}^{n-1}\cos\left(\frac{4\pi j v}{n}\right)&=\Re\left(\sum_{j=0}^{n-1}\left(e^{4\pi vi/n}\right)^j\right)=\Re\left(\frac{e^{4\pi vin/n}-1}{e^{4\pi v i/n}-1}\right).
\end{align*}
Note that $e^{4\pi vin/n}=e^{2\pi i(2v)}=1$. Also $4\pi v/n$ cannot be a multiple of $2\pi$ as $v\in\{1,\dots,(n-1)/2\}$. Therefore $e^{4\pi v i/n}-1\neq 0$ and the sum is zero. This yields $\dsp \int_{\mathbb{KP}_n}\chi(\kappa^{u,v})^*\chi(\kappa^{u,v})=1$ and therefore the $\kappa^{u,v}$ are irreducible representations.

\end{appendices}

\setcitestyle{numbers}
\bibliographystyle{plainnat}
\bibliography{phdbib}

\begin{thebibliography}{62}
\providecommand{\natexlab}[1]{#1}
\providecommand{\url}[1]{\texttt{#1}}
\expandafter\ifx\csname urlstyle\endcsname\relax
  \providecommand{\doi}[1]{doi: #1}\else
  \providecommand{\doi}{doi: \begingroup \urlstyle{rm}\Url}\fi

\bibitem[Abe(1980)]{Abe}
E.~Abe.
\newblock \emph{Hopf Algebras}.
\newblock Cambridge Tracts in Math. 74, Cambridge University Press, Cambridge,
  1980.

\bibitem[Accardi(1976)]{Accardi}
L.~Accardi.
\newblock {N}onrelativistic {Q}uantum {M}echanics as a {N}oncommutative
  {M}arkof {P}rocess.
\newblock \emph{Adv. Math, \textbf{20}, 329-366}, 1976.

\bibitem[Aldous and Diaconis(1987)]{chenthree}
D.~Aldous and P.~Diaconis.
\newblock Strong uniform times and finite random walks.
\newblock \emph{Adv. in Appl. Math. \textbf{8}, 69-97}, 1987.

\bibitem[Andruskiewitsch(2014)]{And}
N.~Andruskiewitsch.
\newblock On finite-dimensional {H}opf algebras.
\newblock \emph{To be presented at the algebra session of ICM 2014}, 2014.

\bibitem[Andruskiewitsch and Ferrer~Santos(2009)]{Andr}
N.~Andruskiewitsch and W.~Ferrer~Santos.
\newblock The {B}eginnings of the {T}heory of {H}opf {A}lgebras.
\newblock \emph{Acta. Appl. Math., \textbf{108}, 3-17}, 2009.

\bibitem[Banica and Goswami(2010)]{FS}
T.~Banica and D.~Goswami.
\newblock Quantum {Isometries} and {Noncommutative} {Spheres}.
\newblock \emph{Communications in Mathematical Physics, \textbf{298}, no.2,
  343-356}, 2010.

\bibitem[Banica and M\'{e}sz\'{a}ros(2015)]{not1}
T.~Banica and S.~M\'{e}sz\'{a}ros.
\newblock Uniqueness results for noncommutative spheres and projective spaces.
\newblock \emph{Illinois J. Math., \textbf{59}, 219-233}, 2015.

\bibitem[Banica and Speicher(2009)]{Banica}
T.~Banica and R.~Speicher.
\newblock Liberation of {O}rthogonal {L}ie {G}roups.
\newblock \emph{Adv. Math. \textbf{222}, (4) 1461-1501}, 2009.

\bibitem[Banica et~al.(2012)Banica, Bichon, and Natale]{BBN}
T.~Banica, J.~Bichon, and S.~Natale.
\newblock Finite quantum groups and quantum permutation groups.
\newblock \emph{Advances in Mathematics, \textbf{229}, 3320-3338}, 2012.

\bibitem[Batty and Robinson(1984)]{robinson}
C.J.K. Batty and D.W. Robinson.
\newblock Positive {O}ne-parameter {S}emigroups on {O}rdered {B}anach {S}paces.
\newblock \emph{Acta Applicandae Mathematicae 2, \textbf{3}, 221-296}, 1984.

\bibitem[Bekka et~al.(2008)Bekka, de~la Harpe, and Valette]{BdHV}
B.~Bekka, P.~de~la Harpe, and A.~Valette.
\newblock \emph{Kazhdan’s {Property} {(T)}}.
\newblock New Math. Monogr. \textbf{11}, Cambridge Univ. Press, Cambridge,
  2008.

\bibitem[Bransden and Joachain(2000)]{Brans}
B.~H. Bransden and C.J. Joachain.
\newblock \emph{{Q}uantum {Mechanics}}.
\newblock Pearson, 2000.

\bibitem[Ceccherini-Silberstein et~al.(2008)Ceccherini-Silberstein, Scarabotti,
  and Tolli]{cecc}
T.~Ceccherini-Silberstein, F.~Scarabotti, and F.~Tolli.
\newblock \emph{Harmonic Analysis on Finite Groups}.
\newblock Cambridge University Press, New York, 2008.

\bibitem[Diaconis(1988)]{PD}
P.~Diaconis.
\newblock \emph{Group Representations in Probability and Statistics}.
\newblock IMS: Hayward, CA, 1988.

\bibitem[Diaconis and Saloff-Coste(1993)]{chensixteen}
P.~Diaconis and L.~Saloff-Coste.
\newblock Comparison techniques for random walks on finite groups.
\newblock \emph{Ann. Probab. \textbf{21}, 2131-2156}, 1993.

\bibitem[Diaconis and Saloff-Coste(1994)]{chenseventeen}
P.~Diaconis and L.~Saloff-Coste.
\newblock Moderate {Growth} and {Random} {Walk} on {Finite} {Groups}.
\newblock \emph{Geom. Funct. Anal. \textbf{4}, no. 1, 1-36}, 1994.

\bibitem[Diaconis et~al.(2014)Diaconis, Pang, and Ram]{DPR}
P.~Diaconis, C.Y.A. Pang, and A.~Ram.
\newblock Hopf algebras and {M}arkov chains: two examples and a theory.
\newblock \emph{Journal of Algebraic Combinatorics, \textbf{39}, no.3,
  527-585}, 2014.

\bibitem[Franz and Gohm(2006)]{franzgohm}
U.~Franz and R.~Gohm.
\newblock Random {Walks} on {Finite} {Quantum} {Groups}.
\newblock \emph{Quantum Independent Increment Processes II, volume 1866 of
  Lecture Notes in Math., pages 1-32. Springer, Berlin}, 2006.

\bibitem[Franz and Skalski(2008)]{Erg}
U.~Franz and A.~Skalski.
\newblock On ergodic properties of convolution operators associated with
  compact quantum groups.
\newblock \emph{Colloquium Mathematicum, \textbf{113}, no. 3, 13-23}, 2008.

\bibitem[Franz and Skalski(2009)]{idempotent}
U.~Franz and A.~Skalski.
\newblock On {I}dempotent {S}tates on {Q}uantum {G}roups.
\newblock \emph{Journal of Algebra \textbf{322}, (5) 1774-1802}, 2009.

\bibitem[Franz et~al.(2016)Franz, Kula, and Skalski]{not2}
U.~Franz, A.~Kula, and A.~Skalski.
\newblock L\'{e}vy {P}rocesses on {Q}uantum {P}ermutation {G}roups.
\newblock \emph{Operator Theory: Advances and Applications, \textbf{252}, no.3,
  193-259}, 2016.

\bibitem[Halmos(1974)]{HaarMeasure}
P.R. Halmos.
\newblock \emph{Measure theory}.
\newblock Springer-Verlag, 1974.

\bibitem[Heyer(1977)]{Hey77}
H.~Heyer.
\newblock \emph{{P}robability {M}easures on {L}ocally {C}ompact {G}roups}.
\newblock Springer-Verlag, 1977.

\bibitem[Hough and Jiang(2017)]{hough}
B.~Hough and Y.~Jiang.
\newblock The cut-off phenomenon in the uniform plane {Kac} walk.
\newblock \emph{Ann. Probab., to appear}, 2017.

\bibitem[Izumi and Kosaki(2002)]{mxe}
M.~Izumi and H.~Kosaki.
\newblock Kac algebras arising from composition of subfactors: {G}eneral theory
  and classification.
\newblock \emph{Mem. Amer. Math. Soc. 158, 750}, 2002.

\bibitem[Kac and Paljutkin(1966)]{KP6}
G.I. Kac and V.G. Paljutkin.
\newblock Finite {G}roup {R}ings.
\newblock \emph{Trudy Moskov. Mat. Ob\v{s}\v{c}. 15:224-261, 1966. Translated
  in Trans. Moscow Math. Soc. (1967), 251-284.}, 1966.

\bibitem[Kusterman and Tuset(1999)]{Kuster}
J.~Kusterman and T.~Tuset.
\newblock A survey of $\mathrm{C}^*$-algebraic {Quantum} {Groups}, {Part} {I}.
\newblock \emph{Irish Mathematical Society Bulletin \textbf{43}, 8-63}, 1999.

\bibitem[Majid(1991)]{Majid2}
S.~Majid.
\newblock Principle of {Representation-Theoretic} {Self-Duality}.
\newblock \emph{Physics Essays, vol. 4, \textbf{3}, p. 395}, 1991.

\bibitem[Majid(1993)]{Majid1}
S.~Majid.
\newblock Quantum {Random} {Walks} and {Time} {Reversal}.
\newblock \emph{Int. J. Mod. Phys. A, \textbf{08}, 4521}, 1993.

\bibitem[Majid(1995)]{Majid3}
S.~Majid.
\newblock \emph{Foundations of quantum group theory}.
\newblock Cambridge University Press, Cambridge, 1995.

\bibitem[McCarthy(2010)]{MSc}
J.P. McCarthy.
\newblock The cut-off phenomenon in random walks on finite groups.
\newblock Master's thesis, University College Cork, 2010.

\bibitem[Meyer(1995)]{space}
P-A. Meyer.
\newblock \emph{Quantum {P}robability for {P}robabilisits}.
\newblock Springer-Verlag, 1995.

\bibitem[Murphy(1990)]{Murphy}
G.~J. Murphy.
\newblock \emph{$\mathrm{C}^*$-Algebras and Operator Theory}.
\newblock Academic Press, Boston, 1990.

\bibitem[Pal(1996)]{Pal}
A.~Pal.
\newblock A counterexample on idempotent states on a compact quantum group.
\newblock \emph{Lett. Math. Phys. \textbf{37} (1), 75-77}, 1996.

\bibitem[Palcoux(2016)]{Palc}
S.~Palcoux.
\newblock Ore's theorem for cyclic subfactor planar algebras and applications.
\newblock preprint, 2016.

\bibitem[Paulsen(2002)]{Paul}
V.~I. Paulsen.
\newblock \emph{Completely Bounded Maps and Operator Algebras}.
\newblock Cambridge University Press, 2002.

\bibitem[Pisier and Xu(2003)]{PX03}
G.~Pisier and Q.~Xu.
\newblock Non-commutative $\mathcal{L}^p$-spaces.
\newblock In William~B. Johnson and Joram Lindenstrauss, editors,
  \emph{Handbook of the Geometry of Banach Spaces, Volume 2}, chapter~34, pages
  1459--1517. North Holland, Amsterdam, 2003.

\bibitem[Pontrjagin(1936)]{Pont}
L.~Pontrjagin.
\newblock Linear representations of compact topological groups.
\newblock \emph{Rec. Math. [Math. Sbornik] N.S., Volume 1(\textbf{43}), no.3,
  267-272}, 1936.

\bibitem[Robinson(1983)]{robin}
D.W. Robinson.
\newblock Continuous {S}emigroups on {O}rdered {B}anach {S}paces.
\newblock \emph{Journal of Functional Analysis, \textbf{51}, 268-284}, 1983.

\bibitem[Rosenthal(1997)]{RosDiac}
J.~S. Rosenthal.
\newblock Random {Rotations}: {Characters} and {Random} {Walks} on {$SO(N)$}.
\newblock \emph{Ann. Probab. \textbf{22}, no. 1, 398-423}, 1997.

\bibitem[Rudin(1991)]{Rudin91}
W.~Rudin.
\newblock \emph{Functional {Analysis}}.
\newblock McGraw--Hill, 1991.

\bibitem[Sekine(1996)]{Sekine}
Y.~Sekine.
\newblock An example of finite-dimensional {Kac} algebras of {Kac--Paljutkin}
  type.
\newblock \emph{Proc. Amer. Math. Soc. \textbf{124}, no.4, 1139-1147}, 1996.

\bibitem[Serre(1977)]{Serre}
J.P. Serre.
\newblock \emph{Linear {R}epresentations of {F}inite {G}roups}.
\newblock Springer-Verlag: New York, 1977.

\bibitem[Sherman(1979)]{sherman}
S.~Sherman.
\newblock Order in {O}perator {A}lgebras.
\newblock \emph{American Journal of Mathematics, \textbf{73}, 227-232}, 1979.

\bibitem[Skalski and So{\l}tan(2016)]{not3}
A.~Skalski and P.~So{\l}tan.
\newblock Quantum families of invertible maps and related problems.
\newblock \emph{Canad. J. Math. \textbf{68}, 698–720.}, 2016.

\bibitem[Steele(2004)]{perm}
J.~M. Steele.
\newblock \emph{The Cauchy--Schwarz Master Class: An Introduction to the Art of
  Mathematical Inequalities}.
\newblock Cambridge University Press, New York, NY, USA, 2004.

\bibitem[Stenholm and Suominen(2005)]{sten}
Stig Stenholm and Kalle-Antti Suominen.
\newblock \emph{Quantum {A}pproach to {I}nformatics}.
\newblock Wiley, 2005.

\bibitem[Sweedler(1969)]{Sweedler}
M.~E. Sweedler.
\newblock \emph{Hopf Algebras}.
\newblock Mathematics Lecture Note Series, W. A. Benjamin, Inc., New York,
  1969.

\bibitem[Takesaki(2002)]{TAK02}
M.~Takesaki.
\newblock \emph{Theory of {Operator} Algebras I, Volume 124 of Encyclopaedia of
  Mathematical Sciences}.
\newblock Springer-Verlag, Berlin, 2002.

\bibitem[Timmermann(2008)]{Timm}
T.~Timmermann.
\newblock \emph{An Invitation to Quantum Groups and Duality}.
\newblock Eur. Math. Soc., 2008.

\bibitem[Urban(1997)]{urban}
R.~Urban.
\newblock Some {Remarks} on the {Random} {Walk} on {Finite} {Groups}.
\newblock \emph{Colloq Math \textbf{74}, No.2 287-298}, 1997.

\bibitem[Vainermann and Kac(1973)]{class}
L.~I. Vainermann and G.~I. Kac.
\newblock Nonunimodular {R}ing {G}roups and {H}opf-von {N}eumann {A}lgebras.
\newblock \emph{Dokl. Akad., Nauk SSSR \textbf{211}, 194-225; English transl.
  Math. USSR Sb. \textbf{23} (1974), 170-181.}, 1973.

\bibitem[Van~Daele(1997)]{VDHM}
A.~Van~Daele.
\newblock The {Haar} {Measure} on {Finite} {Quantum} {Groups}.
\newblock \emph{Proc. Amer. Math. Soc. \textbf{125}, no.12, 3489-3500}, 1997.

\bibitem[Van~Daele(1998)]{VD2}
A.~Van~Daele.
\newblock An {Algebraic} {Framework} for {Group} {Duality}.
\newblock \emph{Adv. Math. \textbf{140}, (2) 323-366}, 1998.

\bibitem[Van~Daele(September 19-23, 2006)]{VD}
A.~Van~Daele.
\newblock The {F}ourier transform in quantum group theory.
\newblock \emph{Talk at the meeting New techniques in Hopf algebras and graded
  ring theory. Brussels}, September 19-23, 2006.

\bibitem[Varj\'{u}(2013)]{Varju}
P.~P. Varj\'{u}.
\newblock Random walks in compact groups.
\newblock \emph{Doc. Math. \textbf{18}, 1137-1175}, 2013.

\bibitem[Wang(2013)]{Wang1}
S.~Wang.
\newblock Equivalent notions of normal quantum subgroups, compact quantum
  groups with properties {$F$} and {$FD$}, and other applications.
\newblock \emph{Journal of Algebra, \textbf{397}, 515-534}, 2013.

\bibitem[Wang(2014)]{Wang}
Simeng Wang.
\newblock {$L_p$}-{I}mproving {C}onvolution {O}perators on {F}inite {Q}uantum
  {G}roups.
\newblock \emph{Indiana Univ. Math. J., to appear; available on Arxiv}, 2014.

\bibitem[Wegge-Olsen(1973)]{WO}
N.E. Wegge-Olsen.
\newblock \emph{K-Theory and C*-Algebras}.
\newblock Oxford University Press, 1973.

\bibitem[Woronowicz(1987{\natexlab{a}})]{Wor194}
S.~L. Woronowicz.
\newblock Twisted {$SU(2)$} group. an example of a noncommutative differential
  calculus.
\newblock \emph{Pub. Res. Inst. Math. Sci. \textbf{(23)}, 117-181},
  1987{\natexlab{a}}.

\bibitem[Woronowicz(1987{\natexlab{b}})]{193}
S.L. Woronowicz.
\newblock Compact matrix pseudogroups.
\newblock \emph{Comm. Math. Phys. \textbf{111}, 613-665}, 1987{\natexlab{b}}.

\bibitem[Woronowicz(1998)]{202}
S.L. Woronowicz.
\newblock Compact quantum groups.
\newblock \emph{Sym\'{e}tries quantiques (Les Houches, 1995), North Holland,
  845-884}, 1998.

\end{thebibliography}

\end{document}